\documentclass[12pt]{article}
\usepackage[latin1]{inputenc}
\usepackage[english]{babel}
\usepackage{graphicx}
\usepackage{amsmath,amsfonts,amssymb,amsthm,amsopn,amscd}
\usepackage{bbm,wasysym,stmaryrd}
\usepackage{subfigure}
\usepackage{color}
\usepackage{setspace}
\usepackage{authblk}
\usepackage{hyperref}
\usepackage[normalem]{ulem} 

\newcommand{\R}{\mathbbm{R}}

\newcommand{\mN}{\mathcal{N}}
\newcommand{\Z}{\mathbbm{Z}}
\newcommand{\B}{\mathcal{B}}

\newcommand{\F}{\mathcal{F}}
\newcommand{\mM}{\mathcal{M}}
\newcommand{\mone}{1}

\newcommand{\Exp}{\mathbb{E}} 

\newcommand{\Expt}[1]{\mathbb{E}\left [ #1 \right]} 
\newcommand{\T}{\mathcal{T}}  
\newcommand{\M}{\mathcal{M}}

\newcommand{\norm}[1]{\| #1 \|}
\newcommand{\lsup}[1]{\underset{#1\to\infty}{\overline{\lim}}}
\newcommand{\linf}[1]{\underset{#1\to\infty}{\underline{\lim}}}
\newcommand{\limunder}[1]{\underset{#1\to\infty}{\lim}}
\newcommand{\J}{J}
\newcommand{\undt}[1]{\underline{#1}}
\renewcommand{\Re}{{\rm Re}}
\renewcommand{\Im}{{\rm Im}}
\newcommand{\meas}{\mathcal{M}_{1}^+}
\newcommand{\mmeas}{\mathcal{M}_{1,s}^+}

\theoremstyle{plain}
\newtheorem{theorem}{Theorem}
\newtheorem{corollary}[theorem]{Corollary}
\newtheorem{proposition}[theorem]{Proposition}
\newtheorem{lemma}[theorem]{Lemma}

\theoremstyle{definition}
\newtheorem{definition}{Definition}

\newcommand{\olivier}[1]{#1}

\begin{document}

\title{A large deviation principle  for 
  networks of rate neurons with correlated synaptic weights}

\author[*]{Olivier Faugeras}
\author[*]{James Maclaurin}
\affil[*]{NeuroMathComp Laboratory, INRIA Sophia-Antipolis, France
  \hspace{2cm} {\tt \small firstname.name@inria.fr}}

\maketitle

\hrule
\medskip
\tableofcontents
\medskip
\hrule
\vspace{0.5cm}

\noindent
{\bf AMS Mathematics Subject Classification:}\\
28C20, 34F05, 34K50, 37L55, 60B11, 60F10, 60G10, 60G15, 60G57, 60G60, 60H10, 62M45, 92C20.
\section*{Abstract}
We study the asymptotic law of a network of interacting neurons when the number of neurons becomes infinite. Given a
completely connected network of firing rate neurons in which the
synaptic weights are Gaussian {\emph correlated} random variables, we describe the 
asymptotic law of the network when the number of neurons goes to
infinity. We introduce the process-level empirical measure
 of the trajectories of the
solutions to the equations of the finite network of  neurons  and the averaged law (with respect to
the synaptic weights)  of the trajectories of the
solutions to the equations of the network of neurons. The main result of this article  is that the image law through the empirical measure satisfies a large deviation principle  with a good rate function which is shown to have a unique global minimum. 
Our analysis of the rate function allows us also to characterize the limit measure as the image of a stationary Gaussian measure defined on a transformed set of trajectories. This is potentially very useful for applications in neuroscience since  the Gaussian measure can be completely characterized by its mean and spectral density. It also facilitates the assessment of the probability of finite-size effects.

\section{Introduction}   

The goal of this paper is to study the asymptotic behaviour and large deviations of a network of
interacting neurons when the number of neurons becomes
infinite. Our network may be thought of as a network of weakly-interacting diffusions: thus before we begin we briefly overview other asymptotic analyses of such systems. In particular, a lot of work has been done on spin glass dynamics, including Ben Arous and Guionnet on the
mathematical side \cite{guionnet:95,ben-arous-guionnet:95,ben-arous-guionnet:97,guionnet:97} and Sompolinsky and his co-workers on the
theoretical physics side
\cite{sompolinsky-zippelius:81,sompolinsky-zippelius:82,crisanti-sompolinsky:87,crisanti-sompolinsky:87b}. Furthermore the large deviations of weakly interacting diffusions has been extensively studied by Dawson, Gartner and co-workers \cite{dawson-gartner:87,dawson-gartner:94,dawson-del-moral:05}. More
references to previous work on this particular subject can be found in
these references. 

Because the dynamics of spin glasses is not too far
from that of networks of interacting neurons, Sompolinsky also
succesfully explored this particular topic
\cite{sompolinsky-crisanti-etal:88} for fully connected networks of
rate neurons, i.e. neurons represented by the time variation of their
firing rates (the number of spikes they emit per unit of time), as
opposed to spiking neurons, i.e. neurons represented by the time
variation of their membrane potential (including the individual
spikes). For an introduction to these notions, the interested
reader is referred to such textbooks as
\cite{gerstner-kistler:02b,izhikevich:07,ermentrout-terman:10}. In his
study of the continuous time dynamics of networks of rate neurons,
Sompolinsky and his colleagues assumed, as in the work on spin
glasses, that the coupling coefficients, called the synaptic
weights in neuroscience, were random variables 
i.i.d. with zero mean Gaussian laws. The main result
obtained by Ben Arous and Guionnet 
for spin glass networks using a large deviations approach (resp. by Sompolinsky and his colleagues for networks of rate neurons using the local chaos hypothesis) under the
previous hypotheses is that the averaged law of Langevin spin glass
(resp. rate neurons)
dynamics is chaotic in the sense that the averaged law of a finite
number of spins (resp. of neurons) converges to a product measure.

The next theoretical efforts in the direction of understanding the
averaged law of rate neurons are those of Cessac, Moynot and Samuelides
\cite{cessac:95,moynot:99,moynot-samuelides:02,cessac-samuelides:07,samuelides-cessac:07}. From the
technical viewpoint, the study of the collective dynamics is done in discrete
time, assuming no leak  (this term is explained below) in the individual dynamics of each of the rate
neurons. Moynot and Samuelides obtained a large deviation principle and were able to describe in detail the
limit averaged law that had been obtained by Cessac using the local chaos hypothesis and to  prove rigourously the propagation of chaos
property. Moynot extended these results to the more general case where the
neurons can belong to two populations, the synaptic weights are
non-Gaussian (with some restrictions)  but still i.i.d., and the
network is not fully connected (with some restrictions) \cite{moynot:99}.

One of the next challenges is to incorporate in the network model the fact
that the synaptic weights are not independent and in effect often
highly correlated. One of the reasons for this is the plasticity processes at
work at the levels of the synaptic connections between neurons; see
for example \cite{kandel-schwartz-etal:00} for a biological viewpoint,
and \cite{dayan-abbott:01,gerstner-kistler:02b,ermentrout-terman:10}
for a more computational and mathematical account of these phenomena.

The problem we solve in this paper is the following. Given a
completely connected network of firing rate neurons in which the
synaptic weights are Gaussian {\emph correlated} random variables, we describe the 
asymptotic law of the network when the number of neurons goes to
infinity. Like in  \cite{moynot:99,moynot-samuelides:02}  we study a
discrete time dynamics but unlike these authors we cope with more
complex intrinsic dynamics of the neurons, in particular we allow for
a leak (to be explained in more detail below). The structure of our proof is broadly similar to these authors; we have generalised their results. Indeed one may directly obtain the LDP in \cite{moynot:99} by applying a contraction principle to the LDP to be proved below.

To be complete, let us mention the fact that this problem has already partially been explored in Physics by Sompolinsky and Zippelius \cite{sompolinsky-zippelius:81,sompolinsky-zippelius:82} and in Mathematics by Alice Guionnet \cite{guionnet:97} who analysed symmetric spin glass dynamics, i.e. the case where the matrix of the coupling coefficients (the synaptic weights in our case) is symmetric. This is a very special case of correlation. The work in \cite{cugliandolo-kurchan-etal:97} is also an important step forward in the direction of understanding the spin glass dynamics when more general correlations are present.

Let us also mention very briefly another class of approaches toward the description of very large populations of neurons where the individual spikes generated by the neurons are considered. The model for individual neurons is usually of the class of Integrate and Fire (IF) neurons \cite{lapicque:07} and the underlying mathematical tools are those of the theory of point-processes \cite{daley-vere-jones:07}. Important  results have been obtained in this framework by Gerstner and his collaborators, e.g.  \cite{gerstner-hemmen:93,gerstner:95} in the case of deterministic synaptic weights. Related to this approach but from a more mathematical viewpoint, important results on the solutions of the mean-field equations have been obtained in \cite{caceres-carillo-etal:11}. In the case of spiking neurons but with a continuous dynamics (unlike that of IF neurons), the first author and collaborators have recently obtained some limit equations that describe the asymptotic dynamics of fully connected networks of neurons \cite{baladron-fasoli-etal:12} with independent synaptic weights.

Because of the correlation of the synaptic weights, the natural space to work in is the infinite dimensional space of the trajectories, noted $\T^\Z$, of a countably-infinite set of neurons and the set of stationary probability measures defined on this set, noted $\mmeas(\T^\Z)$.

We introduce the process-level empirical measure, noted $\hat{\mu}^N$, of the $N$ trajectories of the
solutions to the equations of the network of $N$ neurons  and the averaged (with respect to
the synaptic weights) law $Q^N$ of the $N$ trajectories of the
solutions to the equations of the network of $N$ neurons. The main result of this article (theorem \ref{theo:LDP}) is that the image law $\Pi^N$ of  $Q^N$ through $\olivier{\hat{\mu}}^N$ satisfies a large deviation principle (LDP) with a good rate function $H$ which is shown to have a unique global minimum, $\mu_e$. Thus, with respect to the measure $\Pi^N$ on $\mmeas(\T^\Z)$, if the set $X$ contains the measure $\delta_{\mu_e}$, then $\Pi^N(X) \to 1$ as $N \to \infty$, whereas if $\delta_{\mu_e}$ is not in the closure of $X$, $\Pi^N(X) \to 0$ as $N \to \infty$ exponentially fast and the constant in the exponential rate is determined by the rate function. Our analysis of the rate function allows us also to characterize the limit measure $\mu_e$ as the image of a stationary Gaussian measure $\undt{\mu_e}$ defined on a transformed set of trajectories $\T^\Z$. This is potentially very useful for applications since $\undt{\mu_e}$ can be completely characterized by its mean and spectral density. Furthermore the rate function allows us to quantify the probability of finite-size effects.

The paper is organized as follows. In section \ref{sect:model} we describe the equations of our network of neurons, the type of correlation between the synaptic weights, define the proper state spaces and introduce the different probability measures that are necessary for establishing our results, in particular the level-3 empirical measure, $\hat{\mu}^N$, $\Pi^N$ and the image $R^N$ through $\hat{\mu}^N$ of the law of the uncoupled neurons. We state the principle result of this paper in Theorem \ref{theo:LDP}. In section \ref{sect:average} we motivate our approach by showing that when computing the Radon-Nikodym derivative of $Q^N$ with respect to the law of the uncoupled neurons, one is led to consider certain Gaussian processes which are directly related to the synaptic weights and can be described with the help of the empirical measure $\hat{\mu}^N$.

In section \ref{sect:PiN} we extend the definition of the previous Gaussian processes to be valid for any stationary measure, not only the empirical one. This allows us to compute the Radon-Nikodym derivative of $\Pi^N$ with respect to $R^N$ for any measure in $\mmeas(\T^\Z)$. Using these results, section \ref{Sect:GoodRateFunction} is dedicated to the proof of the existence of a strong LDP for the measure $\Pi^N$. In section \ref{sect:Hminimum} we show that the good rate function obtained in the previous section has a unique global minimum and we characterize it as the image of a stationary Gaussian measure. We conclude with section \ref{sect:conclusion} by  stating some important consequences and sketching a number of possible generalisations of our work as well as discussing some further connections with other approaches.

\section{The neural network model}\label{sect:model}
We consider a fully connected network of $N$ rate neurons. For
simplicity but without loss of generality, we assume $N$ odd\footnote{When $N$ is even the formulae are slightly more  complicated but all the results we prove below in the case $N$ odd are still valid.} and write
$N=2n+1$, $n \geq 0$. In the course of this paper, we will asymptote $N$ to $\infty$. The state of the neurons is described
either by the rate variables $(X^j_t),\,
j=-n,\cdots,n,\,t=0,\cdots,T$ or the potential variables $(U^j_t),\,
j=-n,\cdots,n,\,t=0,\cdots,T$. 
These variables are related as follows
\[
X^j_t=f(U^j_t)\quad j=-n,\ldots,n \quad t=0,\ldots,T-1,
\]
where $f: \R \to ]0,\,1[$ is a monotonic bijection \olivier{which we assume to be Lipschitz continuous. Its Lipschitz constant is noted $k_f$.} We could for example employ $f(x)=(1+\tanh(gx))/2$, where the parameter $g$ can be used to control
the slope of the ``sigmoid'' $f$ at the origin $x=0$.


\subsection{The model equations}\label{sect:modelequations}
The equation describing the time variation of the membrane potential
$U^j$ of the $j$th neuron writes
\begin{equation}\label{eq:U}
U^j_{t}=\gamma U^j_{t-1}+\sum_{i=-n}^{n} J_{ji}^{\olivier{N}}
f(U^i_{t-1})+\theta_j+B^j_{t-1}, \quad j=-n,\ldots,n \quad t=1,\ldots,T.
\end{equation}
This equation involves the parameters $\gamma$, $J_{ij}^{\olivier{N}}$, $\theta_j$, and $B^j_t$,
$i,\,j=-n,\ldots,n$, $t=0,\ldots,T-1$. \olivier{The initial conditions are discussed at the beginning of section \ref{subsection:laws-uncoupled-coupled}.}

$\gamma$ is a positive real between 0 and 1 that determines the time
scale of the intrinsic dynamics, i.e. without interactions, of the
neurons. If $\gamma=0$ the dynamics is said to have no leak.

The $\theta_j$s are the thresholds: they change the value of the
potential of the neuron $j$ at which the sigmoid $f$ takes the value
$1/2$. Like the $J_{ij}^{\olivier{N}}$s they are {\em random variables} that we
assume to be i.i.d. as $\mathcal{N}_1(\bar{\theta},\,\theta^2)$, and
independent of the $J_{ij}$s\footnote{We note $\mN_p(m,\Sigma)$ the law of the $p$-dimensional
Gaussian variable with mean $m$ and covariance matrix $\Sigma$.}. \olivier{We note $\Theta$ the $N$-dimensional random vector of the thresholds:
\[
\Theta=\,^t(\theta_{-n},\cdots,\theta_n)
\]
}

The $B^j_t$s represent random fluctuations of the membrane
potential of neuron $j$. They are independent {\em random
  processes} with the same law. We assume that at each time instant
$t$, the $B^j_t$s are i.i.d. random variables distributed as
$\mathcal{N}_1(0,\sigma^2)$. They are also independent of the synaptic
weights and the thresholds.

The $J_{ij}^{\olivier{N}}$s are the synaptic weights. $J_{ij}^{\olivier{N}}$ represents the
strength with which the `presynaptic' neuron $j$ influences the
`postsynaptic' neuron $i$. They are {\em Gaussian random
variables} \olivier{whose mean is given by
\[
\Exp[J^N_{ij}]=\frac{\bar{J}}{N}
\]
}
We note $\J^{\olivier{N}}$ the $N \times N$ matrix of the synaptic weights:
\[
\J^N=(J_{ij}^{\olivier{N}})_{i,j=-n,\cdots,n}.
\]
Their covariance is assumed to satisfy the following shift invariance property,
\[
cov(J_{ij}^{\olivier{N}}J_{kl}^{\olivier{N}})=cov(J_{i+m,j+n}^{\olivier{N}}J_{k+m,l+n}^{\olivier{N}})
\]
for all indexes
$i,\,j,\,k,\,l$ $=-n,\cdots,n$ and all integers $m$ and $n$,
the indexes being taken modulo $N$. Here, and throughout this paper, $i \text{ mod }N$ is taken to lie between $-n$ and $n$. 

We stipulate the covariances through a covariance function $\Lambda: \Z^2 \to \R$ and assume that they scale as $1/N$. We write
\begin{equation}\label{eq:cov}
cov(J_{ij}^{\olivier{N}}J_{kl}^{\olivier{N}}) =
   \frac{1}{N}\Lambda\left((k-i)\text{ mod }N,(l-j)\text{ mod }N\right). 
\end{equation}

The function $\Lambda$ is even:
\begin{equation}\label{eq:Lambdasymmetries}
\Lambda(- k,- l)=\Lambda(k,l),
\end{equation}
corresponding to the simultaneous exchange of the two presynaptic and postsynaptic neurons ($cov(J_{ij}^{\olivier{N}}J_{kl}^{\olivier{N}})=cov(J_{kl}^{\olivier{N}}J_{ij}^{\olivier{N}})$!).
It is important to note that the covariance function $\Lambda$ and mean $\bar{J}$ are independent of $N$, so that these remain fixed when we asymptote $N$ to infinity later on.  

We must make further assumptions on $\Lambda$ to ensure that the system is well-behaved as the number of neurons $N$ asymptotes to infinity. We assume that the series $(\Lambda(k,l))_{k,\,l \in \Z})$ is
absolutely convergent, i.e.
\begin{equation}
\Lambda^{sum} = \sum_{k,l=-\infty}^\infty \left| \Lambda(k,l) \right| < \infty,\label{eq:lambdasumassumption}
\end{equation}
and furthermore that
\begin{equation}\label{eq:Lambdamin}
\Lambda^{\rm min}=\sum_{k,l=-\infty}^\infty \Lambda(k,l) > 0
\end{equation}
In practice one might expect there to exist a maximal correlation distance $d$ such that $\Lambda(k,l) = 0$ if $|k|+|l| > d$ (especially since in practice there is only a finite number of neurons). The existence of such a maximal correlation distance would be sufficient to guarantee the requirement \eqref{eq:lambdasumassumption}, however we refrain from explicitly making this assumption as it is not necessary \textit{per se}.

We let $\Lambda^N$ be the restriction of $\Lambda$ to $[-n,n]^2$, i.e. $\Lambda^N(i,j) = \Lambda(i,j)$ for $-n\leq i,j\leq n$. 

We next introduce the spectral properties of $\Lambda$ that are crucial for  the results in this paper. 
We use the notation that if $x$ is some quantity, $\tilde{x}$ represents its Fourier transform in a sense that depends on the particular space where $x$ is defined. For example $\tilde{\Lambda}$ is the $2\pi$ doubly periodic Fourier transform of the function $\Lambda$ whose properties are described in the next proposition. \olivier{Similarly, $\tilde{\Lambda}^N$ is the two-dimensional Discrete Fourier Transform (DFT) of the doubly periodic sequence $\Lambda^N$.}
\begin{proposition}\label{prop:lambdabehaviour}
The sum $\tilde{\Lambda}(\omega_1,\omega_2)$ of the absolutely convergent series \\
$(\Lambda(k,l) e^{-i(k \omega_1+l \omega_2)})_{k,\,l \in \Z}$ is continuous on $[-\pi,\,\pi[^2$ and positive. 
\olivier{The covariance function $\Lambda$ is recovered from the inverse Fourier transform of $\tilde{\Lambda}$:
\[
\Lambda(k,l)=\frac{1}{(2\pi)^2} \int_{-\pi}^\pi \int_{-\pi}^\pi	\tilde{\Lambda}(\omega_1,\omega_2) e^{i(k \omega_1+l \omega_2)}\,d\omega_1d\omega_2
\]
}
Moreover there exists $\tilde{\Lambda}^{\text{min}} > 0$ such that
\begin{equation}
\tilde{\Lambda}^N(0,0) \geq \tilde{\Lambda}^{\text{min}} > 0.\label{eq:Lambdabound1},
\end{equation}
for all $N$s sufficiently large.
\end{proposition}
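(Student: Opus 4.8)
The plan is to establish the four assertions separately; all but the positivity of $\tilde\Lambda$ reduce to routine manipulations of absolutely convergent series, so the real content is showing $\tilde\Lambda\ge 0$.

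First I would record continuity and realness: since $\sum_{k,l}|\Lambda(k,l)|<\infty$ by \eqref{eq:lambdasumassumption}, the series $\sum_{k,l}\Lambda(k,l)e^{-i(k\omega_1+l\omega_2)}$ converges uniformly on $[-\pi,\pi[^2$ (Weierstrass $M$-test) and hence defines a continuous function, while pairing the index $(k,l)$ with $(-k,-l)$ and using the evenness \eqref{eq:Lambdasymmetries} makes the $\sin$-contributions cancel, so $\tilde\Lambda(\omega_1,\omega_2)=\sum_{k,l}\Lambda(k,l)\cos(k\omega_1+l\omega_2)$ is real. The inversion formula then follows by substituting this series into $\frac{1}{(2\pi)^2}\int_{-\pi}^\pi\!\int_{-\pi}^\pi\tilde\Lambda(\omega_1,\omega_2)e^{i(k\omega_1+l\omega_2)}\,d\omega_1 d\omega_2$, exchanging the uniformly convergent sum with the integral over the bounded domain, and invoking the orthogonality relations $\frac1{2\pi}\int_{-\pi}^\pi e^{ij\omega}\,d\omega=\delta_{j0}$, which collapse the double sum to $\Lambda(k,l)$.

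For positivity I would exploit the probabilistic origin of $\Lambda$. Fixing $N=2n+1$ and regarding $(J^N_{ij})_{-n\le i,j\le n}$ as a random vector indexed by $(\Z/N\Z)^2$, formula \eqref{eq:cov} together with the modular shift-invariance of the covariances shows that its covariance matrix $C^N$ has entries $C^N_{(i,j),(k,l)}=\frac1N\Lambda^N\big((k-i)\bmod N,\,(l-j)\bmod N\big)$, i.e.\ $C^N$ is block-circulant with circulant blocks, generated by the doubly $N$-periodic sequence $\tfrac1N\Lambda^N$. Such matrices are diagonalised by the two-dimensional Fourier basis of $(\Z/N\Z)^2$, with eigenvalues $\tfrac1N\tilde\Lambda^N(p,q)$, $p,q\in\{-n,\dots,n\}$, where $\tilde\Lambda^N(p,q)=\sum_{k,l=-n}^n\Lambda(k,l)e^{-2\pi i(kp+lq)/N}$; since $C^N$ is a covariance matrix it is positive semi-definite, so $\tilde\Lambda^N(p,q)\ge 0$ for every $p,q$. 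To pass to $\tilde\Lambda$, fix $(\omega_1,\omega_2)\in[-\pi,\pi[^2$, choose $p_N,q_N\in\{-n,\dots,n\}$ with $2\pi p_N/N\to\omega_1$ and $2\pi q_N/N\to\omega_2$, and note that $\sum_{k,l}|\Lambda(k,l)|<\infty$ lets dominated convergence give $\tilde\Lambda^N(p_N,q_N)\to\tilde\Lambda(\omega_1,\omega_2)$, whence $\tilde\Lambda(\omega_1,\omega_2)\ge 0$. (Equivalently, testing positive semi-definiteness of $C^N$ against vectors supported on an arbitrary finite set of indices, for $N$ large enough that there is no modular wrap-around, shows $\Lambda$ is a positive-definite function on $\Z^2$, and \eqref{eq:lambdasumassumption} then identifies $\tilde\Lambda$ with the continuous, and hence non-negative, density of its Herglotz spectral measure.)

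Finally, for \eqref{eq:Lambdabound1} I would simply observe that $\tilde\Lambda^N(0,0)=\sum_{k,l=-n}^n\Lambda(k,l)\to\Lambda^{\rm min}>0$ as $N\to\infty$ by \eqref{eq:lambdasumassumption}--\eqref{eq:Lambdamin}, so $\tilde\Lambda^{\rm min}:=\Lambda^{\rm min}/2>0$ works for all $N$ sufficiently large. The hard part is the positivity step: one must correctly read the modular shift-invariance as a block-circulant structure, identify the eigenvalues with the DFT of $\Lambda^N$, and carry out the $N\to\infty$ limit that upgrades non-negativity from the discrete frequency grid to all of $[-\pi,\pi[^2$; everything else is bookkeeping with absolutely convergent series.
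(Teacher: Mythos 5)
Your proof is correct and takes essentially the same route as the paper: continuity and the inversion formula from absolute convergence, nonnegativity of $\tilde{\Lambda}^N$ by reading the shift-invariant covariance $\eqref{eq:cov}$ as a (doubly) circulant positive semi-definite matrix diagonalised by the DFT, then passing to the limit along $2\pi p_N/N\to\omega_1$, $2\pi q_N/N\to\omega_2$, and finally $\tilde{\Lambda}^N(0,0)\to\Lambda^{\rm min}>0$ for \eqref{eq:Lambdabound1}. You are in fact slightly more explicit than the paper in identifying the $N^2\times N^2$ block-circulant-with-circulant-blocks structure (the paper cites Lemma~\ref{lem:preserveeigenvalues}, which is stated for the simply block-circulant case), but this is a bookkeeping refinement rather than a different argument.
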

\begin{proof}
The existence and continuity of $\tilde{\Lambda}(\omega_1,\omega_2)$ are a consequence of the absolute convergence of the series $(\Lambda(k,l))_{k,\,l \in \Z})$ \olivier{as is the inverse Fourier relation between $\Lambda$ and $\tilde{\Lambda}$.}
Being a covariance function (up to the scale factor $1/N$)
$\Lambda^N(i,j)$ must be a positive function, i.e.
\[
\tilde{\Lambda}^N(p,q) \geq 0 \ \forall p,\,q =-n,\cdots,n,
\]
where $\tilde{\Lambda}^N$ is the discrete Fourier transform (DFT) of the sequence $(\Lambda^N(k,l))_{k,\,l=-n,\cdots,n}$ (since the eigenvalues of $\Lambda^N$ are given by its DFT, see lemma \ref{lem:preserveeigenvalues}). That is,
\begin{equation}\label{eq:tildeLambdapq}
\tilde{\Lambda}^N(p,q)=\sum_{k,\,l=-n}^n \Lambda^\olivier{N}(k,l) e^{-\frac{2
    \pi i}{N} (pk+ql)}\quad p,\,q=-n,\cdots,n,
\end{equation}
with the inverse relation
\[
\Lambda^N(k,l)=\frac{1}{N^2}\sum_{p,\,q=-n}^n \tilde{\Lambda}^N(p,q) e^{\frac{2
    \pi i}{N} (pk+ql)}
\]
It is clear from the definitions that, if the series of integers $(p_N)$ and $(q_N)$ satisfy $\lim_{N \to \infty} 2\pi p_{N} / N =\omega_1$ and $\lim_{N \to \infty} 2\pi q_{N} / N = \omega_2$, then
\[
\tilde{\Lambda}^N(p_N,q_N)\to\tilde{\Lambda}(\omega_1,\omega_2),
\]
and the positivity of $\tilde{\Lambda}$ follows from that of $\tilde{\Lambda}^N$ for all $N$s.

The existence of $\tilde{\Lambda}^{\text{min}}$ and the relation \eqref{eq:Lambdabound1} follow from \eqref{eq:Lambdamin} and the convergence of $\tilde{\Lambda}^N(0,0)$ to $\tilde{\Lambda}(0,0)=\Lambda^{\rm min}$.
\end{proof}

\subsection{The laws of the uncoupled and coupled processes}
\label{subsection:laws-uncoupled-coupled}

\subsubsection{Preliminaries}

We define $\T$ (resp. $\T_T$) to be the set $\R^{[0\cdots T]}$ (resp. $\R^{[1\cdots T]}$) of finite sequences $(u_t)_{t=0,\cdots,T}$ (resp. $(u_t)_{t=1,\cdots,T}$) of length $T+1$ (resp. $T$) of real numbers.
$\T^N$ is the set of sequences $(u^{-n},\cdots,u^n)$ ($N=2n+1$) of elements of $\T$.

Similarly we note $\T^\Z$ the set of doubly infinite sequences of elements of $\T$. If $u$ is in $\T^\Z$ we note $u^i$ its ith coordinate, an element of $\T$. Hence $u=(u^i)_{i=-\infty \cdots \infty}$.

\olivier{
The shift operator
$S: \T^\Z \to \T^\Z$ is defined by
\[
(Su)^i=u^{i+1}, \quad i \in \Z.
\]
Given the element $u=(u^{-n},\ldots,u^{n})$ of $\T^N$ we form the doubly
infinite periodic sequence
\[
u(N)=(\ldots,u^{n-1},u^{n},\,u^{-n},\ldots,\,u^{n},\,u^{-n},\,u^{-n+1},\ldots)
\]
which is an element of $\T^\Z$. We have
$(u(N))^i=u^{(i \mod N)}$. 
}

We define the projection $\pi_N: \T^\Z\rightarrow\T^N$ ($N=2n+1$) to be $\pi_N(u) = (u^{-n},\ldots,u^n)$. The $N$-dimensional marginal $\mu^N$ of a measure $\mu$ in $\mmeas(\T^\Z)$ is such that $\mu^N=\mu \circ \pi_N^{-1}$.

\olivier{
We equip $\T^\Z$ with the projective topology, i.e. the topology generated by the following metric. For $u,\, v \in \T^{N}$, we define their distance $d_N(u,v)$ to be
\[
d_N(u,v) = \sup_{|j|\leq n,0\leq s\leq T}\left|u^j_s-v^j_s\right|.
\]
This allows us to define the following metric over $\T^{\Z}$, whereby if $u,v\in\T^\Z$, then
\begin{equation}
d(u,v)=\sum_{N=1}^\infty 2^{-N}(d_N(\pi_N u,\pi_N v) \wedge 1).\label{eq:Wassersteinearlier}
\end{equation}
Equiped with this topology, $\T^Z$ is Polish (a complete, separable metric space).
}

\olivier{
The metrics $d_N$ and $d$ generate, respectively, the Borelian $\sigma$-algebras $\B(\T^N)$ and $\B(\T^{\Z})$. These Borelian $\sigma$-algebras, also noted $\F_{T}^N$ and $\F_{T}$, respectively, are generated by the coordinate functions $(u^i_t)_{i=-n \cdots n,\,t=0 \cdots T}$ and $(u^i_t)_{i \in \Z,\,t=0 \cdots T}$, respectively. 
We require later on the $\sigma$-algebras $\F_t$, $t=0 \cdots T$ generated by the coordinate functions $(u^i_s)_{i \in \Z,\,s=0 \cdots t}$, and $\F_{1,t}$, $t=1 \cdots T$ generated by the coordinate functions $(u^i_s)_{i \in \Z,\,s=1 \cdots t}$. Similar definitions apply to $\F_t^N$ and $\F_{1,t}^N$.
}

\olivier{
We note $\M_1^+(\T^\Z)$  (resp. $\M_1^+(\T^N)$) the set of probability measures on $(\T^\Z,\F_T)$ (resp. $(\T^N,\F^N_T)$).
The marginal  $\mu_t$, $t=0 \cdots T$  (respectively $\mu_{1,t}$, $t=1 \cdots T$) of a probability  measure  $\mu \in \M_1^+(\T^\Z)$ with respect to the variables  $(u^i_r)_{i \in \Z,\,r=0,\cdots,t}$ (respectively $(u^i_r)_{i \in \Z,\,r=1,\cdots,t}$) is the restriction of  $\mu$ to  $\F_t$ (respectively to $\F_{1,t}$). Similar definitions apply to 
$\mu_t^N$ and $\mu_{1,t}^N$ .
}

A strictly stationary measure $\mu$ on $(\T^\Z,\F_T)$ satisfies
\[
\mu(S(B))=\mu(B) \quad \forall B \in \B(\T^{\Z}).
\]
We note $\mmeas(\T^\Z)$ the set of strictly stationary probability measures on $\T^\Z$.

\olivier{
We now introduce the following empirical measure. Given an
element $(u^{-n},\ldots,u^{n})$ in $\T^N$ we associate with it the
measure, noted $\hat{\mu}^N(u^{-n},\ldots,u^{n})$, in
$\mathcal{M}^+_{1,s}(\mathcal{T}^\Z)$ defined by
\begin{equation}\label{eq:hatmuN}
\hat{\mu}^N: \T^N \to \mathcal{M}^+_{1,s}(\mathcal{T}^\Z) \quad \text{such
  that} \quad d\hat{\mu}^N(u^{-n},\cdots,u^{n})(y)=\frac{1}{N} \sum_{i=-n}^{n}
\delta_{S^i u(N)}(y).
\end{equation}
}

We next equip $\meas(\T^\Z)$ with the topology of weak convergence, as follows. This can be defined in many ways, but the following definition is the most convenient for our paper. For $\mu^N,\nu^N \in \mM_{1}^+(\T^N)$, we note the Wasserstein distance induced by the metric $k_f d_N(u,v) \wedge 1$,
\begin{equation}
D_N(\mu^N,\nu^N) = \inf_{\mathcal{L}\in\mathcal{J}} \left\lbrace E^{\mathcal{L}}(k_f d_N(u,v) \wedge 1) \right\rbrace,\label{eq:Wasserstein}
\end{equation}
where $k_f$ is a positive constant defined at the start of section \ref{sect:model} and $\mathcal{J}$ is the set of all measures in
$\mathcal{M}_1^+(\T^{N} \times \T^N)$  with $N$-dimensional marginals $\mu^N$ and $\nu^N$. For $\mu,\nu\in\mM_{1}^+(\T^Z)$, we define\begin{equation}
D(\mu,\nu) = 2\sum_{n=0}^\infty \kappa_n D_N(\mu^N,\nu^N),\label{eq:metricdefn}
\end{equation}
where $N = 2n+1$. Here $\kappa_n = \text{max}(\lambda_n,2^{-N})$ and $\lambda_n = \sum_{k=-\infty}^{\infty} |\Lambda(k,n)|$. We note that this metric is well-defined because $D_N(\mu^N,\nu^N) \leq 1$ and \newline$\sum_{n=0}^\infty\kappa_n < \infty$. It can be shown that $\mM_1^+(\T^\Z)$ is Polish. The topology corresponding to this metric generates a Borelian $\sigma$-algebra which we denote by $\mathcal{B}(\mathcal{M}_1^+(\mathcal{T}^{\Z}))$. The Borelian $\sigma$-algebra on the set of stationary probability measures is denoted by $\mathcal{B}(\mathcal{M}_{1,s}^+(\mathcal{T}^{\Z}))$.

\subsubsection{Coupled and uncoupled processes}
We specify the initial conditions for \eqref{eq:U} as $N$ i.i.d. random
variables \newline$(U^j_0)_{j=-n,\cdots,n}$. Let $\mu_I$ be the individual
law on $\R$ of $U^j_0$; it follows that the joint law of the variables is
$\mu_I^{\otimes N}$ on $\R^N$. 
We note $P$ the law of the solution to one of the
uncoupled equations \eqref{eq:U} where we take $\theta_j$
deterministic and equal to $\bar{\theta}$ and $J_{ij}^{\olivier{N}}=0$,
$i,\,j=-n,\cdots,n$. $P$ is the law of the solution to the following
stochastic difference equation:
\begin{equation}\label{eq:Uuncoupled}
U_{t}=\gamma U_{t-1}+\bar{\theta}+B_{t-1},\,t=1,\cdots,T
\end{equation}
the law of the initial condition being $\mu_{\,I}$. This process
can be characterized exactly, as follows.

Let $\Psi:\T \rightarrow \T$ be the continuous bijection
\begin{equation}
\olivier{u=\,^t(u_0,\cdots,u_T) \to} \Psi(u) = {}^t(v_0,v_1,\ldots,v_T),\label{eq:Psidefn}
\end{equation}
where $v_0 = u_0$ and for $1\leq s\leq T$,
\begin{equation}\label{eq:Psi}
v_s  =  \Psi_s(u)=u_s - \gamma u_{s-1}-\bar{\theta}\quad s=1,\cdots, T.
\end{equation}
The following proposition is evident from equations  \eqref{eq:Uuncoupled} and \eqref{eq:Psi}.
\begin{proposition}\label{prop:psi}
The law $P$ of the solution to \eqref{eq:Uuncoupled} writes
\[
P=(\mN_{T}(0_{T},\sigma^2{\rm Id}_{T}) \otimes \mu_I) \circ \Psi ,
\]
where $0_{T}$ is the $T$-dimensional vector of coordinates equal
to 0 and ${\rm Id}_{T}$ is the $T$-dimensional identity matrix.
\end{proposition}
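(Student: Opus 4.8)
The plan is to observe that $\Psi$ is exactly the map which, applied to a solution of \eqref{eq:Uuncoupled}, strips off the deterministic recursion and leaves the initial datum together with the driving noise. First I would record that $\Psi: \T \to \T$ is a bijection whose inverse is also continuous: given $v = {}^t(v_0,\ldots,v_T)$, the unique preimage $u = \Psi^{-1}(v)$ is built recursively by $u_0 = v_0$ and $u_s = \gamma u_{s-1} + \bar{\theta} + v_s$ for $s = 1,\ldots,T$, and both $\Psi$ and $\Psi^{-1}$ are affine in the coordinates, hence continuous and in particular Borel measurable --- which is all that is needed to transport measures along them.

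Next I would take $U = {}^t(U_0,\ldots,U_T)$ to be the solution of \eqref{eq:Uuncoupled} driven by $(B_t)_{t=0,\ldots,T-1}$ i.i.d. $\mN_1(0,\sigma^2)$ and started from $U_0 \sim \mu_I$ independent of the $B_t$. By the very definition of $\Psi$ in \eqref{eq:Psi} and the recursion \eqref{eq:Uuncoupled}, one has $\Psi_0(U) = U_0$ and $\Psi_s(U) = U_s - \gamma U_{s-1} - \bar{\theta} = B_{s-1}$ for $1 \le s \le T$, so that $\Psi(U) = {}^t(U_0, B_0, \ldots, B_{T-1})$. By independence of $U_0$ and the $B_t$ and the fact that the $B_t$ are i.i.d. centred Gaussian of variance $\sigma^2$, the law of $\Psi(U)$ is precisely the product measure $\mN_T(0_T, \sigma^2 {\rm Id}_T) \otimes \mu_I$ on $\T = \R^{[0\cdots T]}$, with the convention that the factor $\mu_I$ governs the $0$-th coordinate and $\mN_T(0_T, \sigma^2 {\rm Id}_T)$ the coordinates $1,\ldots,T$.

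Finally I would conclude by writing $U = \Psi^{-1}(\Psi(U))$: since $\Psi(U)$ has the law just identified, the law $P$ of $U$ is the image of $\mN_T(0_T, \sigma^2 {\rm Id}_T) \otimes \mu_I$ under $\Psi^{-1}$, which in the paper's pushforward notation (where the image of $\mu$ under a map $g$ is written $\mu \circ g^{-1}$) is $\big(\mN_T(0_T, \sigma^2 {\rm Id}_T) \otimes \mu_I\big) \circ \Psi$ --- the asserted identity. There is no real obstacle in this argument; the only points deserving a line of care are the verification that $\Psi$ is genuinely invertible, so that $\Psi^{-1}$ exists and is measurable, and the bookkeeping of which coordinate of $\T$ carries the initial law $\mu_I$ as against the Gaussian noise, together with the observation that composing with $\Psi$ (and not with $\Psi^{-1}$) is what realizes the pushforward under the inverse map.
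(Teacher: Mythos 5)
Your argument is correct and is precisely the natural spelling‑out of what the paper leaves implicit: the paper offers no separate proof, simply declaring the proposition ``evident from equations \eqref{eq:Uuncoupled} and \eqref{eq:Psi}.'' You identify the right mechanism --- that $\Psi$ applied to the solution recovers $(U_0,B_0,\dots,B_{T-1})$, whose law is exactly $\mN_T(0_T,\sigma^2\,{\rm Id}_T)\otimes\mu_I$, and then transport back through $\Psi^{-1}$ --- and you are right to flag the measurability of $\Psi^{\pm1}$ and the slightly nonstandard coordinate bookkeeping in the tensor product, which is the paper's notational convention rather than any gap in the mathematics.
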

We later employ the convention that if $u = (u^{-n},\ldots,u^n) \in \T^N$  then $\Psi(u) = (\Psi(u^{-n}),\ldots,\Psi(u^n))$. \olivier{A similar convention applies if $u \in \T^\Z$. We also use the notation $\Psi_{1,T}$ for the mapping $\T \to \T^T$ such that $\Psi_{1,T,s}(u)=\Psi_s(u)$, $s=1\cdots T$.}

We reintroduce the coupling between the neurons, 
we note $Q^N(\J,\Theta)$ the \olivier{element of $\M_1^+(\T^N)$ which is the law of the solution to \eqref{eq:U} conditioned on $(\J,\Theta)$. We let $Q^N = \mathbb{E}^{\J,\Theta}[Q^N(\J,\Theta)]$ be the law averaged with respect to the weights and thresholds.} 

We define $\check{\M}_{1}^+(\T^N)\subset \M_{1}^+(\T^N)$ such that all $\mu^N \in \check{\M}_{1}^+(\T^N)$ have the following property. If $(v^{-n},\ldots,v^n)$ are random variables governed by $\mu^N$, then for all $|m| \leq n$, $(v^{m-n},\ldots,v^{m+n})$ has the same law as $(v^{-n},\ldots,v^n)$ (recall that the indexing is taken modulo $N$). We may thus infer that
\begin{lemma}\label{lemma:PQdag}
$P^{\otimes N}$, $Q^N$ and $(\hat{\mu}^N)^N$ (the $N^{th}$ marginal of $\hat{\mu}^N$) are in $\check{\M}_{1}^+(\T^N)$.
\end{lemma}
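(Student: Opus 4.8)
The plan is to verify the defining "rotational" (cyclic‑shift) invariance property of $\check{\M}_1^+(\T^N)$ for each of the three measures separately, exploiting in each case the manifest symmetry of its construction under permutation of the neuron indices modulo $N$. Recall that $\mu^N\in\check{\M}_1^+(\T^N)$ precisely when, for every $|m|\le n$, the block $(v^{m-n},\ldots,v^{m+n})$ (indices mod $N$) has the same law under $\mu^N$ as $(v^{-n},\ldots,v^n)$; equivalently, $\mu^N$ is invariant under the map $\rho_m:\T^N\to\T^N$ that cyclically rotates coordinates by $m$. So in each case I will produce the rotation, or a random object whose law is visibly rotation‑invariant, and push forward.

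First, for $P^{\otimes N}$: since the $N$ copies are i.i.d.\ with common law $P$, $P^{\otimes N}$ is invariant under the full symmetric group $\mathfrak{S}_N$ acting by coordinate permutation, in particular under every cyclic rotation $\rho_m$, so $P^{\otimes N}\in\check{\M}_1^+(\T^N)$. Second, for $(\hat\mu^N)^N$: by \eqref{eq:hatmuN} and the definition of $\pi_N$, for $u\in\T^N$ we have $d(\hat\mu^N(u))^N(y)=\frac1N\sum_{i=-n}^n\delta_{\pi_N(S^i u(N))}(y)$, and since $u(N)$ is the $N$‑periodic extension of $u$, the $N$ points $\{\pi_N(S^i u(N)):i=-n,\ldots,n\}$ are exactly the $N$ cyclic rotations of $u$. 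The uniform measure on an orbit of the cyclic group is by construction invariant under that group, hence $(\hat\mu^N(u))^N$ is invariant under every $\rho_m$; averaging over $u$ (i.e.\ integrating against the law of $u$) preserves this, so $(\hat\mu^N)^N\in\check{\M}_1^+(\T^N)$.

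Third, and this is the only step requiring real work, $Q^N\in\check{\M}_1^+(\T^N)$. Here the idea is that $Q^N$ is obtained by averaging $Q^N(\J,\Theta)$ over $(\J,\Theta)$, and the law of $(\J,\Theta)$ is itself invariant under the simultaneous cyclic relabelling of neurons: the thresholds $\theta_j$ are i.i.d., so $\Theta$ is exchangeable, and the synaptic weights $J^N_{ij}$ are jointly Gaussian with mean $\bar J/N$ (independent of $i,j$) and covariance \eqref{eq:cov} which, by the shift‑invariance hypothesis $\mathrm{cov}(J^N_{ij}J^N_{kl})=\mathrm{cov}(J^N_{i+m,j+n}J^N_{k+m,l+n})$ (indices mod $N$), is invariant under $J^N_{ij}\mapsto J^N_{i+m,j+m}$. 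Thus $(\J,\Theta)$ and its rotate $(\J^{(m)},\Theta^{(m)})$, with $J^{(m)}_{ij}=J^N_{i+m,j+m}$ and $\theta^{(m)}_j=\theta_{j+m}$, have the same law. On the other hand, inspecting \eqref{eq:U}, one checks the equivariance $Q^N(\J^{(m)},\Theta^{(m)}) = Q^N(\J,\Theta)\circ\rho_m^{-1}$: if $(U^j)$ solves \eqref{eq:U} with data $(\J,\Theta,B)$ then $(\tilde U^j):=(U^{j+m})$ solves \eqref{eq:U} with data $(\J^{(m)},\Theta^{(m)},B^{(m)})$ (using that the driving noises $B^j_t$ are i.i.d., hence $B^{(m)}$ has the same law as $B$), and conditioning removes the noise. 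Combining these two facts and integrating:
\begin{equation}
Q^N\circ\rho_m^{-1} = \Exp^{\J,\Theta}\!\left[Q^N(\J,\Theta)\circ\rho_m^{-1}\right] = \Exp^{\J,\Theta}\!\left[Q^N(\J^{(m)},\Theta^{(m)})\right] = \Exp^{\J,\Theta}\!\left[Q^N(\J,\Theta)\right] = Q^N,
\end{equation}
where the third equality uses the rotation‑invariance of the law of $(\J,\Theta)$. Hence $Q^N\in\check{\M}_1^+(\T^N)$, completing the proof.

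The main obstacle is bookkeeping in this last step: one must be careful that the modular indexing in \eqref{eq:U} and \eqref{eq:cov} is consistent under the relabelling $j\mapsto j+m$ (so that the summation $\sum_{i=-n}^n J^N_{ji}f(U^i_{t-1})$ transforms correctly), and that the independence of the $B^j_t$ across $j$ is genuinely what makes $B^{(m)}\stackrel{d}{=}B$; none of this is deep, but it is where an error could creep in.
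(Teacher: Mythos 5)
Your proof is correct and fills in the details that the paper leaves implicit (the paper simply asserts the lemma with ``We may thus infer that'' and gives no explicit proof). The argument you give — cyclic exchangeability of $P^{\otimes N}$, the cyclic-orbit structure of the empirical measure, and, for $Q^N$, the combination of the shift-invariance of the law of $(\J^N,\Theta)$ with the equivariance $Q^N(\J^{N},\Theta)\circ\rho_m^{-1}=Q^N(\J^{(m)},\Theta^{(m)})$ coming from \eqref{eq:U} and the i.i.d.\ assumptions on $B$ and $U_0$ — is exactly the natural route the authors have in mind.
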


Since the application $\Psi$ defined in \eqref{eq:Psidefn} and \eqref{eq:Psi} plays a central role in the sequel we introduce the following definition.
\begin{definition}\label{def:mubarbar}
For each measure $\mu \in \M_1^+(\T^N)$ or $\mmeas(\T^\Z)$ we define $\undt{\mu}$ to be $\mu \circ \Psi^{-1}$. 
\end{definition}
In particular, note that
\begin{equation}\label{eq:Punder}
\undt{P}=\mN_{T}(0_{T},\sigma^2{\rm Id}_{T}) \otimes \mu_I
\end{equation}

Finally we introduce the image laws in terms of which the principal results of this paper are formulated.
\begin{definition}\label{def:PiNQN}\ \\
\begin{enumerate}
\item 
Let $\Pi^N$ be the image law of $Q^N$ through the function
$\hat{\mu}^N: \T^N \to \mM_{1,s}^+(\T^\Z)$ defined by \eqref{eq:hatmuN}.
\item
We similarly define $R^N$ to be the image law of $P^{\otimes N}$ under
$\hat{\mu}^N$.
\end{enumerate}
\end{definition}
That is, $\forall B \in \B(\mathcal{M}_{1,s}^+(\mathcal{T}^\Z))$,
\[
 \quad
\Pi^N(B)=Q^N(\hat{\mu}^{N} \in B) \quad \text{and} \quad
R^N(B)=P^{\otimes N}(\hat{\mu}^{N} \in B).
\]

The principal result of this paper is in the next theorem.
\begin{theorem}\label{theo:LDP}
$\Pi^N$ is governed by a large deviation principle with a good rate
function $H$ (to be defined in definition \ref{defn:H}). That is, if $F$ is a closed set in $\mM_{1,s}^+(\T^\Z)$, then
\begin{equation}
\lsup{N} N^{-1}\log \Pi^N(F) \leq - \inf_{\mu\in F}H(\mu).\label{eq:PiNclosed}
\end{equation}
Conversely, for all open sets $O$ in $\mM_{1,s}^+(\T^\Z)$,
\begin{equation}
\linf{N} N^{-1}\log \Pi^N(O) \geq - \inf_{\mu\in O}H(\mu).\label{eq:PiNopen}
\end{equation}
\end{theorem}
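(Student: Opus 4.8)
The plan is to establish the LDP for $\Pi^N$ by transferring it from the already-understood law $R^N$ via a Varadhan-type change-of-measure argument, and then to verify the hypotheses of the inverse contraction principle / Bryc-type lemma needed to pass from the finite-dimensional marginals to $\mmeas(\T^\Z)$. First I would establish an LDP for $R^N$, the image of the product measure $P^{\otimes N}$ under $\hat\mu^N$. Since under $P^{\otimes N}$ the $N$ neurons are i.i.d.\ with common law $P$, the empirical measure $\hat\mu^N$ of the periodized, shifted trajectories should satisfy a process-level (level-3) Sanov-type LDP on $\mmeas(\T^\Z)$ with good rate function the specific relative entropy $I(\mu\,\|\,P^{\otimes\Z})$; this is the Donsker--Varadhan / Deuschel--Stroock theory of empirical fields, and the Polishness of $\T^\Z$ together with the explicit characterization of $P$ via $\Psi$ (Proposition \ref{prop:psi}) makes the required exponential tightness and the identification of the rate function routine. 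The main work of Sections \ref{sect:average}--\ref{sect:PiN} (as the introduction indicates) is to compute the Radon--Nikodym derivative $\frac{dQ^N}{dP^{\otimes N}}$, and then $\frac{d\Pi^N}{dR^N}$, showing that it has the asymptotic form $\exp\!\big(N\,\Gamma(\mu) + o(N)\big)$ for a functional $\Gamma$ on $\mmeas(\T^\Z)$ built from the Gaussian processes associated with the synaptic weights — this is where the covariance function $\Lambda$, its Fourier transform $\tilde\Lambda$, and the bound $\tilde\Lambda^{\min}>0$ from Proposition \ref{prop:lambdabehaviour} enter, guaranteeing the relevant Gaussian fields are non-degenerate so that $\Gamma$ is well-defined and (upper semi-)continuous.

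Next I would combine these two ingredients. Writing $\Pi^N(B) = \int_B \frac{d\Pi^N}{dR^N}(\mu)\,dR^N(\mu)$, one applies a Laplace-principle argument: on a closed set $F$, $N^{-1}\log\Pi^N(F) \le N^{-1}\log\int_F e^{N\Gamma(\mu)+o(N)}\,dR^N(\mu)$, and by Varadhan's lemma (whose integrability/moment hypothesis must be checked, using the sub-Gaussian tails coming from the Gaussian $B^j_t$, $\theta_j$, $J_{ij}^N$) this is bounded above by $-\inf_{\mu\in F}\big(I(\mu\,\|\,P^{\otimes\Z}) - \Gamma(\mu)\big)$; the matching lower bound on open sets comes the same way. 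One then \emph{defines} $H(\mu) = I(\mu\,\|\,P^{\otimes\Z}) - \Gamma(\mu)$ (this will be Definition \ref{defn:H}), and the remaining structural claims — that $H \ge 0$, that $H$ is a good rate function (lower semi-continuous with compact sublevel sets), and exponential tightness of $\Pi^N$ — follow by combining goodness of $I$, the growth/continuity properties of $\Gamma$, and the tightness already established for $R^N$.

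There is one genuinely delicate point beyond the bookkeeping: $\hat\mu^N$ takes values in the space of \emph{stationary} measures, and the functional $\Gamma$ is a priori only defined or continuous on the empirical measures themselves, not on all of $\mmeas(\T^\Z)$; Section \ref{sect:PiN} is devoted precisely to extending the Gaussian-process construction to an arbitrary stationary $\mu$ and showing the resulting $\frac{d\Pi^N}{dR^N}$ is a bona fide continuous (or at least suitably semicontinuous and bounded) function on $\mmeas(\T^\Z)$ so that Varadhan's lemma applies globally. Concretely, one must show that the quadratic forms in the $J$-variables, after conditioning and integrating, converge as $N\to\infty$ uniformly enough — controlled by $\Lambda^{\mathrm{sum}}<\infty$ from \eqref{eq:lambdasumassumption} and by the metric \eqref{eq:metricdefn}, whose weights $\kappa_n$ were chosen to dominate the $\Lambda$-tails for exactly this reason — and that the limit depends on $\mu$ only through its marginals in a continuous way.

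\medskip
\noindent\textbf{Where the difficulty lies.} The hardest step is the uniform-in-$N$ control of the Radon--Nikodym derivative: expressing $\frac{dQ^N}{dP^{\otimes N}}$ requires integrating out the correlated Gaussian matrix $\J^N$, which produces a Gaussian density whose covariance operator is a large structured (block-circulant-type, via the shift-invariance \eqref{eq:cov}) matrix, and one must show its log-determinant and the associated quadratic form both converge, after normalization by $N$, to the finite functional $\Gamma(\mu)$ — uniformly over the relevant class of $\mu$'s and with the error genuinely $o(N)$ rather than $O(N)$. Diagonalizing via the DFT (Lemma \ref{lem:preserveeigenvalues}, using $\tilde\Lambda^N(0,0)\ge\tilde\Lambda^{\min}>0$ to keep eigenvalues bounded away from zero) is the natural tool, but reconciling the finite-$N$ DFT with the continuum Fourier transform $\tilde\Lambda$, and handling the non-diagonal coupling between the $\Psi$-transformed trajectory variables and the spectral variables, is the technical crux of the paper.
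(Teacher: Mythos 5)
Your high-level decomposition matches the paper's: establish the Donsker--Varadhan level-3 LDP for $R^N$, compute the Radon--Nikodym derivative $d\Pi^N/dR^N = \exp(N\Gamma(\mu^N))$ (Proposition \ref{prop:radon-nikodym}, where in fact the derivative is exact, with no $o(N)$ error --- the $o(N)$ only enters via $\Gamma(\mu^N)\to\Gamma(\mu)$), use the moment bound of Lemma \ref{lemma:expNc} for exponential tightness, and identify $H = I^{(3)} - \Gamma$. The lower bound on open sets (Lemma \ref{lemma:lbopen}) is obtained exactly as you sketch, using lower semicontinuity of $\Gamma$.

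There is, however, a genuine gap in the step where you invoke Varadhan's lemma for the upper bound. The upper-bound half of Varadhan's integral lemma requires the functional $\Gamma$ to be \emph{upper} semicontinuous (so that $\lim_{\delta\to 0}\sup_{V_\delta(\mu)}\Gamma \le \Gamma(\mu)$). But $\Gamma = \Gamma_1 + \Gamma_2$ is only \emph{lower} semicontinuous: $\Gamma_1$ is continuous (Proposition \ref{prop:Gamma1b}), while $\Gamma_2$ is merely lower semicontinuous (Proposition \ref{prop:Gamma2lsc}), because it is an integral of an unbounded (merely lower-bounded) quadratic form $\phi^N(\mu,v)$ against $\undt{\mu}^N_{1,T}$, and such integrals do not behave upper-semicontinuously under weak convergence. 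Your hedge --- ``continuous (or at least suitably semicontinuous and bounded)'' --- does not resolve this, because the semicontinuity you can actually prove points the wrong way for the upper bound, and $\Gamma$ is not bounded above. This is precisely the obstruction that forces the paper to abandon a direct Varadhan argument in Section \ref{section:upperboundcompact}: instead, for each reference measure $\nu$ one linearizes $\Gamma$ to a functional $\Gamma^\nu$ that is affine in $\mu$, proves an exact Gaussian LDP for the corresponding stationary measure $Q^\nu$ (Lemma \ref{lemma:PinuNLDP}), covers a compact set $K$ by finitely many neighbourhoods $V_\varepsilon(\nu_i)$, and controls $|\Gamma-\Gamma^{\nu_i}|$ on each piece via H\"older's inequality and Lemma \ref{lemma:Vsquared}, with an error term $C_0(C^{\nu_i}_N+\varepsilon)(1+\Exp^{\undt\mu}\|v^0\|^2)$ that must itself be re-absorbed using Lemma \ref{lem:acbound}. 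This compact-cover-plus-linearization construction, together with the exponential tightness you correctly identify, is what replaces the unavailable Varadhan upper bound; without it, your argument does not close.
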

By `good rate function', we mean that $H$ is not identically $\infty$ and the sub-level sets
\[
\lbrace \mu\in \mM_{1,s}^+(\T^\Z): H(\mu) \leq c\rbrace,
\]
where $c\geq 0$, are compact.

Our proof of theorem \ref{theo:LDP} will occur in several steps. We prove in sections \ref{sect:lowerboundopensets} and \ref{section:upperboundcompact} that $\Pi^N$ satisfies a weak LDP, i.e. that it satisfies \eqref{eq:PiNclosed} when $F$ is compact and \eqref{eq:PiNopen} for all open $O$. We also prove in section \ref{sect:exponentialtightness} that $\lbrace \Pi^N\rbrace$ is exponentially tight, and we prove in section \ref{sect:Hgoodratefunction} that $H$ is a good rate function. It directly follows from these results that $\Pi^N$ satisfies a strong LDP with good rate function $H$ \cite{dembo-zeitouni:97}. Finally, in section \ref{sect:Hminimum} we prove that $H$ has a unique minimum which $\hat{\mu}^N$ converges to weakly as $N\rightarrow\infty$.
\section{The good rate function}\label{sect:PiN}
In the sections to follow we will obtain an LDP for the process with
correlations ($\Pi^N$) via the (simpler) process without correlations
($R^{N}$). However to do this we require an expression for the Radon-Nikodym derivative of $\Pi^N$ with respect to $R^{N}$, which is the main result of this section. The derivative will be expressed in terms of a function $\Gamma: \mathcal{M}_{1,s}^+(\T^{N})\to \mathbb{R}$. We will firstly define $\Gamma(\mu^N)$, demonstrating that it may be expressed in terms of a Gaussian process $G^{\mu^N}$ (to be defined below), and then use this to determine the Radon-Nikodym derivative of $\Pi^N$ with respect to $R^{N}$.
\begin{definition}\label{defn:DFT}
\olivier{
Let $p$ be a positive integer.
For $v=(v^j)_{j=-n\cdots n} \in (\R^p)^N$, we note  $\mathcal{H}^N_p(v) = v_\dag=(v_\dag^{-n},\ldots,v_\dag^n) \in (\R^p)^N$, where $v_\dag$ is defined from the Discrete Fourier Transform $\tilde{v}=(\tilde{v}^{-n},\cdots,\tilde{v}^n)$ of $v$ as follows
\begin{equation}\label{eq:DFTdefn}
\tilde{v}^k = \sum_{j=-n}^n v^j \exp\left(-\frac{2\pi ijk}{N}\right). 
\end{equation}
The inverse transform is given by $v^j = \frac{1}{N}\sum_{k=-n}^n \tilde{v}^k\exp\left(\frac{2\pi ijk}{N}\right)$.
}

\olivier{
Because $v$ is in $(\R^p)^N$ the real part of its DFT is even ($\Re(\tilde{v}^{-k})=\Re(\tilde{v}^{k})$, $k=-n,\cdots,n$) and similarly its imaginary part is odd. As a consequence we define
\begin{equation}\label{eq:wtildev}
v_\dag^k=\left\{
\begin{array}{ll}
\sqrt{2}\Im(\tilde{v}^k) & k=-n,\cdots,-1\\
\sqrt{2}\Re(\tilde{v}^k) & k=0,\cdots,n
\end{array}
\right.
\end{equation}
It is easily verified that the mapping $v \to v_\dag=\mathcal{H}^N_p(v) $ is a bijection from $(\R^p)^N$ to itself and that
\[
\sum_{k=-n}^n \| v^k_\dag\|^2=\sum_{k=-n}^n \,^t \tilde{v}^{k\,*} \tilde{v}^{k}=N \sum_{k=-n}^n \| v^k \|^2
\]
}

\olivier{
For a probability measure $\mu\in\mathcal{M}_{1}^+((\R^p)^{N})$, we define $\mu_\dag= \mu\circ(\mathcal{H}^N_p)^{-1}$ to be the image law.
}
\end{definition}
\olivier{
For simplicity we note  $\undt{\mu}_\dag$ the measure $\undt{\mu}_{1,T} \circ(\mathcal{H}^N_T)^{-1}$ (where $\undt{\mu}$ is given in definition \ref{def:mubarbar}). We note that
\begin{equation}\label{eq:tildePdistn}
\undt{P}_\dag \simeq \mathcal{N}_T\left(0_T,N\sigma^2 {\rm Id}_T\right).
\end{equation} 
}

The following lemma from Gaussian calculus \cite{moynot:99,neveu:68} which we recall
for completeness is used several times in the sequel:
\begin{lemma}\label{lemma:gauss}
Let $Z$ be a Gaussian vector of $\R^p$ with mean $c$ and covariance
matrix $K$. If $a \in \R^p$ and $b \in \R$ is such that for all
eigenvalues $\alpha$ of $K$ the relation $\alpha b > -1$ holds, we have
\begin{multline*}
\Expt{\exp\left(^taZ-\frac{b}{2} \|Z\|^2\right)}=\\
\frac{1}{\sqrt{{\rm det}\left({\rm Id}_p+bK\right)}}\times \exp\left(^tac-\frac{b}{2}\|c\|^2+\frac{1}{2}\, ^t(a-bc)K \left({\rm Id}_p+bK\right)^{-1}(a-bc)   \right)
\end{multline*}
\end{lemma}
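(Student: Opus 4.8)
The plan is to diagonalise $K$ so that the $p$-dimensional expectation factorises into one-dimensional Gaussian integrals, dispatch the scalar case by completing the square, and then reassemble the pieces into the stated matrix formula. Concretely, I would first write $K = O D\, {}^tO$ with $O$ orthogonal and $D = \mathrm{diag}(\alpha_1,\dots,\alpha_p)$, where the $\alpha_i \geq 0$ are the eigenvalues of $K$ (possible since $K$ is real symmetric positive semidefinite). Setting $Y = {}^tO Z$, the vector $Y$ is Gaussian with mean $m := {}^tO c$ and covariance $D$, so its coordinates $Y_1,\dots,Y_p$ are independent with $Y_i \sim \mathcal{N}_1(m_i,\alpha_i)$. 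Because $O$ is orthogonal, $\norm{Z}^2 = \norm{Y}^2$ and ${}^taZ = {}^t\tilde a\, Y$ with $\tilde a := {}^tO a$, whence
\[
\Expt{\exp\left({}^taZ - \tfrac{b}{2}\norm{Z}^2\right)} = \prod_{i=1}^p \Expt{\exp\left(\tilde a_i Y_i - \tfrac{b}{2} Y_i^2\right)}.
\]

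Next I would prove the scalar identity: for $Y \sim \mathcal{N}_1(m,\alpha)$ with $1 + b\alpha > 0$ — which holds for every eigenvalue thanks to the hypothesis $\alpha b > -1$, and trivially when $\alpha = 0$ —
\[
\Expt{\exp\left(s Y - \tfrac{b}{2}Y^2\right)} = \frac{1}{\sqrt{1+b\alpha}}\,\exp\left(sm - \tfrac{b}{2}m^2 + \frac{\alpha(s-bm)^2}{2(1+b\alpha)}\right).
\]
When $\alpha > 0$ this is a direct computation: insert the Gaussian density, gather the terms in the exponent of the integrand (the coefficient of $-\tfrac{1}{2}y^2$ is $b + \alpha^{-1} = (1+b\alpha)/\alpha > 0$, so the integral converges), complete the square in $y$, evaluate the remaining Gaussian integral, and simplify algebraically. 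When $\alpha = 0$ one has $Y \equiv m$ and both sides reduce to $\exp(sm - \tfrac{b}{2}m^2)$, so the formula persists.

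Finally I would reassemble. The product of normalising constants is $\prod_{i=1}^p (1+b\alpha_i)^{-1/2} = \det(\mathrm{Id}_p + bK)^{-1/2}$, since the $\alpha_i$ are precisely the eigenvalues of $K$. Summing the scalar exponents gives ${}^t\tilde a\, m - \tfrac{b}{2}\norm{m}^2 + \tfrac{1}{2}\,{}^t(\tilde a - bm)\,D(\mathrm{Id}_p + bD)^{-1}(\tilde a - bm)$; using $\tilde a - bm = {}^tO(a - bc)$, the orthogonal invariances ${}^t\tilde a\, m = {}^tac$ and $\norm{m}^2 = \norm{c}^2$, and the identity $O\,D(\mathrm{Id}_p+bD)^{-1}\,{}^tO = K(\mathrm{Id}_p + bK)^{-1}$ (immediate from $K = OD\,{}^tO$), this equals ${}^tac - \tfrac{b}{2}\norm{c}^2 + \tfrac{1}{2}\,{}^t(a-bc)K(\mathrm{Id}_p+bK)^{-1}(a-bc)$, which is the claim.

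I do not expect a genuine obstacle: this is a classical identity of Gaussian calculus and every step is routine. The two points requiring a little care are that $K$ may be singular — which is precisely why I would diagonalise rather than complete the square directly in $\R^p$ with the matrix $K^{-1} + b\,\mathrm{Id}_p$ (a legitimate alternative when $K$ is invertible, then extended to the general case by a continuity/density argument) — and the bookkeeping in the scalar square-completion together with the re-expression of $D(\mathrm{Id}_p+bD)^{-1}$ in terms of $K$.
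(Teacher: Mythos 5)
The paper does not prove Lemma~\ref{lemma:gauss}; it states it and cites \cite{moynot:99,neveu:68}, so there is no in-paper argument to compare against. Your proof is correct. The structure is sound: orthogonal diagonalisation of $K$ reduces the problem to independent scalar integrals, the scalar case is handled by completing the square with convergence guaranteed by $1+b\alpha>0$, the degenerate case $\alpha=0$ is treated separately, and the reassembly via $O D({\rm Id}_p+bD)^{-1}\,{}^tO = K({\rm Id}_p+bK)^{-1}$ and the orthogonal invariances is exactly right. I verified the scalar identity: expanding $sm - \tfrac{b}{2}m^2 + \tfrac{\alpha(s-bm)^2}{2(1+b\alpha)}$ over a common denominator gives $\tfrac{\alpha s^2 + 2sm - bm^2}{2(1+b\alpha)}$, which matches the exponent obtained by direct completion of the square. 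Choosing to diagonalise rather than complete the square directly in $\R^p$ (using $K^{-1} + b\,{\rm Id}_p$) is a sensible choice precisely because $K$ may be singular, as you note; it avoids the density/continuity detour. One very minor presentational point: after the orthogonal change of variables the $Y_i$ are independent because the covariance $D$ is diagonal \emph{and} the vector is Gaussian — worth stating, though you clearly intend it. No gaps.
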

\olivier{The same holds for} the following elementary lemma.
\begin{lemma}\label{lem:preserveeigenvalues}
Let $B$ be a symmetric block-circulant matrix with the $(j,k)$
$T \times T$ block given by $(B^{(j-k)\mod N})$,
$j,k=-n,\cdots,n$. Let $W^{(N)}$ be the $N\times N$ Unitary matrix with elements $W^{(N)}_{jk} = \frac{1}{\sqrt{N}}\exp(\frac{2\pi ijk}{N})$, $j,k=-n,\cdots,n$. Then $B$ may be `block'-diagonalised in the follow manner (where $\otimes$ is the Kronecker Product and $^*$ the complex conjugate),
\[
B = (W^{(N)}\otimes {\rm Id}_{T})\text{diag}\left(\tilde{B}^{-n},\ldots,\tilde{B}^n\right)(W^{(N)}\otimes {\rm Id}_{T})^*.
\]
Here $\tilde{B}^j$ is a $T \times T$ Hermitian matrix and is the DFT defined in \eqref{eq:DFTdefn}. We observe also that $\lambda$ is an eigenvalue of $B$ if and only if $\lambda$
is an eigenvalue of $\tilde{B}^k$ for some $k$. 
\end{lemma}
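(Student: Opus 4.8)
The plan is to verify the factorisation by a direct computation and then read off the eigenvalue statement; symmetry of $B$ is needed only to establish that each block $\tilde{B}^{p}$ is Hermitian, so I would isolate that point. First I would record the orthogonality relation for $N$-th roots of unity, $\frac{1}{N}\sum_{p=-n}^{n} e^{2\pi i p r/N} = \mathbbm{1}\{\,N \mid r\,\}$, which is just the statement that the columns of $W^{(N)}$ are orthonormal and that the inverse transform in Definition \ref{defn:DFT} inverts \eqref{eq:DFTdefn}. Using $(W^{(N)}\otimes \mathrm{Id}_{T})^{*} = (W^{(N)})^{*}\otimes \mathrm{Id}_{T}$ together with $(W^{(N)})^{*}_{pk} = \overline{W^{(N)}_{kp}} = \frac{1}{\sqrt{N}}e^{-2\pi i kp/N}$ and the definition $\tilde{B}^{p} = \sum_{m=-n}^{n} B^{m} e^{-2\pi i mp/N}$, the $(j,k)$ block of $(W^{(N)}\otimes \mathrm{Id}_{T})\,\mathrm{diag}(\tilde{B}^{-n},\ldots,\tilde{B}^{n})\,(W^{(N)}\otimes \mathrm{Id}_{T})^{*}$ is
\begin{align*}
\sum_{p=-n}^{n} W^{(N)}_{jp}\,\tilde{B}^{p}\,\overline{W^{(N)}_{kp}}
&= \frac{1}{N}\sum_{p=-n}^{n} e^{2\pi i (j-k)p/N}\sum_{m=-n}^{n} B^{m} e^{-2\pi i mp/N} \\
&= \sum_{m=-n}^{n} B^{m}\Bigl(\frac{1}{N}\sum_{p=-n}^{n} e^{2\pi i (j-k-m)p/N}\Bigr)
= B^{(j-k)\mod N},
\end{align*}
since exactly one $m\in\{-n,\ldots,n\}$ satisfies $N\mid (j-k-m)$, namely $m=(j-k)\mod N$. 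This is precisely the $(j,k)$ block of $B$, which proves the factorisation (and note this step does not use symmetry of $B$).

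Next I would check that each $\tilde{B}^{p}$ is Hermitian. Symmetry of $B$ means that its $(j,k)$ and $(k,j)$ blocks are transposes of one another, i.e. $(B^{m})^{t} = B^{(-m)\mod N}$ for every residue $m$; since the $B^{m}$ are real, conjugate-transposing $\tilde{B}^{p}$ and re-indexing $m\mapsto -m$ (legitimate because the summand is $N$-periodic in $m$) gives
\begin{align*}
(\tilde{B}^{p})^{*} &= \sum_{m=-n}^{n} (B^{m})^{t} e^{2\pi i mp/N}
= \sum_{m=-n}^{n} B^{(-m)\mod N} e^{2\pi i mp/N} \\
&= \sum_{m=-n}^{n} B^{m} e^{-2\pi i mp/N} = \tilde{B}^{p}.
\end{align*}

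Finally, $W^{(N)}\otimes \mathrm{Id}_{T}$ is unitary (a Kronecker product of unitaries, so $(W^{(N)}\otimes\mathrm{Id}_T)^*(W^{(N)}\otimes\mathrm{Id}_T) = (W^{(N)})^*W^{(N)}\otimes\mathrm{Id}_T = \mathrm{Id}_{NT}$), hence $B$ is unitarily similar to $\mathrm{diag}(\tilde{B}^{-n},\ldots,\tilde{B}^{n})$, whose spectrum, counted with multiplicity, is the union of the spectra of the $\tilde{B}^{k}$; thus $\lambda$ is an eigenvalue of $B$ if and only if it is an eigenvalue of some $\tilde{B}^{k}$. The only real work is the bookkeeping — keeping the index set $\{-n,\ldots,n\}$ and the reductions modulo $N$ consistent and tracking complex conjugates carefully — so there is no genuine obstacle; as the paper says, the lemma is elementary. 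An alternative, more conceptual route is to write $B = \sum_{m=-n}^{n} C^{(m)}\otimes B^{m}$ with $C^{(m)}$ the scalar circulant shift-by-$m$ matrix and invoke the simultaneous diagonalisation of scalar circulants by $W^{(N)}$.
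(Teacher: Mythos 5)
Your proof is correct; the paper itself does not prove this lemma but merely recalls it as an elementary fact, and your direct block-by-block verification of the factorisation, followed by the transposition argument for Hermiticity and the unitary-similarity observation for the spectrum, is the standard argument one would expect behind such a statement. The bookkeeping with the index set $\{-n,\ldots,n\}$ and the $\mod N$ convention (and in particular that exactly one $m$ in that range satisfies $N\mid(j-k-m)$, and that the set is stable under $m\mapsto -m$) is handled correctly.
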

In this section we introduce a certain stationary Gaussian process attached to an element of $\mathcal{M}_{1,s}^+(\T^\Z)$.
\subsection{Gaussian process}\label{subsub:infinite}

Given $\mu$ in $\mathcal{M}_{1,s}^+(\T^\Z)$ we define a stationary Gaussian process $G^\mu$
with values in $(\R^T)^\Z$. 

For all $i$ the mean of $G^{\mu,i}_t$ is given by $c^\mu_t$, where
\begin{equation}
c^{\mu}_t = \bar{J} \int_{\T^\Z} f(u^i_{t-1})
d\mu(u),\, t=1,\cdots,T\, , i \in \Z,\label{eq:cmumean}
\end{equation}
the above integral being \olivier{well-defined because of the definition of $f$ and} independent of $i$ due to the stationarity of $\mu$.

We now define the covariance of $G^{\mu}$. We first define the following matrix-valued process.
\begin{definition}\label{def:Mmu}
Let $M^{\mu,k}$,
$k \in \Z$ be the $T \times T$ matrix defined by (for $s,t\in [1,T]$),
\begin{equation}\label{eq:Mmuinfinite}
M^{\mu,k}_{st} = \int_{\T^\Z} f(u^0_{s-1}) f(u^k_{t-1}) d\mu(u).
\end{equation}
\end{definition}
These matrixes satisfy 
\begin{equation}\label{eq:Mstationary}
{}^t M^{\mu, k} = M^{\mu,-k},
\end{equation}
because of the stationarity of $\mu$. Furthermore, they feature a spectral representation,
i.e. there exists a $T \times T$ matrix-valued measure
$\tilde{M}^{\mu}=(\tilde{M}^\mu)_{s,\,t=1,\cdots,T}$ with the following properties. Each
$\tilde{M}^\mu_{st}$ is a complex measure on $[-\pi,\,\pi[$ of finite
total variation and such that
\begin{equation}\label{eq:specdensinf}
M^{\mu,k}=\frac{1}{2\pi}\int_{-\pi}^{\pi} e^{ik \omega} \tilde{M}^\mu(d\omega).
\end{equation}
Relations \eqref{eq:Mstationary} and \eqref{eq:specdensinf} imply the following relations, for all Borelian sets $\mathcal{A}\subset [-\pi,\pi[$,
\begin{equation}
\tilde{M}^{\mu}(-\mathcal{A}) = {}^t\tilde{M}^{\mu}(\mathcal{A}) = \tilde{M}^{\mu}(\mathcal{A})^*, 
\end{equation}
from which we may infer that $\tilde{M}^{\mu}$ is Hermitian-valued. The spectral representation means that for all vectors $W \in \R^{T}$, $\,^t W \tilde{M}(d\omega) W$ is a
positive measure on $[-\pi,\pi[$. 

The covariance between the Gaussian vectors $G^{\mu,i}$ and $G^{\mu,i+k}$ is defined to be
\begin{equation}\label{eq:Kimuinfinite}
K^{\mu,k}=\theta^2 \delta_k \mone_{T} \,^t \mone_{T}+
\sum_{l=-\infty}^{\infty} \Lambda(k,l) M^{\mu,l},
\end{equation}
\olivier{where $\mone_{T} $ is the $T$-dimensional vector with all coordinates equal to 1.}
We note that the above summation converges for all $k \in \Z$ since the series
$(\Lambda(k,l))_{k,\,l \in \Z}$ is absolutely convergent 
and the elements of
$M^{\mu,l}$ are bounded by $\pm 1$ for all $l \in \Z$. 

The following lemma is necessary for the covariance function to be well-defined.
\begin{lemma}\label{lemma:Kmupos}
For $\mu \in \mathcal{M}_{1,s}^+(\T^\Z)$ and $k \in \Z$ we have
\begin{equation}\label{eq:Kstationary}
{}^t K^{\mu, k} = K^{\mu,-k},
\end{equation}
\end{lemma}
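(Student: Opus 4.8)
The plan is to obtain \eqref{eq:Kstationary} by a direct term-by-term transposition of the defining formula \eqref{eq:Kimuinfinite}, using only the two symmetry properties already available: the evenness of the covariance function, $\Lambda(-k,-l)=\Lambda(k,l)$ from \eqref{eq:Lambdasymmetries}, and the stationarity relation ${}^t M^{\mu,l}=M^{\mu,-l}$ from \eqref{eq:Mstationary}.

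First I would observe that the matrix $\mone_{T}\,{}^t\mone_{T}$ in the first term of \eqref{eq:Kimuinfinite} is the $T\times T$ all-ones matrix, hence symmetric, so that $\theta^2\delta_k\,\mone_{T}\,{}^t\mone_{T}$ is invariant under transposition; moreover $\delta_k=\delta_{-k}$. Next, since the series $(\Lambda(k,l))_{l\in\Z}$ is absolutely convergent by \eqref{eq:lambdasumassumption} and every entry of $M^{\mu,l}$ is bounded in modulus by $1$ (because $0<f<1$), the sum $\sum_l\Lambda(k,l)M^{\mu,l}$ converges entrywise and transposition may be applied term by term, giving
\[
{}^t\!\left(\sum_{l=-\infty}^{\infty}\Lambda(k,l)M^{\mu,l}\right)=\sum_{l=-\infty}^{\infty}\Lambda(k,l)\,{}^t M^{\mu,l}=\sum_{l=-\infty}^{\infty}\Lambda(k,l)M^{\mu,-l},
\]
the last equality being \eqref{eq:Mstationary}. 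Reindexing by $l\mapsto-l$ (again legitimate by absolute convergence) turns this into $\sum_l\Lambda(k,-l)M^{\mu,l}$, and the evenness of $\Lambda$ in the form $\Lambda(k,-l)=\Lambda(-k,l)$ identifies it with the sum appearing in $K^{\mu,-k}$. Combining with the transposition-invariance of the first term yields ${}^tK^{\mu,k}=K^{\mu,-k}$.

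I do not expect any real obstacle here: the only points needing a word of justification are the interchange of transposition (and of reindexing) with the infinite series, both immediate from the absolute summability of $\Lambda$ together with the uniform bound $|M^{\mu,l}_{st}|\le1$. I would add the remark, however, that this lemma only establishes the consistency symmetry of the family $\{K^{\mu,k}\}_{k\in\Z}$; the genuine positive-semidefiniteness needed to actually construct the stationary Gaussian process $G^\mu$ — that the block-Toeplitz forms built from the $K^{\mu,k}$ are nonnegative — is a separate matter, to be obtained from the spectral representation \eqref{eq:specdensinf} of $M^\mu$ together with the positivity of $\tilde{\Lambda}$ from Proposition \ref{prop:lambdabehaviour}.
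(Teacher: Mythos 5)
Your proof is correct and follows essentially the same route as the paper's: transpose the defining sum \eqref{eq:Kimuinfinite}, apply ${}^tM^{\mu,l}=M^{\mu,-l}$ and the evenness of $\Lambda$, and note $\delta_k=\delta_{-k}$ with the symmetry of $\mone_T\,{}^t\mone_T$. You are somewhat more explicit than the paper about why the term-by-term transposition and reindexing are legitimate (absolute summability of $\Lambda$ plus the uniform bound on $M^{\mu,l}$), and your closing remark correctly notes that positive-semidefiniteness is a separate matter handled via the spectral representation in Proposition \ref{eq:Kmuspectral}.
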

\begin{proof}
From \eqref{eq:Kimuinfinite} we have
\[
\,^t K^{\mu, -k}=\theta^2 \delta_k \mone_{T}\,^t\,\mone_{T}+
\sum_{m=-\infty}^{\infty} \Lambda(-k,m)\,^tM^{\mu,m}
\]
By \eqref{eq:Mstationary} and the evenness of $\Lambda$ we obtain \eqref{eq:Kstationary}.
\end{proof}
We next prove that
the sequence $(K^{\mu,k})_{k \in \Z}$ admits a spectral
representation (which in turn implies that $K^{\mu}$ is a well-defined covariance operator). 
\begin{proposition}\label{eq:Kmuspectral}
The sequence $(K^{\mu,k})_{k \in \Z}$ has spectral density
$\tilde{K}^\mu$ given by
\[
\tilde{K}^\mu(\omega)=\theta^2\mone_T\,^t \mone_T+\frac{1}{2\pi}\int_{-\pi}^{\pi}
\tilde{\Lambda}(\omega,-\gamma) \tilde{M}^\mu(d\gamma).
\]
That is, $\tilde{K}^\mu$ is \olivier{Hermitian} positive and satisfies $\tilde{K}^{\mu}(-\theta) = {}^t\tilde{K}^{\mu}(\theta)$ and
\begin{equation*}
K^{\mu,k}=\frac{1}{2\pi}\int_{-\pi}^{\pi} e^{ik \omega} \tilde{K}^\mu(\omega)d\omega.
\end{equation*}
\end{proposition}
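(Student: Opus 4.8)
The plan is to recognise $\tilde{K}^\mu$ as the Fourier series of the matrix sequence $(K^{\mu,k})_{k\in\Z}$ and then to identify that series with the stated integral; the inversion formula $K^{\mu,k}=\frac{1}{2\pi}\int_{-\pi}^{\pi}e^{ik\omega}\tilde{K}^\mu(\omega)\,d\omega$ comes for free. First I would establish absolute summability: since every entry of $M^{\mu,l}$ lies in $[-1,1]$, definition \eqref{eq:Kimuinfinite} together with the assumption \eqref{eq:lambdasumassumption} gives $\sum_{k\in\Z}\| K^{\mu,k}\|\leq \theta^2 T+T\,\Lambda^{sum}<\infty$. Hence $\tilde{K}^\mu(\omega):=\sum_{k\in\Z}K^{\mu,k}e^{-ik\omega}$ converges absolutely and uniformly, defines a continuous $T\times T$ matrix-valued function on $[-\pi,\pi[$, and Fourier inversion yields $K^{\mu,k}=\frac{1}{2\pi}\int_{-\pi}^{\pi}e^{ik\omega}\tilde{K}^\mu(\omega)\,d\omega$, which is the last displayed identity of the proposition.

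Next I would compute the series. Substituting the spectral representation \eqref{eq:specdensinf} of $M^{\mu,l}$ into \eqref{eq:Kimuinfinite}, and using $\sum_k\delta_k e^{-ik\omega}=1$ for the threshold term, one gets $\tilde{K}^\mu(\omega)=\theta^2\mone_T\,^t\mone_T+\sum_{k}e^{-ik\omega}\sum_l\Lambda(k,l)\,\frac{1}{2\pi}\int_{-\pi}^{\pi}e^{il\gamma}\tilde{M}^\mu(d\gamma)$. The heart of the argument is to push the two summations inside the integral against $\tilde{M}^\mu$: the $l$-sum because $\sum_l|\Lambda(k,l)|<\infty$ and each $\tilde{M}^\mu_{st}$ has finite total variation, and then the $k$-sum because $\sum_k\big|\sum_l\Lambda(k,l)e^{il\gamma}\big|\leq\Lambda^{sum}$ uniformly in $\gamma$. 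Writing $\Lambda_\gamma(k):=\sum_l\Lambda(k,l)e^{il\gamma}$, which is absolutely summable in $k$ with $\sum_k\Lambda_\gamma(k)e^{-ik\omega}=\tilde{\Lambda}(\omega,-\gamma)$ immediately from the definition of $\tilde{\Lambda}$ in Proposition \ref{prop:lambdabehaviour}, this collapses to exactly $\tilde{K}^\mu(\omega)=\theta^2\mone_T\,^t\mone_T+\frac{1}{2\pi}\int_{-\pi}^{\pi}\tilde{\Lambda}(\omega,-\gamma)\,\tilde{M}^\mu(d\gamma)$.

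It then remains to record the three asserted properties. Hermitianity and the reflection identity $\tilde{K}^{\mu}(-\omega)={}^t\tilde{K}^{\mu}(\omega)$ follow from the series representation together with the facts that each $K^{\mu,k}$ is a real matrix (evident from \eqref{eq:Kimuinfinite}) and that $\,^tK^{\mu,k}=K^{\mu,-k}$ (Lemma \ref{lemma:Kmupos}): indeed $(\tilde{K}^\mu(\omega))^*=\sum_k\,^tK^{\mu,k}e^{ik\omega}=\sum_k K^{\mu,-k}e^{ik\omega}=\tilde{K}^\mu(\omega)$, and the reflection is analogous. For positivity, fix $W\in\C^T$; then $W^*\tilde{K}^\mu(\omega)W=\theta^2\big|\textstyle\sum_i W_i\big|^2+\frac{1}{2\pi}\int_{-\pi}^{\pi}\tilde{\Lambda}(\omega,-\gamma)\,W^*\tilde{M}^\mu(d\gamma)W$, in which the first term is nonnegative, $\tilde{\Lambda}(\omega,-\gamma)\geq 0$ by Proposition \ref{prop:lambdabehaviour} (using $2\pi$-periodicity and the evenness \eqref{eq:Lambdasymmetries}), and $W^*\tilde{M}^\mu(d\gamma)W$ is a nonnegative measure because $\tilde{M}^\mu$ is the spectral measure of the stationary $\R^T$-valued sequence $i\mapsto (f(u^i_{s-1}))_{s=1,\cdots,T}$, so the positivity noted after \eqref{eq:specdensinf} for real test vectors extends to complex $W$. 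Hence $\tilde{K}^\mu(\omega)$ is positive semidefinite, which is precisely what makes $(K^{\mu,k})_{k\in\Z}$ a bona fide covariance sequence and $G^\mu$ a well-defined stationary Gaussian process.

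The step I expect to be the main obstacle is the careful justification of the two interchanges of summation with integration against the matrix-valued measure $\tilde{M}^\mu$ (rewriting the double series/measure-integral as the required iterated object); finite total variation of $\tilde{M}^\mu$ and the absolute convergence \eqref{eq:lambdasumassumption} are exactly what is needed there, and everything else is bookkeeping. A minor but genuine point is that the positivity of $\tilde{M}^\mu$ must be invoked against complex, not merely real, test vectors.
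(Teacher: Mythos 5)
Your argument follows the paper's own proof essentially step for step: absolute summability from \eqref{eq:lambdasumassumption} and boundedness of $M^{\mu,l}$ to define $\tilde{K}^\mu$, substitution of the spectral representation \eqref{eq:specdensinf} followed by Fubini to obtain the integral formula, and positivity of $\tilde\Lambda$ and $\,{}^tW\tilde M^\mu(d\gamma)W$ for the positive-definiteness. You are in fact slightly more careful than the paper at two points — you explicitly justify the interchange of summations with the matrix-measure integral, and you (correctly) test positivity against complex $W\in\C^T$ rather than only real $W\in\R^T$ as the paper does, which for a genuinely Hermitian $\tilde K^\mu(\omega)$ is the required statement.
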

\begin{proof}
First we prove that the  matrix function
\begin{equation}\label{eq:Ktildemu}
\tilde{K}^\mu(\omega)=\sum_{k=-\infty}^\infty K^{\mu,k} e^{-ik\omega}
\end{equation}
is well-defined on $[-\pi,\pi[$ and is equal to the expression in the statement of the proposition. Afterwards, we will prove that $\tilde{K}^\mu$ is positive.

From \eqref{eq:Kimuinfinite} we obtain that, for all $s,t\in [0,T]$,
\begin{equation}\label{eq:boundKmui}
|K^{\mu,k}_{st}| \leq T\theta^2
\delta_k+\sum_{l=-\infty}^{\infty} |\Lambda(k,l)|. 
\end{equation}
This shows that,
because by \eqref{eq:lambdasumassumption} the series $(\Lambda(k,l))_{k,\,l \in \Z}$ is absolutely
convergent, $\tilde{K}^\mu(\omega)$ is well-defined on $[-\pi,\pi[$. \olivier{The fact that $\tilde{K}^\mu(\omega)$ is Hermitian follows from \eqref{eq:Ktildemu} and \eqref{eq:Kstationary}.}

Combining \eqref{eq:specdensinf} \eqref{eq:Kimuinfinite} and \eqref{eq:Ktildemu} we write
\[
\tilde{K}^\mu(\omega)=\theta^2\mone_T\,^t \mone_T+
\frac{1}{2\pi}\int_{-\pi}^{\pi} \left( \sum_{m=-\infty}^{\infty}
  \sum_{k=-\infty}^{\infty}\Lambda(k,m) e^{-i(k\omega-m\gamma)}\right) \tilde{M}^\mu(d\gamma).
\]
This can be rewritten in terms of the spectral density
$\tilde{\Lambda}$ of $\Lambda$
\[
\tilde{K}^\mu(\omega)=\theta^2\mone_T\,^t \mone_T+\frac{1}{2\pi}\int_{-\pi}^{\pi} \tilde{\Lambda}(\omega,-\gamma) \tilde{M}^\mu(d\gamma).
\]
We note that $\tilde{K}^\mu(\omega)$ is positive, because for all vectors $W$ of $\R^{T}$,
\begin{equation}\label{eq:WtildeKW}
\,^t W \tilde{K}^\mu(\omega) W= \theta^2 \langle \mone_T, W \rangle^2+\frac{1}{2\pi}\int_{-\pi}^{\pi} 
\tilde{\Lambda}(\omega,-\gamma) \left( \,^t W \tilde{M}^\mu(d\gamma) W \right),
\end{equation}
the spectral density $\tilde{\Lambda}$ is
positive and the measure $\,^t W \tilde{M}^\mu(d\gamma) W$ is positive. The identity $\tilde{K}^{\mu}(-\theta) = {}^t\tilde{K}^{\mu}(\theta)$ follows from the previous lemma.
\end{proof}
If we consider the $N$th order marginal $\mu^N$ of $\mu$ we define the $N$-dimensional Gaussian process $G^{\mu^N}$ with values in $(\R^T)^N$. The mean of $G^{\mu^N,i}$, $i=-n,\cdots,n$ is given by \eqref{eq:cmumean} and the covariance between $G^{\mu^N,i}$ and $G^{\mu^N,i+k}$ is given by 
\begin{equation}\label{eq:Kimu}
K^{\mu^N,k}=\theta^2 \delta_k \mone_{T}\,^t\,\mone_{T}+
\sum_{m=-n}^{n} \Lambda(k,m)M^{\mu^N,m},
\end{equation}
for $k=-n,\cdots,n$, where the matrixes $M^{\mu^N,k}$ are defined by the finite dimensional analog of \eqref{eq:Mmuinfinite}
\begin{equation}\label{eq:Mmu}
M^{\mu^N,k}_{st} = \int_{\T^N} f(y^0_{s-1}) f(y^k_{t-1}) \mu^N(dy).
\end{equation}
Equations \eqref{eq:Mstationary} (resp. \eqref{eq:Kstationary}) hold for the matrixes $M^{\mu^N,k}$ (resp, $K^{\mu^N,k}$) for $k=-n\cdots n$. These finite sequences also have \olivier{Hermitian positive} spectral representations $\tilde{M}^{\mu^N,l}$ (resp.  $\tilde{K}^{\mu^N,l}$) for $l=-n\cdots n$ which are obtained by taking DFTs.

The finite-dimensional system `converges' to the infinite-dimensional system in the following sense. In what follows, we use the
Frobenius norm on the $T$-dimensional matrices. We write $\tilde{K}^{\mu^N}(\omega) = \sum_{k=-n}^n K^{\mu^N,k}\exp(-ik\omega)$. Note that for $|j|\leq n$, $\tilde{K}^{\mu^N}(2\pi j/N) = \tilde{K}^{\mu^N,j}$. The lemma below follows directly from the absolute convergence of $\sum_{j,k}|\Lambda(j,k)|$.
\begin{lemma}\label{lem:muNconvergenceK}
Fix $\mu\in\mM_{1,s}^+(\T^{\Z})$. For all $\varepsilon$, there exists an $N$ such that for all $M>N$ and all $j$ such that $2|j|+1 \leq M$, $\|K^{\mu^M,j}- K^{\mu,j}\|<\varepsilon$ and for all $\omega\in [-\pi,\pi[$, $\|\tilde{K}^{\mu^M}(\omega) -\tilde{K}^{\mu}(\omega)\| \leq \varepsilon$.
\end{lemma}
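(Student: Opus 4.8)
The plan is to reduce the statement to tail estimates on the absolutely convergent series $\Lambda^{sum}$ of \eqref{eq:lambdasumassumption}, the essential observation being that for $|j|$ small compared with $M$ the finite-dimensional moment matrix $M^{\mu^M,j}$ coincides \emph{exactly} with $M^{\mu,j}$. Write $M=2p+1$. Since $\mu^M=\mu\circ\pi_M^{-1}$ with $\pi_M(u)=(u^{-p},\dots,u^p)$, and the integrand $f(u^0_{s-1})f(u^j_{t-1})$ in \eqref{eq:Mmuinfinite}, \eqref{eq:Mmu} depends only on the coordinates indexed $0$ and $j$ — both of which lie in $\{-p,\dots,p\}$, so no reduction modulo $M$ intervenes, as soon as $2|j|+1\le M$ — the transfer formula for $\pi_M$ gives $M^{\mu^M,j}=M^{\mu,j}$ for every such $j$. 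Alongside this I will use the trivial bound $\|M^{\mu,l}\|\le T$ (Frobenius norm), valid for all $l\in\Z$ since every entry of $M^{\mu,l}$ lies in $[0,1]$ by the definition of $f$.

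\textbf{The matrix estimate.} Fix $j$ with $2|j|+1\le M$. In $K^{\mu,j}-K^{\mu^M,j}$ the $\theta^2\delta_j$ terms of \eqref{eq:Kimuinfinite} and \eqref{eq:Kimu} cancel, and by the above so do all summands indexed by $|l|\le p$; hence
\[
K^{\mu,j}-K^{\mu^M,j}=\sum_{|l|>p}\Lambda(j,l)\,M^{\mu,l},
\qquad
\|K^{\mu,j}-K^{\mu^M,j}\|\le T\!\!\sum_{\max(|k|,|l|)>p}\!\!|\Lambda(k,l)|.
\]
The right-hand side does not depend on $j$, is non-increasing in $p$, and tends to $0$ as $p\to\infty$, being the tail of the convergent double series $\sum_{k,l}|\Lambda(k,l)|$; choosing $N$ with $T\sum_{\max(|k|,|l|)>(N-1)/2}|\Lambda(k,l)|<\varepsilon$ proves the first assertion for all $M>N$ and all admissible $j$.

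\textbf{The spectral densities.} Recall $\tilde{K}^{\mu^M}(\omega)=\sum_{|k|\le p}K^{\mu^M,k}e^{-ik\omega}$ and, by \eqref{eq:Ktildemu} together with \eqref{eq:boundKmui}, $\tilde{K}^{\mu}(\omega)=\sum_{k\in\Z}K^{\mu,k}e^{-ik\omega}$. Splitting into $|k|\le p$ and $|k|>p$, bounding the former terms by the matrix estimate and the latter by $\|K^{\mu,k}\|\le T\sum_{l}|\Lambda(k,l)|$ (valid for $k\ne0$), one gets, uniformly in $\omega\in[-\pi,\pi[$,
\[
\|\tilde{K}^{\mu^M}(\omega)-\tilde{K}^{\mu}(\omega)\|\le 2T\!\!\sum_{\max(|k|,|l|)>p}\!\!|\Lambda(k,l)|,
\]
again a tail of $\Lambda^{sum}$; enlarging $N$ if needed makes this $\le\varepsilon$ for all $M>N$ and all $\omega$.

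There is no genuinely hard step here: the only delicate point is the identification $M^{\mu^M,j}=M^{\mu,j}$, which is precisely why the hypothesis is phrased through $2|j|+1\le M$ rather than merely $|j|$ bounded (so the index $j$ is untouched by the modular convention); everything else is a routine tail estimate on $\Lambda^{sum}$.
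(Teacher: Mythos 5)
Your proof is correct and fills in, carefully and completely, what the paper leaves as a one-line remark ("follows directly from the absolute convergence of $\sum_{j,k}|\Lambda(j,k)|$"). The key identification $M^{\mu^M,j}=M^{\mu,j}$ for $2|j|+1\le M$, the cancellation leaving only the tail $\sum_{|l|>p}\Lambda(j,l)M^{\mu,l}$, and the uniform tail estimate $\|M^{\mu,l}\|\le T$ in Frobenius norm are exactly the intended route.
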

\begin{lemma}\label{eq:rhoK}
The eigenvalues of $\tilde{K}^{\mu^N,l}$ and $\tilde{K}^\mu(\omega)$ are upperbounded by
\begin{equation*}
\rho_K \overset{{\rm def}}{\equiv} T
\left(\theta^2+\Lambda^{sum}\right),
\end{equation*}
where $\Lambda^{sum}$ is defined in \eqref{eq:lambdasumassumption}.
\end{lemma}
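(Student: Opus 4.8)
The plan is to bound the eigenvalues of the Hermitian matrices $\tilde{K}^{\mu^N,l}$ and $\tilde{K}^\mu(\omega)$ by estimating the operator norm (equivalently, the spectral radius, since these matrices are Hermitian positive by Proposition \ref{eq:Kmuspectral} and the remark following \eqref{eq:Mmu}) via a Frobenius or row-sum bound on the defining expressions. First I would treat the infinite-dimensional case: starting from \eqref{eq:Kimuinfinite}, for any unit vector $W\in\R^T$ write
\[
\,^t W \tilde{K}^\mu(\omega) W = \theta^2 \langle \mone_T, W\rangle^2 + \sum_{k=-\infty}^\infty e^{-ik\omega}\,^tW M^{\mu,k} W,
\]
using \eqref{eq:Ktildemu}. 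The first term is bounded by $\theta^2 T$ since $|\langle \mone_T, W\rangle|^2 \le T\|W\|^2$. For the second term, I would interchange the sum over $k$ with the double sum defining $K^{\mu,k}$ in \eqref{eq:Kimuinfinite} — legitimate by absolute convergence of $(\Lambda(k,l))_{k,l}$ combined with the uniform bound $|M^{\mu,l}_{st}|\le 1$ — and bound $|\,^tW M^{\mu,l}W|$ by $\sum_{s,t}|W_s||W_t| \le T$ (again by Cauchy–Schwarz, $\sum_s |W_s|\le\sqrt{T}\|W\|$). This gives $\bigl|\sum_{k} e^{-ik\omega}\,^tW M^{\mu,k}W\bigr| \le \sum_{k,l}|\Lambda(k,l)| \cdot T = T\Lambda^{sum}$, whence $|\,^tW\tilde{K}^\mu(\omega)W| \le T(\theta^2 + \Lambda^{sum}) = \rho_K$. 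Since $\tilde{K}^\mu(\omega)$ is Hermitian positive, its largest eigenvalue equals $\sup_{\|W\|=1}\,^tW\tilde{K}^\mu(\omega)W \le \rho_K$.

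For the finite-dimensional matrices $\tilde{K}^{\mu^N,l}$, I would run exactly the same argument on \eqref{eq:Kimu}: for a unit vector $W\in\R^T$,
\[
\,^tW\tilde{K}^{\mu^N,l}W = \theta^2\langle\mone_T,W\rangle^2 + \sum_{k=-n}^n e^{-\frac{2\pi ikl}{N}}\,^tW M^{\mu^N,k}W,
\]
and since $|M^{\mu^N,k}_{st}| = \bigl|\int_{\T^N} f(y^0_{s-1})f(y^k_{t-1})\mu^N(dy)\bigr| \le 1$ by the bound $0<f<1$, the same chain of estimates yields $|\,^tW\tilde{K}^{\mu^N,l}W| \le T\theta^2 + T\sum_{k,m=-n}^n|\Lambda(k,m)| \le T(\theta^2 + \Lambda^{sum})$, the last step because $\sum_{k,m=-n}^n|\Lambda(k,m)| \le \sum_{k,m=-\infty}^\infty |\Lambda(k,m)| = \Lambda^{sum}$. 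Positivity and Hermitianness of $\tilde{K}^{\mu^N,l}$ then give the eigenvalue bound.

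There is no serious obstacle here; the lemma is an elementary uniform estimate. The only point requiring a little care is the legitimacy of interchanging summations in the infinite-dimensional case and the correct handling of the double sum, but this is immediate from assumption \eqref{eq:lambdasumassumption} and the uniform boundedness $|M^{\mu,l}_{st}|\le 1$. One could alternatively phrase the whole argument in terms of the Frobenius norm, using $\|\tilde{K}^\mu(\omega)\|_F \le \theta^2\|\mone_T\,^t\mone_T\|_F + \sum_{k,l}|\Lambda(k,l)|\,\|M^{\mu,l}\|_F$ with $\|\mone_T\,^t\mone_T\|_F = T$ and $\|M^{\mu,l}\|_F \le T$, but since the Frobenius norm only dominates the spectral radius up to being an upper bound anyway, the quadratic-form computation above gives the sharper stated constant $T(\theta^2+\Lambda^{sum})$ directly.
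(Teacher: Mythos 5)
Your proof is correct and takes essentially the same route as the paper: both bound the quadratic form $\,^tW\tilde K^\mu(\omega)W$ directly using $0<f<1$ and absolute summability of $\Lambda$, with the paper passing through the spectral representation (bounding $\tilde\Lambda(\omega,-\gamma)\le\Lambda^{sum}$ and collapsing the remaining integral to $\Lambda^{sum}\,^tW M^{\mu,0}W$, then using the trace bound on the Gram matrix $M^{\mu,0}$), while you sum $|\Lambda(k,l)|$ directly and bound each $|\,^tW M^{\mu,l}W|\le T$ via the entrywise bound $|M^{\mu,l}_{st}|\le 1$ together with $\|W\|_1^2\le T\|W\|^2$. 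One small blemish: your displayed identity $\,^tW\tilde K^\mu(\omega)W=\theta^2\langle\mone_T,W\rangle^2+\sum_k e^{-ik\omega}\,^tW M^{\mu,k}W$ is wrong as written (the summand should be $\sum_l\Lambda(k,l)\,^tW M^{\mu,l}W$, not $\,^tW M^{\mu,k}W$), but the subsequent text makes clear you intended the $\Lambda$-weighted double sum, so the argument itself is sound.
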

\begin{proof}
Let $W\in \R^T$. We find from \eqref{eq:WtildeKW}, \eqref{eq:tildeLambdapq}, and \eqref{eq:lambdasumassumption} that
\begin{align*}
\,^t W \tilde{K}^{\mu}(\omega) W &\leq \theta^2 T\norm{W}^2+ \frac{\Lambda^{sum}}{2\pi}\int_{-\pi}^\pi \,^t W \tilde{M}^{\mu} (d\gamma)W \\
&=  \theta^2 T\norm{W}^2 + \Lambda^{sum}\,{}^t W M^{\mu,0}W.
\end{align*}
The eigenvalues of $M^{\mu,0}$ are all positive (since it is a correlation matrix), which means that each eigenvalue is upperbounded by the trace, which in turn is upperbounded by $T$. The proof in the finite dimensional case follows similarly.
\end{proof}
We note $K^{\mu^N}$ the $(NT \times NT)$ covariance matrix of the sequence of Gaussian random variables $(G^{\mu^N,-n},\cdots,G^{\mu^N,n})$. 

We let $A^{\mu^N}=K^{\mu^N}(\sigma^2 {\rm
  Id}_{NT}+K^{\mu^N})^{-1}$. This is well-defined because $K^{\mu^N}$ is \olivier{diagonalizable (being symmetric and real) and has positive eigenvalues (being a covariance matrix)}. It follows from lemma \ref{lem:preserveeigenvalues} that this is a symmetric block circulant matrix, with blocks $A^{\mu^N,k}$ ($k=-n,\cdots,n$) such that
 \[
 A^{\mu^N,-k}=\,^tA^{\mu^N,k}
 \]
and that the matrixes
\begin{equation}\label{eq:Atildel}
\tilde{A}^{\mu^N,l}=\sum_{k=-n}^n A^{\mu^N,k} e^{-\frac{2\pi i kl}{N}}=\tilde{K}^{\mu^N,l}(\sigma^2 {\rm
  Id}_T+\tilde{K}^{\mu^N,l})^{-1}.
\end{equation}
are Hermitian positive.

In the limit $N \to \infty$ we may define 
\[
\tilde{A}^{\mu}(\omega)=\tilde{K}^{\mu}(\omega)(\sigma^2 {\rm
  Id}_T+\tilde{K}^{\mu}(\omega))^{-1}
\]
as the product of two \olivier{matrix-valued} functions defined on $[-\pi,\pi[$ whose Fourier series
are absolutely convergent. The Fourier series of $(\sigma^2 {\rm
  Id}_T+\tilde{K}^{\mu}(\omega))^{-1} $ is absolutely convergent as a
consequence of Wiener's theorem because the eigenvalues of $\sigma^2 {\rm
  Id}_T+\tilde{K}^{\mu}(\omega)$ are strictly positive. Hence the Fourier series of
$\tilde{A}^{\mu}(\omega)$, i.e. $(A^{\mu,k})_{k \in \Z}$, is absolutely
convergent. We thus find that, for $l\in\Z$,
\begin{equation}\label{eq:Amulimit}
A^{\mu,l}=\frac{1}{2\pi}\int_{-\pi}^{\pi}\tilde{A}^{\mu}(\omega) e^{il\omega}d\omega = \lim_{N\rightarrow\infty}A^{\mu^N,l},
\end{equation}
and
\[
\tilde{A}^{\mu}(\omega) = \sum_{l=-\infty}^{\infty}A^{\mu,l}e^{-il\omega}.
\]
Let $\tilde{A}^{\mu^N}(\omega) = \sum_{k=-n}^n A^{\mu^N,k}\exp(-ik\omega)$ and note that for $|j| \leq n$,\newline $\tilde{A}^{\mu^N}(2\pi j/N) = \tilde{A}^{\mu^N,j}$.
\begin{lemma}\label{lemma:Lipschitz}
The map $B\rightarrow B(\sigma^2 {\rm Id}_T + B)^{-1}$ is Lipschitz continuous over
the set $\varDelta = \lbrace
\tilde{K}^{\mu^N}(\omega),\tilde{K}^{\mu}(\omega):\mu\in\M_{1,s}^+(\T^{\Z}),N>0,\omega\in
[-\pi,\pi[\rbrace$. 
\end{lemma}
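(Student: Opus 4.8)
The plan is to combine the elementary identity $B(\sigma^2 {\rm Id}_T + B)^{-1} = {\rm Id}_T - \sigma^2(\sigma^2 {\rm Id}_T + B)^{-1}$ with the second resolvent identity, the only real input being a uniform lower bound on $\sigma^2 {\rm Id}_T + B$ as $B$ ranges over $\varDelta$. First I would check that $\sigma^2 {\rm Id}_T + B$ is invertible for every $B\in\varDelta$: by Proposition \ref{eq:Kmuspectral} the matrices $\tilde{K}^\mu(\omega)$ are Hermitian positive, and the finite-dimensional analogue together with Lemma \ref{lem:preserveeigenvalues} gives the same for the DFT blocks $\tilde{K}^{\mu^N,l}=\tilde{K}^{\mu^N}(2\pi l/N)$ (these are the blocks of the genuine covariance matrix $K^{\mu^N}$, hence positive semidefinite). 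On this part of $\varDelta$ one therefore has $\sigma^2 {\rm Id}_T + B \succeq \sigma^2 {\rm Id}_T$ and $\norm{(\sigma^2 {\rm Id}_T + B)^{-1}}_{\rm op}\leq \sigma^{-2}$; for the remaining matrices $\tilde{K}^{\mu^N}(\omega)$ at frequencies $\omega$ not of the form $2\pi l/N$ one controls the spectrum by Lemma \ref{lem:muNconvergenceK} (which, for $N$ large, puts $\tilde{K}^{\mu^N}(\omega)$ within any prescribed distance of the positive matrix $\tilde{K}^\mu(\omega)$) and the a priori eigenvalue bounds of Lemma \ref{eq:rhoK}, which in all cases keep $\sigma^2 {\rm Id}_T + B$ boundedly invertible uniformly over $\varDelta$.

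Given this, the Lipschitz estimate is a one-line computation. For $B_1,B_2\in\varDelta$,
\[
B_1(\sigma^2 {\rm Id}_T + B_1)^{-1} - B_2(\sigma^2 {\rm Id}_T + B_2)^{-1} = \sigma^2\big[(\sigma^2 {\rm Id}_T + B_2)^{-1} - (\sigma^2 {\rm Id}_T + B_1)^{-1}\big],
\]
and the second resolvent identity rewrites the bracket as $(\sigma^2 {\rm Id}_T + B_2)^{-1}(B_1-B_2)(\sigma^2 {\rm Id}_T + B_1)^{-1}$. Taking Frobenius norms and using the submultiplicativity property $\norm{XYZ}\leq\norm{X}_{\rm op}\,\norm{Y}\,\norm{Z}_{\rm op}$ together with the resolvent bound above yields
\[
\norm{B_1(\sigma^2 {\rm Id}_T + B_1)^{-1} - B_2(\sigma^2 {\rm Id}_T + B_2)^{-1}} \leq \sigma^{-2}\,\norm{B_1-B_2},
\]
so the map $B\mapsto B(\sigma^2 {\rm Id}_T + B)^{-1}$ is Lipschitz on $\varDelta$ with constant $\sigma^{-2}$ (or, in the worst case, with the constant furnished by the uniform resolvent bound).

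I expect the only genuinely delicate point to be the uniform control of $\norm{(\sigma^2 {\rm Id}_T + B)^{-1}}_{\rm op}$ over all of $\varDelta$ — in particular handling $\tilde{K}^{\mu^N}(\omega)$ at non-lattice frequencies, where positivity is not immediate and one must fall back on the approximation of Lemma \ref{lem:muNconvergenceK} and the eigenvalue bounds of Lemma \ref{eq:rhoK}. Once that uniform bound is secured, the resolvent manipulation above completes the proof with no further work.
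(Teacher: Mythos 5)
Your proof takes a slightly cleaner route than the paper's. You first reduce $B\mapsto B(\sigma^2\mathrm{Id}_T+B)^{-1}$ to the resolvent map via the identity $B(\sigma^2\mathrm{Id}_T+B)^{-1}=\mathrm{Id}_T-\sigma^2(\sigma^2\mathrm{Id}_T+B)^{-1}$, and then a single application of the second resolvent identity gives an explicit Lipschitz constant $\sigma^2\cdot(\text{resolvent bound})^2$. The paper instead argues that $B$ and $(\sigma^2\mathrm{Id}_T+B)^{-1}$ are each bounded and Lipschitz on $\varDelta$ (the latter by the same resolvent identity), and then invokes the product rule ``bounded-Lipschitz $\times$ bounded-Lipschitz is Lipschitz.'' Both proofs turn on the same two facts --- the resolvent identity and a uniform bound on $\|(\sigma^2\mathrm{Id}_T+B)^{-1}\|_{\rm op}$ over $\varDelta$ --- but your decomposition is shorter and yields a sharper, explicit constant.

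The issue you flag at the end is real and worth taking seriously: the paper's proof opens with ``the eigenvalues $\lambda$ of the matrices in $\varDelta$ satisfy $0\le\lambda\le\rho_K$,'' citing lemma \ref{eq:rhoK}, but that lemma as stated (and as proved) only treats $\tilde K^\mu(\omega)$ for $\omega\in[-\pi,\pi[$ and $\tilde K^{\mu^N,l}=\tilde K^{\mu^N}(2\pi l/N)$ at lattice frequencies. For $\tilde K^{\mu^N}(\omega)$ at non-lattice $\omega$ one only gets, from the absolute convergence of $\Lambda$ and $|M^{\mu^N,m}_{st}|\le1$, the two-sided bound $-T\Lambda^{sum}\|W\|^2\le{}^tW\tilde K^{\mu^N}(\omega)W\le\rho_K\|W\|^2$; positivity of the block-circulant $K^{\mu^N}$ controls only the DFT blocks, not the full trigonometric interpolant. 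So the clean resolvent bound $\sigma^{-2}$ is secured only where $B\succeq0$, and in general one has to live with $(\sigma^2-T\Lambda^{sum})^{-1}$, which is a genuine constraint if $T\Lambda^{sum}\ge\sigma^2$. Your suggested fix --- controlling the non-lattice spectrum via lemma \ref{lem:muNconvergenceK} --- does not quite close this, since that lemma is for fixed $\mu$ and hence is not uniform over the $\mu$ appearing in the definition of $\varDelta$. In short: your algebra is correct and tighter than the paper's; the remaining delicate point is precisely where the paper is also terse, and you have correctly isolated it even if your sketch of how to resolve it needs more care.
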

\begin{proof}
The eigenvalues $\lambda$ of the matrixes in $\varDelta$ satisfy $0\leq\lambda\leq \rho_K$. Thus, both $B$ and $(\sigma^2 {\rm Id}_T + B)^{-1}$ are bounded in the operator norm (which is equal to the largest eigenvalue) for all $B\in \varDelta$. They are thus bounded over every matrix norm (as the matrix norms are all equivalent). The first term is clearly Lipschitz, and the second term is also Lipschitz because
\begin{multline*}
(\sigma^2 {\rm Id}_T + B_1)^{-1} - (\sigma^2 {\rm Id}_T + B_2)^{-1}\\= (\sigma^2 {\rm Id}_T + B_1)^{-1}\left(B_2 - B_1\right) (\sigma^2 {\rm Id}_T + B_2)^{-1}.
\end{multline*}
\end{proof}
The following lemma is a consequence of lemmas \ref{lem:muNconvergenceK} and \ref{lemma:Lipschitz}.
\begin{lemma}\label{lem:muNconvergenceA}
Fix $\mu\in\mM_{1,s}^+(\T^{\Z})$. For all $\varepsilon$, there exists an $N$ such that for all $M>N$ and all $\omega\in [-\pi,\pi[$, $\|\tilde{A}^{\mu^M}(\omega) -\tilde{A}^{\mu}(\omega)\| \leq \varepsilon$.
\end{lemma}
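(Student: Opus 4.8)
The plan is to transport the uniform estimate of Lemma~\ref{lem:muNconvergenceK} through the matrix map $g:B\mapsto B(\sigma^2{\rm Id}_T+B)^{-1}$, which by Lemma~\ref{lemma:Lipschitz} is Lipschitz (with constant $L$, say) on the set $\varDelta$ containing every symbol $\tilde K^{\mu^M}(\omega)$ and $\tilde K^{\mu}(\omega)$. By the very definition of $\tilde A^{\mu}$ in the discussion around \eqref{eq:Amulimit} one has $\tilde A^{\mu}(\omega)=g(\tilde K^{\mu}(\omega))$ for all $\omega$, while at the interpolation nodes $\omega=2\pi j/M$ ($|j|\le m$, $M=2m+1$) equation \eqref{eq:Atildel} gives $\tilde A^{\mu^M}(2\pi j/M)=\tilde A^{\mu^M,j}=g(\tilde K^{\mu^M,j})=g\bigl(\tilde K^{\mu^M}(2\pi j/M)\bigr)$. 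Choosing $N$ so large that $\sup_{\omega}\|\tilde K^{\mu^M}(\omega)-\tilde K^{\mu}(\omega)\|\le\varepsilon/L$ for all $M>N$ (Lemma~\ref{lem:muNconvergenceK}) then yields $\|\tilde A^{\mu^M}(2\pi j/M)-\tilde A^{\mu}(2\pi j/M)\|\le\varepsilon$ at every node; this nodal estimate is the essential content and is what is actually needed wherever these symbols later enter Riemann-type sums over the grid.

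To obtain the bound for every $\omega\in[-\pi,\pi[$ one additional step is required, since off the grid $\tilde A^{\mu^M}(\omega)=\sum_{|k|\le m}A^{\mu^M,k}e^{-ik\omega}$ is only the degree-$m$ trigonometric interpolant of $h_M(\omega):=g(\tilde K^{\mu^M}(\omega))$ (a rational function of $e^{-i\omega}$), not $h_M$ itself. I would use the aliasing identity $A^{\mu^M,k}=\sum_{l\in\Z}\widehat{h_M}(k+lM)$, valid for $|k|\le m$, which gives $\sup_{\omega}\|\tilde A^{\mu^M}(\omega)-h_M(\omega)\|\le 2\sum_{|k|>m}\|\widehat{h_M}(k)\|$; together with $\sup_{\omega}\|h_M(\omega)-\tilde A^{\mu}(\omega)\|=\sup_\omega\|g(\tilde K^{\mu^M}(\omega))-g(\tilde K^{\mu}(\omega))\|\le\varepsilon$ from the first paragraph, the lemma reduces to showing $\sum_{|k|>m}\|\widehat{h_M}(k)\|\to 0$ as $m\to\infty$, uniformly in $M$.

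The main obstacle is precisely this equi-summability, which I would settle by a matrix Wiener--L\'evy argument. The symbols $\tilde K^{\mu^M}$ are uniformly bounded in the matrix Wiener norm --- from \eqref{eq:Kimu}--\eqref{eq:Mmu} and \eqref{eq:lambdasumassumption}, $\sum_k\|K^{\mu^M,k}\|\le\theta^2T+T\Lambda^{sum}$, with a common tail bound $\sum_{|k|>p}\|K^{\mu^M,k}\|\le T\sum_{|k|>p}\lambda_k$ independent of $M$ --- and each $\tilde K^{\mu^M}(\omega)$ is Hermitian with spectrum in the fixed interval $[0,\rho_K]$ by Lemma~\ref{eq:rhoK}. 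Since $g(z)=z/(\sigma^2+z)$ is holomorphic on a complex neighbourhood of $[0,\rho_K]$ avoiding $-\sigma^2$, the Riesz functional-calculus formula $g(\tilde K^{\mu^M})=\frac{1}{2\pi i}\oint_\Gamma g(z)\,(z\,{\rm Id}_T-\tilde K^{\mu^M})^{-1}\,dz$ along a fixed contour $\Gamma$ encircling $[0,\rho_K]$, combined with Wiener's inversion theorem applied uniformly along $\Gamma$, gives a uniform-in-$M$ bound on $\|h_M\|_W$ and --- with a bit more care --- the required equi-summability of the Fourier coefficients $\widehat{h_M}(k)$. Letting $m\to\infty$ then completes the argument. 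In the final write-up I would present the nodal estimate in full and, for the refinement to all $\omega$, either carry out this Wiener--L\'evy estimate or simply observe that only the nodal values are invoked in the sequel.
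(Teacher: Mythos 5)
You have correctly spotted a real gap that the paper glosses over. The text says the lemma ``is a consequence of lemmas~\ref{lem:muNconvergenceK} and~\ref{lemma:Lipschitz},'' but composing the Lipschitz map $g:B\mapsto B(\sigma^2\,{\rm Id}_T+B)^{-1}$ with the uniform convergence of $\tilde K^{\mu^M}$ only controls $\sup_\omega\|g(\tilde K^{\mu^M}(\omega))-g(\tilde K^\mu(\omega))\|$, whereas $\tilde{A}^{\mu^M}(\omega)=\sum_{|k|\le m}A^{\mu^M,k}e^{-ik\omega}$ is the degree-$m$ trigonometric \emph{interpolant} of $h_M:=g(\tilde K^{\mu^M})$ and equals $h_M$ only at the grid $\omega=2\pi j/M$. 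Your aliasing identity $A^{\mu^M,k}=\sum_{p\in\Z}\widehat{h_M}(k+pM)$ and the resulting bound $\sup_\omega\|\tilde A^{\mu^M}(\omega)-h_M(\omega)\|\le 2\sum_{|k|>m}\|\widehat{h_M}(k)\|$ are correct, and your diagnosis that the proof reduces to equi-summability of the $\widehat{h_M}$ is exactly right. One caveat on your final remark: the pointwise (nodal) estimate is not really all that is needed downstream --- in proposition~\ref{prop:E2Gamma2} the symbol $\tilde A^{\mu^N}(\omega)$ is integrated against the spectral measure $\tilde v^\mu(d\omega)$ over all $\omega$, and in that passage the argument quietly needs something as strong as Wiener-norm convergence, not merely convergence at the grid.

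The matrix Wiener--L\'evy functional-calculus route you sketch is workable but heavier than necessary; there is a shorter path that also makes the equi-summability completely transparent. Since $\mu^M$ is the $M$-th marginal of $\mu$, one has $M^{\mu^M,l}=M^{\mu,l}$ for $|l|\le m$, so $K^{\mu^M,k}-K^{\mu,k}=-\sum_{|l|>m}\Lambda(k,l)M^{\mu,l}$ for $|k|\le m$, whence
\[
\|\tilde K^{\mu^M}-\tilde K^\mu\|_W:=\sum_{k\in\Z}\|K^{\mu^M,k}-K^{\mu,k}\|
\le T\sum_{k\in\Z}\sum_{|l|>m}|\Lambda(k,l)|+\sum_{|k|>m}\|K^{\mu,k}\|\longrightarrow 0
\]
by absolute convergence of $\sum_{k,l}|\Lambda(k,l)|$; that is, $\tilde K^{\mu^M}\to\tilde K^\mu$ in the Wiener algebra, not merely in sup norm. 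Writing $g(B)={\rm Id}_T-\sigma^2(\sigma^2{\rm Id}_T+B)^{-1}$, set $H=(\sigma^2{\rm Id}_T+\tilde K^\mu)^{-1}$, which the paper has already shown to lie in the Wiener algebra via Wiener's theorem. The resolvent identity $H_M=H\sum_{j\ge 0}\bigl(-(\tilde K^{\mu^M}-\tilde K^\mu)H\bigr)^j$ is a convergent Neumann series in Wiener norm once $\|\tilde K^{\mu^M}-\tilde K^\mu\|_W\,\|H\|_W<1$, giving $\|h_M-\tilde A^\mu\|_W=\sigma^2\|H_M-H\|_W\to 0$. Then $\sum_{|k|>m}\|\widehat{h_M}(k)\|\le\|h_M-\tilde A^\mu\|_W+\sum_{|k|>m}\|A^{\mu,k}\|\to 0$, and combining this with your aliasing bound and the Lipschitz estimate from the first paragraph proves the lemma. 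This route bypasses the contour integral and the uniform-in-$M$ Wiener inversion estimate, which are the delicate parts of what you propose.
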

The above-defined matrices have the following `uniform convergence' properties.
\begin{proposition}\label{prop:Ktildeomega}
Fix $\nu \in \mM_{1,s}^+(\T^\Z)$. For all $\varepsilon > 0$, there exists an open neighbourhood $V_{\varepsilon}(\nu)$ such that for all $\mu\in V_{\varepsilon}(\nu)$, all $s,t\in [\olivier{1},T]$ and all $\omega \in [-\pi,\pi[$,
\begin{equation}\label{eq:Mbnd1}
\left|\tilde{K}^{\nu}_{st}(\omega) - \tilde{K}^{\mu}_{st}(\omega)\right| \leq \varepsilon,
\end{equation}
\begin{equation}\label{eq:Mbnd2}
\left|\tilde{A}^{\nu}_{st}(\omega) - \tilde{A}_{st}^{\mu}(\omega)\right| \leq \varepsilon,
\end{equation}
\begin{equation}\label{eq:Mbnd3}
\left|c^\nu_s - c^\mu_s \right| \leq \varepsilon,
\end{equation}
and for all $N > 0$, and for all $k$ such that $|k| \leq n$,
\begin{equation}\label{eq:Mbnd4}
\left|\tilde{K}^{\nu^N,k}_{st} - \tilde{K}^{\mu^N,k}_{st}\right| \leq \varepsilon,
\end{equation}
and
\begin{equation}\label{eq:Mbnd5}
\left|\tilde{A}_{st}^{\nu^N,k} - \tilde{A}^{\mu^N,k}_{st}\right| \leq \varepsilon.
\end{equation}
\end{proposition}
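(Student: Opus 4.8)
The plan is to establish the five estimates in turn, exploiting the fact that every quantity appearing is a continuous functional of the finitely many real numbers $M^{\mu,l}_{st}$ (for a suitable finite range of $l$) plus the "tail" contributions, which are small uniformly in $\mu$ by the absolute convergence \eqref{eq:lambdasumassumption}. First I would fix a large integer $L$ so that $\sum_{|l|>L}\sum_k|\Lambda(k,l)| + \sum_k\sum_{|l|>L}|\Lambda(k,l)|$ is at most (a small multiple of) $\varepsilon$; this uses \eqref{eq:lambdasumassumption}. The only genuinely $\mu$-dependent data are then the matrices $M^{\mu,l}$ for $|l|\leq L$ and the means $c^\mu_s$. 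Each entry $M^{\mu,l}_{st} = \int_{\T^\Z} f(u^0_{s-1})f(u^l_{t-1})\,d\mu(u)$ is an integral of a bounded continuous function (here the key point is that $f$ is continuous and bounded by $1$, and $u\mapsto f(u^0_{s-1})f(u^l_{t-1})$ depends only on finitely many coordinates, hence is continuous and bounded on $\T^\Z$), so $\mu\mapsto M^{\mu,l}_{st}$ is continuous for the topology of weak convergence on $\mM_{1,s}^+(\T^\Z)$; likewise $\mu\mapsto c^\mu_s$ from \eqref{eq:cmumean}. Therefore there is an open neighbourhood $V_\varepsilon(\nu)$ of $\nu$ on which $|M^{\nu,l}_{st}-M^{\mu,l}_{st}|$ and $|c^\nu_s - c^\mu_s|$ are all small for the finitely many indices $|l|\le L$, $s,t\in[1,T]$; \eqref{eq:Mbnd3} is then immediate.

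For \eqref{eq:Mbnd1}, I would write $\tilde{K}^{\mu}(\omega) = \theta^2\mone_T\,{}^t\mone_T + \sum_{l}\big(\sum_k \Lambda(k,l)e^{-ik\omega}\big) M^{\mu,l}$ (rearranging the definitions \eqref{eq:Kimuinfinite}, \eqref{eq:Ktildemu}; this is the content of Proposition \ref{eq:Kmuspectral}), split the $l$-sum at $|l|=L$, bound the tail using the choice of $L$ together with $\|M^{\mu,l}\|\le T$, and bound the head by $\sum_{|l|\le L}\big(\sum_k|\Lambda(k,l)|\big)\|M^{\nu,l}-M^{\mu,l}\|$, which is small on $V_\varepsilon(\nu)$. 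The bound is uniform in $\omega$ because $|e^{-ik\omega}|=1$. The same argument, now using the finite sums \eqref{eq:Kimu}--\eqref{eq:Mmu} and noting that $\tilde K^{\mu^N}(\omega)$ only involves $M^{\mu^N,m}$ for $|m|\le n$ (and that $M^{\mu^N,m}_{st}$ equals the same integral against the marginal, hence is again weakly continuous in $\mu$ and controlled by the same $L$-truncation once $N$ is large, while for small $N$ there are only finitely many $(N,k)$ and finitely many entries, each continuous), gives \eqref{eq:Mbnd4}; one shrinks $V_\varepsilon(\nu)$ finitely many times. I would then pass from $K$ to $A$: by Lemma \ref{lemma:Lipschitz} the map $B\mapsto B(\sigma^2{\rm Id}_T+B)^{-1}$ is Lipschitz on the set $\varDelta$ containing all the matrices $\tilde K^{\mu}(\omega)$ and $\tilde K^{\mu^N,k}$, so \eqref{eq:Mbnd2} and \eqref{eq:Mbnd5} follow from \eqref{eq:Mbnd1} and \eqref{eq:Mbnd4} respectively, after a further (harmless) rescaling of $\varepsilon$ by the Lipschitz constant and possibly shrinking $V_\varepsilon(\nu)$. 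Finally I take $V_\varepsilon(\nu)$ to be the intersection of the finitely many neighbourhoods produced above.

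The main obstacle is the uniformity: one must make sure that a single neighbourhood $V_\varepsilon(\nu)$ works simultaneously for all $\omega\in[-\pi,\pi[$ and all $N$ (and all $|k|\le n$). The resolution, as indicated, is that after truncating the $l$-sum at a fixed $L$ the $\omega$- and $N$-dependence enters only through the bounded factors $e^{-ik\omega}$ (respectively $e^{-2\pi ikm/N}$) and through matrices $M^{\mu^N,m}$ whose norms are bounded by $T$ uniformly; the tail beyond $L$ is then $O(\varepsilon)$ uniformly by \eqref{eq:lambdasumassumption}, and the head is a finite sum of weakly continuous functionals of $\mu$. A minor technical point to check carefully is that for the finite-$N$ statements \eqref{eq:Mbnd4}--\eqref{eq:Mbnd5} the marginal integrals $M^{\mu^N,m}_{st}$ are indeed continuous in $\mu\in\mM_{1,s}^+(\T^\Z)$ (they are, since $\mu\mapsto\mu^N$ is continuous and $f$ is bounded continuous), and that one may handle the finitely many "small-$N$" cases by brute force while handling all large $N$ uniformly via the $L$-truncation.
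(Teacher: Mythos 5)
Your proof is correct, but it takes a genuinely different route from the paper. You use a truncation argument: fix $L$ so the tail $\sum_{|l|>L}\sum_k|\Lambda(k,l)|$ is small, observe that the head involves only finitely many functionals $\mu\mapsto M^{\mu,l}_{st}$ and $\mu\mapsto c^\mu_s$ which are weakly continuous (being integrals of bounded continuous functions of finitely many coordinates), and shrink the neighbourhood finitely many times. The paper instead exploits the particular metric $D$ of \eqref{eq:metricdefn}, whose weights $\kappa_n=\max(\lambda_n,2^{-N})$ with $\lambda_n=\sum_k|\Lambda(k,n)|$ were chosen \emph{precisely} to absorb the $\Lambda$-factors appearing in $K^{\mu,k}-K^{\nu,k}$: writing the difference as a double sum over $(k,l)$ and bounding $|f(u^0_{s-1})f(u^l_{t-1})-f(v^0_{s-1})f(v^l_{t-1})|\leq 2(k_f d_L(\pi_L u,\pi_L v)\wedge 1)$, one obtains directly $|\tilde K^\mu_{st}(\omega)-\tilde K^\nu_{st}(\omega)|\leq 2D(\mu,\nu)$, so $V_\varepsilon(\nu)$ is simply the $D$-ball of radius $\varepsilon/2$ with no truncation needed. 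What you gain is robustness (your argument would work for any metric metrizing weak convergence, and does not require reverse-engineering the definition of $D$); what the paper's approach buys is brevity and an explicit, uniform radius for $V_\varepsilon(\nu)$, which streamlines later estimates such as Lemma \ref{lemma:Vsquared}. One small remark: your separate handling of the ``small-$N$'' cases in \eqref{eq:Mbnd4}--\eqref{eq:Mbnd5} is unnecessary. Since $M^{\mu^N,m}_{st}=M^{\mu,m}_{st}$ for $|m|\leq n$ (the integrand depends only on the coordinates captured by $\pi_N$), the finite sums in \eqref{eq:Kimu} are dominated termwise by the infinite ones, so the single $L$-truncation already gives a bound uniform in $N$ and $k$ without any case split.
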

\begin{proof}
Let $\mu$ be in $\mathcal{M}_{1,s}^+(\T^\Z)$ and $\omega \in [-\pi,\pi[$. We have
\[
\tilde{K}^\mu_{st}(\omega)-\tilde{K}_{st}^\nu(\omega)=\sum_{k=-\infty}^\infty
(K^{\mu,k}_{st}-K^{\nu,k}_{st}) e^{-ik\omega}.
\]
Using \eqref{eq:Kimuinfinite} we have
\[
\tilde{K}^\mu_{st}(\omega)-\tilde{K}^\nu_{st}(\omega)=\sum_{k,l = -\infty}^\infty \Lambda(k,l)(M_{st}^{\mu,l}-M_{st}^{\nu,l}) e^{-ik\omega},
\]
hence
\begin{multline*}
\left|\tilde{K}_{st}^\mu(\omega)-\tilde{K}_{st}^\nu(\omega)\right| \leq \\
\sum_{k,l = -\infty}^\infty
|\Lambda(k,l)| \olivier{\inf_{\mathcal{L}^{2L}}}\int_{\T^{L} \times \T^L} \left| f(u^0_{s-1}) f(u^l_{t-1}) - f(v^0_{s-1})f(v^l_{t-1})\right|\mathcal{L}^{2L}(du,dv),
\end{multline*}
where $L=2|l|+1$ and $\mathcal{L}^{2L}$ has marginals $\mu^L$ and $\nu^L$. Since $|f(u^0_{s-1}) f(u^l_{t-1}) - f(v^0_{s-1}) f(v^l_{t-1})|\leq 2 (k_f d_L(\pi_L u,\pi_L v) \wedge 1)$, \olivier{where $k_f$ is the Lipschitz constant of the function $f$}, we find (through \eqref{eq:metricdefn}) that
\[
\left|\tilde{K}^{\mu}_{st}(\omega)-\tilde{K}^{\nu}_{st}(\omega)\right| \leq 2 D(\mu,\nu).
\]
Thus for \eqref{eq:Mbnd1} to be satisfied, it suffices for us to stipulate that $V_\varepsilon(\nu)$ is a ball of radius less than $\frac{\varepsilon}{2}$ (with respect to the distance metric in \eqref{eq:metricdefn}). Similar reasoning dictates that \eqref{eq:Mbnd4} is satisfied too. 

However in light of lemma \ref{lemma:Lipschitz}, it is evident that we may take the radius of $V_\varepsilon(\nu)$ to be sufficiently small that \eqref{eq:Mbnd1}, \eqref{eq:Mbnd4} and \eqref{eq:Mbnd5} are satisfied. In fact \eqref{eq:Mbnd2} is also satisfied, as it may be obtained by taking the limit as $N\rightarrow\infty$ of \eqref{eq:Mbnd5}. Since $c^\mu$ is determined by the one-dimensional marginal of $\mu$, it follows from the definition of the metric in \eqref{eq:metricdefn} that we may take the radius of $V_\varepsilon(\nu)$ to be sufficiently small that \eqref{eq:Mbnd3} is satisfied too.
\end{proof}
A direct consequence of the above proposition is that $c^\mu,\tilde{K}^{\mu^N},\tilde{K}^{\mu},\tilde{A}^{\mu^N}$ and $\tilde{A}^{\mu}$ are continuous with respect to $\mu$.

Before we close this section we define a subset of
$\mathcal{M}_{1,s}^+(\T^\Z)$ which appears naturally.
\begin{definition}\label{def:E2}
Let  $\mathcal{E}_2$ be the subset  of $\mathcal{M}_{1,s}^+(\T^\Z)$ by
\[
\mathcal{E}_2=\{ \mu \in \mathcal{M}_{1,s}^+(\T^\Z)\, | \,
\Exp^{\underline{\mu}_{1,T}}[\|v^0\|^2] < \infty\}.
\]
\end{definition}
For this set of measures, we may define the stationary process $(v^k)_{k \in \Z}$ in $\T^{\Z}_T$, where $v^k_s = \Psi_s(u^k)$, $s=1,\cdots,T$. This has a finite mean $\Exp^{\undt{\mu}_{1,T}}[v^0]$, noted $\bar{v}^\mu$. It admits the following spectral density measure,
noted $\tilde{v}^\mu$, such that
\begin{equation}\label{eq:tildemuV}
\Exp^{\undt{\mu}_{1,T}}[v^0\,^t
v^k]=\frac{1}{2\pi}\int_{-\pi}^\pi e^{ik\omega} \, \tilde{v}^\mu(d\omega).
\end{equation}
We similarly define $\mathcal{E}_2^{(N)}$ to be the subset of $\mathcal{M}_{1}^+(\T^N)$ such that for all $\mu^N$ in this subset and for all $|j|<n$,
\[
\Exp^{\undt{\mu}^N_{1,T}}[\norm{v^j}^2] < \infty 
\]
and note that if $\mu\in\mathcal{E}_2$ then $\mu^N\in\mathcal{E}_2^{(N)}$. \olivier{Also note that $\hat{\mu}^N \in \mathcal{E}_2$.}

\subsection{Definition of the functional \protect $\Gamma$}

In this section we define and study a functional $\Gamma = \Gamma_1 + \Gamma_2$, which will be used to characterise the Radon-Nikodym derivative of $\Pi^N$ with respect to $R^N$. Let $\mu\in\mathcal{M}^+_{1,s}(\mathcal{T}^{\Z})$, and let $(\mu^N)_{N \geq 1}$ be the
$N$-dimensional marginals of $\mu$ (for $N=2n+1$ odd). 
\subsubsection{\protect $\Gamma_1$}\label{Sect:Gamma1Properties}
We define
\begin{equation}\label{eq:Gamma1}
\Gamma_1(\mu^N)=-\frac{1}{2N}\log\left({\rm det}\left({\rm
  Id}_{NT}+\frac{1}{\sigma^2} K^{\mu^N}\right)\right).
\end{equation}
Because of lemma \ref{lemma:Kmupos} the spectrum of $K^{\mu^N}$ is
positive, that of ${\rm
  Id}_{NT}+\frac{1}{\sigma^2} K^{\mu^N}$ is strictly positive and the above expression has a
sense. Moreover, $\Gamma_1(\mu^N) \leq 0$.

We now define $\Gamma_1(\mu) =\lim_{N\rightarrow\infty}\Gamma_1(\mu^N)$. The following lemma indicates that this is well-defined.
\begin{lemma}\label{lemma:Gamma1}
When $N$ goes to infinity the limit of \eqref{eq:Gamma1} is given by
\begin{equation}\label{eq:Gamma1infinity}
\Gamma_1(\mu)=-\frac{1}{4\pi} \int_{-\pi}^{\pi} \log \left( {\rm det}\left({\rm
  Id}_{T}+\frac{1}{\sigma^2} \tilde{K}^\mu(\omega)\right)\right)\,d\omega
\end{equation}
for all $\mu \in \mathcal{M}_{1,s}^+(\T^\Z)$.
\end{lemma}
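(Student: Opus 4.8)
The plan is to compute $\Gamma_1(\mu^N)$ explicitly using the block-circulant structure of $K^{\mu^N}$ and then identify the limit as a Riemann sum converging to the stated integral. First I would invoke Lemma~\ref{lem:preserveeigenvalues}: since $K^{\mu^N}$ is a symmetric block-circulant matrix with $T\times T$ blocks $K^{\mu^N,k}$, it is block-diagonalised by $W^{(N)}\otimes \mathrm{Id}_T$ into the Hermitian blocks $\tilde{K}^{\mu^N,l}=\tilde{K}^{\mu^N}(2\pi l/N)$, $l=-n,\cdots,n$. Hence $\mathrm{Id}_{NT}+\frac1{\sigma^2}K^{\mu^N}$ has determinant $\prod_{l=-n}^n \det\bigl(\mathrm{Id}_T+\frac1{\sigma^2}\tilde{K}^{\mu^N}(2\pi l/N)\bigr)$, so that
\[
\Gamma_1(\mu^N) = -\frac{1}{2N}\sum_{l=-n}^n \log\det\Bigl(\mathrm{Id}_T+\frac1{\sigma^2}\tilde{K}^{\mu^N}(2\pi l/N)\Bigr).
\]
This is $-\frac{1}{4\pi}$ times a Riemann sum (with mesh $2\pi/N$ over $[-\pi,\pi[$) for the function $\omega\mapsto \log\det\bigl(\mathrm{Id}_T+\frac1{\sigma^2}\tilde{K}^{\mu^N}(\omega)\bigr)$.

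Next I would pass to the limit. The integrand is built from $\tilde K^{\mu^N}(\omega)$, which converges uniformly in $\omega$ to $\tilde K^{\mu}(\omega)$ by Lemma~\ref{lem:muNconvergenceK}. Since the eigenvalues of both $\tilde K^{\mu^N}(\omega)$ and $\tilde K^{\mu}(\omega)$ lie in $[0,\rho_K]$ by Lemma~\ref{eq:rhoK}, the matrices $\mathrm{Id}_T+\frac1{\sigma^2}\tilde K^{\cdot}(\omega)$ have eigenvalues in $[1,1+\rho_K/\sigma^2]$; on this compact set of positive-definite matrices the map $B\mapsto \log\det B$ is Lipschitz (its derivative $B\mapsto B^{-1}$ is bounded there), and $\omega\mapsto\tilde K^{\mu}(\omega)$ is continuous by Proposition~\ref{prop:lambdabehaviour}-type reasoning (continuity of the absolutely convergent Fourier series). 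Therefore the integrand is continuous and uniformly bounded, the Riemann sums for $\tilde K^{\mu}$ converge to $\int_{-\pi}^\pi \log\det(\mathrm{Id}_T+\frac1{\sigma^2}\tilde K^{\mu}(\omega))\,d\omega$, and the uniform convergence $\tilde K^{\mu^N}\to\tilde K^{\mu}$ lets us replace $\tilde K^{\mu^N}$ by $\tilde K^{\mu}$ in the sum at a cost $o(1)$. Combining, $\Gamma_1(\mu^N)\to -\frac1{4\pi}\int_{-\pi}^\pi \log\det(\mathrm{Id}_T+\frac1{\sigma^2}\tilde K^{\mu}(\omega))\,d\omega$, which is \eqref{eq:Gamma1infinity}.

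The main technical care is in the two-step limit: one must control simultaneously the discretisation error (Riemann sum versus integral, for a fixed continuous integrand) and the model error ($\tilde K^{\mu^N}$ versus $\tilde K^{\mu}$, uniformly in $\omega$). Both are handled by the uniform eigenvalue bound $\rho_K$, which keeps everything inside a compact domain where $\log\det$ is Lipschitz; there is no real obstacle, only bookkeeping. A minor point worth a sentence: the identity $\tilde K^{\mu^N}(2\pi l/N)=\tilde K^{\mu^N,l}$ for $|l|\le n$ (noted just before Lemma~\ref{lem:muNconvergenceK}) is what aligns the DFT eigenvalues from Lemma~\ref{lem:preserveeigenvalues} with the sampled continuous symbol, and one should also record that $\Gamma_1(\mu)\le 0$ since each factor of the determinant is $\ge 1$, consistent with the finite-$N$ bound already observed.
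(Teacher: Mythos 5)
Your proposal follows the same route as the paper: block-diagonalise $K^{\mu^N}$ via Lemma~\ref{lem:preserveeigenvalues} to write $\Gamma_1(\mu^N)$ as the Riemann sum in \eqref{eq:Gamma1tildeKN}, then pass to the limit using the uniform convergence $\tilde{K}^{\mu^N}(\omega)\to\tilde{K}^{\mu}(\omega)$ of Lemma~\ref{lem:muNconvergenceK}. Your additional remarks on the eigenvalue bound $\rho_K$ and the Lipschitz control of $\log\det$ simply make explicit what the paper leaves implicit in ``it is evident that the above expression converges to the desired result.''
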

\begin{proof}
Through lemma \ref{lem:preserveeigenvalues}, we have that
\begin{equation}\label{eq:Gamma1tildeKN}
\Gamma_1(\mu^N) = -\frac{1}{2N}\sum_{l=-n}^n\log\left(\det\left({\rm Id}_{T} + \frac{1}{\sigma^2}\tilde{K}^{\mu^N}\left(\frac{2\pi l}{N}\right)\right)\right),
\end{equation}
where we recall that $\tilde{K}^{\mu^N}\left(\frac{2\pi l}{N}\right) = \tilde{K}^{\mu^N,l}$. Since, by lemma \ref{lem:muNconvergenceK}, $\tilde{K}^{\mu^N}(\omega)$ converges uniformly to $\tilde{K}^{\mu}(\omega)$, it is evident that the above expression converges to the desired result.
\end{proof}
\begin{proposition}\label{prop:Gamma1b}
$\Gamma_1$ is 
bounded below and continuous on both $\mathcal{M}_{1}^+(\T^N)$ and $\mathcal{M}_{1,s}^+(\T^\Z)$.
\end{proposition}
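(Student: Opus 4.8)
The plan is to prove the two assertions of Proposition \ref{prop:Gamma1b} — boundedness below and continuity — by reducing everything to the spectral formula for $\Gamma_1$ and the uniform estimates already established. For the lower bound, recall that $\Gamma_1(\mu^N)\leq 0$ was noted right after \eqref{eq:Gamma1}, so the upper bound is trivial; for the lower bound I would use Lemma \ref{eq:rhoK}, which says the eigenvalues of $\tilde{K}^{\mu}(\omega)$ (and of each $\tilde{K}^{\mu^N,l}$) are bounded above by $\rho_K = T(\theta^2+\Lambda^{sum})$, uniformly in $\mu$, $N$ and $\omega$. Hence $\det\bigl({\rm Id}_T + \tfrac{1}{\sigma^2}\tilde{K}^{\mu}(\omega)\bigr) \leq \bigl(1 + \rho_K/\sigma^2\bigr)^T$, since the matrix $\tilde K^\mu(\omega)$ is Hermitian positive (Proposition \ref{eq:Kmuspectral}) and its determinant is the product of its eigenvalues. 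Therefore, from \eqref{eq:Gamma1infinity},
\[
\Gamma_1(\mu) \geq -\frac{1}{4\pi}\int_{-\pi}^{\pi} T\log\!\left(1+\frac{\rho_K}{\sigma^2}\right) d\omega = -\frac{T}{2}\log\!\left(1+\frac{\rho_K}{\sigma^2}\right),
\]
a bound independent of $\mu$; the same estimate applied to \eqref{eq:Gamma1tildeKN} gives the uniform lower bound for $\Gamma_1(\mu^N)$ on $\mathcal{M}_1^+(\T^N)$.

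For continuity on $\mathcal{M}_{1,s}^+(\T^\Z)$, I would fix $\nu$ and $\varepsilon>0$ and use Proposition \ref{prop:Ktildeomega}: on a small enough neighbourhood $V_\varepsilon(\nu)$ we have $|\tilde{K}^{\nu}_{st}(\omega) - \tilde{K}^{\mu}_{st}(\omega)| \leq \varepsilon$ for all $s,t$ and all $\omega$. Since the map $B \mapsto \log\det\bigl({\rm Id}_T + \tfrac{1}{\sigma^2}B\bigr)$ is $C^1$, hence Lipschitz, on the compact set of Hermitian positive-semidefinite matrices with eigenvalues in $[0,\rho_K]$ (the eigenvalues of $B$ are bounded there, so $\det({\rm Id}_T+\tfrac{1}{\sigma^2}B) \geq 1$ is bounded away from zero and the gradient of $\log\det$ is bounded), the integrand in \eqref{eq:Gamma1infinity} varies by at most $C\varepsilon$ uniformly in $\omega$; integrating over $[-\pi,\pi[$ and dividing by $4\pi$ shows $|\Gamma_1(\nu)-\Gamma_1(\mu)| \leq \tfrac{C}{2}\varepsilon$ on $V_\varepsilon(\nu)$, giving continuity. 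For continuity on $\mathcal{M}_1^+(\T^N)$ the argument is the same, working from \eqref{eq:Gamma1tildeKN} with the DFT matrices $\tilde{K}^{\mu^N,l}$ and the bound \eqref{eq:Mbnd4} in place of \eqref{eq:Mbnd1}; here the sum over $l$ is finite so no extra care with uniformity in $N$ is needed.

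The only genuinely delicate point is making sure the Lipschitz constant $C$ for $B \mapsto \log\det({\rm Id}_T + \tfrac{1}{\sigma^2}B)$ can be taken independent of $\mu$, $N$ and $\omega$; this is exactly what Lemma \ref{eq:rhoK} buys us, since it confines all the relevant matrices to a fixed compact set on which $\log\det({\rm Id}_T+\tfrac{1}{\sigma^2}\,\cdot\,)$ is smooth with bounded derivative (its differential at $B$ is $W\mapsto \tfrac{1}{\sigma^2}\operatorname{tr}\bigl(({\rm Id}_T+\tfrac{1}{\sigma^2}B)^{-1}W\bigr)$, whose norm is controlled by $\|({\rm Id}_T+\tfrac{1}{\sigma^2}B)^{-1}\| \leq 1$). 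Everything else is a routine assembly of Proposition \ref{prop:Ktildeomega}, Lemma \ref{eq:rhoK}, Lemma \ref{lemma:Gamma1} and the observation $\Gamma_1(\mu^N)\leq 0$.
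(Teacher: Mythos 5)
Your proof is correct, but for the lower bound you take a genuinely different route than the paper. The paper first rewrites $\Gamma_1(\mu^N)$ as a Gaussian integral via Lemma \ref{lemma:gauss}, namely $\Gamma_1(\mu^N)=\frac{1}{N}\log\Expt{\exp(-\frac{1}{2\sigma^2}\sum_k\|G^{\mu^N,k}-c^{\mu^N}\|^2)}$, applies Jensen's inequality, and then bounds $\mathrm{Trace}(K^{\mu^N,0})$ directly from \eqref{eq:Kimu}; this yields the constant $\beta_1=\frac{T}{2\sigma^2}(\theta^2+\Lambda^{sum})=\frac{\rho_K}{2\sigma^2}$. You instead work from the spectral formulas \eqref{eq:Gamma1tildeKN}--\eqref{eq:Gamma1infinity} and use the eigenvalue bound of Lemma \ref{eq:rhoK} to bound the determinant pointwise in $\omega$, giving $-\frac{T}{2}\log(1+\rho_K/\sigma^2)$. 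Both constants are uniform in $\mu$ and $N$, which is all that matters; your bound is neither uniformly tighter nor weaker (it beats the paper's precisely when $T\log(1+\rho_K/\sigma^2)<\rho_K/\sigma^2$), but it avoids Lemma \ref{lemma:gauss} and Jensen altogether, so it is arguably more elementary. For continuity the two arguments are essentially identical (Proposition \ref{prop:Ktildeomega} plus regularity of $\log\det$); you simply make the Lipschitz estimate explicit where the paper says only ``continuity of the determinant,'' and your observation that $\|({\rm Id}_T+\sigma^{-2}B)^{-1}\|\leq 1$ on the positive cone is a clean way to see that the constant is uniform. One small caveat you might flag: \eqref{eq:Gamma1tildeKN} rests on the block-circulant structure of $K^{\mu^N}$ (Lemma \ref{lem:preserveeigenvalues}), which implicitly uses the cyclic-stationarity class $\check{\M}_1^+(\T^N)$ of Lemma \ref{lemma:PQdag}; the paper's definition \eqref{eq:Gamma1} does not rely on that diagonalisation, so its Jensen argument for boundedness is marginally more robust as to the class of $\mu^N$ for which it is stated. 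This does not affect the result since only measures in that class arise.
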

\begin{proof}
Applying lemma \ref{lemma:gauss} in the case of
$Z=(G^{\mu^N,-n}-c^{\mu^N},\cdots,G^{\mu^N,n}-c^{\mu^N})$, $a=0$, $b=\sigma^{-2}$,  we write
\[
\Gamma_1(\mu^N)=\frac{1}{N}\log \Expt { \exp \left(- \frac{1}{2\sigma^2}
    \sum_{k=-n}^{n} \| G^{\mu^N,k}-c^{\mu^N} \|^2 \right) }.
\]
Using Jensen's inequality we have
\[
\Gamma_1(\mu^N) \geq -\frac{1}{2N\sigma^2} \Expt { \sum_{k=-n}^{n} \|
  G^{\mu^N,k}-c^{\mu^N} \|^2 }=-\frac{1}{2\sigma^2} \Expt{\|G^{\mu^N,0}-c^{\mu^N} \|^2}.
\]
By definition of $K^{\mu^N,0}$, the righthand side is equal to
$-\frac{1}{2 \sigma^2} {\rm Trace}(K^{\mu^N,0})$. From \eqref{eq:Kimu}, we find that
\[
{\rm Trace}(K^{\mu^N,0})=T\theta^2+\sum_{m=-n}^n \Lambda(0,m) {\rm Trace}(M^{\mu^N,m}).
\]
It follows from the definition \eqref{eq:Mmu} that
\[
0 \leq |{\rm Trace}(M^{\mu^N,m})| \leq T.
\]
We obtain
\[
{\rm Trace}(K^{\mu^N,0}) \leq T\left(\theta^2+ \sum_{m=-n}^n
  \left|\Lambda(0,m)\right|\right) \leq T\left(\theta^2+\Lambda^{sum}\right)
\]
Hence
\[
\Gamma_1(\mu^N) \geq -\beta_1,
\]
where 
\begin{equation}\label{eq:Gamma1lb}
\beta_1 = \frac{T}{2\sigma^2} \left(\theta^2+ \Lambda^{sum}\right).
\end{equation}
It follows from lemma \ref{lemma:Gamma1} that  $-\beta_1$ is a lower
bound for $\Gamma_1(\mu)$ as well. 

The continuity (over both $\mathcal{M}_{1}^+(\T^N)$ and $\mathcal{M}_{1,s}^+(\T^\Z)$) follows from the expressions \eqref{eq:Gamma1} and \eqref{eq:Gamma1infinity}, continuity of the applications $\mu^N
\to \tilde{K}^{\mu^N}$ and $\mu
\to \tilde{K}^{\mu}$ (proposition \ref{prop:Ktildeomega}) and the continuity of the determinant.
\end{proof}

\subsubsection{\protect $\Gamma_2$}\label{Sect:Gamma2Properties}
We define
\begin{equation}\label{eq:Gamma2muN}
\Gamma_2(\mu^N)=\int_{\T_T^N}\phi^N(\mu,v)\undt{\mu}^N_{1,T}(dv)
\end{equation} 
where
\begin{equation}\label{eq:phiNdeftn}
\phi^N(\mu,v) = \frac{1}{2\sigma^2}\Bigg(\frac{1}{N}\sum_{j,k=-n}^{n} 
\,^t (v^j- c^{\mu})A^{\mu^N,\,k-j}
   (v^k-
c^{\mu} )+\\ \frac{2}{N}\sum_{j=-n}^n \langle c^{\mu}, v^j\rangle -
  \|c^{\mu} \|^2\Bigg).
\end{equation}
This quantity is finite in the subset $\mathcal{E}_2^N$ of
$\mM_{1}^+(\T^N)$ defined in definition \ref{def:E2}. If
$\mu^N\notin\mathcal{E}_2^N$, then we set $\Gamma_2(\mu^N) =
\infty$. Equivalently, we note that $\Gamma_2(\mu^N) = \int_{\T_T^N}\phi_\olivier{\dag}^N(\mu,\olivier{v_\dag}) \olivier{\undt{\mu}_\dag^N}(\olivier{dv_\dag)}$, where
\begin{multline}\label{eq:phiNdag}
\phi_\dag^N(\mu,\olivier{v_\dag}) = \frac{1}{2N^2\sigma^2}\sum_{l=-n}^n {}^t\tilde{v}^{l,*}\tilde{A}^{\mu^N,-l}\tilde{v}^{l} \\+\frac{1}{N\sigma^2}{}^t\tilde{v}^{0}\left({\rm Id}_{T}-\tilde{A}^{\mu^N}(0)\right)c^{\mu} \olivier{-} \frac{1}{2\sigma^2}{}^tc^{\mu}\left({\rm Id}_T-\tilde{A}^{\mu^N}(0)\right)c^{\mu},
\end{multline}
and $\tilde{v}$ is implicitly given by \eqref{eq:wtildev} as a function of $v_{\dag}$.
We have used definition \ref{defn:DFT} and the DFT diagonalisation of Lemma \ref{lem:preserveeigenvalues}. We note that, since $\tilde{A}^{\mu^N,l}$ is Hermitian \olivier{positive}, ${}^t\tilde{v}^{l,*}\tilde{A}^{\mu^N,l}\tilde{v}^{l}$ is real and positive.

We define
\begin{equation*}
\Gamma_2(\mu) = \lim_{N\rightarrow\infty}\Gamma_2(\mu^N),
\end{equation*}
where $\mu^N$ is the $N$-dimensional marginal of $\mu$. If $\mu\notin \mathcal{E}_2$ then $\mu^N\notin\mathcal{E}_2^{N}$ and $\Gamma_2(\mu)=\infty$. We assume throughout the rest of this section that $\mu\in\mM_{1,s}^+(\T^\Z)$ is in $\mathcal{E}_2$. This means that the spectral measure $\tilde{v}^\mu$ (as given in (\ref{eq:tildemuV})) exists. The following proposition indicates that $\Gamma_2(\mu)$ is well-defined.

\begin{proposition}\label{prop:E2Gamma2}
If the measure $\mu$ is in $\mathcal{E}_2$, i.e. if $\Exp^{\undt{\mu}_{1,T}}[\| v^0 \|^2] < \infty$, then $\Gamma_2(\mu)$ is finite and writes
\begin{multline*}\label{eq:Gamma2spectral}
\Gamma_2(\mu)=\frac{1}{2\sigma^2}
\left(\frac{1}{2\pi}\int_{-\pi}^{\pi} \tilde{A}^\mu(-\omega) :
  \tilde{v}^\mu(d\omega)\right.\\ \left.+\,^t c^\mu
  (\tilde{A}^\mu(0)-{\rm Id}_{T}) c^\mu+2\Exp^{\undt{\mu}_{1,T}}\left[{}^{t}
   {v^0}({\rm Id}_{T}-\tilde{A}^\mu(0))c^\mu\right]\right).
\end{multline*}
The ``:'' symbol indicates the double
contraction on the indexes. One also has\\
\begin{multline*}
\Gamma_2(\mu)=\frac{1}{2\sigma^2}\left(\lim_{n \to \infty} \sum_{k=-n}^n \int_{\T_T^\Z} \,^t
(v^0- c^{\mu}) A^{\mu,\,k} (v^{k}-c^{\mu})\,
d\undt{\mu}_\olivier{{1,T}}(v)\right.\\ \left.+2\Exp^{\undt{\mu}_\olivier{{1,T}}}[\langle c^\mu, v^0\rangle]-\|c^\mu\|^2\right).
\end{multline*}
\end{proposition}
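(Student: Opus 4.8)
The plan is to establish the finite-$N$ formula for $\Gamma_2(\mu^N)$ first, then pass to the limit. The key observation is that the integrand $\phi^N(\mu,v)$ in \eqref{eq:phiNdeftn} is a quadratic-plus-linear functional of $v$, so its integral against $\undt{\mu}^N_{1,T}$ is determined entirely by the mean $\bar{v}^{\mu^N} = \Exp^{\undt{\mu}^N_{1,T}}[v^0] = c^\mu$ (since $\undt{\mu}$ has mean $c^\mu$ componentwise by definition of $\Psi$ and the equation for the coupled process --- actually one should check $\Exp^{\undt{\mu}_{1,T}}[v^0] = c^\mu$ holds; if not, keep it general) and the spectral measure $\tilde{v}^{\mu^N}$ of the stationary sequence $(v^k)$. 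First I would use the DFT diagonalisation of Lemma \ref{lem:preserveeigenvalues} together with Definition \ref{defn:DFT} to rewrite $\phi^N$ in the Fourier domain as $\phi_\dag^N$ in \eqref{eq:phiNdag}; the block-circulant structure of $A^{\mu^N}$ makes the double sum $\sum_{j,k}\,{}^t(v^j-c^\mu)A^{\mu^N,k-j}(v^k-c^\mu)$ collapse to the single sum $\frac{1}{N}\sum_l {}^t\tilde{v}^{l,*}\tilde{A}^{\mu^N,l}\tilde{v}^l$.

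Next I would take the expectation of $\phi_\dag^N$ under $\undt{\mu}^N_\dag$. Using that $\Exp[\tilde{v}^l] = 0$ for $l \neq 0$ while $\Exp[\tilde{v}^0] = N c^\mu$ (stationarity plus the mean being $c^\mu$), and that $\Exp[{}^t\tilde{v}^{l,*}\tilde{A}^{\mu^N,l}\tilde{v}^l] = \tilde{A}^{\mu^N,l}:\Exp[\tilde{v}^{l,*}\,{}^t\tilde{v}^l]$, I express everything via the finite spectral representation: $\Exp[\tilde{v}^{l,*}\,{}^t\tilde{v}^l]$ is, up to normalisation, the value $\tilde{v}^{\mu^N}(\{2\pi l/N\})$ of the discrete spectral measure, so that $\frac{1}{N}\sum_l \tilde{A}^{\mu^N,l}:(\text{that}) \to \frac{1}{2\pi}\int_{-\pi}^\pi \tilde{A}^\mu(-\omega):\tilde{v}^\mu(d\omega)$ as $N\to\infty$. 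The linear and constant terms reduce to $\frac{1}{N}\sum_j \langle c^\mu, v^j\rangle$ and $-\|c^\mu\|^2$ style expressions once the $\tilde{A}^{\mu^N}(0)$ pieces are accounted for, giving the stated spectral formula for $\Gamma_2(\mu)$ in the limit. For the second (non-spectral) formula I would instead integrate $\phi^N$ directly, without diagonalising: $\int \phi^N(\mu,v)\,\undt{\mu}^N_{1,T}(dv) = \frac{1}{2\sigma^2}\big(\sum_{|k|\le n}(1 - |k|/N)\,{}^t\Exp[(v^0-c^\mu)A^{\mu^N,k}(v^k-c^\mu)] + 2\Exp[\langle c^\mu, v^0\rangle] - \|c^\mu\|^2\big)$ using stationarity to reduce the double sum over $j,k$ to a single sum over $k=j-k$ weighted by $(1-|k|/N)$; then letting $N\to\infty$, combined with $A^{\mu^N,k}\to A^{\mu,k}$ (equation \eqref{eq:Amulimit}) and dominated convergence justified by $\mu\in\mathcal{E}_2$ and $\sum_k\|A^{\mu,k}\| < \infty$, yields the second displayed expression.

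The main obstacle I expect is the justification of the limit interchanges and the finiteness claim when $\mu \in \mathcal{E}_2$. One must control $\Exp^{\undt{\mu}^N_{1,T}}[\,{}^t(v^0-c^\mu)A^{\mu^N,k}(v^k-c^\mu)]$ uniformly in $N$: by Cauchy--Schwarz this is bounded by $\|A^{\mu^N,k}\|_{op}\,\Exp[\|v^0-c^\mu\|^2]$, and $\Exp^{\undt{\mu}^N_{1,T}}[\|v^0\|^2] = \Exp^{\undt{\mu}_{1,T}}[\|v^0\|^2] < \infty$ by definition of $\mathcal{E}_2$ and stationarity, while $\sum_k \|A^{\mu^N,k}\|$ is bounded uniformly in $N$ by the absolute convergence established before \eqref{eq:Amulimit} (Wiener's theorem) together with Lemma \ref{lemma:Lipschitz}; this gives a dominating summable bound permitting dominated convergence in $k$ and the exchange with $N\to\infty$. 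A secondary subtlety is relating the finite discrete spectral measure $\tilde{v}^{\mu^N}$ (mass at the points $2\pi l/N$) to the continuous spectral measure $\tilde{v}^\mu$ in \eqref{eq:tildemuV}; here one argues that the finite covariances $\Exp[v^0\,{}^t v^k]$ for $|k|\le n$ agree for $\mu^N$ and $\mu$, so the Cesàro-type sums of the finite spectral data converge to the Riemann--Stieltjes integral against $\tilde{v}^\mu$, exactly as in the convergence argument for $\Gamma_1$ in Lemma \ref{lemma:Gamma1}. Finally one checks the two formulas agree with each other via Parseval applied to the stationary sequence $(v^k)$ and its spectral measure.
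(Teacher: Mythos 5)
Your proposal follows essentially the same route as the paper: the finite-$N$ quantity $\Gamma_2(\mu^N)$ is rewritten in the Fourier domain using the block-circulant/DFT structure of $A^{\mu^N}$ together with the spectral measure $\tilde{v}^\mu$, and the limit $N\to\infty$ is then taken via the uniform convergence $\tilde{A}^{\mu^N}(\omega)\to\tilde{A}^\mu(\omega)$ (lemma~\ref{lem:muNconvergenceA}) and dominated convergence, with finiteness supplied by $\mu\in\mathcal{E}_2$ and the absolute summability of $(A^{\mu,k})$. The only real difference is cosmetic: the paper writes the finite-$N$ integral directly against the fixed infinite spectral measure $\tilde{v}^\mu$ (so that only the $\tilde{A}^{\mu^N}$ factor needs to converge), whereas you pass through the empirical DFT of $v$ and a Ces\`aro/Fej\'er step; also note that in general $\Exp[\tilde{v}^0]=N\bar{v}^\mu$ rather than $Nc^\mu$ (as you flag), which simply means keeping $\bar{v}^\mu$ in the linear terms.
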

\begin{proof}
Using \eqref{eq:tildemuV}, \eqref{eq:Gamma2muN} the stationarity of $\mu$ and the fact that $\sum_{k=-n}^n A^{\mu^N,k}=\tilde{A}^{\mu^N}(0)$, we have
\begin{multline}
\Gamma_2(\mu^N) = \frac{1}{4\pi\sigma^2}\int_{-\pi}^{\pi}\sum_{k=-n}^n \exp(ik\omega)A^{\mu^N,k}:\tilde{v}^\mu(d\omega) \\+ \frac{1}{\sigma^2}\int_{\T_T^N} \langle c^\mu, v^0\rangle -{}^t c^\mu\tilde{A}^{\mu^N}(0)v^0 d\undt{\mu}_{1,T}^N(v) + \frac{1}{2\sigma^2}{}^t c^\mu \left({\rm Id}_{T} - \tilde{A}^{\mu^N}(0)\right)c^\mu. 
\end{multline}
From the spectral representation of $A^{\mu^N}$ we find that
\begin{multline}
\Gamma_2(\mu^N) = \frac{1}{4\pi\sigma^2}\int_{-\pi}^\pi\tilde{A}^{\mu^N}(-\omega):\tilde{v}^{\mu}(d\omega) \\+ \frac{1}{\sigma^2}E^{\undt{\mu}_{1,T}}\left[{}^t {v^0}({\rm Id}_{T} - \tilde{A}^{\mu^N}(0))c^\mu\right]+\frac{1}{2\sigma^2} {}^t c^\mu \left({\rm Id}_{T} - \tilde{A}^{\mu^N}(0)\right)c^\mu.
\end{multline}
Since (according to proposition \ref{lem:muNconvergenceA}) $\tilde{A}^{\mu^N}(\omega)$ converges uniformly to $\tilde{A}^{\mu}(\omega)$ as $N\rightarrow\infty$, it follows by dominated convergence that $\Gamma_2(\mu^N)$ converges to the expression in the proposition.

The second expression for $\Gamma_2(\mu)$ follows analogously, although this time we make use of the fact that the partial sums of the Fourier Series of $\tilde{A}^{\mu}$ converge uniformly to $\tilde{A}^{\mu}$ (because the Fourier Series is absolutely convergent).
\end{proof}
We next obtain more information about the eigenvalues of the matrices
$\tilde{A}^{\mu^N,k} = \tilde{A}^{\mu^N}(\frac{2k\pi}{N})$ (where ${k=-n,\ldots,n}$) and $\tilde{A}^{\mu}(\omega)$.
\begin{lemma}\label{lem:alphabound}
There exists $\olivier{0 <}\, \alpha < 1$, such that for all $N$, $\mu$ and $\omega$,
the eigenvalues of $\tilde{A}^{\mu^N,k}$, $\tilde{A}^{\mu}(\omega)$ and $A^{\mu^N}$ are less than or equal to $\alpha$.
\end{lemma}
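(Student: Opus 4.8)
The plan is to reduce everything to the scalar spectral map $x \mapsto x/(\sigma^2+x)$ and then to invoke the uniform upper bound $\rho_K$ on the eigenvalues of the various $\tilde{K}$'s coming from Lemma \ref{eq:rhoK}.

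First I would recall the identities $\tilde{A}^{\mu^N,k} = \tilde{K}^{\mu^N,k}(\sigma^2 {\rm Id}_T + \tilde{K}^{\mu^N,k})^{-1}$ (equation \eqref{eq:Atildel}) and $\tilde{A}^{\mu}(\omega) = \tilde{K}^{\mu}(\omega)(\sigma^2 {\rm Id}_T + \tilde{K}^{\mu}(\omega))^{-1}$, where the matrices $\tilde{K}^{\mu^N,k}$ and $\tilde{K}^{\mu}(\omega)$ are Hermitian and positive semidefinite (this is part of Proposition \ref{eq:Kmuspectral} and the discussion preceding it). Since $\tilde{A}$ is obtained from $\tilde{K}$ by the functional calculus associated with the scalar function $g(x) = x/(\sigma^2+x)$, which is well-defined and smooth on $[0,\infty)$ because $\sigma^2 > 0$, the matrices $\tilde{A}$ and $\tilde{K}$ are simultaneously diagonalizable and every eigenvalue of $\tilde{A}$ is of the form $g(\lambda)=\lambda/(\sigma^2+\lambda)$ for some eigenvalue $\lambda\geq 0$ of $\tilde{K}$.

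Next, I would use that $g$ is strictly increasing on $[0,\infty)$ together with Lemma \ref{eq:rhoK}, which gives $0\leq\lambda\leq\rho_K = T(\theta^2+\Lambda^{sum})$ for every eigenvalue $\lambda$ of $\tilde{K}^{\mu^N,k}$ or of $\tilde{K}^{\mu}(\omega)$, uniformly in $N$, $k$, $\mu$ and $\omega$. Hence every eigenvalue of $\tilde{A}^{\mu^N,k}$ or $\tilde{A}^{\mu}(\omega)$ is at most $g(\rho_K)=\rho_K/(\sigma^2+\rho_K)$. I would then set $\alpha := \rho_K/(\sigma^2+\rho_K)$; since $\rho_K<\infty$ and $\sigma^2>0$ we have $0<\alpha<1$, and this constant depends only on $T$, $\theta^2$, $\sigma^2$ and $\Lambda^{sum}$, so in particular not on $N$, $\mu$ or $\omega$.

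Finally, for $A^{\mu^N}=K^{\mu^N}(\sigma^2 {\rm Id}_{NT}+K^{\mu^N})^{-1}$: by Lemma \ref{lem:preserveeigenvalues} the symmetric block-circulant matrix $K^{\mu^N}$ is block-diagonalized by $W^{(N)}\otimes{\rm Id}_T$ with diagonal blocks $\tilde{K}^{\mu^N,k}$, so the spectrum of $K^{\mu^N}$ is the union of the spectra of the $\tilde{K}^{\mu^N,k}$ and therefore lies in $[0,\rho_K]$; applying the same functional-calculus argument to $K^{\mu^N}$ (equivalently, noting that $A^{\mu^N}$ is block-diagonalized with diagonal blocks $\tilde{A}^{\mu^N,k}$) shows its eigenvalues are all at most $\alpha$ as well. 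There is no real obstacle here: the only points requiring a line of care are that the $\tilde{K}$'s are genuinely positive semidefinite and that the bound $\rho_K$ of Lemma \ref{eq:rhoK} is uniform in $N$, $\mu$ and $\omega$, both of which are already available from the cited results.
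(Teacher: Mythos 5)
Your proof is correct and follows essentially the same route as the paper: invoke Lemma \ref{eq:rhoK} for the uniform bound $\rho_K$ on the eigenvalues of the Hermitian positive matrices $\tilde{K}$, observe that $\tilde{A}$ and $\tilde{K}$ are simultaneously diagonalizable so the eigenvalues transform by $\lambda\mapsto\lambda/(\sigma^2+\lambda)$, set $\alpha=\rho_K/(\sigma^2+\rho_K)$, and transfer the bound to $A^{\mu^N}$ via the block-diagonalization of Lemma \ref{lem:preserveeigenvalues}.
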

\begin{proof}
By lemma \ref{eq:rhoK}, the eigenvalues of $\tilde{K}^{\mu}(\omega)$ are positive and upperbounded by $\rho_K$. Since $\tilde{K}^{\mu}(\omega)$ and $\left(\sigma^2 {\rm Id}_T + \tilde{K}^{\mu}(\omega)\right)^{-1}$ are coaxial (because $\tilde{K}^{\mu}$ is Hermitian and therefore diagonalisable), we may take
\begin{equation*}
\alpha = \frac{\rho_K}{\sigma^2+\rho_K}.
\end{equation*}
This upperbound also holds for $\tilde{A}^{\mu^N,k}$, and for the eigenvalues of $A^{\mu^N}$ because of lemma \ref{lem:preserveeigenvalues}.
\end{proof}
We wish to prove that $\Gamma_2(\mu^N)$ is lower semicontinuous. A consequence of this will be that $\Gamma_2(\mu^N)$ is measureable with respect to $\mathcal{B}(\mathcal{M}_{1}(\T^N))$. In
order to do this, we must first prove that its integrand $\phi^N(\mu,v)$ possesses a lower bound. Letting $\olivier{\tilde{w}^j}= \tilde{v}^j$ for all $j$, except that $\tilde{w}^0 = \tilde{v}^0 - Nc^{\mu}$, we may write the integrand as $\olivier{\undt{\phi}_\dag}^N(\mu,\olivier{w_\dag}) = \olivier{\phi_\dag^N(\mu,v_\dag)}$, where \\
\begin{equation}\label{eq:phiw}
 \olivier{\undt{\phi}_\dag}^N(\mu,\olivier{w_\dag}) = \frac{1}{2N^2\sigma^2}\sum_{l=-n}^n {}^t \olivier{\tilde{w}^{l,*}}\tilde{A}^{\mu^N,-l}\olivier{\tilde{w}}^l + \frac{1}{N\sigma^2}\langle c^{\mu},\tilde{w}^0\rangle + \frac{1}{2\sigma^2}\norm{c^\mu}^2.
\end{equation}
Note that the correspondence between $\tilde{w}$ and $w_{\dag}$ is given by \eqref{eq:wtildev}.

Thus in order that the integrand possesses a lower bound, it suffices to prove, \olivier{since the matrixes  $\tilde{A}^{\mu^N,l}$ are Hermitian positive} that there exists a lower bound for
\begin{equation}\label{eq:Gamma2reduced}
\frac{1}{N^2} \,^t
\tilde{w}^0\tilde{A}^{\mu^N,0}\tilde{w}^{0}+\frac{2}{N} \ \langle
\tilde{w}^0 , c^{\mu^N}\rangle,
\end{equation}
We have made use of the fact that $\tilde{w}^0$ and $\tilde{A}^{\mu^N,0}$ are real (since they are each a sum of real variables). Let $\tilde{K}^{\mu^N,0}$ = $O^{\mu^N} D^{\mu^N} \,^t O^{\mu^N}$, where $D^{\mu^N}$ is diagonal and $O^{\mu^N}$ is orthonormal. We define $X = ^t O^{\mu^N}\olivier{\tilde{w}^0}$, so that (\ref{eq:Gamma2reduced}) is equal to
\begin{equation}
\frac{1}{N^2} \,^t X D^{\mu^N}(\sigma^2{\rm Id}_{T}+D^{\mu^N})^{-1}X + \frac{2}{N}\sum_{t=1}^T \langle {}^tO^{\mu^N}_t, c^{\mu^N}\rangle X_t,\label{eq:Gamma2reduced2}
\end{equation}
where $O^{\mu^N}_t$ is the $t$-th column vector of $O^{\mu^N}$.
In order that (\ref{eq:Gamma2reduced2}) is bounded below, we require that the coefficient of $X$ converges to zero when $D^{\mu^N}$ does. The following lemma is sufficient. 
\begin{lemma}\label{lem:boundOc}
For each $1 \leq t\leq T$,
\begin{equation*}
\langle c^{\mu^N}, O^{\mu^N}_t \rangle^2\leq\frac{\bar{J}^2}{\tilde{\Lambda}^{\text{min}}}D^{\mu^N}_{tt},
\end{equation*}
where $\tilde{\Lambda}^{\text{min}}$ is given in proposition \ref{prop:lambdabehaviour}.
\end{lemma}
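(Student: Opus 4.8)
The quantity to bound is $D^{\mu^N}_{tt}={}^tO^{\mu^N}_t\,\tilde K^{\mu^N,0}\,O^{\mu^N}_t$, where $\tilde K^{\mu^N,0}=\tilde K^{\mu^N}(0)=\sum_{k=-n}^nK^{\mu^N,k}$. The idea is to show that, after discarding the (nonnegative) threshold contribution, $\tilde K^{\mu^N,0}$ is the convolution of the matrix sequence $(M^{\mu^N,m})_m$ against a \emph{positive-definite} scalar sequence, and then to extract the mean $c^{\mu^N}$ through the zero Fourier mode together with Jensen's inequality.

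First, summing \eqref{eq:Kimu} over $k$ and interchanging the two finite sums,
\[
\tilde K^{\mu^N,0}=\theta^2\mone_T\,{}^t\mone_T+\sum_{m=-n}^n\Lambda_2^N(m)\,M^{\mu^N,m},\qquad \Lambda_2^N(m):=\sum_{k=-n}^n\Lambda(k,m).
\]
Since $\Lambda$ is even, $\Lambda_2^N$ is even; since ${}^tM^{\mu^N,m}=M^{\mu^N,-m}$, the sum on the right is a real symmetric matrix, and dropping the first term (which is positive semidefinite) gives
\[
D^{\mu^N}_{tt}\ \ge\ \sum_{m=-n}^n\Lambda_2^N(m)\,{}^tO^{\mu^N}_t M^{\mu^N,m}O^{\mu^N}_t .
\]

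Next, introduce on $(\T^N,\mu^N)$ the scalar variable $g^i=\sum_{s=1}^T(O^{\mu^N}_t)_s\,f(u^i_{s-1})$, and set $r(m):={}^tO^{\mu^N}_t M^{\mu^N,m}O^{\mu^N}_t=\Exp^{\mu^N}[g^0g^m]$ (using \eqref{eq:Mmu}). Two facts are needed: (i) $r$ is a real even positive-definite sequence on $\Z_N$, because its DFT at $p$ equals ${}^tO^{\mu^N}_t\tilde M^{\mu^N,p}O^{\mu^N}_t\ge 0$ since $\tilde M^{\mu^N,p}$ is Hermitian positive; (ii) $\Lambda_2^N$ is a real even positive-definite sequence on $\Z_N$, because its DFT at $p$ is $\tilde\Lambda^N(0,p)$, which is $\ge 0$ by Proposition \ref{prop:lambdabehaviour} and \eqref{eq:tildeLambdapq}. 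Parseval's identity on $\Z_N$ and the nonnegativity of both DFTs then give, keeping only the $p=0$ term,
\[
\sum_{m=-n}^n\Lambda_2^N(m)\,r(m)=\frac1N\sum_{p=-n}^n\tilde\Lambda^N(0,p)\,\tilde r(p)\ \ge\ \frac1N\,\tilde\Lambda^N(0,0)\,\tilde r(0).
\]
By stationarity of $\mu$, $\tilde r(0)=\sum_m\Exp^{\mu^N}[g^0g^m]=\Exp^{\mu^N}\!\bigl[g^0\textstyle\sum_m g^m\bigr]=\frac1N\Exp^{\mu^N}\!\bigl[(\sum_m g^m)^2\bigr]\ge\frac1N\bigl(\Exp^{\mu^N}\sum_m g^m\bigr)^2=N(\Exp^{\mu^N}g^0)^2$, the inequality being Jensen. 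Since $\Exp^{\mu^N}g^0=\frac1{\bar J}\langle O^{\mu^N}_t,c^{\mu^N}\rangle$ by \eqref{eq:cmumean}, and $\tilde\Lambda^N(0,0)=\sum_{k,l}\Lambda(k,l)\ge\tilde\Lambda^{\text{min}}$ by Proposition \ref{prop:lambdabehaviour}, chaining the three displays yields $D^{\mu^N}_{tt}\ge\tilde\Lambda^{\text{min}}(\Exp^{\mu^N}g^0)^2=\frac{\tilde\Lambda^{\text{min}}}{\bar J^2}\langle O^{\mu^N}_t,c^{\mu^N}\rangle^2$, which rearranges to the claim.

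The main obstacle is the first step's observation: recognising that summing the covariance blocks $K^{\mu^N,k}$ over $k$ collapses the two-variable function $\Lambda$ onto the one-variable sequence $\Lambda_2^N$, which inherits positive-definiteness from $\Lambda$ (its DFT being precisely the slice $\tilde\Lambda^N(0,\cdot)$ of $\tilde\Lambda^N$). Once this is in hand, everything else is routine: domination of a nonnegative Fourier sum by its zero mode (Parseval/Bessel) and Jensen's inequality $\Exp[G^2]\ge(\Exp G)^2$ applied to $G=\sum_m g^m$, which together produce exactly the constant $\bar J^2/\tilde\Lambda^{\text{min}}$ and the square $\langle O^{\mu^N}_t,c^{\mu^N}\rangle^2$.
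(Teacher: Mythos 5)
Your strategy tracks the paper's closely: both expand $D^{\mu^N}_{tt}={}^t O^{\mu^N}_t\tilde K^{\mu^N,0}O^{\mu^N}_t$, discard the nonnegative threshold contribution, rewrite the remaining double sum as $\sum_{m}\Lambda_2^N(m)\,{}^tO^{\mu^N}_t M^{\mu^N,m}O^{\mu^N}_t$ with $\Lambda_2^N(m)=\sum_k\Lambda(k,m)$, pass to the DFT where the weights $\tilde\Lambda^N(0,p)$ are nonnegative, and isolate the zero mode to produce $\tilde\Lambda^N(0,0)\ge\tilde\Lambda^{\text{min}}$.

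Where you diverge is the final step, and there the argument has a genuine gap. The paper centres first, setting $L^{\mu^N,m}=M^{\mu^N,m}-\bar c^\mu\,{}^t\bar c^\mu$ with $\bar c^\mu=c^{\mu^N}/\bar J$; this makes the term $\tilde\Lambda^N(0,0)\langle\bar c^\mu,O^{\mu^N}_t\rangle^2$ appear \emph{exactly}, and the remainder $\frac1N\sum_l\tilde\Lambda^N(0,-l)\,{}^tO^{\mu^N}_t\tilde L^{\mu^N,l}O^{\mu^N}_t$ is discarded by invoking the asserted Hermitian positivity of $\tilde L^{\mu^N,l}$. You instead keep the raw $M$-matrices, drop the $p\ne 0$ modes, and try to extract $\langle\bar c^\mu,O^{\mu^N}_t\rangle^2$ from $\tilde r(0)$ via $\tilde r(0)=\Exp^{\mu^N}[g^0\sum_m g^m]=\frac1N\Exp^{\mu^N}[(\sum_m g^m)^2]$, which you attribute to stationarity. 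That equality is false here: it would require $\Exp^{\mu^N}[g^j\sum_m g^m]$ to be independent of $j$, i.e.\ stationarity modulo $N$ (the class $\check{\M}_{1}^+(\T^N)$), whereas the $N$-dimensional marginal of a general $\mu\in\mM_{1,s}^+(\T^\Z)$ satisfies only $\Exp[g^jg^k]=\Exp[g^0g^{k-j}]$ \emph{without} wraparound. Explicitly, $\frac1N\Exp[(\sum_m g^m)^2]=\sum_{d=-2n}^{2n}\bigl(1-\frac{|d|}{N}\bigr)\Exp[g^0g^d]$ is a Fej\'er-weighted sum over lags $|d|\le 2n$, which is not the unweighted sum $\tilde r(0)=\sum_{|d|\le n}\Exp[g^0g^d]$. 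The inequality you actually need, $\tilde r(0)\ge N\langle\bar c^\mu,O^{\mu^N}_t\rangle^2$, is precisely ${}^tO^{\mu^N}_t\tilde L^{\mu^N,0}O^{\mu^N}_t\ge 0$, the Hermitian positivity of $\tilde L^{\mu^N,0}$; cite that assertion directly (or centre $M$ as the paper does) rather than deriving it through this probabilistic identity.
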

\begin{proof}
If $\bar{J} = 0$ the conclusion is evident, thus we assume throughout this proof that $\bar{J}\neq 0$. 
Since $D_{tt}^{\mu^N} = {}^t \bar{O}^{\mu^N}_t \tilde{K}^{\mu^N,0} O^{\mu^N}_t$, we find from the definition that
\begin{equation*}
D_{tt}^{\mu^N} = \theta^2 \langle\mone_T,O^{\mu^N}_{t}\rangle^2
+\sum_{k,m=-n}^n \Lambda^N(k,m) \,^t
O^{\mu^N}_t M^{\mu^N,{\olivier m}} O^{\mu^N}_t.
\end{equation*}
We introduce the matrixes $(L^{\mu^N,k})_{ k=-n,\cdots,n}$, where for $1\leq s,t\leq T$,
\[
L^{\mu^N,k}_{st}=M^{\mu^N,k}_{st}- \bar{c}^\mu_s \bar{c}^\mu_t =\int_{\T^N}
(f(u^0_{s-1})-\bar{c}^{\mu}_{s-1}) (f(u^k_{t-1})-\bar{c}^{\mu}_{t-1})\,\mu^N(du)
\]
where $\bar{c}^{\mu}=\frac{1}{\bar{J}}c^{\mu^N}$. 

These matrices have the same properties as the matrixes $M^{\mu^N,k}$,
in particular the discrete Fourier Transform
$(\tilde{L}^{\mu^N,l})_{l=-n,\cdots,n}$ is \olivier{Hermitian} positive.
Using this spectral representation we
write
\[
D_{tt}^{\mu^N} = \theta^2 \langle \mone_T, O^{\mu^N}_{t}\rangle^2+\tilde{\Lambda}^N(0,0)\langle\bar{c}^\mu,
O^{\mu^N}_t \rangle^2
+ \frac{1}{N}\sum_{l=-n}^n \tilde{\Lambda}^N(0,-l) \,^t
O^{\mu^N}_t \tilde{L}^{\mu^N,l} O^{\mu^N}_t,
\]
and since $\tilde{\Lambda}^N(0,-l)$ is positive for all
$l=-n,\cdots ,n$ and $\,^t
O^{\mu^N}_t \tilde{L}^{\mu^N,l} O^{\mu^N}_t$ is positive for all $t=1,\cdots,T$,
we have
\[
D_{tt}^{\mu^N} \geq \frac{\tilde{\Lambda}^N(0,0)}{\bar{J}^2}\langle c^{\mu^N}, O^{\mu^N}_t\rangle^2,
\]
and the conclusion follows from assumption \eqref{eq:Lambdabound1}.\\
\end{proof}
We may use the previous lemma to obtain a lower-bound for the quadratic form (\ref{eq:Gamma2reduced2}). We recall the easily-proved identity from the calculus of quadratics that, for all $x\in\mathbb{R}$,
\begin{equation*}
ax^2 + 2bx \geq -\frac{b^2}{a}.
\end{equation*}
We therefore find, through lemma \ref{lem:boundOc}, that (\ref{eq:Gamma2reduced2}) is greater than or equal to
\begin{equation}\label{eq:betadefn}
-\frac{\bar{J}^2}{\tilde{\Lambda}^{\text{min}}}\left(T\sigma^2 +
  \sum_{t=1}^T
  D^{\mu^N}_{tt}\right)=-\frac{\bar{J}^2}{\tilde{\Lambda}^{\text{min}}}\left(T\sigma^2
  + {\rm Trace}(\tilde{K}^{\mu^N,0}))\right).
\end{equation}
Since $\tilde{K}^{\mu^N,0}=\sum_{k=-n}^n K^{\mu,k}$ and
$K^{\mu,k}=\theta^2 \delta_k \mone_T \,^t \mone_T+\sum_{m=-n}^n
\Lambda(k,m) M^{\mu,m},$ it follows that
\[
{\rm Trace}(\tilde{K}^{\mu^N,0}) \leq T\left(\theta^2+\Lambda^{sum}\right).
\]
Putting all this together we find that $\phi^N(\mu,v)$ is greater
than $-\beta_2$, where
\begin{equation}\label{eq:beta}
\beta_2=\frac{T \bar{J}^2}{2\sigma^2\tilde{\Lambda}^{\text{min}}} \left(\sigma^2 +\theta^2+\Lambda^{sum}\right).
\end{equation}
This is a `universal' constant which depends only on the model
parameters and not on the particular measure $\mu$.\\

We then have the following proposition.
\begin{proposition}\label{prop:Gamma2lsc}
$\Gamma_2(\mu^N)$ is lower-semicontinuous.
\end{proposition}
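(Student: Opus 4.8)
The plan is to exhibit $\Gamma_2(\mu^N)$, up to the additive constant $\beta_2$, as an \emph{increasing supremum of continuous functionals of $\mu^N$}; since a pointwise supremum of continuous (indeed of lower-semicontinuous) functions is lower-semicontinuous, this gives the result. Two preliminary facts are needed. First, the integrand $(\mu,v)\mapsto\phi^N(\mu,v)$ of \eqref{eq:phiNdeftn} is \emph{jointly continuous}: it is a polynomial of degree two in $v\in\T_T^N$ whose coefficients are assembled from $c^\mu$ and the blocks $A^{\mu^N,k}$, both of which depend continuously on $\mu$ by Proposition~\ref{prop:Ktildeomega} and the remark following it (continuity of $c^\mu$ and of $\tilde A^{\mu^N,k}$, hence of $A^{\mu^N,k}$), while for fixed $\mu$ it is manifestly continuous in $v$. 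Second, the map $\mu^N\mapsto\undt{\mu}^N_{1,T}=\mu^N_{1,T}\circ\Psi_{1,T}^{-1}$ is continuous for the topologies of weak convergence, because $\Psi_{1,T}$ is continuous. Together with the uniform lower bound $\phi^N(\mu,v)\geq-\beta_2$ established above \eqref{eq:beta}, this shows that $\Gamma_2(\mu^N)=\int_{\T_T^N}\phi^N(\mu,v)\,\undt{\mu}^N_{1,T}(dv)$ is a well-defined element of $(-\infty,+\infty]$ for every $\mu$, consistent with the convention $\Gamma_2(\mu^N)=+\infty$ off $\mathcal{E}_2^{(N)}$.

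\textbf{The abstract lemma.} I would then prove, and apply with $g=\phi^N+\beta_2\geq0$ and $\nu_\mu=\undt{\mu}^N_{1,T}$, the following: if $g$ is nonnegative and jointly continuous on $\mathcal{M}\times\T_T^N$ (with $\mathcal{M}$ a metrizable space of probability measures) and $\mu\mapsto\nu_\mu$ is weakly continuous, then $\mu\mapsto\int g(\mu,v)\,\nu_\mu(dv)$ is lower-semicontinuous. To prove it, fix a product metric $\rho$ on $\mathcal{M}\times\T_T^N$ and approximate $g$ from below by its inf-convolutions $g_k(\mu,v)=\inf_{(\mu',v')}\bigl(g(\mu',v')+k\,\rho((\mu,v),(\mu',v'))\bigr)$: each $g_k$ is nonnegative and $k$-Lipschitz (hence jointly continuous), the truncations $\hat g_k=\min(g_k,k)$ are bounded, jointly continuous, and increase pointwise to $g$ (here we use that $g$ is real-valued, so $g_k\uparrow g$ and eventually $g_k<k$ at each point). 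By monotone convergence $\int g(\mu,v)\,\nu_\mu(dv)=\sup_k\int\hat g_k(\mu,v)\,\nu_\mu(dv)$, so it suffices to show each $\mu\mapsto\int\hat g_k(\mu,v)\,\nu_\mu(dv)$ is continuous. Given $\mu_m\to\mu$, the family $\{\nu_{\mu_m}\}$ is tight (a weakly convergent sequence on a Polish space is relatively compact, hence tight), and joint continuity of $\hat g_k$ forces $\hat g_k(\mu_m,\cdot)\to\hat g_k(\mu,\cdot)$ uniformly on compact sets. A standard three-term splitting then finishes it: choose a compact $K$ carrying all but $\varepsilon$ of every mass $\nu_{\mu_m}$; bound the integral over $K$ by the uniform convergence, the integral over $K^c$ by $2\|\hat g_k\|_\infty\varepsilon$, and the remaining term $\int\hat g_k(\mu,\cdot)\,d(\nu_{\mu_m}-\nu_\mu)$ by weak convergence tested against the fixed bounded continuous function $\hat g_k(\mu,\cdot)$.

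\textbf{Main obstacle.} The only genuinely nontrivial point is the first preliminary fact: that the coefficients of the quadratic $\phi^N$ depend continuously on $\mu$ in the weak topology, which is precisely the content of Proposition~\ref{prop:Ktildeomega} (continuity of $c^\mu$ and, via Lemma~\ref{lemma:Lipschitz}, of $\tilde A^{\mu^N,k}$ and $A^{\mu^N,k}$). Everything else — the inf-convolution regularization, monotone convergence, and the tightness-plus-uniform-convergence argument for bounded continuous integrands against a weakly convergent family of measures — is soft. I would remark that the identical argument (replacing $\T_T^N$ by the Polish space $\T^\Z_T$ and $A^{\mu^N}$ by $A^{\mu}$) also gives lower semicontinuity of $\Gamma_2$ on $\mmeas(\T^\Z)$ through its $N$-marginal, which is what is used subsequently.
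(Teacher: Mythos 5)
Your proof is correct and follows the same basic strategy as the paper's: realize $\Gamma_2(\mu^N)+\beta_2$ as an increasing supremum of continuous functionals of $\mu^N$, using the lower bound $\phi^N\geq-\beta_2$ and the continuity of the coefficients $c^\mu$, $A^{\mu^N,k}$ supplied by Proposition~\ref{prop:Ktildeomega}. Where you differ is in the mode of truncation. The paper defines $\phi^{N,M}(\mu^N,v)=\mathbbm{1}_{B_M}(v)\big(\phi^N(\mu^N,v)+\beta_2\big)$ with $B_M=\{N^{-1}\sum_j\|v^j\|^2\leq M\}$ and lets $M\to\infty$; you instead use the Moreau--Yosida (inf-convolution) regularization $g_k$ followed by the truncation $\hat g_k=\min(g_k,k)$, and let $k\to\infty$. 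Both reduce the problem to showing that a bounded truncation integrates continuously against $\undt{\mu}^N_{1,T}$, via essentially the same two- or three-term split. Your regularization is arguably cleaner: $\hat g_k$ is genuinely (indeed $k$-Lipschitz) jointly continuous, whereas the paper's $\phi^{N,M}$ is discontinuous on $\partial B_M$, so the claim that ``$\phi^{N,M}$ is continuous and bounded with respect to $v$'' in the paper's first-term estimate is a small gloss (it would need, say, a continuous cutoff or a Portmanteau-type argument). You also make explicit the tightness-plus-uniform-convergence-on-compacts mechanism, which the paper leaves implicit. One remark: both you and the paper implicitly invoke $\sup$-of-truncations $=\Gamma_2(\mu^N)$ even when $\mu^N\notin\mathcal{E}_2^{(N)}$, i.e.\ that $\int(\phi^N+\beta_2)\,d\undt{\mu}^N_{1,T}=\infty$ off $\mathcal{E}_2^{(N)}$, which neither argument verifies; but since the paper's proof carries the identical implicit assumption, this is not a gap specific to your proposal.
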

\begin{proof}
We define $\phi^{N,M}(\mu^N,v) = 1_{B_M}(v)\left(\phi^{N}(\mu^N,v) + \beta_2\right)$, where $v\in B_M$ if $ N^{-1}\sum_{j=-n}^{n} \norm{v^j}^2 \leq M$. We have just seen that $\phi^{N,M} \geq 0$. We also define
\begin{equation*}
\Gamma_2^M(\mu^N) = \int_{\T_T^{N}}\phi^{N,M}(\mu,v)\olivier{\undt{\mu}}_{1,T}^N(dv) -\beta_2.
\end{equation*}
Suppose that $\nu^N\rightarrow\mu^N$ with respect to the weak topology. Observe that
\begin{multline*}
\left|\Gamma_2^M(\mu^N) - \Gamma_2^M(\nu^N)\right|\leq \left|\int_{\T_T^{N}}\phi^{N,M}(\mu^N,v)\olivier{\undt{\mu}}^N_{1,T}(dv) - \int_{\T_T^{N}}\phi^{N,M}(\mu^N,v)\olivier{\undt{\nu}}^N_{1,T}(dv)\right| \\+ \left|\int_{\T_T^{N}}\phi^{N,M}(\mu^N,v)\olivier{\undt{\nu}}^N_{1,T}(dv) - \int_{\T_T^{N}}\phi^{N,M}(\nu^N,v)\olivier{\undt{\nu}}^N_{1,T}(dv)\right|.
\end{multline*}
We may infer from the above expression that $\Gamma_2^M(\mu^N)$ is
continuous (with respect to $\mu^N$) for the following reasons. The
first term on the right hand side converges to zero because
$\phi^{N,M}$ is continuous and bounded (with respect to $v$). The
second term converges to zero because $\phi^{N,M}(\mu^N,v)$ is a
continuous function of $\mu^N$, see proposition \ref{prop:Ktildeomega}.

Since $\Gamma_2^M(\mu^N)$ grows to $\Gamma_2(\mu^N)$ as $M\rightarrow\infty$, we may conclude that $\Gamma_2(\mu^N)$ is lower semicontinuous with respect to $\mu^N$. 
\end{proof}
We define $\Gamma(\mu^N)=\Gamma_1(\mu^N)+\Gamma_2(\mu^N)$. We may conclude from propositions \ref{prop:Gamma1b} and \ref{prop:Gamma2lsc} that $\Gamma$ is measureable. 

\subsection{The Radon-Nikodym derivative}\label{sect:average}
In this section we determine the Radon-Nikodym derivative of $\Pi^N$ with respect to $R^N$. However in order for us to do this, we must first
compute the Radon-Nikodym derivative of $Q^N$ with respect to
$P^{\otimes N}$.  We do this 
in the next proposition where, and we will use the same notation
throughout the paper, the usual inner product of two vectors $u$ and $v$ of
$\R^{T}$ is noted $\langle u, v \rangle$.
\begin{proposition}\label{prop:RNderiv1}
The Radon-Nikodym  derivative
of $Q^N$ with respect to $P^{\otimes N}$ is given by
the following expression.
\begin{multline}\label{eq:dQNdPN}
\frac{dQ^N}{dP^{\otimes N}}(u^{-n},\cdots,u^{n})=\\
\Expt{\exp\left(\frac{1}{\sigma^2} \left(\sum_{j=-n}^n \langle \Psi_{1,T}(u^j), G^j \rangle-
\frac{1}{2}\| G^j \|^2\right)\right)},
\end{multline}
the expectation being taken against the $N$ $T$-dimensional Gaussian processes $(G^i)$, $i=-n,\cdots,n$ given by
\begin{equation}
\label{equation:Gprocdef}
G^i_t  =  \sum_{j=-n}^{n} J_{ij}^{\olivier{N}} f(u^j_{t-1})+\theta_i-\bar{\theta},
\quad t=1,\cdots,T ,
\end{equation}
and the function $\Psi$ being defined by \eqref{eq:Psidefn} and \eqref{eq:Psi}.
\end{proposition}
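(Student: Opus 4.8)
The plan is to compute the Radon-Nikodym derivative by conditioning on the synaptic weights and thresholds $(\J^N, \Theta)$, using the tower property of conditional expectation together with Proposition \ref{prop:psi}, which gives an explicit Gaussian-plus-initial-condition representation of the uncoupled law $P$.

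\textbf{Step 1: Conditioning and the change of variables.}
First I would fix a realisation of $(\J^N, \Theta)$ and identify the law $Q^N(\J,\Theta)$ of the solution to \eqref{eq:U}. Applying $\Psi_{1,T}$ coordinate-wise to a trajectory $U^j$ governed by \eqref{eq:U}, one has, by \eqref{eq:Psi},
\[
\Psi_s(U^j) = U^j_s - \gamma U^j_{s-1} - \bar{\theta} = \sum_{i=-n}^n J^N_{ji} f(U^i_{s-1}) + (\theta_j - \bar{\theta}) + B^j_{s-1} = G^j_s + B^j_{s-1},
\]
where $G^j$ is precisely the process defined in \eqref{equation:Gprocdef}. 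Since the initial conditions $U^j_0$ have law $\mu_I^{\otimes N}$ (independent of everything else) and the $B^j_{s-1}$ are i.i.d. $\mN_1(0,\sigma^2)$, it follows that, conditionally on $(\J,\Theta)$, the image $\undt{Q^N(\J,\Theta)}= Q^N(\J,\Theta)\circ\Psi^{-1}$ is the Gaussian measure $\bigotimes_{j}\bigl(\mN_T(G^j, \sigma^2 {\rm Id}_T)\otimes\mu_I\bigr)$, whereas by \eqref{eq:Punder} the uncoupled image $\undt{P}^{\otimes N}=\bigotimes_j\bigl(\mN_T(0_T,\sigma^2{\rm Id}_T)\otimes\mu_I\bigr)$.

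\textbf{Step 2: The Gaussian density and unconditioning.}
Since $\Psi$ is a bijection, $\frac{dQ^N(\J,\Theta)}{dP^{\otimes N}}(u) = \frac{d\undt{Q^N(\J,\Theta)}}{d\undt{P}^{\otimes N}}(\Psi(u))$, and the latter is a ratio of product Gaussian densities with a common covariance; the $\mu_I$ factors cancel, and the standard completing-the-square identity yields
\[
\frac{dQ^N(\J,\Theta)}{dP^{\otimes N}}(u^{-n},\dots,u^n) = \exp\left(\frac{1}{\sigma^2}\sum_{j=-n}^n \Bigl(\langle \Psi_{1,T}(u^j), G^j\rangle - \tfrac12\|G^j\|^2\Bigr)\right),
\]
where $G^j$ depends on $u$ and on $(\J,\Theta)$ through \eqref{equation:Gprocdef}. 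Finally, since $Q^N = \Exp^{\J,\Theta}[Q^N(\J,\Theta)]$ and $P^{\otimes N}$ does not depend on $(\J,\Theta)$, Fubini's theorem gives $\frac{dQ^N}{dP^{\otimes N}}(u) = \Exp^{\J,\Theta}\!\bigl[\frac{dQ^N(\J,\Theta)}{dP^{\otimes N}}(u)\bigr]$, which is exactly \eqref{eq:dQNdPN}.

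\textbf{The main obstacle} I anticipate is a careful justification of the interchange of expectation (over $(\J,\Theta)$) and the Radon-Nikodym derivative, i.e. verifying that $u\mapsto\Exp^{\J,\Theta}[dQ^N(\J,\Theta)/dP^{\otimes N}(u)]$ genuinely is a density of $Q^N$ with respect to $P^{\otimes N}$; this requires $Q^N(\J,\Theta)\ll P^{\otimes N}$ for $\Exp^{\J,\Theta}$-almost every $(\J,\Theta)$ (clear here, as both are equivalent to $\undt{P}^{\otimes N}$) and a measurability/integrability check so Fubini applies — the integrand is positive, so Tonelli suffices. A minor secondary point is bookkeeping with the map $\Psi$ and its Jacobian: because $\Psi$ is affine with unit Jacobian (it is unipotent lower-triangular in the time index, fixing $u_0$), the Jacobian factors cancel identically between numerator and denominator, so no Jacobian term survives in \eqref{eq:dQNdPN}.
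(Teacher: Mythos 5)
Your overall plan --- condition on $(\J^N,\Theta)$, make a change of variables to reduce $Q^N(\J^N,\Theta)$ to the uncoupled product measure, take a ratio of densities, then average over $(\J^N,\Theta)$ --- is exactly the paper's strategy, and the final formula you arrive at is correct.

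There is, however, a real gap in Step 2. You assert that $\undt{Q^N(\J,\Theta)} = Q^N(\J,\Theta)\circ\Psi^{-1}$ is the Gaussian product measure $\bigotimes_j\bigl(\mN_T(G^j,\sigma^2{\rm Id}_T)\otimes\mu_I\bigr)$, but this cannot be right as stated: $G^j_t$ is a function of the trajectory $u^i_{t-1}$ (hence, after composing with $\Psi^{-1}$, a nonlinear function of $v^i_s$ for $s\leq t-1$), so the pushforward is not a Gaussian measure at all. The correct statement is about its density with respect to Lebesgue times $\mu_I^{\otimes N}$, and establishing that density is exactly where the nontrivial Jacobian computation lives. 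Your ``secondary point'' about the unit Jacobian of $\Psi$ (unipotent, lower-triangular in time) is true but addresses the wrong map: $\Psi$ is affine and its Jacobian would cancel harmlessly anyway. What actually needs to be shown unimodular is the nonlinear map
\[
u\;\longmapsto\; \bigl(u^j_0,\ u^j_t-\gamma u^j_{t-1}-G^j_t(u)\bigr)_{j,t},
\]
which the paper calls $R_{\J^N,\Theta}$. Its Jacobian equals $1$ because $G^j_t$ depends only on $u^i_s$ with $s\leq t-1$ (the time-triangular structure of the recursion), and this must be argued explicitly. Once $R_{\J^N,\Theta}$ is in hand, the image of $Q^N(\J^N,\Theta)$ under it is $\bigl(\mN_T(\bar\theta\,\mone_T,\sigma^2{\rm Id}_T)\otimes\mu_I\bigr)^{\otimes N}$, the image of $P^{\otimes N}$ under $R_{0,\bar\theta\mone_N}$ is the same Gaussian product, and the ratio of densities gives your displayed formula; the subsequent Tonelli step is as you describe. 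So the plan is sound and coincides with the paper's, but the Gaussian-measure claim should be replaced by the unit-Jacobian change-of-variables argument for $R_{\J^N,\Theta}$, which is the technical heart of the proof.
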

\begin{proof}
For fixed $\olivier{(\J^{\olivier{N}}, \Theta)}$, we let $R_{J^N,\Theta} : \R^{N(T+1)} \to \R^{N(T+1)}$ be the mapping $u\rightarrow y$, i.e.
$R_{J^N,\Theta}(u^{-n},\cdots,u^{n})=(y^{-n},\cdots,y^n)$, where for $j=-n,\cdots,n$,
\[
\left\{
\begin{array}{lcl}
y^j_0 & = & u^j_0\\
y^j_t & = & u^j_t-\gamma u^j_{t-1}-G^j_t \quad t=1,\cdots,T.
\end{array}
\right.
\]
The determinant of the Jacobian of $R_{J^N,\Theta}$ is $1$ for the following reasons. Since $\frac{dy^j_s}{du^k_t} = 0$ if $t>s$, the determinant is $\prod_{s=0}^T D_s$, where $D_s$ is the Jacobian of the map $(u^{-n}_s,\ldots,u^{n}_s)\to (y^{-n}_s,\ldots,y^{n}_s)$ induced by $R_{\olivier{\J^{\olivier{N}}},\Theta}$. However $D_s$ is evidently $1$. Similar reasoning implies that $R_{\olivier{\J^{\olivier{N}}},\Theta}$ is a bijection.

It may be seen that the random vector $Y = R_{\olivier{\J^{\olivier{N}}},\Theta}(U)$ is such that $Y^j_0 = U^j_0$ and $Y^j_t  =  B^j_{t-1}+\bar{\theta}$ where $|j|\leq n$ and $t=1,\cdots,T.$ Therefore
\begin{equation*}
Y^j \simeq
\mathcal{N}_{T}(\bar{\theta} \mone_{T},\sigma^2 {\rm Id}_{T}) \otimes \mu_I,
\quad j=-n,\cdots,n.
\end{equation*}
Since the determinant of the Jacobian of $R_{J^N,\Theta}$ is one, we obtain the law of $Q^N(\J^{\olivier{N}},\Theta)$ by applying the inverse of $R_{\olivier{\J^{\olivier{N}}},\Theta}$ to the above distribution, i.e.
\begin{align*}
Q^N(\J^{\olivier{N}},\Theta)(du) = \left(2\pi\sigma^2\right)^{-\frac{NT}{2}}\exp\left(-\frac{1}{2\sigma^2}\norm{R_{\J^{\olivier{N}},\Theta}(u)}^2\right)\prod_{j=-n}^n\mu_I(du^j_0) \prod_{t=1}^T du^j_t.
\end{align*}
Recalling that $P^{\otimes N} = Q^N(0,\bar{\theta}\mone_N)$, we therefore find that
\begin{equation*}
\frac{dQ^N(\J^{\olivier{N}},\Theta)}{dP^{\otimes N}}(u) = \exp\left(-\frac{1}{2\sigma^2}\left(\norm{R_{\J^{\olivier{N}},\Theta}(u)}^2-\norm{R_{0,\bar{\theta}\mone_N}(u)}^2\right)\right).
\end{equation*}
Taking the expectation of this with respect to $\J^{\olivier{N}}$ and $\Theta$ yields the result.
\end{proof}

We now prove that the Gaussian system $(G^i_s)_{i=-n,\ldots,n,s=1,\ldots,T}$ has the same law as the system $G^{(\hat{\mu}^N)^N}$, as defined in \eqref{eq:Kimu} and afterwards.

\begin{proposition}\label{prop:Kmuhatpos}
Fix $u \in \T^N$. The covariance of the Gaussian system
$(G^i_s)$, where $i=-n,\ldots,n$ and $s=1,\ldots,T$ writes $K^{(\hat{\mu}^N(u))^N}$, where $(\hat{\mu}^N(u))^N$ is the $N$-dimensional marginal of $\hat{\mu}^N(u)$. For each $i$, the mean of $G^i$ is $c^{\hat{\mu}^N(u)}$.
\end{proposition}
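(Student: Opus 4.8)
The statement to prove is \textbf{Proposition~\ref{prop:Kmuhatpos}}: for a fixed $u \in \T^N$, the Gaussian system $(G^i_s)_{i=-n,\ldots,n,\,s=1,\ldots,T}$ defined in \eqref{equation:Gprocdef} has mean $c^{\hat{\mu}^N(u)}$ (for each $i$) and covariance $K^{(\hat{\mu}^N(u))^N}$. The plan is to compute the mean and covariance of $G^i$ directly from \eqref{equation:Gprocdef}, using only the fact that the $J^N_{ij}$ are Gaussian with known mean $\bar J/N$ and covariance \eqref{eq:cov}, that the $\theta_i$ are $\mathcal{N}_1(\bar\theta,\theta^2)$ independent of the $J$'s, and then to recognise the resulting expressions as exactly \eqref{eq:cmumean} and \eqref{eq:Kimu} evaluated at the empirical measure $\hat\mu^N(u)$.

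First I would treat the mean. Taking the expectation of \eqref{equation:Gprocdef} over $(\J^N,\Theta)$ with $u$ fixed gives
\[
\Exp[G^i_t] = \sum_{j=-n}^n \Exp[J^N_{ij}] f(u^j_{t-1}) + \Exp[\theta_i] - \bar\theta = \frac{\bar J}{N}\sum_{j=-n}^n f(u^j_{t-1}),
\]
since $\Exp[\theta_i]=\bar\theta$. On the other hand, by the definition \eqref{eq:hatmuN} of $\hat\mu^N(u)$, which places mass $1/N$ on each shifted periodic extension $S^i u(N)$, one has $\int_{\T^\Z} f(y^0_{t-1})\,d\hat\mu^N(u)(y) = \frac1N\sum_{j=-n}^n f(u^j_{t-1})$ (the $0$-th coordinate of $S^i u(N)$ runs through all the $u^j$ as $i$ varies, modulo $N$). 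Hence by \eqref{eq:cmumean}, $\Exp[G^i_t] = c^{\hat\mu^N(u)}_t$, independent of $i$ as claimed. Since $G^i_t$ is a linear combination of jointly Gaussian variables plus $\theta_i$, it is itself Gaussian, so the first claim is complete.

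Next I would compute the covariance. Writing $\bar G^i = G^i - \Exp[G^i]$, and noting $J^N_{ij} - \Exp[J^N_{ij}]$ is the centred weight and $\theta_i - \bar\theta$ the centred threshold, independence of thresholds from weights and across indices gives, for $s,t\in[1,T]$,
\[
\mathrm{cov}(G^i_s, G^{i+k}_t) = \theta^2\delta_k + \sum_{j,l=-n}^n \mathrm{cov}(J^N_{ij}, J^N_{i+k,l})\, f(u^j_{s-1}) f(u^l_{t-1}).
\]
Now I substitute \eqref{eq:cov}: $\mathrm{cov}(J^N_{ij},J^N_{i+k,l}) = \frac1N\Lambda((i+k-i)\bmod N,(l-j)\bmod N) = \frac1N\Lambda(k\bmod N,(l-j)\bmod N)$. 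Reindexing the double sum by $m = (l-j)\bmod N$ (so for each fixed $j$, as $l$ ranges over $\{-n,\ldots,n\}$, $m$ ranges over $\{-n,\ldots,n\}$ too, with $l = (j+m)\bmod N$), the double sum becomes
\[
\sum_{m=-n}^n \Lambda(k\bmod N, m)\left(\frac1N\sum_{j=-n}^n f(u^j_{s-1}) f(u^{(j+m)\bmod N}_{t-1})\right).
\]
The inner average is precisely $M^{(\hat\mu^N(u))^N,m}_{st}$ as defined in \eqref{eq:Mmu}, because the periodic extension $u(N)$ satisfies $(S^j u(N))^0 = u^{j\bmod N}$ and $(S^j u(N))^m = u^{(j+m)\bmod N}$, so averaging over the $N$ shifts reproduces the defining integral against $(\hat\mu^N(u))^N$. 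Combining with the $\theta^2\delta_k$ term and recalling (from the conventions in force, with indices taken modulo $N$) that $\Lambda(k\bmod N,m)$ appears exactly as in \eqref{eq:Kimu}, we obtain $\mathrm{cov}(G^i_s,G^{i+k}_t) = K^{(\hat\mu^N(u))^N,k}_{st}$, which is the assertion.

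\textbf{Main obstacle.} The only genuinely delicate point is the bookkeeping with indices modulo $N$ in the covariance computation: one must check carefully that the shift-invariance of $\Lambda$ built into \eqref{eq:cov}, the modular reindexing $l \mapsto m=(l-j)\bmod N$, and the structure of the periodic extension $u(N)$ all line up so that the empirical double-average over shifts coincides exactly with the matrix $M^{(\hat\mu^N(u))^N,m}$ and hence with the block structure of $K^{\mu^N}$ in \eqref{eq:Kimu}. Once the convention ``$i\bmod N$ lies between $-n$ and $n$'' is used consistently, this is routine but is the step where an error would most easily creep in; everything else (Gaussianity, linearity, independence of the thresholds) is immediate.
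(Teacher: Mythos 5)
Your proposal is correct and follows essentially the same route as the paper's proof: compute $\Exp[G^i_t]$ and $\mathrm{cov}(G^i_s,G^{i+k}_t)$ directly from \eqref{equation:Gprocdef}, substitute \eqref{eq:cov}, reindex the double sum over $(j,l)$ by $m=(l-j)\bmod N$, and identify the resulting inner average with $M^{(\hat\mu^N(u))^N,m}$ to recover \eqref{eq:Kimu}. You are slightly more explicit than the paper about the modular bookkeeping (which the paper leaves largely implicit behind the periodic extension $u(N)$), but the argument is the same.
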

\begin{proof}
The proof follows from the definition \eqref{equation:Gprocdef}.

The mean of $G^i_t$  is equal to
\[
\Expt{G^i_t}=\frac{\bar{J}}{N}\sum_{j=-n}^{n}
f(u^j_{t-1})=\bar{J}\int_{\T^{\Z}}
f(y^0_{t-1})\,d\hat{\mu}^{N}(u)(y),
\]
for $t=1,\cdots,T$. This is indeed independent of the index $i$.

Let us now examine the covariance function $K$ of these $N$ Gaussian
processes. It is an $NT \times NT$ matrix which has a block structure, each block $K^{ik}$,
$i,\,k=-n,\cdots,n$, being the $T \times T$ covariance matrix of the
two processes $G^i$ and $G^k$. We have
\[
K^{ik}_{ts}=cov(G^i_tG^k_s)=\sum_{j,\,l=-n}^{n} cov(J_{ij}^{\olivier{N}}J_{kl}^{\olivier{N}})
f(u^j_{t-1}) f(u^l_{s-1})+\theta^2 \delta_{i-k},\,s,t=1,\cdots,T.
\]
Because of our definition \eqref{eq:cov} of the covariance structure
we have
\begin{equation}
K^{ik}_{ts}=\sum_{m=-n}^{n} \Lambda(k-i,m)\left(\frac{1}{N}\sum_{j=-n}^{n} f(u^j_{t-1}) f(u^{j+m}_{s-1})\right) +\theta^2 \delta_{i-k}\label{eq:Kikintermediate}.
\end{equation}
Since $K^{ik}$ depends only on $(k-i)$, it can be seen that $K$ is a block circulant matrix, and we may write
\[
K^{ik} \overset{\rm def}{\equiv} K^{(k-i)\mod N},
\]
where we recall that $j\mod N$ lies between $\pm n$. 
It may be inferred from \eqref{eq:Kikintermediate} that 
\[
K^i_{ts}=\theta^2 \delta_i+
\sum_{m=-n}^{n} \Lambda(i,m)\int_{\T^N} f(v^0_{t-1})f(v^{m}_{s-1})\,(\hat{\mu}^N(u))^N(dv).
\]
\olivier{After a comparison of this with \eqref{eq:Kimu}, we find that $K = K^{(\hat{\mu}^N(u))^N}$.}
\end{proof}
We obtain an alternative expression for the Radon-Nikodym derivative in \eqref{eq:dQNdPN} by applying lemma \ref{lemma:gauss}. That is, we substitute \newline$Z=(G^{-n},\cdots,G^{n})$,
$a=\frac{1}{\sigma^2}({v}^{-n},\cdots,{v}^{n})$, and
$b=\frac{1}{\sigma^2}$ into the formula in lemma \ref{lemma:gauss}. After noting proposition \ref{prop:Kmuhatpos} we thus find that
\begin{proposition}\label{prop:radon-nikodym}
The Radon-Nikodym derivatives write as 
\begin{align*}
\frac{dQ^N}{dP^{\otimes N}}(u^{-n},\cdots,u^{n}) &=\exp(N\Gamma((\hat{\mu}^N(u^{-n},\cdots,u^n))^N)),\\
\frac{d\Pi^N}{dR^N}(\mu) &= \exp(N\Gamma(\mu^N)).
\end{align*}
Here $\mu\in \M_{1,s}^{+}(\T^\Z)$, 
$\Gamma(\mu)=\Gamma_1(\mu)+\Gamma_2(\mu)$
and the expressions for $\Gamma_1$ and
$\Gamma_2$ have been defined in equations \eqref{eq:Gamma1} and \eqref{eq:Gamma2muN}. 
\end{proposition}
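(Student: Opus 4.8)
The plan is to prove the first identity by evaluating the Gaussian expectation of Proposition \ref{prop:RNderiv1} with the Gaussian-calculus Lemma \ref{lemma:gauss}, and then to deduce the second identity from the first by a change-of-variables argument along $\hat\mu^N$. For the first identity, fix $u=(u^{-n},\dots,u^n)\in\T^N$ and set $v^j=\Psi_{1,T}(u^j)$, $v=(v^{-n},\dots,v^n)$. By Proposition \ref{prop:Kmuhatpos}, once the expectation over $(\J^N,\Theta)$ is taken, the Gaussian vector $G=(G^{-n},\dots,G^n)$ of \eqref{equation:Gprocdef} has mean $\mathbf c=(c^{\hat\mu^N(u)},\dots,c^{\hat\mu^N(u)})$ and covariance $K:=K^{(\hat\mu^N(u))^N}$. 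First I would apply Lemma \ref{lemma:gauss} with $Z=G$, $a=\sigma^{-2}v$ and $b=\sigma^{-2}$; the hypothesis $\alpha b>-1$ on the eigenvalues $\alpha$ of $K$ is immediate since $K$ is a covariance matrix and $b>0$. The prefactor $\bigl(\det({\rm Id}_{NT}+\sigma^{-2}K)\bigr)^{-1/2}$ produced by the lemma is, by \eqref{eq:Gamma1}, exactly $\exp\bigl(N\Gamma_1((\hat\mu^N(u))^N)\bigr)$. For the remaining exponential factor I would use the identity $K({\rm Id}_{NT}+\sigma^{-2}K)^{-1}=\sigma^2A^{(\hat\mu^N(u))^N}$, which is immediate from the definition of $A^{\mu^N}$, and then expand the resulting quadratic form using the block-circulant structure of $A^{(\hat\mu^N(u))^N}$ (blocks $A^{(\hat\mu^N(u))^N,\,k-j}$); after collecting terms, this exponent equals $N\phi^N(\hat\mu^N(u),v)$ with $\phi^N$ as in \eqref{eq:phiNdeftn}.

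It then remains to identify $\phi^N(\hat\mu^N(u),v)$ with $\Gamma_2((\hat\mu^N(u))^N)$. From \eqref{eq:hatmuN}, the marginal $(\hat\mu^N(u))^N$ is the uniform average of the Dirac masses at the $N$ cyclic shifts of $(u^{-n},\dots,u^n)$, so $\undt{(\hat\mu^N(u))^N}_{1,T}$ is the uniform average of the Dirac masses at the cyclic shifts of $v$, and hence $\Gamma_2((\hat\mu^N(u))^N)$ equals the uniform average over $i=-n,\dots,n$ of $\phi^N(\hat\mu^N(u),\cdot)$ evaluated at the $i$-fold cyclic shift of $v$. Inspecting \eqref{eq:phiNdeftn} shows that $\phi^N(\mu^N,\cdot)$ is invariant under cyclic shifts of its second argument — the double sum $\sum_{j,k}{}^t(v^j-c^\mu)A^{\mu^N,k-j}(v^k-c^\mu)$ and the sum $\sum_j\langle c^\mu,v^j\rangle$ are both unchanged by reindexing $j,k$ modulo $N$ — so every term of the average equals $\phi^N(\hat\mu^N(u),v)$. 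Combining this with the $\Gamma_1$ factor gives $dQ^N/dP^{\otimes N}(u)=\exp\bigl(N\Gamma((\hat\mu^N(u))^N)\bigr)$.

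For the second identity, the first identity shows that the density $dQ^N/dP^{\otimes N}$ factors as $\Phi\circ\hat\mu^N$ with $\Phi(\mu)=\exp(N\Gamma(\mu^N))$, and $\Phi$ is measurable by Propositions \ref{prop:Gamma1b} and \ref{prop:Gamma2lsc} (so in particular $\Pi^N\ll R^N$). Then for every bounded measurable $\psi$ on $\mmeas(\T^\Z)$, using that $\Pi^N$ and $R^N$ are by definition the images of $Q^N$ and $P^{\otimes N}$ under $\hat\mu^N$,
\[
\int\psi\,d\Pi^N=\int\psi(\hat\mu^N(u))\,dQ^N(u)=\int\psi(\hat\mu^N(u))\,\Phi(\hat\mu^N(u))\,dP^{\otimes N}(u)=\int\psi(\mu)\,\Phi(\mu)\,dR^N(\mu),
\]
whence $d\Pi^N/dR^N(\mu)=\Phi(\mu)=\exp(N\Gamma(\mu^N))$.

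The computation is essentially bookkeeping; the step needing care will be matching the output of Lemma \ref{lemma:gauss} with $N\Gamma_2$ — namely establishing $K({\rm Id}_{NT}+\sigma^{-2}K)^{-1}=\sigma^2A^{\mu^N}$ and correctly expanding the block-circulant quadratic form — together with the observation that the cyclic-shift average implicit in $\Gamma_2$ of an empirical measure collapses to a single evaluation by the shift-invariance of $\phi^N(\mu^N,\cdot)$. Everything else is routine.
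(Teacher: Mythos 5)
Your proposal is correct and takes essentially the same route as the paper: apply Lemma \ref{lemma:gauss} to the expectation in Proposition \ref{prop:RNderiv1} using Proposition \ref{prop:Kmuhatpos} to identify mean and covariance, and then pass to $\Pi^N$ via the measurability of $\Gamma$. The paper states this quite tersely, and you have usefully made explicit the two bookkeeping facts it leaves implicit, namely the identity $K({\rm Id}_{NT}+\sigma^{-2}K)^{-1}=\sigma^2 A^{\mu^N}$ and the observation that the integral defining $\Gamma_2((\hat{\mu}^N(u))^N)$ over the cyclic-shift average $\undt{\hat{\mu}^N(u)}^N_{1,T}$ collapses to the single value $\phi^N(\hat{\mu}^N(u),\Psi_{1,T}(u))$ by the shift-invariance of $\phi^N$ in its second argument.
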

The second expression in the above proposition follows from the first one because $\Gamma$ is measureable.
\section{The large deviation principle}\label{Sect:GoodRateFunction}
In this section we prove the principal result of this paper (Theorem \ref{theo:LDP}), that the image laws $\Pi^N$ satisfy an LDP with good rate function $H$ (to be defined below). We do this by firstly establishing an LDP for the image law with uncoupled weights ($R^N$), \olivier{see definition \ref{def:PiNQN}}, and then use the Radon-Nikodym derivative of corollary \ref{prop:radon-nikodym} to establish the full LDP for $\Pi^N$. Therefore our first task is to write the LDP governing $R^N$.

Let $\mu,\nu$ be probability measures over a Polish Space $\Omega$ with respect to the Borelian $\sigma$-algebra $\mathcal{B}(\Omega)$. The K\"ullback-Leibler
divergence of $\mu$ relative to $\nu$ is
\[
I^{(2)}(\mu, \nu)=\int_{\Omega} \log \left(\frac{d
  \mu}{d \nu}\right) d\mu
\]
if $\mu$ is absolutely continuous with respect to $\nu$,
and $I^{(2)}(\mu, \nu)=\infty$ otherwise.

Let $\nu^{\Z}$ be the infinite product measure on $\Omega^{\Z}$ induced by $\nu$. If $\mu$ is a stationary probability measure over $\Omega^{\Z}$, then the process-level entropy \olivier{of $\mu$ with respect to $\nu^\Z$} is defined to be
\begin{equation}\label{eq:I3limit}
I^{(3)}(\mu,\nu^{\Z})=\lim_{N\to \infty} \frac{1}{N} I^{(2)}(\mu^N,
\nu^{\otimes N}).
\end{equation}

$R^N$ is governed by the following large deviation principle \cite{donsker-varadhan:85,baxter-jain:93}. 
\begin{theorem}\label{theorem:Rnexptight}
If $F$ is a closed set in $\mmeas$, then
\begin{equation*}
\lsup{N} N^{-1}\log R^N(F) \leq - \inf_{\mu\in F}I^{(3)}(\mu,P^\Z),
\end{equation*}
and for all open sets $O$
\begin{equation*}
\linf{N} N^{-1}\log R^N(O) \geq - \inf_{\mu\in O}I^{(3)}(\mu,P^\Z).
\end{equation*}
Here
\begin{equation}\label{eq:I3R}
I^{(3)}\left(\mu,P^{\Z}\right) = I^{(3)}\left(\mu_0,\mu_{I}^{\Z}\right) + \int_{\R^\infty}I^{(3)}\left(\mu_{u_0},P^{\Z}_{u_0}\right)d\mu_0(u_0),
\end{equation}
where $\mu_{u_0}\in\M_{1,s}^+(\T_T^{\Z})$ is the conditional probability distribution of $\mu$ given $u_0$ \olivier{in $\R^\Z$}.
$I^{(3)}$ is a good rate function (i.e. its level sets are compact). In addition, the set of measures $\lbrace R^N\rbrace$ is exponentially tight. This means that, for all $0 \leq a < \infty$, there exists a compact set $K_a \subset M_{1,s}^+(\T^{\Z})$ such that for all $N$
\begin{equation*}
\lsup{N}N^{-1}\log R^N\left(K_a^c\right) < -a.
\end{equation*}
\end{theorem}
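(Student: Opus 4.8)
The plan is to reduce everything to the classical Donsker--Varadhan/Baxter--Jain process-level large deviation theory for i.i.d.\ sequences, applied to the empirical measure $\hat\mu^N$ of the (uncoupled) random variables $(u^{-n},\dots,u^n)$ drawn i.i.d.\ from $P$. First I would recall that under $P^{\otimes N}$ the coordinates $u^{-n},\dots,u^n$ are i.i.d.\ with common law $P$ on the Polish space $\T$, and that $\hat\mu^N$ as defined in \eqref{eq:hatmuN} is exactly the periodized process-level empirical measure associated to this i.i.d.\ sample. The Baxter--Jain theorem (or the Donsker--Varadhan theory in the form \cite{donsker-varadhan:85,baxter-jain:93}) then yields directly that $R^N = P^{\otimes N}\circ(\hat\mu^N)^{-1}$ satisfies a full LDP on $\mmeas(\T^\Z)$ with good rate function $\mu \mapsto I^{(3)}(\mu,P^\Z)$, where $I^{(3)}$ is the process-level (specific) relative entropy defined by the limit \eqref{eq:I3limit}. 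The fact that $I^{(3)}$ is a good rate function (compact level sets) and that the sequence $\{R^N\}$ is exponentially tight is part of this same body of theorems; I would cite it rather than reprove it, since the topology on $\mmeas(\T^\Z)$ here (the weak topology induced by the metric \eqref{eq:metricdefn}) generates the same Borel structure and the same notion of weak convergence as the standard projective-limit topology used in those references.

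The one genuinely non-routine point is the decomposition formula \eqref{eq:I3R}, which splits the process-level entropy into the entropy of the time-zero marginal relative to $\mu_I^\Z$ plus an averaged process-level entropy of the conditional law of the trajectories given the initial condition. To establish this I would use the disintegration of any stationary $\mu$ over its time-zero coordinate process: write $\mu = \mu_0 \otimes \mu_{u_0}$ where $\mu_0 \in \mmeas(\R^\Z)$ is the law of $(u^i_0)_{i\in\Z}$ and $\mu_{u_0} \in \mmeas(\T_T^\Z)$ is the regular conditional distribution of the remaining coordinates. Since $P = \undt P \circ \Psi^{-1}$ with $\undt P = \mN_T(0_T,\sigma^2\mathrm{Id}_T)\otimes\mu_I$ (Proposition \ref{prop:psi}), under $P^\Z$ the initial conditions are i.i.d.\ $\mu_I$ and, conditionally on them, the trajectories are i.i.d.\ and independent of the initial data; hence $P^\Z = \mu_I^\Z \otimes P^\Z_{u_0}$ in the corresponding disintegrated sense. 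Then the chain rule for relative entropy at each finite level $N$, namely $I^{(2)}(\mu^N,P^{\otimes N}) = I^{(2)}(\mu_0^N,\mu_I^{\otimes N}) + \int I^{(2)}((\mu_{u_0})^N, P^{\otimes N}_{u_0})\,d\mu_0^N$, combined with dividing by $N$ and passing to the limit \eqref{eq:I3limit}, yields \eqref{eq:I3R}; the interchange of limit and integral is justified by monotonicity of $\tfrac1N I^{(2)}$ along the natural superadditive structure (or by Fatou plus an a priori bound), which is where the care is needed.

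I expect the main obstacle to be precisely this last interchange-of-limit-and-integral step and, more broadly, the verification that the abstract Baxter--Jain framework applies verbatim to the metric topology \eqref{eq:metricdefn}: one must check that $\mmeas(\T^\Z)$ with this metric is Polish (asserted earlier in the excerpt) and that the identity map to the projective-limit topology is a homeomorphism, so that compactness of level sets and exponential tightness transfer. Once the topological identification is in place, the LDP, the goodness of $I^{(3)}$, and the exponential tightness are immediate quotations of \cite{donsker-varadhan:85,baxter-jain:93}, and only the entropy-decomposition \eqref{eq:I3R} requires a short independent argument along the lines above.
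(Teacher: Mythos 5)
Your proposal follows essentially the same route as the paper: quote the Donsker--Varadhan/Baxter--Jain process-level LDP for i.i.d.\ sequences (the paper cites \cite{ellis:85} for the LDP with good rate function and \cite{dembo-zeitouni:97} for exponential tightness from the upper bound plus goodness), and obtain the decomposition \eqref{eq:I3R} by invoking the finite-level chain rule for relative entropy from \cite{donsker-varadhan:83}, dividing by $N$ and letting $N\to\infty$. The extra scruples you raise — justifying the interchange of limit and integral and checking that the metric \eqref{eq:metricdefn} induces the same topology as the projective-limit one used in the cited references — are reasonable but are not addressed in the paper's (very terse) proof either, so they do not constitute a divergence of method.
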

\begin{proof}
$R^N$ satisfies an LDP with good rate function $I^{(3)}(\mu,P^{\Z})$ \cite{ellis:85}.  In turn, a sequence of probability measures (such as $\lbrace R^N\rbrace$) over a Polish Space satisfying a large deviations upper bound with a good rate function is exponentially tight \cite{dembo-zeitouni:97}. 

It is an identity in \cite{donsker-varadhan:83} that \[I^{(2)}(\mu^N,P^{\otimes N}) = I^{(2)}\left(\mu^N_0,\mu_{I}^{\otimes N}\right) + \int_{\R^N}I^{(2)}\left(\mu_{u_0}^N,P^{\otimes N}_{u_0}\right)\mu_{0}^{N}(du_0^{-n}\cdots du_0^n).\]
We divide by $N$ and then take $N\to\infty$ to obtain \eqref{eq:I3R}.
\end{proof}
Because $\Psi$ is bijective and continuous, it may be easily shown that 
\begin{align}
I^{(2)}\left(\mu^N,P^{\otimes N}\right) &= I^{(2)}\left(\undt{\mu}^N,\undt{P}^{\otimes N}\right)\label{eq:I2equality}\\
I^{(3)}\left(\mu,P^{\Z}\right) &= I^{(3)}\left(\undt{\mu},\undt{P}^{\Z}\right).\label{eq:I3equality} 
\end{align}
Before we move to a statement of the LDP governing $\Pi^N$, we prove the following relationship between the set
$\mathcal{E}_2$ (see definition \ref{def:E2}) and the set of stationary measures which have a finite
K\"ullback-Leibler information or process level entropy with respect to $P^\Z$.
\begin{lemma}\label{lemma:E2enough}
We have
\[
\{\mu \in \mathcal{M}_{1,s}^+(\T^\Z),\, I^{(3)}(\mu,P^\Z) < \infty
\} \subset \mathcal{E}_2.
\]
\end{lemma}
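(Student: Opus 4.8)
The goal is to show that finiteness of the process-level entropy $I^{(3)}(\mu,P^\Z)$ forces $\Exp^{\undt{\mu}_{1,T}}[\|v^0\|^2] < \infty$, i.e. $\mu \in \mathcal{E}_2$. The plan is to work with the transformed measure: by \eqref{eq:I3equality} we have $I^{(3)}(\mu,P^\Z) = I^{(3)}(\undt{\mu},\undt{P}^\Z)$, and by \eqref{eq:Punder} the measure $\undt{P} = \mN_T(0_T,\sigma^2{\rm Id}_T) \otimes \mu_I$ is a (Gaussian $\otimes$ initial) product; in particular its $v$-marginal on $\T_T$ is the centered Gaussian $\mN_T(0_T,\sigma^2{\rm Id}_T)$. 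First I would reduce the process-level quantity to a one-site relative entropy: since $I^{(3)}(\undt{\mu},\undt{P}^\Z) = \lim_N \frac{1}{N} I^{(2)}(\undt{\mu}^N,\undt{P}^{\otimes N})$ and relative entropy is superadditive over marginals under a product reference measure, one gets $I^{(2)}(\undt{\mu}^{1},\undt{P}) \leq \frac{1}{N} I^{(2)}(\undt{\mu}^N,\undt{P}^{\otimes N})$ for every $N$ (by stationarity each single-coordinate marginal of $\undt{\mu}^N$ equals $\undt{\mu}^1$, and chain-rule/subadditivity gives the bound). Hence $I^{(2)}(\undt{\mu}^1,\undt{P}) \leq I^{(3)}(\mu,P^\Z) < \infty$; restricting further to the $v$-coordinates (which only decreases relative entropy, as it is a marginal) gives $I^{(2)}(\undt{\mu}_{1,T},\mN_T(0_T,\sigma^2{\rm Id}_T)) < \infty$, where $\undt{\mu}_{1,T}$ denotes the law of $v^0 \in \T_T = \R^T$ under $\undt{\mu}$.

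The heart of the matter is then the elementary Gaussian fact: if a probability measure $\lambda$ on $\R^T$ has finite relative entropy with respect to a nondegenerate centered Gaussian $\mN_T(0,\sigma^2{\rm Id}_T)$, then $\int \|v\|^2\, d\lambda(v) < \infty$. I would prove this by writing, when $\lambda \ll \gamma := \mN_T(0,\sigma^2{\rm Id}_T)$ with density $g = d\lambda/d\gamma$,
\[
I^{(2)}(\lambda,\gamma) = \int g \log g\, d\gamma,
\]
and using the variational (Donsker--Varadhan) inequality $\int F\, d\lambda \leq I^{(2)}(\lambda,\gamma) + \log \int e^{F}\, d\gamma$ with $F(v) = \varepsilon \|v\|^2$ for $\varepsilon$ small enough that $\int e^{\varepsilon\|v\|^2}\,d\gamma < \infty$ (any $\varepsilon < 1/(2\sigma^2)$ works). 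This yields $\varepsilon \int \|v\|^2\, d\lambda \leq I^{(2)}(\lambda,\gamma) + C_\varepsilon < \infty$, hence $\Exp^{\undt{\mu}_{1,T}}[\|v^0\|^2] < \infty$, which is exactly the defining condition of $\mathcal{E}_2$.

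The step I expect to require the most care is the reduction from the process-level entropy $I^{(3)}$ to the single-site relative entropy $I^{(2)}(\undt{\mu}^1,\undt{P})$: one must check the superadditivity/subadditivity bookkeeping for relative entropy under a product reference measure and correctly use the stationarity of $\mu$ (equivalently of $\undt{\mu}$, since $\Psi$ acts coordinatewise and commutes with the shift) to identify all one-coordinate marginals of $\undt{\mu}^N$ with a single law $\undt{\mu}^1$. Everything after that — passing to the $v$-marginal and invoking the Gaussian integrability estimate — is routine. Note also that the use of \eqref{eq:I3equality} is essential: it is the reference measure $\undt{P}$, not $P$, whose relevant marginal is a clean centered Gaussian, which is what makes the final exponential-moment estimate available.
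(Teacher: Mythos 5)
Your proposal is correct and takes essentially the same approach as the paper: both apply the Donsker--Varadhan/Fenchel--Legendre variational characterisation of relative entropy against the product Gaussian reference $\undt{P}^{\otimes N}$, with the quadratic test function $\varepsilon\|v\|^2$ (suitably truncated to lie in $C_b$), and then use the exponential moment $\int e^{\varepsilon\|v\|^2}d\mathcal{N}_T(0,\sigma^2{\rm Id}_T) = (1-2\varepsilon\sigma^2)^{-T/2}$ to close the estimate. The one small reorganisation is that you first reduce $\frac{1}{N}I^{(2)}(\undt{\mu}^N,\undt{P}^{\otimes N}) \geq I^{(2)}(\undt{\mu}^1,\undt{P})$ by superadditivity and then work at the single site, whereas the paper applies the variational bound directly in $N$ dimensions with the test function $\rho(y)=\sum_{s,k}(y^k_s)^2$ and divides by $N$; these are equivalent bookkeeping choices, not a different idea.
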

\begin{proof}
Let $\mu \in \mathcal{M}_{1,s}^+(\T^\Z)$. We use the classical result
that
\begin{equation}
I^{(2)}\left(\undt{\mu}^N,\undt{P}^{\otimes N}\right)=\sup_{\varphi \in {\rm C}_b(\T^N)} \left(
  \int_{\T^N} \varphi\,d\undt{\mu}^N-\log \int_{\T^N} \exp(\varphi)\,d\undt{P}^{\otimes N}\right).\label{eq:FenchelLegendre}
\end{equation}
We let $\rho(y)=\sum_{s=1}^T\sum_{k=-n}^n (y^k_s)^2$ and $\varphi(y) = a\rho(y)$, where $a>0$. The function $\rho_M(x)=\rho(x)\mone_{\varphi(x) \leq M}$ is in
${\rm C}_b(\T^N)$, hence for all $a  > 0$
\[
a \int_{\T^N} \rho_M \,d\undt{\mu}^N \leq \log \int_{\T^N} \exp (a
\rho_M)\,d\undt{P}^{\otimes N}+I^{(2)}(\undt{\mu}^N,\undt{P}^{\otimes N}).
\]
According to proposition  \ref{prop:psi}, $\underline{P}_{1,T}\simeq\mN(0_{T},\sigma^2{\rm Id}_{T})$. Hence, as soon as $1-2a\sigma^2 >0$, 
we obtain using an easy Gaussian computation that
\[
\log \int_{\T^N} \exp (a
\rho)\,d\undt{P}^{\otimes N}=-\frac{NT}{2} \log(1-2a\sigma^2).
\]
Hence, since $\int_{\T^N} \rho \,d\undt{\mu}^N=N \Exp^{\undt{\mu}_{1,T}}[\|v^0\|^2]$, after taking $M\to\infty$ and applying the dominated convergence theorem we have
\[
a\Exp^{\undt{\mu}_{1,T}}[\|v^0\|^2] \leq -\frac{T}{2} \log(1-2a\sigma^2)+\frac{1}{N}I^{(2)}(\undt{\mu}^N,\undt{P}^{\otimes N}).
\]
By taking the limit $N \to \infty$ we obtain the result.
\end{proof}
We are now in a position to define what will be the rate function of the LDP governing $\Pi^N$.
\begin{definition}\label{defn:H}
Let $H$ be the function $\mathcal{M}_{1,s}^+(\T^\Z) \to \R \cup \{+\infty\}$ defined by
\[
H(\mu)=\left\{
\begin{array}{l}
+\infty \quad \text{if} \quad I^{(3)}(\mu,P^\Z)=\infty\\
I^{(3)}(\mu,P^\Z)-\Gamma(\mu) \quad \text{otherwise}.
\end{array}
\right.
\]
\end{definition}
Here $\Gamma(\mu)=\Gamma_1(\mu)+\Gamma_2(\mu)$
and the expressions for $\Gamma_1$ and
$\Gamma_2$ have been defined in equations \eqref{eq:Gamma1} and \eqref{eq:Gamma2muN}. Note that because of proposition \ref{prop:E2Gamma2} and lemma \ref{lemma:E2enough}, whenever
$I^{(3)}(\mu,P^\Z)$ is finite, so is $\Gamma(\mu)$. 

\subsection{Lower bound on the open sets}\label{sect:lowerboundopensets}
We prove the second half of proposition \ref{theo:LDP}.
\begin{lemma}\label{lemma:lbopen}
For all open sets $O$,
\begin{equation*}
\linf{N} N^{-1}\log\Pi^N(O) \geq -\inf_{\mu\in O}H(\mu).
\end{equation*}
\end{lemma}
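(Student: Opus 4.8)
The plan is to prove the lower bound on open sets via the standard change-of-measure argument, transferring the LDP lower bound for $R^N$ (Theorem \ref{theorem:Rnexptight}) to $\Pi^N$ by means of the Radon-Nikodym derivative $\frac{d\Pi^N}{dR^N}(\mu) = \exp(N\Gamma(\mu^N))$ established in Proposition \ref{prop:radon-nikodym}. Fix an open set $O \subset \mM_{1,s}^+(\T^\Z)$ and a measure $\nu \in O$; it suffices to show $\linf{N} N^{-1}\log\Pi^N(O) \geq -H(\nu)$, since we may then take the supremum over $\nu \in O$. We may assume $H(\nu) < \infty$, for otherwise the bound is vacuous; in particular $I^{(3)}(\nu,P^\Z) < \infty$, so by Lemma \ref{lemma:E2enough} we have $\nu \in \mathcal{E}_2$, and by Proposition \ref{prop:E2Gamma2} $\Gamma(\nu)$ is finite.

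The heart of the argument: for any open neighbourhood $V$ of $\nu$ with $V \subset O$,
\[
\Pi^N(O) \geq \Pi^N(V) = \int_{V} \exp(N\Gamma(\mu^N))\, dR^N(\mu) \geq \exp\Big(N \inf_{\mu \in V}\Gamma(\mu^N)\Big)\, R^N(V).
\]
Taking $N^{-1}\log$ and $\linf{N}$, and using the $R^N$ lower bound on the open set $V$, gives
\[
\linf{N} N^{-1}\log \Pi^N(O) \geq \linf{N}\Big(\inf_{\mu \in V}\Gamma(\mu^N)\Big) - \inf_{\mu\in V}I^{(3)}(\mu,P^\Z).
\]
Now I would let $V$ shrink to $\nu$ through the neighbourhood basis $V_\varepsilon(\nu)$ supplied by Proposition \ref{prop:Ktildeomega}. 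By continuity of $\Gamma_1$ on $\mM_{1,s}^+(\T^\Z)$ (Proposition \ref{prop:Gamma1b}) and the continuity properties of $\tilde K^\mu, \tilde A^\mu, c^\mu$, the quantity $\inf_{\mu \in V_\varepsilon(\nu)}\Gamma_1(\mu^N)$ can be made close to $\Gamma_1(\nu)$ uniformly in $N$; and $\inf_{\mu \in V_\varepsilon(\nu)}\Gamma_2(\mu^N) \geq -\beta_2$ always, while lower semicontinuity alone is not quite enough here — one wants $\Gamma_2(\mu^N) \to \Gamma_2(\nu^N)$ locally uniformly. The delicate point, and what I expect to be the main obstacle, is that $\Gamma_2$ (being only lower semicontinuous, not continuous, and involving the possibly-unbounded integrand $\phi^N$) may jump down in the limit: $\liminf_{\mu\to\nu}\Gamma_2(\mu^N)$ could exceed $\Gamma_2(\nu^N)$, which is harmless for a lower bound, but one must also control the interchange of the $N\to\infty$ limit with the neighbourhood-shrinking limit, and ensure $\Gamma(\nu^N) \to \Gamma(\nu)$.

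To handle this cleanly I would instead argue directly at the level of $\nu$ itself rather than uniformly over $V$: restrict attention to the (open) event $\hat\mu^N \in V$ and estimate
\[
\Pi^N(V) = \Exp^{R^N}\big[\mathbbm{1}_{\hat\mu^N \in V}\exp(N\Gamma(\hat\mu^N \text{'s marginal}))\big],
\]
then use that on the $R^N$-typical event $\{\hat\mu^N \approx \nu\}$ (which has the right exponential weight $\exp(-N I^{(3)}(\nu,P^\Z))$ up to $o(N)$ by the $R^N$ lower bound) the exponent $N\Gamma$ is approximately $N\Gamma(\nu)$. Concretely, pick $\varepsilon > 0$, use Proposition \ref{prop:Ktildeomega} to choose $V_\varepsilon(\nu) \subset O$ so that $|\Gamma_1(\mu^N) - \Gamma_1(\nu^N)| < \varepsilon$ for all $\mu \in V_\varepsilon(\nu)$ and all $N$ (this follows from the uniform bounds \eqref{eq:Mbnd1}--\eqref{eq:Mbnd4} and the continuity of the determinant), bound $\Gamma_2$ from below by $-\beta_2$ to get a crude first estimate, and then refine: since $\Gamma_2(\nu^N) \to \Gamma_2(\nu)$ as $N\to\infty$ (Proposition \ref{prop:E2Gamma2}) and $\Gamma_2$ is lower semicontinuous (Proposition \ref{prop:Gamma2lsc}), for $\mu$ in a possibly smaller neighbourhood $\Gamma_2(\mu^N) \geq \Gamma_2(\nu^N) - \varepsilon$ for $N$ large. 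Combining, $\linf{N} N^{-1}\log\Pi^N(O) \geq \Gamma(\nu) - 2\varepsilon - I^{(3)}(\nu, P^\Z) = -H(\nu) - 2\varepsilon$, and letting $\varepsilon \downarrow 0$ and then taking $\sup_{\nu \in O}$ completes the proof. I expect the write-up to require care in ordering the two limits ($\varepsilon \to 0$ versus $N \to \infty$) and in justifying that the lower-semicontinuity inequality for $\Gamma_2$ can be upgraded to a local near-equality along the specific sequence $\nu^N \to \nu$, perhaps by invoking the explicit spectral formula in Proposition \ref{prop:E2Gamma2} together with Lemma \ref{lem:muNconvergenceA}.
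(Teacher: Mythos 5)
Your proposal follows essentially the same strategy as the paper's proof: change of measure via the Radon--Nikodym derivative from Proposition \ref{prop:radon-nikodym}, the $R^N$ lower bound from Theorem \ref{theorem:Rnexptight}, pointwise reduction to a fixed $\nu\in O$ with $H(\nu)<\infty$ (hence $\nu\in\mathcal{E}_2$ by Lemma \ref{lemma:E2enough}), and control of $\Gamma$ near $\nu$ via lower semicontinuity together with $\Gamma(\nu^N)\to\Gamma(\nu)$. The delicate interchange of limits you flag is also present in the paper's write-up, which handles it by letting the open neighbourhood $Z^N_\varepsilon(\mu)$ depend on $N$ (chosen from the lower semicontinuity of $\mu\mapsto\Gamma(\mu^N)$) rather than shrinking a fixed $V$ in two stages as you sketch.
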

\begin{proof}
From the expression for the Radon-Nikodym derivative in corollary \ref{prop:radon-nikodym} we have
\begin{equation*}
\Pi^N(O) = \int_{O}\exp\left(N\Gamma(\mu^N)\right)dR^N(\mu).
\end{equation*}
If $\mu \in O$ is such that $I^{(3)}(\mu,P^\Z) = \infty$, then $H(\mu) = \infty$ and evidently
\begin{equation}
\linf{N} N^{-1}\log\Pi^N(O) \geq -H(\mu).\label{eq:LDP1}
\end{equation}
We now prove (\ref{eq:LDP1}) for all  $\mu\in O$ such that $I^{(3)}(\mu,P^\Z) < \infty$. Let $\varepsilon > 0$ and $Z^N_\varepsilon(\mu)\subset O$ be an open
neighbourhood 
containing $\mu$ such that \newline$\inf_{\gamma\in Z^N_\varepsilon(\mu) }\Gamma(\gamma^N)\geq
\Gamma(\mu^N)-\varepsilon$. Such $\lbrace Z^N_\varepsilon(\mu) \rbrace$ exist for all $N$
because of the lower semi-continuity of $\Gamma(\mu^N)$ (see proposition \ref{prop:Gamma2lsc}) and the fact that the projection $\mu\rightarrow \mu^N$ is clearly continuous. Then
\begin{align*}
\linf{N}N^{-1}\log\Pi^N(O) &= \linf{N} N^{-1}\log\int_O\exp(N\Gamma(\gamma^N))dR^N(\gamma) \\
&\geq \linf{N} N^{-1}\log\left(R^N(Z^N_\varepsilon\olivier{(\mu)})\times \inf_{\gamma\in Z^N_\varepsilon(\mu)}\exp(N\Gamma(\gamma^N))\right) \\
&\geq -I^{(3)}(\mu,P^\Z) + \linf{N}\inf_{\gamma\in Z^N_\varepsilon(\mu)}\Gamma(\gamma^N) \\
&\geq -I^{(3)}(\mu,P^\Z) + \linf{N}\Gamma(\mu^N) - \varepsilon \\ 
&= -I^{(3)}(\mu,P^\Z) + \Gamma(\mu) - \varepsilon.
\end{align*}
The last equality follows from lemma \ref{lemma:Gamma1} and proposition \ref{prop:E2Gamma2}. Since $\varepsilon$ is arbitrary, we may take the limit as $\varepsilon\rightarrow 0$ to obtain \eqref{eq:LDP1}. Since (\ref{eq:LDP1}) is true for all $\mu\in O$ the lemma is proved.
\end{proof}

\subsection{Exponential Tightness of \protect $\Pi^N$}\label{sect:exponentialtightness}
We recall that if $\mu\in\mathcal{M}_{1,s}^+(\T^{\Z})$ but $\mu\notin\mathcal{E}_2$, then $I^{(3)}(\mu,P^{\Z}) = \Gamma(\mu) =\infty$. We begin with the following technical lemma. 
\begin{lemma}\label{lemma:expNc}
There exist positive constants $c>0$ and $a > 1$ such that, for all $N$,
\begin{equation*}
\int_{\T^N}\exp\left(aN\phi^N(\hat{\mu}^N(u),\Psi(u))\right)\,P^{\otimes N}(du) \leq \exp(Nc),
\end{equation*}
where $\phi^N$ is defined in \eqref{eq:phiNdeftn}.
\end{lemma}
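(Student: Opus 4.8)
The plan is to bound $\phi^N(\hat\mu^N(u),\Psi(u))$ pointwise, for fixed $u\in\T^N$, by the exponent of a quadratic form in a Gaussian vector whose exponential moment under $P^{\otimes N}$ stays finite for some exponent strictly larger than $1$, and then integrate. Write $\mathbf v$ for the $NT$-vector obtained by stacking the blocks $(\Psi_1(u^j),\dots,\Psi_T(u^j))$, $j=-n,\dots,n$ (recall $\phi^N$ depends on its second argument only through the time-$1,\dots,T$ coordinates), $\mathbf c$ for the $NT$-vector obtained by stacking $N$ copies of $c^{\hat\mu^N(u)}$, and $A=A^{(\hat\mu^N(u))^N}$ for the $NT\times NT$ matrix introduced before Lemma~\ref{lemma:Lipschitz}. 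By Proposition~\ref{prop:Kmuhatpos} the matrix $K^{(\hat\mu^N(u))^N}$ is a genuine covariance matrix, so $A$ is symmetric positive semi-definite, and by Lemma~\ref{lem:alphabound} all its eigenvalues are bounded above by the constant $\alpha<1$. In this notation \eqref{eq:phiNdeftn} reads
\[
2\sigma^2 N\phi^N(\hat\mu^N(u),\Psi(u)) = {}^t(\mathbf v-\mathbf c)A(\mathbf v-\mathbf c) + 2\langle\mathbf c,\mathbf v\rangle - \norm{\mathbf c}^2 .
\]

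The key algebraic step is to rewrite $2\langle\mathbf c,\mathbf v\rangle-\norm{\mathbf c}^2=\norm{\mathbf v}^2-\norm{\mathbf v-\mathbf c}^2$, so that ${}^t(\mathbf v-\mathbf c)A(\mathbf v-\mathbf c)\le\alpha\norm{\mathbf v-\mathbf c}^2$ gives
\[
2\sigma^2 N\phi^N(\hat\mu^N(u),\Psi(u)) \le (\alpha-1)\norm{\mathbf v-\mathbf c}^2 + \norm{\mathbf v}^2 .
\]
Since $\alpha-1<0$, combining this with the elementary bound $\norm{\mathbf v-\mathbf c}^2\ge\tfrac12\norm{\mathbf v}^2-\norm{\mathbf c}^2$ yields
\[
N\phi^N(\hat\mu^N(u),\Psi(u)) \le \frac{1+\alpha}{4\sigma^2}\norm{\mathbf v}^2 + \frac{1-\alpha}{2\sigma^2}\norm{\mathbf c}^2 .
\]
Because $f$ takes values in $]0,1[$ we have $|c^{\hat\mu^N(u)}_t|=\big|\tfrac{\bar J}{N}\sum_{j=-n}^n f(u^j_{t-1})\big|\le|\bar J|$, hence $\norm{\mathbf c}^2=N\norm{c^{\hat\mu^N(u)}}^2\le NT\bar J^2$, so the last term is at most $\tfrac{(1-\alpha)T\bar J^2}{2\sigma^2}N$, a constant multiple of $N$.

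It remains to integrate $\exp\!\big(\tfrac{a(1+\alpha)}{4\sigma^2}\norm{\mathbf v}^2\big)$ against $P^{\otimes N}$. By Proposition~\ref{prop:psi}, under $P^{\otimes N}$ the $NT$ coordinates of $\mathbf v$ are i.i.d.\ $\mathcal N(0,\sigma^2)$, so $\norm{\mathbf v}^2$ has the law of $\sigma^2$ times a $\chi^2_{NT}$ variable, whence
\[
\Expt{\exp\Big(\frac{a(1+\alpha)}{4\sigma^2}\norm{\mathbf v}^2\Big)} = \Big(1-\frac{a(1+\alpha)}{2}\Big)^{-NT/2},
\]
which is finite precisely when $a<\tfrac{2}{1+\alpha}$. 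Since $\alpha<1$ this range contains values $a>1$; fixing any such $a$ (which, like $\alpha$, does not depend on $N$ by Lemma~\ref{lem:alphabound}) and putting $c=\tfrac{a(1-\alpha)T\bar J^2}{2\sigma^2}-\tfrac{T}{2}\log\!\big(1-\tfrac{a(1+\alpha)}{2}\big)>0$ gives $\int_{\T^N}\exp(aN\phi^N(\hat\mu^N(u),\Psi(u)))\,P^{\otimes N}(du)\le\exp(Nc)$, as claimed.

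The matrix algebra and the Gaussian moment generating function are routine; the only delicate point — the main obstacle if one is careless — is that one must retain the negative term $-\norm{\mathbf v-\mathbf c}^2$ produced by $2\langle\mathbf c,\mathbf v\rangle-\norm{\mathbf c}^2$: keeping it and exploiting $\alpha<1$ is exactly what pushes the admissible exponents past $1$, whereas discarding it would only give $N\phi^N\le\tfrac{1}{2\sigma^2}\norm{\mathbf v}^2$, whose exponential moment under $P^{\otimes N}$ already diverges at $a=1$.
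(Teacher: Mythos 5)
Your proof is correct, and it takes a genuinely different and more elementary route than the paper's. The paper passes to the DFT domain via the change of variable $\tilde w^0=\tilde v^0-Nc^\mu$, separates the zero (DC) Fourier mode from the others, and then computes the $\mathcal G_1$ integral over that mode by completing the square, with a somewhat delicate bound because $c^{\hat\mu^N}$ depends on the integration variable $y^0$ itself. You avoid the spectral machinery entirely: staying in the original domain, you use the algebraic identity $2\langle\mathbf c,\mathbf v\rangle-\norm{\mathbf c}^2=\norm{\mathbf v}^2-\norm{\mathbf v-\mathbf c}^2$ together with the uniform eigenvalue bound $\alpha<1$ (Lemma \ref{lem:alphabound}) and the crude but sufficient inequality $\norm{\mathbf v-\mathbf c}^2\geq\tfrac12\norm{\mathbf v}^2-\norm{\mathbf c}^2$ to reduce $N\phi^N$ to $\tfrac{1+\alpha}{4\sigma^2}\norm{\mathbf v}^2$ plus an explicit $O(N)$ constant — and crucially, $\tfrac{1+\alpha}{4\sigma^2}<\tfrac{1}{2\sigma^2}$, which is what leaves room for $a>1$. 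By absorbing the $u$-dependence of $c^{\hat\mu^N(u)}$ into the constant term immediately (via $\norm{\mathbf c}^2\leq NT\bar J^2$), you turn the remaining integral into a textbook $\chi^2$ moment-generating function, so the Gaussian computation is a one-liner rather than the diagonalisation-plus-completing-the-square argument the authors perform. You correctly diagnose the one delicate point — the negative $-\norm{\mathbf v-\mathbf c}^2$ term must be retained and traded against the quadratic-form bound to get strictly below $\tfrac{1}{2\sigma^2}$. The only price is a slightly worse (but still $N$-uniform) constant $c$; since nothing in the rest of the LDP argument cares about the value of $c$, this is a pure simplification.
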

\begin{proof}
We have from \eqref{eq:phiw} that $\phi^N\left(\mu,v\right) = \olivier{\undt{\phi}_\dag}^N(\mu,w_\dag)$, where \olivier{$w^j_\dag = v_\dag^j$} for all $j$, except that \olivier{$w^0_\dag = v_\dag^0 - Nc^{\mu}$}. Since (by \eqref{eq:tildePdistn}) the distribution of the variables $\olivier{v_\dag}$ under $\olivier{\undt{P}_\dag}^{\otimes N}$ is \olivier{$\mathcal{N}_T\left(0_T,N\sigma^2{\rm Id}_T\right)^{\otimes N}$}, the distribution of \olivier{$w_\dag$} under \olivier{$\undt{P}_\dag^{\otimes N}$} is \olivier{$\mathcal{N}_T\left(Nc^{\mu},N\sigma^2\rm{Id}_T\right)^{\otimes N}$}. By lemma \ref{lem:alphabound}, the eigenvalues of $\tilde{A}^{\mu^N,j}$ are upperbounded by $0 < \alpha <1$, for all $j$. Thus
\begin{equation}
N\olivier{\undt{\phi}_\dag}^N\left(\mu,w_\olivier{\dag}\right) \leq \frac{\alpha}{2N\sigma^2}\sum_{l=-n}^n\norm{w_\olivier{\dag}^l}^2  + \frac{1}{\sigma^2}\langle c^{\mu},w_\olivier{\dag}^0\rangle + \frac{N}{2\sigma^2}\norm{c^\mu}^2.
\end{equation}
Hence we find that 
\begin{multline*}
\int_{\T^{N}}\exp\left(aN\phi^N(\hat{\mu}^N(u),\Psi(u))\right)P^{\otimes N}(du)
\leq \left(\sqrt{2\pi
    N\sigma^2}\right)^{-NT}\times \\ \int_{\T_T^{N-1}}\mathcal{G}_1\exp\left(\frac{1}{2N\sigma^2}\left[\sum_{|j|=1}^n a\alpha \norm{y^j}^2-\norm{y^j}^2\right]\right) \prod_{|j|=1}^{n}\prod_{t=1}^T dy^j_t,
\end{multline*}
 where
\begin{multline*}
\mathcal{G}_1 =\int_{\T_T}\exp\Bigg[\frac{1}{2N\sigma^2}\times \\ \left[a\alpha \norm{y^0}^2 +2aN\langle
  c^{\hat{\mu}^N}, y^0\rangle+aN^2\|c^{\hat{\mu}^N}\|^2-\norm{y^0+Nc^{\hat{\mu}^N}}^2\right]\Bigg] \prod_{t=1}^T dy^0_t. 
\end{multline*}
We note the dependency of $\mathcal{G}_1$ on $(y^j)$ (for all $|j|\neq n$) via $c^{\hat{\mu}^N}$. After diagonalisation, we find that
\begin{multline*}
\mathcal{G}_1 =\int_{\T_T}\exp\left[\frac{N\norm{c^{\hat{\mu}^N}}^2 a(a-1)(1-\alpha)}{2(1-a\alpha)\sigma^2}\right]\times \\
\exp\left[-\frac{(1-a\alpha)}{2N\sigma^2}\sum_{s=1}^T\left(y^0_s - \frac{Nc^{\hat{\mu}^N}_s(a-1)}{1-a\alpha}\right)^2\right]\prod_{s=1}^T dy^0_s. 
\end{multline*}
We assume that $a>1$ is such that $1-a\alpha \olivier{>} 0$. To bound this expression, we note the identity that if $\mathcal{A}:\mathbb{R}\to\mathbb{R}$ satisfies $|\mathcal{A}|\leq \mathcal{B} > 0$ and $\gamma_c>0$, then
\[
\int_{\R}\exp\left(-\frac{1}{2\gamma_c}\left(t-\mathcal{A}(t)\right)^2\right)dt \leq 2\mathcal{B} + \sqrt{2\pi\olivier{\gamma_c}}.
\]
Since $|c^{\hat{\mu}^N}_s| \leq \olivier{|\bar{J}|}$, \olivier{and hence $\|c^{\hat{\mu}^N}\|^2 \leq T \bar{J}^2$}, we therefore find that $\mathcal{G}_1 \leq \mathcal{G}_1^c$, where
\begin{equation*}
\mathcal{G}_1^c =\exp\left[\frac{N \olivier{T\bar{J}^2}a(a-1)(1-\alpha)}{2\sigma^2(1-a\alpha)}\right]\left(\frac{2N\olivier{|\bar{J}|}(a-1)}{1-a\alpha} + \sqrt{\frac{2\pi N\sigma^2}{1-a\alpha}}\right)^T.
\end{equation*}
Thus
\[
\int_{\T^{N}}\exp\left(aN\phi^N(\hat{\mu}^N(u),\Psi(u))\right)P^{\otimes N}(du)
\leq \mathcal{G}_1^c (1-a\alpha)^{-\frac{T(N-1)}{2}}\left(2\pi N\sigma^2\right)^{-\frac{T}{2}},
\]
which yields the lemma.
\end{proof}
\begin{proposition}\label{prop:exptight}
The family $\lbrace \Pi^N\rbrace$ is exponentially tight.
\end{proposition}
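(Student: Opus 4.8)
The plan is to transfer the exponential tightness of $\lbrace R^N\rbrace$, established in Theorem~\ref{theorem:Rnexptight}, to $\lbrace\Pi^N\rbrace$ by means of the Radon--Nikodym derivative $\frac{d\Pi^N}{dR^N}(\mu)=\exp(N\Gamma(\mu^N))$ of Proposition~\ref{prop:radon-nikodym}, the key quantitative input being the uniform bound of Lemma~\ref{lemma:expNc}. Fix $a_0<\infty$; we must exhibit a compact set $K\subset\mM_{1,s}^+(\T^\Z)$ with $\lsup{N}N^{-1}\log\Pi^N(K^c)<-a_0$.

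First I would record the estimate that makes Lemma~\ref{lemma:expNc} applicable. Since $\Gamma_1(\mu^N)\leq 0$ (Section~\ref{Sect:Gamma1Properties}), we have $\Gamma(\mu^N)\leq\Gamma_2(\mu^N)$. Moreover, for an empirical measure, $\Gamma_2((\hat{\mu}^N(u))^N)=\phi^N(\hat{\mu}^N(u),\Psi(u))$: the measure $\undt{\hat{\mu}^N(u)}^N_{1,T}$ appearing in \eqref{eq:Gamma2muN} puts mass $1/N$ at each of the $N$ cyclic shifts of $\Psi_{1,T}(u)\in\T_T^N$, while $\phi^N(\mu^N,\cdot)$ is invariant under a cyclic permutation of its $\T_T^N$-valued argument because the blocks $A^{\mu^N,\cdot}$ form a circulant family and $c^{\mu}$ is independent of the neuron index. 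Combining these facts with the change of variables $R^N=P^{\otimes N}\circ(\hat{\mu}^N)^{-1}$ and with Lemma~\ref{lemma:expNc}, we obtain, with $a>1$ and $c>0$ the constants furnished by that lemma,
\begin{align*}
\int_{\mM_{1,s}^+(\T^\Z)}\exp\!\big(aN\Gamma(\mu^N)\big)\,dR^N(\mu)
&=\int_{\T^N}\exp\!\big(aN\Gamma((\hat{\mu}^N(u))^N)\big)\,P^{\otimes N}(du)\\
&\leq\int_{\T^N}\exp\!\big(aN\phi^N(\hat{\mu}^N(u),\Psi(u))\big)\,P^{\otimes N}(du)\;\leq\;e^{Nc}.
\end{align*}

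Next I would apply H\"older's inequality with conjugate exponents $a$ and $a/(a-1)$ to the identity $\Pi^N(K^c)=\int_{K^c}\exp(N\Gamma(\mu^N))\,dR^N(\mu)$:
\begin{align*}
\Pi^N(K^c)
&\leq\Big(\int_{\mM_{1,s}^+(\T^\Z)}\exp\!\big(aN\Gamma(\mu^N)\big)\,dR^N(\mu)\Big)^{1/a}R^N(K^c)^{(a-1)/a}\\
&\leq e^{Nc/a}\,R^N(K^c)^{(a-1)/a}.
\end{align*}
By the exponential tightness of $\lbrace R^N\rbrace$ (Theorem~\ref{theorem:Rnexptight}), for every $b<\infty$ there is a compact set $K_b$ with $\lsup{N}N^{-1}\log R^N(K_b^c)<-b$. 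Taking logarithms, dividing by $N$ and letting $N\to\infty$ in the previous display therefore gives $\lsup{N}N^{-1}\log\Pi^N(K_b^c)\leq \tfrac{c}{a}-\tfrac{a-1}{a}\,b$. Choosing $b>\tfrac{a\,a_0+c}{a-1}$ makes the right-hand side strictly smaller than $-a_0$, so the corresponding $K_b$ is the required compact set.

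The genuine work---the uniform exponential moment bound $\int_{\T^N}\exp(aN\phi^N(\hat{\mu}^N(u),\Psi(u)))\,P^{\otimes N}(du)\leq e^{Nc}$---is already contained in Lemma~\ref{lemma:expNc}; what remains is routine. The only point that warrants care is the chain $\Gamma(\mu^N)\leq\Gamma_2(\mu^N)$ together with the cyclic-invariance identity $\Gamma_2((\hat{\mu}^N(u))^N)=\phi^N(\hat{\mu}^N(u),\Psi(u))$, which is precisely what allows Lemma~\ref{lemma:expNc} to be inserted inside the H\"older estimate, and then the elementary bookkeeping in the final choice of $b$. I do not anticipate any further obstacle.
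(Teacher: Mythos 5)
Your proof is correct and follows essentially the same route as the paper's: applying H\"older's inequality to $\Pi^N(B)=\int_{(\hat{\mu}^N)^{-1}(B)}\exp(N\Gamma(\hat{\mu}^N(u)))\,P^{\otimes N}(du)$, discarding the $\Gamma_1\leq 0$ term, invoking Lemma~\ref{lemma:expNc} for the uniform exponential moment bound, and then transferring the exponential tightness of $\{R^N\}$ from Theorem~\ref{theorem:Rnexptight}. The only cosmetic difference is that you spell out the cyclic-invariance identity $\Gamma_2((\hat{\mu}^N(u))^N)=\phi^N(\hat{\mu}^N(u),\Psi(u))$ which the paper leaves implicit.
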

\begin{proof}
Let $B\in\B(\mM_{1,s}^+(\T^\Z))$. We have
\[
\Pi^N(B)=\int_{(\hat{\mu}^N)^{-1}(B)} \exp N
\Gamma(\hat{\mu}^N(u))\,P^{\otimes N}(du).
\]
Through H\"older's Inequality, we find that for any $a > 1$ such that $1-a\alpha > 0$:
\begin{equation*}
\Pi^N(B) \leq R^N(B)^{(1-\frac{1}{a})}\left(\int_{(\hat{\mu}^N)^{-1}(B)} \exp\left(a N\Gamma(\hat{\mu}^N(u))\right)P^{\otimes N}(du)\right)^{\frac{1}{a}},
\end{equation*}
Now it may be observed that 
\begin{multline*}
\int_{\T^{N}}\exp\left(aN\Gamma(\hat{\mu}^N(u))\right)P^{\otimes N}(du) \\= \int_{\T^N}\exp\left(aN\phi^N(\hat{\mu}^N(u),\Psi(u)) + aN\Gamma_1(\hat{\mu}^N(u))\right)P^{\otimes N}(du).
\end{multline*}
Since $\Gamma_1 \leq 0$, it follows from lemma \ref{lemma:expNc} that
\begin{equation}
\Pi^N(B) \leq R^N(B)^{(1-\frac{1}{a})}\exp\left(\frac{Nc}{a}\right). \label{eq:PinB}
\end{equation}
By the exponential tightness of $\lbrace R^N\rbrace$ (as proved in lemma \ref{theorem:Rnexptight}), for each $L > 0$, there exists a compact set $K_L$ such that
\begin{equation*}
\lsup{N} N^{-1}\log(R^N(K_L^c)) \leq -L.
\end{equation*}
It thus suffices for us to choose
\begin{equation*}
B = K_{\frac{a}{a-1}(L + \frac{c}{a})}^c.
\end{equation*}
\end{proof}
We finish with another technical lemma which will be of use later on.
\begin{lemma}\label{lem:acbound}
There exist constants $a>1$ and $c>0$ such that for all $\mu\in\mathcal{M}_{1,s}^+(\T^{\Z})\cap\mathcal{E}_2$,
\begin{equation*}
\Gamma(\mu) \leq \frac{\left(I^{(3)}(\mu,P^{\Z}) + c\right)}{a}. 
\end{equation*}
\end{lemma}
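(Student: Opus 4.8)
The plan is to reduce the claim to a finite-$N$ estimate on $\Gamma_2$ alone, to obtain that estimate from the Gibbs variational inequality together with a uniform exponential-moment bound, and then to pass to the limit $N\to\infty$. Since $\Gamma(\mu)=\Gamma_1(\mu)+\Gamma_2(\mu)$ and $\Gamma_1(\mu^N)\le 0$ for every $N$ (see the line following \eqref{eq:Gamma1}), it suffices to control $\Gamma_2$; and if $I^{(3)}(\mu,P^\Z)=\infty$ the inequality is trivial, since $\Gamma(\mu)$ is finite on $\mathcal{E}_2$ by Propositions~\ref{prop:Gamma1b} and \ref{prop:E2Gamma2}. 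I will fix constants $a\in(1,1/\alpha)$ and $c>0$, depending only on the model parameters (with $\alpha\in(0,1)$ the constant of Lemma~\ref{lem:alphabound}), and prove the finite-$N$ inequality
\[
a N\,\Gamma_2(\mu^N)\ \le\ I^{(2)}\!\left(\mu^N,P^{\otimes N}\right)+N c
\]
for all $N$ and all $\mu\in\mathcal{M}_{1,s}^+(\T^\Z)$, then divide by $aN$, add $\Gamma_1(\mu^N)\le 0$, and let $N\to\infty$.

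To get the finite-$N$ inequality I would apply the Gibbs variational (entropy) inequality $\int g\,dQ\le I^{(2)}(Q,\nu)+\log\int e^{g}\,d\nu$, valid for probability measures $Q,\nu$ on a Polish space and a measurable $g$ bounded below with $\int e^{g}\,d\nu<\infty$, taking $\nu=P^{\otimes N}$, $Q=\mu^N$ and $g(u)=aN\,\phi^N(\mu,\Psi(u))$ with $\phi^N$ as in \eqref{eq:phiNdeftn}. Since $\Gamma_2(\mu^N)=\int_{\T^N}\phi^N(\mu,\Psi(u))\,\mu^N(du)$ by \eqref{eq:Gamma2muN} and Definition~\ref{def:mubarbar}, one has $\int_{\T^N}g\,d\mu^N=aN\,\Gamma_2(\mu^N)$, which is finite because $\mu\in\mathcal{E}_2$; and $g\ge-aN\beta_2$ is bounded below (see \eqref{eq:beta}), so the inequality is licit once the exponential moment is finite. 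Hence the only missing ingredient is the uniform bound
\[
\int_{\T^N}\exp\!\Big(aN\,\phi^N(\mu,\Psi(u))\Big)\,P^{\otimes N}(du)\ \le\ e^{Nc}
\]
for all $N$ and all $\mu\in\mathcal{M}_{1,s}^+(\T^\Z)$.

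This bound is obtained by rerunning the computation in the proof of Lemma~\ref{lemma:expNc} with the fixed stationary measure $\mu$ in the first argument of $\phi^N$ in place of the empirical measure $\hat{\mu}^N(u)$ --- which in fact simplifies matters, since $c^\mu$ and the Hermitian positive matrices $\tilde{A}^{\mu^N,l}$ then no longer depend on the integration variable. Concretely: one passes to the Fourier coordinates of Definition~\ref{defn:DFT}, under which the image of $P^{\otimes N}$ is $\mathcal{N}_T(0_T,N\sigma^2{\rm Id}_T)^{\otimes N}$ by \eqref{eq:tildePdistn}; performs the change of variables \eqref{eq:phiw}; bounds ${}^t\tilde{w}^{l,*}\tilde{A}^{\mu^N,-l}\tilde{w}^{l}\le\alpha\|w^l_\dag\|^2$ via Lemma~\ref{lem:alphabound}; and then the integral factorises over the $N$ Fourier modes into Gaussian integrals, each at most $(1-a\alpha)^{-T/2}\exp(N\kappa)$ provided $a\alpha<1$, where the only $\mu$-dependent data entering $\kappa$ are the eigenvalues of $\tilde{A}^{\mu^N,l}$ (uniformly $\le\alpha<1$ by Lemma~\ref{lem:alphabound}) and $\|c^\mu\|^2\le T\bar{J}^2$ (since $f$ takes values in $]0,1[$). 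Multiplying the $N$ factors gives $e^{Nc}$ with $c$ depending only on $\sigma,\alpha,T$ and $\bar{J}$.

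Combining the two displays yields $aN\,\Gamma_2(\mu^N)\le I^{(2)}(\mu^N,P^{\otimes N})+Nc$, hence $\Gamma(\mu^N)\le\frac1{aN}I^{(2)}(\mu^N,P^{\otimes N})+\frac ca$. Letting $N\to\infty$ and using $\Gamma(\mu^N)\to\Gamma(\mu)$ (Lemma~\ref{lemma:Gamma1} and Proposition~\ref{prop:E2Gamma2}) together with $\frac1N I^{(2)}(\mu^N,P^{\otimes N})\to I^{(3)}(\mu,P^\Z)$ (definition \eqref{eq:I3limit}) gives $\Gamma(\mu)\le\frac1a\big(I^{(3)}(\mu,P^\Z)+c\big)$, which is the assertion. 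The one genuinely non-routine step is the uniform exponential-moment bound of the third paragraph: one has to check that the constant coming out of the Gaussian computation of Lemma~\ref{lemma:expNc} can be chosen independently of $\mu$, and this rests precisely on the two uniform controls $\|c^\mu\|\le\sqrt{T}\,|\bar{J}|$ and the bound $\le\alpha<1$ on the eigenvalues of $\tilde{A}^{\mu^N,l}$ for all $N$, $\mu$ and $l$.
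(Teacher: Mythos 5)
Your proof is correct and follows essentially the same route as the paper's: both apply the variational (Gibbs / Fenchel--Legendre) characterisation of $I^{(2)}(\mu^N,P^{\otimes N})$ with the test function built from $a\phi^N(\mu,\Psi(\cdot))$, invoke the uniform exponential-moment bound in the style of Lemma~\ref{lemma:expNc}, and pass to the limit $N\to\infty$. The only cosmetic difference is that the paper absorbs the constant $N\Gamma_1(\mu^N)$ into the test function $\varsigma^N_*$ before applying the variational inequality (with the same truncation-and-monotone-convergence argument you invoke), whereas you bound $\Gamma_2$ alone and then add $\Gamma_1\le 0$ at the end; your observation that fixing $\mu$ as the first argument of $\phi^N$ makes the Gaussian computation of Lemma~\ref{lemma:expNc} simpler (since $c^\mu$ and the matrices $\tilde A^{\mu^N,l}$ no longer depend on the integration variable) is a useful clarification of the paper's remark that the bound follows ``similarly.''
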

\begin{proof}
We have (from (\ref{eq:I3limit})) that
\begin{equation*}
I^{(3)}(\mu,P^{\Z}) = \lim_{N\rightarrow\infty} N^{-1} I^{(2)}\left(\mu^N,P^{\otimes N}\right).
\end{equation*}
We recall that $I^{(2)}$ may be expressed using the Fenchel-Legendre transform as
\begin{multline}
I^{(2)}\left(\mu^N,P^{\otimes N}\right) = \\ 
=\sup_{\varsigma^N \in C_b(\T^N)}\left(\int_{\T^{N}} \varsigma^N(u)\mu^N(du) - \log\int_{\T^{N}}\exp(\varsigma^N(u))P^{\otimes N}(du)\right),\label{eq:I2bound}
\end{multline}
where $\varsigma^N$ is a continuous, bounded function on $\T^N$. We let $\varsigma^N_M = \olivier{a}1_{B_M}\varsigma^N_*$, where $\varsigma^N_*(u) = \olivier{N}(\phi^N(\mu,\Psi(u)) + \Gamma_1(\mu^N))$, and $u\in B_M$ only if either $\norm{\Psi\olivier{(u)}} \leq NM$ or $(\phi^N(\mu,\Psi(u)) + \Gamma_1(\mu^N)) \leq 0$. We proved in section \ref{Sect:Gamma2Properties} that $\phi^N(\mu,\Psi(u))$ possesses a lower bound, which means that $\varsigma^N_M$ is continuous and bounded. Furthermore $\varsigma^N_M$ grows to $\varsigma^N_*$, so that after substituting $\varsigma^N_M$ into \eqref{eq:I2bound} and taking $M\to\infty$ (i.e. applying the dominated convergence theorem), we obtain

\begin{equation}
a\int_{\T^N}\varsigma^N_*(u) \mu^N(du)
\leq\log\int_{\T^N}\exp\left(a\varsigma^N_*(u)\right)\,P^{\otimes N}(du) +
I^{(2)}(\mu^N,P^{\otimes N}).\label{eq:phiNM}
\end{equation}
It can be easily shown, similarly to lemma \ref{lemma:expNc}, that \newline$\log\int_{\T^N}\exp\left(a\varsigma^N_*(u)\right)\,P^{\otimes N}(du)\leq Nc$. We may thus divide both sides by $aN$ and let $N \to \infty$ to obtain the required result.
\end{proof}
\subsection{Upper Bound on the Compact Sets}\label{section:upperboundcompact}
In this section we obtain an upper bound on the compact sets, i.e. the first half of theorem \ref{theo:LDP} for $F$ compact. Our
method is to obtain an LDP for a simplified Gaussian system (with fixed
$A^{\nu}$ and $c^\nu$), and then prove that this converges to the
required bound as $\nu\rightarrow\mu$. 
\subsubsection{An LDP for a Gaussian measure}  
\label{section:Qndefinition}
We linearise $\Gamma$ in the following manner. Fix $\nu\in\M_{1,s}^+(\T^{\Z})$ and assume for the moment that $\mu \in \mathcal{E}_2$. Let
\begin{equation}\label{eq:Gamma2nuN}
\Gamma_{2}^\nu(\mu^N)=\int_{\T_T^N}\phi^N_{\infty}(\nu,v)d\undt{\mu}^N_{1,T}(v), \text{ where }
\end{equation}
\begin{equation}\label{eq:phiNInfinitydefn}
\phi^N_{\infty}(\nu,v) = \frac{1}{2\sigma^2}\Bigg(\frac{1}{N}\sum_{j,k=-n}^{n} 
\,^t (v^\olivier{j}- c^{\nu})A^{\nu,\,k-j}
   (v^k-
c^{\nu} )+\\ \frac{2}{N}\sum_{j=-n}^n \langle c^{\nu}, v^j\rangle -
  \|c^{\nu} \|^2\Bigg).
\end{equation}
Let us also define
\[
\Gamma_1^N(\nu)=-\frac{1}{2N} \log \det \left({\rm Id}_{NT}+\frac{1}{\sigma^2} K^{\nu,N}\right),
\]
where $K^{\nu,N}$ is the $NT\times NT$ matrix with elements given by $K^{\nu,l}_{st}$, \olivier{$l=-n\cdots n$}. \olivier{Its $T \times T$ blocks are noted $K^{\nu,N,l}$}. We define
\begin{equation*}
 \Gamma^\nu(\mu^N) = \Gamma_1^N(\nu) + \Gamma_2^\nu(\mu^N), \text{ and }
\end{equation*}
\olivier{$\Gamma^\nu_2(\mu) = \lim_{N\rightarrow\infty}\Gamma^\nu_2(\mu^N)$.} We
find, using the first identity in proposition \ref{prop:E2Gamma2}, that 
\begin{multline}\label{eq:Gamma2nuspectral}
\Gamma_2^\nu(\mu)=\frac{1}{2\sigma^2}
\left(\frac{1}{2\pi}\int_{-\pi}^{\pi} \tilde{A}^\nu(-\omega) :
  \tilde{v}^\mu(d\omega)\right.\\ \left.-2\,{}^t c^\nu\tilde{A}^\nu(0)
  \bar{v}^\mu+\,^tc^\nu \tilde{A}^\nu(0) c^\nu+2\langle c^\nu, \bar{v}^\mu \rangle - \|c^\nu\|^2\right),
\end{multline}
where $\bar{v}^\mu= \Exp^{\undt{\mu}_{1,T}}[v^0]$, and $\tilde{v}^\mu$ is the spectral measure defined in \eqref{eq:tildemuV}. We recall that $:$ denotes double contraction on the indices.

Similarly to lemma \ref{lemma:Gamma1}, we find that 
\begin{equation}\label{eq:Gamma1nu}
\lim_{N \to \infty} \Gamma_1^N(\nu)=-\frac{1}{4\pi} \int_{-\pi}^\pi
\left(\log \det \left({\rm Id}_{T}+\frac{1}{\sigma^2} \tilde{K}^\nu(\omega)\right)\right)\,d\omega=\Gamma_1(\nu).
\end{equation}
For $\mu\in\mathcal{E}_2$, we define $H^\nu(\mu) = I^{(3)}(\mu,P^\Z) - \Gamma^{\nu}(\mu)$; for $\mu\notin\mathcal{E}_2$, we define $\Gamma_2^\nu(\mu) = \Gamma^\nu(\mu)= \infty$ and $H^\nu(\mu) = \infty$. In fact it will be seen that $H^{\nu}$ is the rate function for the Gaussian Stationary Process $Q^{\nu}$ which we now define.

\begin{definition}
Let $\undt{Q}^{\nu}\in\M_{1,s}^+\left(\T^{\Z}\right)$ with $N$-dimensional marginals $\undt{Q}^{\nu,N}$ given by  
\begin{equation}\label{defn:Qnu}
\undt{Q}^{\nu,N}(B) =
\int_{B}\exp\left(N\Gamma^\nu(\undt{\hat{\mu}}^N(v))\right)\,\undt{P}^{\otimes N}(dv),
\end{equation}
where $B\in\mathcal{B}(\olivier{\T^N})$. This defines a law $Q^{\nu}\in\M_{1,s}^+\left(\T^{\Z}\right)$ according to the
correspondence in definition \ref{def:mubarbar}.
\end{definition}
It is easily shown that $\undt{Q}^{\nu}_{1,T}$ is Gaussian, with covariance operator
$\sigma^2 {\rm Id} + K^{\nu}$ and mean $c^{\nu}$. The spectral density of the covariance is $\sigma^2
{\rm Id}_{T} + \tilde{K}^{\nu}$. \olivier{In addition, 
\begin{equation}\label{eq:muIassumption}
\undt{Q}^{\nu}_0=\mu_{I}^{\Z}.
\end{equation}
}
\begin{definition}
Let $\undt{\Pi}^{\nu,N}$ be the image law of $\undt{Q}^{\nu, \olivier{N}}$ under $\undt{\hat{\mu}}^N$, i.e. for $B\in\B(\M_{1,s}^+(\T^\Z))$,
\begin{equation*}
\undt{\Pi}^{\nu,N}(B) = \undt{Q}^{\nu, \olivier{N}} \left(\undt{\hat{\mu}}^N  \in B\right).
\end{equation*}
\end{definition}
\begin{lemma}\label{lemma:PinuNLDP}
The image law $\undt{\Pi}^{\nu,N}$ satisfies a strong LDP (in the manner of Theorem \ref{theo:LDP}) with good rate function
\begin{equation}\label{eq:Hnuratefunction}
\undt{H}^{\nu}(\undt{\mu}) = I^{(3)}\left(\undt{\mu},\undt{P}^{\Z}\right) - \Gamma^{\nu}(\mu).
\end{equation}
\end{lemma}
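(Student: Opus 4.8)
The plan is to realise $\undt{\Pi}^{\nu,N}$ as an exponential tilting of the already-understood law $\undt R^N$ (the image of $\undt P^{\otimes N}$ under $\undt{\hat\mu}^N$) and to transfer an LDP through it, in the spirit of Varadhan's lemma. Starting from equation \eqref{defn:Qnu}, a change of variable gives, for $B\in\B(\M_{1,s}^+(\T^\Z))$,
\begin{equation*}
\undt{\Pi}^{\nu,N}(B)=\int_{(\undt{\hat\mu}^N)^{-1}(B)}\exp\!\big(N\Gamma^\nu(\undt{\hat\mu}^N(v))\big)\,\undt P^{\otimes N}(dv)=\int_B\exp\!\big(N\Gamma^\nu(\mu^N)\big)\,d\undt R^N(\mu).
\end{equation*}
Since $\undt P$ is a probability measure on $\T$, the level-$3$ LDP cited for Theorem \ref{theorem:Rnexptight} applies with $\undt P$ in place of $P$: $\undt R^N$ obeys an LDP with good rate function $\undt\mu\mapsto I^{(3)}(\undt\mu,\undt P^\Z)$ and $\{\undt R^N\}$ is exponentially tight. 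I would record two structural facts: $\Gamma^\nu(\mu^N)=\Gamma_1^N(\nu)+\int_{\T_T^N}\phi^N_\infty(\nu,v)\,d\undt\mu^N_{1,T}(v)$ is \emph{affine} in $\mu^N$, with $\Gamma_1^N(\nu)\le0$ a constant; and, because $\phi^N_\infty(\nu,\cdot)$ is invariant under the cyclic shift, $N\Gamma^\nu(\undt{\hat\mu}^N(v))=N\Gamma_1^N(\nu)+N\phi^N_\infty(\nu,\Psi_{1,T}(v))$, which reduces every integral against $\undt P^{\otimes N}$ to a one-neuron Gaussian computation; in particular the formula of Lemma \ref{lemma:gauss} confirms that $\undt Q^{\nu,N}$ (hence $\undt{\Pi}^{\nu,N}$) is a probability measure, so the rate function must have infimum $0$.

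Next I would assemble the ingredients of a tilted LDP. \textbf{Exponential tightness of }$\{\undt{\Pi}^{\nu,N}\}$: since the eigenvalues of $\tilde A^{\nu,k}$ are bounded by $\alpha<1$ (Lemma \ref{lem:alphabound}), the Gaussian estimate proving Lemma \ref{lemma:expNc} carries over with $\phi^N_\infty(\nu,\cdot)$ in place of $\phi^N(\hat\mu^N,\cdot)$; together with $\Gamma_1^N(\nu)\le0$ and Hölder's inequality exactly as in Proposition \ref{prop:exptight} this yields $\undt{\Pi}^{\nu,N}(B)\le\undt R^N(B)^{1-1/a}e^{Nc/a}$ for suitable $a>1,c>0$, and exponential tightness of $\{\undt R^N\}$ transfers. \textbf{Goodness of }$\undt H^\nu$: the analogue of Lemma \ref{lem:acbound} for $\phi^N_\infty(\nu,\cdot)$ (its proof uses only the lower bound $-\beta_2$, valid for the fixed $A^\nu,c^\nu$ by the argument of Section \ref{Sect:Gamma2Properties}, and the Gaussian estimate above) gives $\Gamma^\nu(\mu)\le a^{-1}(I^{(3)}(\mu,P^\Z)+c)$ on $\mathcal{E}_2$, so $\undt H^\nu\ge(1-a^{-1})I^{(3)}(\cdot,\undt P^\Z)-a^{-1}c$ and its sublevel sets lie in those of the good rate function $I^{(3)}(\cdot,\undt P^\Z)$; this also follows a posteriori from exponential tightness. \textbf{Lower bound on open sets}: this mirrors Lemma \ref{lemma:lbopen}; for open $O$ and $\mu\in O$ with $I^{(3)}(\mu,P^\Z)<\infty$, lower semicontinuity of $\mu^N\mapsto\Gamma^\nu(\mu^N)$ (proved as in Proposition \ref{prop:Gamma2lsc}) furnishes a neighbourhood $Z\subset O$ with $\inf_{\gamma\in Z}\Gamma^\nu(\gamma^N)\ge\Gamma^\nu(\mu^N)-\varepsilon$, whence $\undt{\Pi}^{\nu,N}(O)\ge\undt R^N(Z)\,e^{N(\Gamma^\nu(\mu^N)-\varepsilon)}$; taking $\liminf$, using $\Gamma^\nu(\mu^N)\to\Gamma^\nu(\mu)$ (from \eqref{eq:Gamma2nuspectral}, \eqref{eq:Gamma1nu} and Proposition \ref{prop:E2Gamma2}) and then $\varepsilon\downarrow0$ gives $\liminf_N N^{-1}\log\undt{\Pi}^{\nu,N}(O)\ge-\undt H^\nu(\undt\mu)$.

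\textbf{Upper bound on compact sets} is the main obstacle: $\Gamma^\nu$ is only lower (not upper) semicontinuous and unbounded above, so $e^{N\Gamma^\nu(\mu^N)}$ cannot be dominated on a ball. I would clip the quadratic form in $\phi^N_\infty(\nu,\cdot)$ at height $M$, obtaining bounded continuous $\phi^{N,M}_\infty(\nu,\cdot)\le\phi^N_\infty(\nu,\cdot)$ and a linearised functional $\Gamma^{\nu,M}$; as in Proposition \ref{prop:Gamma2lsc}, $\Gamma^{\nu,M}(\mu^N)$ is continuous in $\mu^N$, and since $A^\nu,c^\nu$ are fixed one checks, using absolute summability of $(A^{\nu,k})$ and stationarity, that $\Gamma^{\nu,M}(\mu^N)\to\Gamma^{\nu,M}(\mu)$ uniformly over each sublevel set $\{I^{(3)}(\cdot,P^\Z)\le L\}$ --- which has uniformly bounded $\Exp^{\undt\mu_{1,T}}[\norm{v^0}^2]$ by Lemma \ref{lemma:E2enough} --- on which moreover $\Gamma^{\nu,M}$ is continuous in $\mu$. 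For compact $F$: on $F\cap\{I^{(3)}\le L\}$, cover by finitely many balls where $\Gamma^{\nu,M}$ varies by at most $\varepsilon$ and apply the LDP upper bound for $\undt R^N$ on each, getting $\limsup_N N^{-1}\log\int_{F\cap\{I^{(3)}\le L\}}e^{N\Gamma^{\nu,M}(\mu^N)}d\undt R^N\le\sup_{\mu\in F}(\Gamma^{\nu,M}(\mu)-I^{(3)}(\mu,P^\Z))+\varepsilon$; the complement $F\cap\{I^{(3)}>L\}$ is handled uniformly in $N$ by exponential tightness of $\{\undt R^N\}$ with Hölder and the $a$-moment bound. Reinstating the full weight via $e^{N\Gamma^\nu}=e^{N\Gamma^{\nu,M}}e^{N(\Gamma^\nu-\Gamma^{\nu,M})}$ with $\Gamma^\nu-\Gamma^{\nu,M}\ge0$, Hölder with exponent $q\downarrow1$, and the fact that $\int e^{qN(\Gamma^\nu-\Gamma^{\nu,M})}d\undt R^N$ has exponential rate $\to0$ as $M\to\infty$ (again the $a$-moment bound, the clipped part living where $\phi^N_\infty>M$), then letting $\varepsilon\downarrow0$, $L\to\infty$, $M\to\infty$, and using $\sup_F(\Gamma^{\nu,M}-I^{(3)})\uparrow\sup_F(\Gamma^\nu-I^{(3)})$, gives $\limsup_N N^{-1}\log\undt{\Pi}^{\nu,N}(F)\le-\inf_{\undt\mu\in F}\undt H^\nu(\undt\mu)$. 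Finally, the weak LDP just obtained plus exponential tightness upgrade to the strong LDP of Theorem \ref{theo:LDP} with good rate function $\undt H^\nu$. The hardest step is this upper bound, as it forces the simultaneous use of truncation, restriction to sublevel sets of $I^{(3)}$ (where Lemma \ref{lemma:E2enough} supplies the second-moment control), and the exponential-tightness/moment machinery inherited from Lemmas \ref{lemma:expNc} and \ref{lem:acbound}.
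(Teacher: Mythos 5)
Your approach is genuinely different from the paper's. The paper realises $\undt Q^{\nu}$ as the image of $\undt P^{\Z}$ under the shift-invariant linear map $\tau^{\nu}$ (convolution with the Fourier coefficients of $\sqrt{{\rm Id}_T+\sigma^{-2}\tilde K^{\nu}}$), approximates $\tau^{\nu}$ by continuous Fejer truncations $\tau^{\nu}_{(M)}$, applies the contraction principle to each of these, passes to the limit via the Baxter--Jain exponential approximation criterion (Lemma \ref{lem:tempend}), and then identifies the resulting rate function with $I^{(3)}(\undt\mu,\undt P^{\Z})-\Gamma^{\nu}(\mu)$ through the separate entropy computation of Section \ref{sect:EntropyProperties} (Lemma \ref{lem:hnusemicont}). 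You instead treat $\undt\Pi^{\nu,N}$ as an exponential tilt of $\undt R^N$ by $e^{N\Gamma^{\nu}(\mu^N)}$ and transfer the LDP directly, mirroring the strategy the paper uses for Theorem \ref{theo:LDP}; your lower bound, exponential tightness, and goodness arguments are sound and free of circularity since you only invoke the LDP for $\undt R^N$, and the rate function comes out directly in the advertised form without any entropy detour.

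The gap is in the upper bound, at the step you yourself flag as hardest. You assert that the clipped functional $\Gamma^{\nu,M}(\mu^N)$ converges to $\Gamma^{\nu,M}(\mu)$ uniformly on the sublevel sets $\{I^{(3)}\le L\}$ ``using absolute summability of $(A^{\nu,k})$ and stationarity.'' For the \emph{unclipped} affine functional $\Gamma^{\nu}$ this uniform convergence does follow: the error $\Gamma^{\nu}_2(\mu)-\Gamma^{\nu}_2(\mu^N)$ is controlled by a tail sum of $\|A^{\nu,k}\|$ times the second moment $\Exp^{\undt\mu_{1,T}}[\|v^0\|^2]$, which is uniformly bounded on the sublevel set by Lemma \ref{lemma:E2enough}. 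But the clipped functional is \emph{not} of this tail-summable, affine-in-$\undt\mu^N_{1,T}$ form: the truncation indicator (whether $1_{\{N^{-1}\sum_j\|v^j\|^2\le M\}}$ as in Proposition \ref{prop:Gamma2lsc} or a clip at height $M$ of $\phi^N_{\infty}$) depends on $N$ through a Cesàro average whose level set moves with $N$, it has no pointwise limit as a function of the trajectory but only an ergodic one, and the position of the limit functional relative to the clipping threshold depends delicately on the second moment of the particular $\mu$. So the uniform convergence you need is not a consequence of the facts you cite and would require a separate argument. Moreover, since $\Gamma^{\nu}$ itself is only lower-semicontinuous in $\mu$ (it is built from second moments, which are not controlled by weak convergence), some truncation seems unavoidable for a Varadhan-type upper bound, so the gap cannot simply be bypassed by dropping the clip. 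This is precisely the difficulty the paper's contraction-principle route was engineered to sidestep: the Fejer approximation acts on the linear map $\tau^{\nu}$ in trajectory space, where the single Gaussian exponential-moment estimate of Lemma \ref{lem:tempend} gives clean uniform control, and the rate function is then computed in closed form (via Section \ref{sect:EntropyProperties}) rather than extracted as a Varadhan limit over truncations.
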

This result is proved in appendix \ref{section:proofLemmaLDP}. For $B\in\B(\M_{1,s}^+(\T^{\Z}))$, we define the image law
\begin{equation*}
\Pi^{\nu,N}(B) = Q^{\nu,\olivier{N}}(\hat{\mu}^N\in B)=\undt{Q}^{\nu, \olivier{N}}(\hat{\undt{\mu}}^N \in
B).
\end{equation*}
It follows from the contraction principle that if we write $H^\nu(\mu) := \undt{H}^\nu(\undt{\mu})$, then
\begin{corollary}\label{Cor:PiNnu}
The image law $\Pi^{\nu,N}$ satisfies a strong LDP with good rate function
\begin{equation}\label{eq:Hnuratefunction2}
H^{\nu}(\mu) = I^{(3)}(\mu,P^\Z) - \Gamma^{\nu}(\mu). 
\end{equation}
\end{corollary}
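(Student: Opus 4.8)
The plan is to obtain the corollary from Lemma~\ref{lemma:PinuNLDP} by a single application of the contraction principle along the ``$\Psi$-change of variables''. The first step is to observe that the push-forward map $\Psi_*:\M_{1,s}^+(\T^\Z)\to\M_{1,s}^+(\T^\Z)$, $\Psi_*\mu=\mu\circ\Psi^{-1}=\undt{\mu}$ of Definition~\ref{def:mubarbar}, is a homeomorphism: $\Psi$ is a continuous bijection of $\T$ with continuous inverse (see \eqref{eq:Psidefn}--\eqref{eq:Psi}), so, acting coordinate-wise, it is a homeomorphism of $\T^\Z$; pushing measures forward by $\Psi$ and by $\Psi^{-1}$ then gives mutually inverse maps that are continuous for the topology of weak convergence, hence for the metric $D$ of \eqref{eq:metricdefn} which generates that topology; and, since $\Psi$ acts separately on each coordinate it commutes with the shift $S$, so $\Psi_*$ maps $\M_{1,s}^+(\T^\Z)$ onto itself.

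The second step is the intertwining identity between the two families of image laws. By construction (see \eqref{defn:Qnu} together with the correspondence of Definition~\ref{def:mubarbar}) the law $Q^{\nu,N}$ is the pull-back of $\undt{Q}^{\nu,N}$ along $\Psi$, while a direct computation with \eqref{eq:hatmuN} gives $\hat{\mu}^N(\Psi^{-1}(v))=\hat{\mu}^N(v)\circ\Psi=\Psi_*^{-1}\big(\hat{\mu}^N(v)\big)$, because $\Psi$ commutes with the shift and with the periodisation $u\mapsto u(N)$. Substituting these two facts into the definitions of $\Pi^{\nu,N}$ and $\undt{\Pi}^{\nu,N}$ shows that $\undt{\Pi}^{\nu,N}$ is the image of $\Pi^{\nu,N}$ under $\Psi_*$, equivalently that $\Pi^{\nu,N}$ is the image of $\undt{\Pi}^{\nu,N}$ under the continuous map $\Psi_*^{-1}$.

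The third step applies the contraction principle \cite{dembo-zeitouni:97}. Since $\undt{\Pi}^{\nu,N}$ satisfies a strong LDP with good rate function $\undt{H}^\nu$ by Lemma~\ref{lemma:PinuNLDP}, and $\Pi^{\nu,N}$ is its image under the homeomorphism $\Psi_*^{-1}$, the family $\Pi^{\nu,N}$ satisfies a strong LDP with good rate function $\mu\mapsto\inf\{\undt{H}^\nu(\eta):\Psi_*^{-1}(\eta)=\mu\}=\undt{H}^\nu(\undt{\mu})$, the infimum being over the single point $\undt{\mu}$. It then remains only to rewrite this rate function: using the definition \eqref{eq:Hnuratefunction} and the invariance \eqref{eq:I3equality} of the process-level entropy under $\Psi$, $\undt{H}^\nu(\undt{\mu})=I^{(3)}(\undt{\mu},\undt{P}^\Z)-\Gamma^\nu(\mu)=I^{(3)}(\mu,P^\Z)-\Gamma^\nu(\mu)=H^\nu(\mu)$, which is \eqref{eq:Hnuratefunction2}.

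The one point requiring care is the bookkeeping of directions in the second step: one must track which objects are pull-backs and which are push-forwards, and note in particular that conjugating the empirical-measure map by $\Psi$ produces $\Psi_*^{-1}$ rather than $\Psi_*$. Everything else is formal once the homeomorphism property of $\Psi_*$ is in hand.
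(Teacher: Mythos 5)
Your proof is correct and fills in precisely the details that the paper compresses into its one-line appeal to the contraction principle: the paper likewise passes from Lemma \ref{lemma:PinuNLDP} to Corollary \ref{Cor:PiNnu} by pushing the LDP for $\undt{\Pi}^{\nu,N}$ through the $\Psi$-change of variables and then identifying $\undt{H}^\nu(\undt{\mu})$ with $I^{(3)}(\mu,P^{\Z}) - \Gamma^\nu(\mu)$ via \eqref{eq:Hnuratefunction} and \eqref{eq:I3equality}. Your careful verification that $\Psi_*$ is a shift-commuting homeomorphism of $\mM_{1,s}^+(\T^\Z)$ and that conjugating the empirical-measure map by $\Psi$ produces $\Psi_*^{-1}$ is exactly the bookkeeping the paper leaves implicit.
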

\subsubsection{An upper bound for \protect $\Pi^N$ over compact sets}
In this section we derive an upper bound for $\Pi^N$ over compact sets using the LDP of the previous section. Before we do this, we require some lemmas governing the `distance' between $\Gamma^\nu$ and $\Gamma$. Let $\tilde{K}^{\mu,N}$ be the DFT of $\left(K^{\mu,j}\right)_{j=-n}^n$, and similarly $\tilde{A}^{\mu,N}$ is the DFT of $(A^{\mu,j})_{j=-n}^n$. We define
\begin{equation}
C^\nu_N = \sup_{M\geq N,(2|l|+1)\leq M} \lbrace\norm{\tilde{A}^{\nu^M,l}-\tilde{A}^{\nu,M,l}}, \norm{\tilde{K}^{\nu^M,l}-\tilde{K}^{\nu,M,l}}\rbrace,\label{defn:CnuN}
\end{equation}
where we have taken the operator norm. 
\begin{lemma}\label{lem:cnuNasymptote}
For all $\nu\in \M_{1,s}^+(\T^{\Z})$, $C^{\nu}_N$ is finite and 
\begin{equation*}
C^\nu_N \rightarrow 0 \text{ as } N\rightarrow\infty.
\end{equation*}
\end{lemma}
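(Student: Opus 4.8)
Write $M = 2n+1$ and, for $|l|\le n$, put $\omega_l = 2\pi l / M$. Recall from the remarks preceding Lemmas \ref{lem:muNconvergenceK} and \ref{lemma:Lipschitz} that $\tilde K^{\nu^M,l} = \tilde K^{\nu^M}(\omega_l)$ and $\tilde A^{\nu^M,l} = \tilde A^{\nu^M}(\omega_l)$, whereas by construction $\tilde K^{\nu,M,l} = \sum_{j=-n}^n K^{\nu,j} e^{-ij\omega_l}$ and $\tilde A^{\nu,M,l} = \sum_{j=-n}^n A^{\nu,j} e^{-ij\omega_l}$. The plan is to compare each of these finite objects with the exact spectral density ($\tilde K^{\nu}(\omega_l)$, resp.\ $\tilde A^{\nu}(\omega_l)$) sampled at $\omega_l$, and to estimate the two resulting errors uniformly in $l$ and in $M\ge N$.

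For the $\tilde K$ term I would use \eqref{eq:Ktildemu} to write
\[
\tilde K^{\nu^M,l} - \tilde K^{\nu,M,l} = \bigl(\tilde K^{\nu^M}(\omega_l) - \tilde K^{\nu}(\omega_l)\bigr) + \sum_{|j|>n} K^{\nu,j} e^{-ij\omega_l}.
\]
Lemma \ref{lem:muNconvergenceK} bounds the first bracket by any prescribed $\varepsilon$, uniformly in $l$, as soon as $M$ is large enough; and by \eqref{eq:boundKmui} we have $\|K^{\nu,j}\|\le T\sum_m |\Lambda(j,m)|$ for $j\neq 0$, so the second term has norm at most $T\sum_{|j|>n}\sum_m |\Lambda(j,m)|$, a tail of the absolutely convergent series \eqref{eq:lambdasumassumption}. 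Since $M\ge N$ forces $n \ge (N-1)/2$, both contributions are controlled uniformly over $M\ge N$, and hence $\sup_{M\ge N,\,2|l|+1\le M}\|\tilde K^{\nu^M,l} - \tilde K^{\nu,M,l}\|\to 0$ as $N\to\infty$.

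For the $\tilde A$ term the argument is identical with $\tilde K$ replaced by $\tilde A$: since $\tilde A^{\nu}(\omega) = \sum_{j\in\Z} A^{\nu,j} e^{-ij\omega}$,
\[
\tilde A^{\nu^M,l} - \tilde A^{\nu,M,l} = \bigl(\tilde A^{\nu^M}(\omega_l) - \tilde A^{\nu}(\omega_l)\bigr) + \sum_{|j|>n} A^{\nu,j} e^{-ij\omega_l},
\]
where the first bracket is now handled by Lemma \ref{lem:muNconvergenceA}, and the tail $\sum_{|j|>n}\|A^{\nu,j}\|$ tends to $0$ because the Fourier series $(A^{\nu,j})_{j\in\Z}$ is absolutely convergent (established in the discussion around \eqref{eq:Amulimit}); again everything is uniform over $M\ge N$. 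Combining the two displays gives $C^\nu_N \to 0$.

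Finiteness of $C^\nu_N$ follows from the same two displays estimated crudely: the brackets $\tilde K^{\nu^M}(\omega_l) - \tilde K^{\nu}(\omega_l)$ and $\tilde A^{\nu^M}(\omega_l) - \tilde A^{\nu}(\omega_l)$ are at most $2\rho_K$ resp.\ $2\alpha$ by Lemmas \ref{eq:rhoK} and \ref{lem:alphabound}, and the two tails are at most $T\Lambda^{sum}$ resp.\ $\sum_{j\in\Z}\|A^{\nu,j}\| < \infty$; these are finite constants independent of $M$ and $l$, so the supremum in \eqref{defn:CnuN} is finite. The only point requiring care is to keep the three objects $\tilde A^{\nu^M,l}$ (a block of the diagonalisation of the $NT\times NT$ matrix $A^{\nu^M}$), $\tilde A^{\nu,M,l}$ (the DFT of the \emph{truncated} exact Fourier coefficients $A^{\nu,j}$), and $\tilde A^{\nu}(\omega_l)$ (the exact spectral density at $\omega_l$) clearly apart; routing the comparison through the last of them is what lets the uniform-in-$\omega$ convergence lemmas \ref{lem:muNconvergenceK}--\ref{lem:muNconvergenceA} do the work, and everything else is the triangle inequality.
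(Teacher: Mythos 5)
Your proof is correct and follows essentially the same approach as the paper's: route the comparison through the exact spectral density $\tilde K^{\nu}(\omega_l)$ (resp.\ $\tilde A^{\nu}(\omega_l)$), control $\tilde K^{\nu^M,l}-\tilde K^{\nu}(\omega_l)$ by the uniform convergence of Lemma \ref{lem:muNconvergenceK} (resp.\ \ref{lem:muNconvergenceA}), and control $\tilde K^{\nu}(\omega_l)-\tilde K^{\nu,M,l}$ as the tail of an absolutely convergent Fourier series. You have merely written out explicitly the triangle-inequality decomposition that the paper compresses into a couple of sentences, and your bookkeeping of the three distinct objects is a welcome clarification.
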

\begin{proof}
We recall from proposition \ref{prop:Ktildeomega} that $\tilde{K}^{\nu^M}_{st}(\omega)$ converges uniformly (in $\omega$) to $\tilde{K}^{\nu}_{st}(\omega)$. The same holds for $\tilde{K}_{st}^{\nu,M,l}$, because this represents the partial summation of an absolutely converging Fourier Series. That is, for fixed $\omega = 2\pi l_M / M$, $\tilde{K}_{st}^{\nu,M,l_M}\to\tilde{K}_{st}^\nu(\omega)$ as $M\rightarrow\infty$. The result then follows from the equivalence of matrix norms. The proof for $\tilde{A}^\nu$ is analogous.
\end{proof}
\begin{lemma}\label{lemma:Vsquared}
There exists a constant $C_0$ such that for all $\nu$ in $\mathcal{M}_{1,s}^+(\T^\Z)$, all $\varepsilon > 0$ and all $\mu \in V_\varepsilon(\nu)\cap\mathcal{E}_2$, 
\begin{equation*}
\left|\Gamma(\mu^N)-\Gamma^{\nu}(\mu^N)\right|\leq C_0 (C^\nu_N + \varepsilon)(1+\Exp^{\undt{\mu}_{1,T}}[\| v^0 \|^2]).
\end{equation*}
Here $V_\varepsilon(\nu)$ is the open neighbourhood defined in proposition \ref{prop:Ktildeomega}, and $\undt{\mu}$ is given in definition \ref{def:mubarbar}.
\end{lemma}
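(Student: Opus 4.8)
The plan is to split the difference into its two natural pieces,
\[
|\Gamma(\mu^N)-\Gamma^\nu(\mu^N)|\le|\Gamma_1(\mu^N)-\Gamma_1^N(\nu)|+|\Gamma_2(\mu^N)-\Gamma_2^\nu(\mu^N)|,
\]
and to bound each term by $C(C^\nu_N+\varepsilon)\bigl(1+\Exp^{\undt{\mu}_{1,T}}[\|v^0\|^2]\bigr)$, after which $C_0$ can be taken to be the larger of the two constants. Throughout, $C$ denotes a constant depending only on the model parameters ($T,\sigma,\bar{J},\theta,\Lambda^{sum}$), which may change from line to line.

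For the $\Gamma_1$ term I would diagonalise both expressions by lemma \ref{lem:preserveeigenvalues}, writing $\Gamma_1(\mu^N)=-\frac{1}{2N}\sum_{l=-n}^{n}\log\det({\rm Id}_T+\sigma^{-2}\tilde{K}^{\mu^N,l})$ and $\Gamma_1^N(\nu)=-\frac{1}{2N}\sum_{l=-n}^{n}\log\det({\rm Id}_T+\sigma^{-2}\tilde{K}^{\nu,N,l})$, where $\tilde{K}^{\nu,N,l}$ is the DFT of $(K^{\nu,m})_{|m|\le n}$. The matrices $\tilde{K}^{\mu^N,l},\tilde{K}^{\nu^N,l},\tilde{K}^{\nu,N,l}$ all lie in a fixed bounded spectral region on which $B\mapsto\log\det({\rm Id}_T+\sigma^{-2}B)$ is Lipschitz (its differential being $\sigma^{-2}({\rm Id}_T+\sigma^{-2}B)^{-1}$, of operator norm at most $\sigma^{-2}$; this uses lemma \ref{eq:rhoK} for the first two matrices and lemma \ref{lem:cnuNasymptote} together with the finiteness of $C^\nu_N$ for the third). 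Hence $|\Gamma_1(\mu^N)-\Gamma_1^N(\nu)|\le\frac{C}{2}\sup_{l}\|\tilde{K}^{\mu^N,l}-\tilde{K}^{\nu,N,l}\|$, and the triangle inequality $\|\tilde{K}^{\mu^N,l}-\tilde{K}^{\nu,N,l}\|\le\|\tilde{K}^{\mu^N,l}-\tilde{K}^{\nu^N,l}\|+\|\tilde{K}^{\nu^N,l}-\tilde{K}^{\nu,N,l}\|$ bounds the first summand by $C\varepsilon$ via \eqref{eq:Mbnd4} (with $V_\varepsilon(\nu)$ as in proposition \ref{prop:Ktildeomega}) and the second by $C^\nu_N$ by definition \eqref{defn:CnuN}. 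Since $1\le 1+\Exp^{\undt{\mu}_{1,T}}[\|v^0\|^2]$, this piece has the required form.

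For the $\Gamma_2$ term I would use the Fourier-domain form \eqref{eq:phiNdag} for $\phi^N(\mu,v)$ together with its analogue for $\phi^N_\infty(\nu,v)$, obtained by the same DFT diagonalisation, namely with $\tilde{A}^{\mu^N,\cdot}$ replaced by $\tilde{A}^{\nu,N,\cdot}$ (the DFT of $(A^{\nu,m})_{|m|\le n}$) and $c^\mu$ by $c^\nu$. The pointwise difference $\phi^N(\mu,v)-\phi^N_\infty(\nu,v)$ then splits into a part quadratic in the $\tilde{v}^l$, a part linear in $\tilde{v}^0$, and a constant part. The quadratic part is at most $\frac{1}{2N^2\sigma^2}\sup_l\|\tilde{A}^{\mu^N,l}-\tilde{A}^{\nu,N,l}\|\sum_l\|\tilde{v}^l\|^2$, which by the Parseval identity $\sum_l\|\tilde{v}^l\|^2=N\sum_j\|v^j\|^2$ of definition \ref{defn:DFT} equals $\frac{1}{2\sigma^2}\sup_l\|\tilde{A}^{\mu^N,l}-\tilde{A}^{\nu,N,l}\|\cdot\frac1N\sum_j\|v^j\|^2$. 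For the linear part I would use $\|\tilde{v}^0\|=\|\sum_j v^j\|\le N^{1/2}(\sum_j\|v^j\|^2)^{1/2}$, so that $\frac1N\|\tilde{v}^0\|\le(\frac1N\sum_j\|v^j\|^2)^{1/2}\le\frac12(1+\frac1N\sum_j\|v^j\|^2)$, together with $\|{\rm Id}_T-\tilde{A}^{\mu^N}(0)\|\le 1$ (lemma \ref{lem:alphabound}), $\|c^\mu\|,\|c^\nu\|\le\sqrt{T}|\bar{J}|$, $\|c^\mu-c^\nu\|\le C\varepsilon$ (\eqref{eq:Mbnd3}) and $\|\tilde{A}^{\mu^N}(0)-\tilde{A}^{\nu,N}(0)\|\le\sup_l\|\tilde{A}^{\mu^N,l}-\tilde{A}^{\nu,N,l}\|$; the constant part is estimated the same way without the $v$-factor. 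Since $\sup_l\|\tilde{A}^{\mu^N,l}-\tilde{A}^{\nu,N,l}\|\le\sup_l\|\tilde{A}^{\mu^N,l}-\tilde{A}^{\nu^N,l}\|+\sup_l\|\tilde{A}^{\nu^N,l}-\tilde{A}^{\nu,N,l}\|\le C\varepsilon+C^\nu_N$ by \eqref{eq:Mbnd5} and \eqref{defn:CnuN}, these estimates combine to $|\phi^N(\mu,v)-\phi^N_\infty(\nu,v)|\le C(C^\nu_N+\varepsilon)(1+\frac1N\sum_{j=-n}^n\|v^j\|^2)$. Integrating against $\undt{\mu}^N_{1,T}$ and using the stationarity of $\undt{\mu}$ — which gives $\int\frac1N\sum_j\|v^j\|^2\,\undt{\mu}^N_{1,T}(dv)=\Exp^{\undt{\mu}_{1,T}}[\|v^0\|^2]$ — then yields the bound for $|\Gamma_2(\mu^N)-\Gamma_2^\nu(\mu^N)|$, and adding the two pieces proves the lemma.

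The step I expect to be the main obstacle is the bookkeeping in the $\Gamma_2$ estimate: one must keep scrupulous track of the three distinct Fourier objects $\tilde{A}^{\mu^N,l}$, $\tilde{A}^{\nu^N,l}$ and $\tilde{A}^{\nu,N,l}$ (the last being the DFT of the truncated limit coefficients $(A^{\nu,m})_{|m|\le n}$, which in general is \emph{not} $\tilde{K}^{\nu,N,l}(\sigma^2{\rm Id}_T+\tilde{K}^{\nu,N,l})^{-1}$), invoking proposition \ref{prop:Ktildeomega} only for the $\mu^N$-versus-$\nu^N$ gap and the definition of $C^\nu_N$ for the $\nu^N$-versus-$(\nu,N)$ gap, and confirming that $\phi^N_\infty(\nu,\cdot)$ indeed diagonalises under the block-circulant convention so that \eqref{eq:phiNdag} applies to it verbatim. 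Everything else reduces to elementary linear algebra, the operator-norm bounds of lemmas \ref{eq:rhoK} and \ref{lem:alphabound}, and the inequality $ab\le\frac12(a^2+b^2)$.
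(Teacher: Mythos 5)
Your proposal is correct and follows essentially the same path as the paper's proof: the same split into $\Gamma_1$ and $\Gamma_2$, the same DFT diagonalisation, the same triangle inequality through the intermediate object $\tilde{K}^{\nu^N,l}$ (resp.\ $\tilde{A}^{\nu^N,l}$) to separate the $\varepsilon$-contribution from the $C^\nu_N$-contribution, and the same use of the Parseval identity together with Cauchy--Schwarz on $\|\tilde{v}^0\|$ to reduce everything to $\Exp^{\undt{\mu}_{1,T}}[\|v^0\|^2]$. The one place where you are slightly more explicit than the paper is in spelling out the Lipschitz constant of $B\mapsto\log\det({\rm Id}_T+\sigma^{-2}B)$ and the elementary inequality $\sqrt{x}\le\tfrac12(1+x)$ used to absorb the linear-in-$\tilde{v}^0$ term, both of which the paper leaves implicit.
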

\begin{proof}
We firstly bound $\Gamma_1$.
\begin{multline*}
\left|\Gamma_1(\mu^N) - \Gamma_1^N(\nu)\right| \leq \\ \frac{1}{2N}\sum_{l=-n}^n \left|\log\det\left({\rm Id}_{T} + \sigma^{-2}\tilde{K}^{\mu^N,l}\right) - \log\det\left({\rm Id}_{T}+\sigma^{-2}\tilde{K}^{\nu^N,l}\right)\right| \\ +\frac{1}{2N}\sum_{l=-n}^n \left|\log\det\left({\rm Id}_{T} + \sigma^{-2}\tilde{K}^{\nu^N,l}\right) - \log\det\left({\rm Id}_{T}+\sigma^{-2}\tilde{K}^{\nu,N,l}\right)\right|.
\end{multline*}
It thus follows from proposition \ref{prop:Ktildeomega} and lemma \ref{lem:cnuNasymptote} that
\begin{equation*}
 \left|\Gamma_1(\mu^N) - \Gamma_1^N(\nu)\right| \leq C_0^* (C^\nu_N + \varepsilon),
\end{equation*}
for some constant $C_0^*$ which is independent of $\nu$ and $N$.

We define $\phi^N_{\infty,\dag}(\nu,\olivier{v_\dag}) = \phi^N_{\infty}(\nu,(\mathcal{H}_T^N)^{-1}(\olivier{v}))$, where $\mathcal{H}_T^{N}$ is given in definition \ref{defn:DFT} and $\phi^N_\infty$ is given in \eqref{eq:phiNInfinitydefn}, and find that
\begin{multline}
\phi^N_{\infty,\dag}(\mu,\olivier{v_\dag}) = \frac{1}{2N^2\sigma^2}\sum_{l=-n}^n {}^t\tilde{v}^{l,*}\tilde{A}^{\nu,-l}\tilde{v}^{l} \\+\frac{1}{N\sigma^2}{}^t\tilde{v}^0\left({\rm Id}_{T}-\tilde{A}^{\nu}(0)\right)c^{\nu} + \frac{1}{2\sigma^2}{}^tc^{\nu}\left({\rm Id}_T-\tilde{A}^{\nu}(0)\right)c^{\nu}.
\end{multline}
This means that
\begin{equation}
\Gamma_2^\nu(\mu^N) - \Gamma_2(\mu) = \int_{\T_T^N}\phi^N_{\infty,\dag}(\nu,\olivier{v_\dag}) - \phi^N_{\dag}(\mu,\olivier{v_\dag})\undt{\mu}_{\dag}^N(\olivier{dv_\dag}).
\end{equation}
Upon expansion of the above expression, we find that
\begin{multline*}
 \left|\phi^N_{\infty,\dag}(\nu,\olivier{v_\dag}) - \phi^N_{\dag}(\mu,\olivier{v_\dag})\right|\leq \\
\frac{1}{2\sigma^2} \Bigg(\frac{1}{N^2} \sum_{l=-n}^n \| \tilde{A}^{\mu^N,-l}-\tilde{A}^{\nu,N,-l}\|
\| \tilde{v}^l \|^2 +\frac{2}{N} \| d_{\nu,\mu} \| \| \tilde{v}^0 \| +|e_{\nu,\mu} |
\Bigg),
\end{multline*}
where $d_{\nu,\mu}=c^\mu-c^\nu+\tilde{A}^{\nu,N,0}
c^\nu-\tilde{A}^{\mu^N,0} c^\mu$and $e_{\nu,\mu}=\,^t
c^\mu \tilde{A}^{\mu^N,0} c^\mu -\| c^\mu \|^2-\,^t
c^\nu \tilde{A}^{\nu,N,0} c^\nu +\| c^\nu \|^2$. 
It follows from proposition \ref{lem:muNconvergenceA} and lemma \ref{lem:cnuNasymptote} that the (Euclidean) norm each of the above
terms is bounded by $C^*(C^\nu_N + \varepsilon)$ for some constant
$C^*$. 

The lemma now follows after consideration of the fact that
$\int_{\T_T^\Z}\| v^k\|^2\undt{\mu}_\olivier{1.T}(dv) = \Exp^{\undt{\mu}_{1,T}}[\| v^0
\|^2]$, $\| \tilde{v}^0 \|^2 \leq N \sum_{k=-n}^n \| v^k \|^2$ (\olivier{Cauchy-Schwarz}) and, because of the properties of the DFT, $\sum_{l=-n}^n \| v^l \|^2 = N\sum_{k=-n}^n \| \tilde{v}^k \|^2.$
\end{proof}
We are now ready to begin the proof of the upper bound on compact sets.
\begin{proposition}\label{prop:ubcompact}
Let $K$ be a compact subset of $\M_{1,s}(\T^{\Z})$. Then \newline
$\lsup{N} N^{-1}\log(\Pi^N(K))\leq-\inf_K H$.
\end{proposition}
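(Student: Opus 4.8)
The plan is to cover the compact set $K$ by finitely many of the neighbourhoods $V_\varepsilon(\nu)$ from Proposition \ref{prop:Ktildeomega}, use on each piece the Gaussian LDP of Corollary \ref{Cor:PiNnu} for the linearised process $\Pi^{\nu,N}$, and control the error $|\Gamma(\mu^N)-\Gamma^\nu(\mu^N)|$ via Lemma \ref{lemma:Vsquared}. The key point is that on $V_\varepsilon(\nu)$, the Radon--Nikodym derivative of $\Pi^N$ with respect to $R^N$ (which is $\exp(N\Gamma(\mu^N))$, by Corollary \ref{prop:radon-nikodym}) differs from that of $\Pi^{\nu,N}$ (which is $\exp(N\Gamma^\nu(\mu^N))$) by a factor $\exp(NC_0(C^\nu_N+\varepsilon)(1+\Exp^{\undt{\mu}_{1,T}}[\|v^0\|^2]))$. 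The term $\Exp^{\undt{\mu}_{1,T}}[\|v^0\|^2]$ is not bounded on $V_\varepsilon(\nu)$, so it must be absorbed: this is exactly what Lemma \ref{lem:acbound} is for, since it bounds $\Gamma(\mu)$ — and hence, with a little work, the relevant moment — by $(I^{(3)}(\mu,P^\Z)+c)/a$ with $a>1$.

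First I would fix $a>1$ and $c>0$ as in Lemmas \ref{lemma:expNc} and \ref{lem:acbound}. Given $\delta>0$, for each $\nu\in K$ choose $\varepsilon(\nu)$ small and then $N(\nu)$ large so that $C_0(C^\nu_N+\varepsilon(\nu))$ is as small as we like (possible by Lemma \ref{lem:cnuNasymptote}); by compactness extract a finite subcover $K\subset\bigcup_{i=1}^p V_{\varepsilon_i}(\nu_i)$. On each $V_i:=V_{\varepsilon_i}(\nu_i)$ write, using Hölder's inequality with exponents $a$ and $a/(a-1)$ exactly as in the proof of Proposition \ref{prop:exptight},
\[
\Pi^N(K\cap V_i) = \int_{(\hat\mu^N)^{-1}(K\cap V_i)}\exp(N\Gamma(\hat\mu^N(u)))\,P^{\otimes N}(du).
\]
Split $\Gamma = \Gamma^{\nu_i} + (\Gamma-\Gamma^{\nu_i})$ and apply Lemma \ref{lemma:Vsquared} to the second piece. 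The factor $\exp(NC_0(C^{\nu_i}_N+\varepsilon_i)\Exp^{\undt{\mu}_{1,T}}[\|v^0\|^2])$ is then dominated, along the lines of Lemma \ref{lem:acbound}, by a power of the $I^{(2)}(\mu^N,P^{\otimes N})$-weight and a term $\exp(Nc')$; choosing $\varepsilon_i,N(\nu_i)$ small/large enough this perturbs the rate function $H^{\nu_i}$ only by $\delta$. Thus for $N$ large
\[
N^{-1}\log\Pi^N(K\cap V_i)\leq -\inf_{\mu\in \overline{V_i}}H^{\nu_i}(\mu) + \delta
\leq -\inf_{\mu\in \overline{V_i}}H(\mu) + 2\delta,
\]
where the second inequality uses that $\Gamma^{\nu_i}(\mu)\leq\Gamma(\mu)+\delta'$ on $\overline{V_i}$ (again Lemma \ref{lemma:Vsquared} plus Lemma \ref{lem:acbound} to handle the unbounded moment when $I^{(3)}(\mu,P^\Z)<\infty$, and both sides being $-\infty$ otherwise), so that $H^{\nu_i}(\mu)\geq H(\mu)-\delta'$.

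Finally I would combine the pieces: since $\Pi^N(K)\leq\sum_{i=1}^p\Pi^N(K\cap V_i)$ and there are only finitely many terms,
\[
\lsup{N}N^{-1}\log\Pi^N(K)\leq\max_{1\leq i\leq p}\Big(-\inf_{\mu\in\overline{V_i}}H(\mu)\Big)+2\delta\leq -\inf_{\mu\in K}H(\mu)+2\delta,
\]
using that each $\overline{V_i}$ meets $K$ and $\inf_{\overline{V_i}}H\geq\inf_{K'}H$ for an appropriate slightly enlarged compact set — more carefully, one shrinks the $\varepsilon_i$ so that $\overline{V_i}\cap K$ governs the infimum, or passes to the limit $\delta\to0$ after noting the cover can be chosen inside any prescribed neighbourhood of $K$. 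Letting $\delta\to 0$ gives the claim. The main obstacle is the bookkeeping in the previous paragraph: making the unbounded second-moment term genuinely harmless requires marrying the Hölder argument of Proposition \ref{prop:exptight} with Lemma \ref{lem:acbound} so that the exponential error is reabsorbed into the $I^{(3)}$ part of the rate function without degrading the constant — i.e. verifying that one can choose a single pair $(a,c)$ working simultaneously for the tightness estimate, the $\Gamma$-bound, and all finitely many neighbourhoods.
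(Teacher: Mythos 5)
Your overall outline is the same as the paper's: cover the compact set by finitely many neighbourhoods $V_\varepsilon(\nu_i)$, on each neighbourhood compare $\Gamma$ to $\Gamma^{\nu_i}$ via Lemma \ref{lemma:Vsquared}, use H\"older to split off the error term, invoke the Gaussian LDP of Corollary \ref{Cor:PiNnu}, and pass to the limit. However, there is a genuine gap in how you propose to handle the H\"older exponents, and you explicitly flag this ("the main obstacle") without resolving it.

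Your plan is to use the \emph{fixed} pair $(a,a/(a-1))$ from Lemmas \ref{lemma:expNc}/\ref{lem:acbound}, "exactly as in Proposition \ref{prop:exptight}," and claims that with small enough $\varepsilon$ the constant is "not degraded." But if the main factor $\bigl(Q^{\nu_i,N}(\hat\mu^N\in\overline{V_i}\cap K)\bigr)^{1/p}$ carries a fixed exponent $1/p<1$, the resulting bound is $-\tfrac{1}{p}\inf H^{\nu_i}+\cdots$, and for a fixed $p>1$ this is strictly weaker than the target $-\inf H$ (on the set where $H>0$, you are off by a multiplicative factor that does not vanish with $\varepsilon$). The paper does \emph{not} use the fixed $a$ here: it works with a free pair $(p,q)$ throughout the estimate, shows that the $q$-part $D^{1/q}$ is bounded by $\exp(Ns^{\nu_i}_N(q,\varepsilon)/q)$ with $s(q,\varepsilon)\to 0$ as $\varepsilon\to 0$ \emph{for $q$ fixed}, takes $\varepsilon\to 0$ (where Lemma \ref{lem:acbound} is used, but only to see that the $\varepsilon$-perturbed infimum $\inf\bigl(I^{(3)}(1-C_I\varepsilon)-\Gamma\bigr)$ converges to $\inf(I^{(3)}-\Gamma)$), and only \emph{then} sends $p\to 1$. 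You have conflated the fixed constant $a$ that appears in the exponential-moment/entropy estimates with the free H\"older exponent $p$ that must be taken to $1$, and the claim that one can "choose a single pair $(a,c)$ working simultaneously" without degrading the constant is exactly the step that does not go through.

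A second, lesser, issue: you control the error factor $D$ "along the lines of Lemma \ref{lem:acbound}, by a power of the $I^{(2)}$-weight," whereas the paper controls $D$ by a direct Gaussian computation against $Q^{\nu_i,N}_{1,T}$ (Lemma \ref{lemma:gauss}), using the eigenvalue bound $\sigma^2+\rho_K$ and the convergence $C^{\nu_i}_N\to 0$ to verify the integrability condition \eqref{eq:integralconvergecond}. The entropy-based route you sketch is not obviously a substitute, since $D$ is an integral against $Q^{\nu_i,N}$, not $P^{\otimes N}$, and what is needed is an honest exponential moment bound, not a bound on $\Gamma(\mu)$. Also note that passing from $H^{\nu_i}$ to $H$ uses Proposition \ref{prop:Gamma2nu} (a comparison of $\Gamma^{\nu_i}(\mu)$ with $\Gamma^\mu(\mu)=\Gamma(\mu)$, stated in terms of $I^{(3)}$), which is the right lemma; Lemma \ref{lemma:Vsquared} compares finite-$N$ marginals and bounds by $\Exp[\|v^0\|^2]$ rather than $I^{(3)}$.
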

\begin{proof}
Fix $\varepsilon > 0$. Let $V_\varepsilon(\nu)$ be the open neighbourhood of $\nu$ defined in proposition \ref{prop:Ktildeomega}, and let $\bar{V}_\varepsilon(\nu)$ be its closure. Since $K$ is compact and $\lbrace V_\varepsilon(\nu)\rbrace_{\nu \in K}$ is an open cover, there exists an $r$ and
$\lbrace \nu_i\rbrace_{i=1}^r$ such that $K\subset\bigcup_{i=1}^r
V_\varepsilon(\nu_i)$. We find that
\begin{multline*}
\lsup{N} N^{-1}\log\left(\Pi^N\left(\bigcup_{i=1}^r V_\varepsilon(\nu_i)\cap K\right)\right)\\ \leq \sup_{1\leq i\leq r}\lsup{N} N^{-1}\log\left(\Pi^N\left(\bar{V}_\varepsilon(\nu_i)\cap K\right)\right).
\end{multline*}
It follows from the fact that $\hat{\mu}^N\in\mathcal{E}_2$, lemma \ref{lemma:Vsquared} and the definition of $\Pi^N$ that
\begin{multline}
\Pi^N(\bar{V}_\varepsilon(\nu_i)\cap K) \leq \int_{\hat{\mu}^N(u)\in
  \bar{V}_\varepsilon(\nu_i)\cap K}\exp\Bigg(N\Gamma^{\nu_i}(\hat{\mu}^N(u))+\\
\left.NC_0 (\varepsilon + C^{\nu_i}_N) \left(1+\frac{1}{N}\sum_{j=-n}^n\|\Psi_{{1,T}}(u^j) \|^2\right)\right)P^{\otimes N}_{1,T}(du),\label{eq:PiN1}
\end{multline}
where \olivier{if $u \in \T$,} $\Psi_{1,T}(u) = (\Psi(u)_1,\ldots,\Psi(u)_T)$. From the definition of $Q^{\nu,N}$ in (\ref{defn:Qnu}) and H\"older's Inequality, for $p,q$ such that $\frac{1}{p} + \frac{1}{q} = 1$, we have
\begin{equation}
\Pi^N(\bar{V}_\varepsilon(\nu_i)\cap K) \leq \left(Q^{\nu_i,N}(\hat{\mu}^N(u)\in \bar{V}_\varepsilon(\nu_i)\cap K)\right)^{\frac{1}{p}}D^{\frac{1}{q}},\label{eq:PiN2}
\end{equation}
where
\begin{multline*}
D = \int_{\hat{\mu}^N(u)\in \bar{V}_\varepsilon(\nu_i)\cap
  K}\exp\left(qNC_0 (\varepsilon + C^{\nu_i}_N)\left(1+\frac{1}{N}\sum_{j=-n}^n
    \|\Psi_{1,T}(u^j)\|^2\right)\right)Q^{\nu_i,N}_{1,T}(du)\\
\olivier{\leq} \exp qNC_0(\varepsilon + C^{\nu_i}_N)\times\\ \int_{\undt{\hat{\mu}}^N(v)\in \Psi(\bar{V}_\varepsilon(\nu_i)\cap
  K)}\exp  \left(qC_0(\varepsilon + C^{\nu_i}_N) \left( \sum_{j=-n}^n
    \|v^j\|^2\right)\right)\undt{Q}^{\nu_i,N}_{1,T}(dv).
\end{multline*}
We note from lemma \ref{eq:rhoK} that the eigenvalues of the covariance of $\undt{Q}^{\nu_i,N}$ are upperbounded by $\sigma^2 + \rho_K$. Thus for this integral to converge it is sufficient that
\begin{equation}
qC_0(\varepsilon + C^{\nu_i}_N) \leq \frac{1}{2(\sigma^2 + \rho_K)}.\label{eq:integralconvergecond}
\end{equation}
This condition will always be satisfied for sufficiently small $\varepsilon$ and sufficiently large $N$ (since $C^{\nu_i}_N\rightarrow 0$ as $N\rightarrow\infty$). By corollary \ref{Cor:PiNnu},
\begin{equation}\label{eq:BndBnui}
 \lsup{N}N^{-1} \log \left(Q^{\nu_i,N}(\hat{\mu}^N(u)\in \bar{V}_\varepsilon(\nu_i)\cap K)\right) \leq -\inf_{\mu\in \bar{V}_\varepsilon(\nu_i)\cap K} H^{\nu_i}(\mu),
\end{equation}
where we defined $\undt{Q}^{\nu_i,N}$ in the previous section. We apply lemma \ref{lemma:gauss} to find
\begin{multline*}
\int_{\undt{\hat{\mu}}^N(v)\in \Psi (\bar{V}_\varepsilon(\nu_i)\cap
  K)}\exp q C_0(\varepsilon + C^{\nu_i}_N) \left( \sum_{j=-n}^n
    \|v^j\|^2\right)\undt{Q}^{\nu_i,N}_{1,T}(dv) \leq \\
\left(\det \left((1-2qC_0 (\varepsilon + C^{\nu_i}_N) \sigma^2){\rm
    Id}_{NT}-2qC_0(\varepsilon + C^{\nu_i}_N)K^{\nu_i,N}\right)\right)^{-\frac{1}{2}}\times \\
\exp\left(2C_0^2 q^2((\varepsilon + C^{\nu_i}_N)^2) {}^t (1_{NT} 
  c^{\nu_i})B(\olivier{({\rm Id}_T \otimes \mone_N)} c^{\nu_i})+NqC_0 (\varepsilon + C^{\nu_i}_N)\norm{c^{\nu_i}}^2\right)
\end{multline*}
where $\olivier{{\rm Id}_T \otimes \mone_N}$ is the $NT \times T$ block matrix with each block ${\rm Id}_{T}$ and 
\begin{equation*}
B = (\sigma^2 {\rm Id}_{NT} +K^{\nu_i,N})((1-2C_0 q(\varepsilon + C^{\nu_i}_N) \sigma^2){\rm
    Id}_{NT}-2C_0 q(\varepsilon + C^{\nu_i}_N) K^{\nu_i,N})^{-1}
\end{equation*}
is a symmetric block circulant matrix. 

We note $B^k$, $k=-n,\cdots,n$
its $T \times T$ blocks. We have
\[
{}^t (\olivier{({\rm Id}_T \otimes \mone_N)} 
  c^{\nu_i})B(\olivier{({\rm Id}_T \otimes \mone_N)} c^{\nu_i})=N \,^t c^{\nu_i} \left(\sum_{k=-n}^n B^k\right)
  c^{\nu_i}=N\,^t c^{\nu_i} \tilde{B}^0 c^{\nu_i},
\]
where $\tilde{B}^0$ is the 0th component of the spectral
representation of the sequence $(B^k)_{k=-n,\cdots,n}$.
Let $v_{m}$ be the largest eigenvalue of $B$. Since (by lemma \ref{lem:preserveeigenvalues}) the eigenvalues of $\tilde{B}^0$ are a subset of the eigenvalues of $B$, we have 
\[
{}^t (\olivier{({\rm Id}_T \otimes \mone_N)} 
  c^{\nu_i})B(\olivier{({\rm Id}_T \otimes \mone_N)} c^{\nu_i}) \leq N v_m \|c^{\nu_i} \|^2.
\]
From the definition of $B$ and through lemma \ref{eq:rhoK} we have
\[
v_m \leq \frac{\sigma^2+\rho_K}{1-2C_0 q(\varepsilon + C^{\nu_i}_N)(
  \sigma^2+\rho_K)}.
\]
Hence we have, since $\| c^{\nu_i} \|^2 \leq T\bar{J}^2$
\begin{multline*}
\exp\left(2C_0^2( q^2(\varepsilon + C^{\nu_i}_N)^2 {}^t (\olivier{({\rm Id}_T \otimes \mone_N)} 
  c^{\nu_i})B(\olivier{({\rm Id}_T \otimes \mone_N)} c^{\nu_i})\right) \leq \\ \exp\left(  NT \times \frac{2C_0^2 q^2(\varepsilon + C^{\nu_i}_N)^2(\sigma^2+\rho_K)\bar{J}^2}{1-2C_0  q(\varepsilon + C^{\nu_i}_N)(
  \sigma^2+\rho_K)}\right).
\end{multline*}
Since the determinant is the product of the eigenvalues, we similarly find that
\begin{multline*}
   \left(\det \left((1-2C_0 q(\varepsilon + C^{\nu_i}_N) \sigma^2){\rm
    Id}_{NT}-2C_0 q(\varepsilon + C^{\nu_i}_N) K^{\nu_i,N}\right)\right)^{-\frac{1}{2}}
\leq \\ \left(  1-2C_0 q(\varepsilon + C^{\nu_i}_N)(
  \sigma^2+ \rho_K) \right)^{-\frac{NT}{2}}.
\end{multline*}
Upon collecting the above inequalities, and noting that $\norm{c^\nu}^2\leq T\bar{J}^2$, we find that
\begin{equation}
D \leq \exp(N s^{\nu_i}_N(q,\varepsilon)),\label{eq:Dtemporary}
\end{equation}
where
\begin{multline*}
s^{\nu_i}_N(q,\varepsilon) =T\left( -\frac{1}{2} \log \left( 1-2C_0 q(\varepsilon + C^{\nu_i}_N)(
  \sigma^2+ \rho_K)\right)\right.\\ \left. + \frac{2C_0^2 q^2(\varepsilon + C^{\nu_i}_N)^2(\sigma^2+\rho_K)\bar{J}^2}{1-2C_0 q(\varepsilon + C^{\nu_i}_N)(
  \sigma^2+\rho_K)}+qC_0(\varepsilon + C^{\nu_i}_N) \left(\frac{1}{T}+\bar{J}^2\right) \right).
\end{multline*}
We let $s(q,\varepsilon) = \lsup{N} s^{\nu_i}_N(q,\varepsilon)$, and find through lemma \ref{lem:cnuNasymptote} that
\begin{multline*}
s(q,\varepsilon) =T\left( -\frac{1}{2} \log \left( 1-2C_0 q\varepsilon(
  \sigma^2+ \rho_K)\right)\right.\\ \left. + \frac{2C_0^2 q^2\varepsilon^2(\sigma^2+\rho_K)\bar{J}^2}{1-2C_0 q\varepsilon(
  \sigma^2+\rho_K)}+qC_0 \varepsilon \left(\frac{1}{T}+\bar{J}^2\right) \right).
\end{multline*}
Notice that $s(q,\varepsilon)$ is independent of $\nu_i$ and that $s(q,\varepsilon)\rightarrow 0$ as $\varepsilon\rightarrow 0$. Using \eqref{eq:PiN2}, \eqref{eq:BndBnui} and \eqref{eq:Dtemporary} we thus find that
\begin{equation*}
\lsup{N} N^{-1}\log(\Pi^N(K)) \leq \sup_{1\leq i\leq r}-\frac{1}{p}\inf_{\mu\in K\cap \bar{V}_\varepsilon(\nu_i)}H^{\nu_i}(\mu) - \frac{1}{q}s(q,\varepsilon).
\end{equation*}
Recall that $H^{\nu}(\mu)=\infty$ for all $\mu\notin\mathcal{E}_2$. Thus if $K\cap \mathcal{E}_2 = \emptyset$, we may infer that $\lsup{N} N^{-1}\log(\Pi^N(K)) = -\infty$ and the proposition is evident. Thus we may assume without loss of generality that $\inf_{\mu\in K}H^{\nu_i}(\mu) = \inf_{\mu\in K\cap\mathcal{E}_2}H^{\nu_i}(\mu)$. Furthermore it follows from proposition \ref{prop:Gamma2nu} (below) that there exists a constant $C_I$ such that for all $\mu\in \bar{V}_\varepsilon(\nu_i)\cap\mathcal{E}_2$, 
\[
H^{\nu_i}(\mu)\geq I^{(3)}(\mu,P^\Z) - \Gamma(\mu) - C_I\varepsilon(1+I^{(3)}(\mu,P^\Z)).
\]
We thus find that 
\begin{multline*}
\lsup{N} N^{-1}\log(\Pi^N(K)) \leq \\-\frac{1}{p}\inf_{K\cap\mathcal{E}_2}\left(I^{(3)}(\mu,P^\Z)(1-C_I\varepsilon)-\Gamma(\mu)\right) - \frac{s(q,\varepsilon)}{q}+\frac{\varepsilon}{p}C_I,
\end{multline*}
We take $\varepsilon \rightarrow 0$ and find, through the use of lemma \ref{lem:acbound}, that
\begin{equation*}
\lsup{N} N^{-1}\log(\Pi^N(K)) \leq -\frac{1}{p}\inf_{K}\left(I^{(3)}(\mu,P^\Z)-\Gamma(\mu)\right).
\end{equation*}
The proof may thus be completed by taking $p\rightarrow 1$.
\end{proof}
\begin{proposition}\label{prop:Gamma2nu}
There exists a positive constant $C_I$ such that, for all $\nu$ in $\mathcal{M}_{1,s}^+(\T^\Z)\cap\mathcal{E}_2 $, all $\varepsilon >
0$ and all $\mu \in \bar{V}_\varepsilon(\nu)\cap\mathcal{E}_2$ (where $\bar{V}_\varepsilon(\nu)$ is the neighbourhood defined in proposition \ref{prop:Ktildeomega}),
\begin{equation}
\left|\Gamma^\nu(\mu)-\Gamma^\mu(\mu)\right|\leq C_I \varepsilon \left(1+I^{(3)}(\mu,P^\Z)\right).\label{eq:secondbound}
\end{equation}
\end{proposition}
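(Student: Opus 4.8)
The plan is to reduce \eqref{eq:secondbound} to Lemma~\ref{lemma:Vsquared} together with the a~priori moment estimate that is implicit in the proof of Lemma~\ref{lemma:E2enough}. The first observation is that $\Gamma^\mu(\mu)=\Gamma(\mu)$ for every $\mu\in\mathcal{E}_2$, so that the left-hand side of \eqref{eq:secondbound} equals $\bigl|\Gamma^\nu(\mu)-\Gamma(\mu)\bigr|$. One way to see this is to compare the spectral formula \eqref{eq:Gamma2nuspectral} for $\Gamma_2^\nu(\mu)$ at $\nu=\mu$ with the formula for $\Gamma_2(\mu)$ in Proposition~\ref{prop:E2Gamma2}, using that $\tilde{A}^\mu(0)$ is real symmetric so that ${}^tc^\mu\tilde{A}^\mu(0)\bar{v}^\mu={}^t\bar{v}^\mu\tilde{A}^\mu(0)c^\mu$, together with \eqref{eq:Gamma1nu}. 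A shorter route is to apply Lemma~\ref{lemma:Vsquared} with $\nu=\mu$: since $\mu\in V_{\varepsilon'}(\mu)$ for every $\varepsilon'>0$, one gets $\bigl|\Gamma(\mu^N)-\Gamma^\mu(\mu^N)\bigr|\le C_0(C^\mu_N+\varepsilon')\bigl(1+\Exp^{\undt{\mu}_{1,T}}[\|v^0\|^2]\bigr)$; letting $N\to\infty$ (so $C^\mu_N\to0$ by Lemma~\ref{lem:cnuNasymptote}, while $\Gamma(\mu^N)\to\Gamma(\mu)$ and $\Gamma^\mu(\mu^N)\to\Gamma^\mu(\mu)$) and then $\varepsilon'\to0$ forces $\Gamma^\mu(\mu)=\Gamma(\mu)$.

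Next I would bound $\bigl|\Gamma^\nu(\mu)-\Gamma(\mu)\bigr|$ directly from Lemma~\ref{lemma:Vsquared}. For $\mu\in\bar{V}_\varepsilon(\nu)\cap\mathcal{E}_2$ we have $D(\mu,\nu)\le\varepsilon/2<\varepsilon$ (recall that $V_\varepsilon(\nu)$ was taken in Proposition~\ref{prop:Ktildeomega} to be a ball of radius strictly less than $\varepsilon/2$), so the estimates of Proposition~\ref{prop:Ktildeomega}, and hence the conclusion of Lemma~\ref{lemma:Vsquared}, remain valid on the closed neighbourhood $\bar{V}_\varepsilon(\nu)$ as well. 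Thus $\bigl|\Gamma(\mu^N)-\Gamma^\nu(\mu^N)\bigr|\le C_0(C^\nu_N+\varepsilon)\bigl(1+\Exp^{\undt{\mu}_{1,T}}[\|v^0\|^2]\bigr)$ for every $N$; letting $N\to\infty$ and using Lemma~\ref{lem:cnuNasymptote} ($C^\nu_N\to0$), Lemma~\ref{lemma:Gamma1} and Proposition~\ref{prop:E2Gamma2} ($\Gamma(\mu^N)\to\Gamma(\mu)$), and \eqref{eq:Gamma1nu}, \eqref{eq:Gamma2nuspectral} ($\Gamma^\nu(\mu^N)\to\Gamma^\nu(\mu)$), we obtain
\[
\bigl|\Gamma^\nu(\mu)-\Gamma(\mu)\bigr|\ \le\ C_0\,\varepsilon\,\bigl(1+\Exp^{\undt{\mu}_{1,T}}[\|v^0\|^2]\bigr).
\]

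Finally I would trade the second moment for the process-level entropy. Repeating verbatim the computation in the proof of Lemma~\ref{lemma:E2enough} with the fixed choice $a=\tfrac{1}{4\sigma^2}$ (so that $1-2a\sigma^2=\tfrac12>0$) and passing to the limit $N\to\infty$ through \eqref{eq:I3limit} and \eqref{eq:I3equality} gives $\Exp^{\undt{\mu}_{1,T}}[\|v^0\|^2]\le 2\sigma^2 T\log2+4\sigma^2 I^{(3)}(\mu,P^\Z)$, whence $1+\Exp^{\undt{\mu}_{1,T}}[\|v^0\|^2]\le C_1\bigl(1+I^{(3)}(\mu,P^\Z)\bigr)$ with $C_1=\max\!\bigl(1+2\sigma^2 T\log2,\,4\sigma^2\bigr)$. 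Combining the two displays yields \eqref{eq:secondbound} with $C_I=C_0C_1$. I do not anticipate a genuine obstacle; the only points needing a little care are the identity $\Gamma^\mu(\mu)=\Gamma(\mu)$ and the verification that each $N\to\infty$ limit invoked is precisely one already recorded in the paper, so that the $C^\nu_N$ term in Lemma~\ref{lemma:Vsquared} drops out in the limit.
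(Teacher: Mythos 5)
Your proof is correct and follows the approach the paper indicates by its terse pointer to Lemma~\ref{lemma:Vsquared}: apply that lemma's bound at each $N$, pass to the limit using the already-recorded convergences $\Gamma(\mu^N)\to\Gamma(\mu)$, $\Gamma^\nu(\mu^N)\to\Gamma^\nu(\mu)$ and $C^\nu_N\to0$, and then trade the second moment for the process-level entropy. The two details the paper leaves unsaid --- the identity $\Gamma^\mu(\mu)=\Gamma(\mu)$ (which the paper itself recalls inside the proof of Lemma~\ref{lemma:Hsemicontinuous}) and the replacement of $1+\Exp^{\undt{\mu}_{1,T}}[\|v^0\|^2]$ by $C_1\bigl(1+I^{(3)}(\mu,P^\Z)\bigr)$ via the computation of Lemma~\ref{lemma:E2enough} with a fixed $a<\tfrac{1}{2\sigma^2}$ --- are exactly what you supply, and both are handled correctly.
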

The proof is very similar to that of lemma \ref{lemma:Vsquared} and we leave it to the reader.
\subsection{End of the proof of theorem \ref{theo:LDP}}
\label{sect:Hgoodratefunction}
\begin{lemma}\label{lemma:Hsemicontinuous}
$H(\mu)$ is lower-semi-continuous.
\end{lemma}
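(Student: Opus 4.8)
The goal is to show that $H(\mu) = I^{(3)}(\mu,P^\Z) - \Gamma(\mu)$ (with value $+\infty$ whenever $I^{(3)}(\mu,P^\Z)=\infty$) is lower-semi-continuous on $\mM_{1,s}^+(\T^\Z)$. The natural strategy is to exploit the linearised functionals $\Gamma^\nu$ from Section~\ref{section:Qndefinition} together with the LDP of Corollary~\ref{Cor:PiNnu}: at each point $\nu$, the function $H^\nu = I^{(3)}(\cdot,P^\Z) - \Gamma^\nu$ is a \emph{good rate function}, hence lower-semi-continuous, and by Proposition~\ref{prop:Gamma2nu} it agrees with $H$ at $\nu$ up to an error controlled by $\varepsilon$ on a whole neighbourhood $\bar V_\varepsilon(\nu)$. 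The classical fact that a supremum of lower-semi-continuous functions is lower-semi-continuous, combined with a diagonal argument in $\varepsilon$, should then deliver the result.

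First I would fix $\nu \in \mM_{1,s}^+(\T^\Z)$ and distinguish cases. If $I^{(3)}(\nu,P^\Z) = \infty$ then $H(\nu) = \infty$; since $I^{(3)}(\cdot,P^\Z)$ is itself lower-semi-continuous (it is a good rate function by Theorem~\ref{theorem:Rnexptight}) and $\Gamma(\mu) \leq (I^{(3)}(\mu,P^\Z)+c)/a$ by Lemma~\ref{lem:acbound}, we get $H(\mu) = I^{(3)}(\mu,P^\Z) - \Gamma(\mu) \geq (1 - 1/a)I^{(3)}(\mu,P^\Z) - c/a$, which tends to $\infty$ along any sequence $\mu_n \to \nu$ (using lower-semi-continuity of $I^{(3)}$), establishing $\liminf_{\mu\to\nu} H(\mu) = \infty = H(\nu)$. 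So the substantive case is $I^{(3)}(\nu,P^\Z) < \infty$, equivalently (by Lemma~\ref{lemma:E2enough}) $\nu \in \mathcal{E}_2$. Here I would take a sequence $\mu_n \to \nu$; WLOG $I^{(3)}(\mu_n,P^\Z)$ is bounded (else $\liminf H(\mu_n) = \infty$ by the same estimate and we are done). Then $\mu_n \in \mathcal{E}_2$ for $n$ large, and for each $\varepsilon > 0$, eventually $\mu_n \in \bar V_\varepsilon(\nu)$, so Proposition~\ref{prop:Gamma2nu} gives $|\Gamma^\nu(\mu_n) - \Gamma(\mu_n)| \leq C_I\varepsilon(1+I^{(3)}(\mu_n,P^\Z))$, whence $H(\mu_n) \geq H^\nu(\mu_n) - C_I\varepsilon(1+I^{(3)}(\mu_n,P^\Z))$. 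Taking $\liminf_n$, using that $H^\nu$ is lower-semi-continuous (good rate function, Corollary~\ref{Cor:PiNnu}) and that $I^{(3)}(\mu_n,P^\Z)$ is bounded, yields $\liminf_n H(\mu_n) \geq H^\nu(\nu) - C_I\varepsilon(1+\sup_n I^{(3)}(\mu_n,P^\Z))$. Since $H^\nu(\nu) = I^{(3)}(\nu,P^\Z) - \Gamma^\nu(\nu) = I^{(3)}(\nu,P^\Z) - \Gamma(\nu) = H(\nu)$ (because $\Gamma^\nu$ and $\Gamma$ coincide at $\nu$ — this is immediate from comparing \eqref{eq:phiNdeftn} and \eqref{eq:phiNInfinitydefn}), letting $\varepsilon \to 0$ gives $\liminf_n H(\mu_n) \geq H(\nu)$.

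One technical point requires care: in the estimate above I use that $I^{(3)}(\mu_n,P^\Z)$ stays bounded along the minimising sequence, which is legitimate because if it does not we are in the trivial case; but I must also make sure $\varepsilon \to 0$ and $n\to\infty$ are taken in the right order — fix $\varepsilon$, take $\liminf_n$, then let $\varepsilon \to 0$ — which the argument above does. Equivalently and perhaps more cleanly, one can write $H = \sup_{\nu, \varepsilon}\,\big(\inf_{\mu\in\bar V_\varepsilon(\nu)} H^\nu - C_I\varepsilon(1+\sup_{\bar V_\varepsilon(\nu)}I^{(3)})\big)$-type lower envelopes, but the sequential formulation is less error-prone for write-up.

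**Main obstacle.** The delicate part is not any single estimate — each ingredient (lower-semi-continuity of $I^{(3)}$ and of $H^\nu$, the comparison bound of Proposition~\ref{prop:Gamma2nu}, the identity $\Gamma^\nu(\nu)=\Gamma(\nu)$) is already available — but rather handling the regime $I^{(3)}(\mu_n,P^\Z)\to\infty$ uniformly and the interplay of the two limits. The comparison bound $C_I\varepsilon(1+I^{(3)}(\mu_n,P^\Z))$ degrades precisely when $I^{(3)}(\mu_n,P^\Z)$ is large, so one genuinely needs the dichotomy "either the $\liminf$ of $H(\mu_n)$ is $+\infty$ (handled by Lemma~\ref{lem:acbound}) or $I^{(3)}(\mu_n,P^\Z)$ is bounded along a subsequence (handled by the neighbourhood argument)". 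Making this split rigorous, and checking that passing to a subsequence does not lose generality for a $\liminf$ statement, is where I would be most careful.

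\begin{proof}
If $I^{(3)}(\nu,P^\Z) = \infty$, then by Lemma~\ref{lem:acbound} we have, for any $\mu\in\mathcal{M}_{1,s}^+(\T^\Z)\cap\mathcal{E}_2$,
\[
H(\mu) = I^{(3)}(\mu,P^\Z) - \Gamma(\mu) \geq \left(1-\frac{1}{a}\right)I^{(3)}(\mu,P^\Z) - \frac{c}{a},
\]
and this inequality trivially also holds (with both sides $+\infty$) when $\mu\notin\mathcal{E}_2$, since then $I^{(3)}(\mu,P^\Z)=\infty$. Let $\mu_n\to\nu$. Since $I^{(3)}(\cdot,P^\Z)$ is a good rate function (Theorem~\ref{theorem:Rnexptight}), it is lower-semi-continuous, so $\linf{n}I^{(3)}(\mu_n,P^\Z) \geq I^{(3)}(\nu,P^\Z) = \infty$, and the displayed bound gives $\linf{n}H(\mu_n) = \infty = H(\nu)$.

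Now suppose $I^{(3)}(\nu,P^\Z) < \infty$; by Lemma~\ref{lemma:E2enough}, $\nu\in\mathcal{E}_2$. Let $\mu_n\to\nu$. If $\linf{n}I^{(3)}(\mu_n,P^\Z) = \infty$, then the same estimate as above shows $\linf{n}H(\mu_n) = \infty \geq H(\nu)$. Otherwise, after passing to a subsequence (which does not affect the value of $\linf{n}H(\mu_n)$), we may assume there is a constant $C_* < \infty$ with $I^{(3)}(\mu_n,P^\Z) \leq C_*$ for all $n$; in particular $\mu_n\in\mathcal{E}_2$ for all $n$ by Lemma~\ref{lemma:E2enough}.

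Fix $\varepsilon > 0$, and let $\bar{V}_\varepsilon(\nu)$ be the closed neighbourhood from proposition \ref{prop:Ktildeomega}. Since $\mu_n\to\nu$, there is $n_\varepsilon$ such that $\mu_n\in\bar{V}_\varepsilon(\nu)\cap\mathcal{E}_2$ for all $n\geq n_\varepsilon$. For such $n$, proposition \ref{prop:Gamma2nu} gives
\[
\left|\Gamma^\nu(\mu_n) - \Gamma(\mu_n)\right| \leq C_I\varepsilon\left(1+I^{(3)}(\mu_n,P^\Z)\right) \leq C_I\varepsilon(1+C_*),
\]
where we used $\Gamma^\mu(\mu) = \Gamma(\mu)$, which is immediate upon comparing \eqref{eq:phiNdeftn} with \eqref{eq:phiNInfinitydefn} (set $\nu = \mu$). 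Hence, for $n\geq n_\varepsilon$,
\[
H(\mu_n) = I^{(3)}(\mu_n,P^\Z) - \Gamma(\mu_n) \geq I^{(3)}(\mu_n,P^\Z) - \Gamma^\nu(\mu_n) - C_I\varepsilon(1+C_*) = H^\nu(\mu_n) - C_I\varepsilon(1+C_*).
\]
By corollary \ref{Cor:PiNnu}, $H^\nu$ is a good rate function, hence lower-semi-continuous, so $\linf{n}H^\nu(\mu_n) \geq H^\nu(\nu)$. Taking $\linf{n}$ in the previous display yields
\[
\linf{n}H(\mu_n) \geq H^\nu(\nu) - C_I\varepsilon(1+C_*).
\]
Finally, $H^\nu(\nu) = I^{(3)}(\nu,P^\Z) - \Gamma^\nu(\nu) = I^{(3)}(\nu,P^\Z) - \Gamma(\nu) = H(\nu)$, again using $\Gamma^\nu(\nu) = \Gamma(\nu)$. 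Since $\varepsilon > 0$ is arbitrary, letting $\varepsilon\to 0$ gives $\linf{n}H(\mu_n) \geq H(\nu)$. As the original sequence $\mu_n\to\nu$ was arbitrary, $H$ is lower-semi-continuous.
\end{proof}
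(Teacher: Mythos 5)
Your proof takes essentially the same route as the paper: lower-bound $\Gamma$ in terms of $I^{(3)}$ via Lemma~\ref{lem:acbound}, reduce to the bounded-entropy regime, and compare $\Gamma$ to its linearisation $\Gamma^\nu$ via Proposition~\ref{prop:Gamma2nu}, relying on the lower-semicontinuity of $H^\nu$ established in the LDP for the linearised system. However, there is a logical gap in the subsequence step.

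In the case $\linf{n} I^{(3)}(\mu_n,P^\Z) < \infty$, you pass to a subsequence along which $I^{(3)}(\mu_n,P^\Z)\leq C_*$ and assert parenthetically that this ``does not affect the value of $\linf{n}H(\mu_n)$.'' That assertion is not justified: passing to a subsequence can only \emph{increase} a $\liminf$, so establishing $\linf{k}H(\mu_{n_k})\geq H(\nu)$ along your chosen subsequence does not give $\linf{n}H(\mu_n)\geq H(\nu)$ for the original sequence. The indices you discard (those with $I^{(3)}(\mu_n,P^\Z) > C_*$) might carry the infimum: Lemma~\ref{lem:acbound} only tells you $H$ there is bounded below by $(1-1/a)C_* - c/a$, which need not exceed $H(\nu)$.

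The paper avoids this by reversing the order of operations: first extract a subsequence $(\mu_{p_m})$ along which $H$ actually attains the $\liminf$, i.e.\ $\lim_m H(\mu_{p_m}) = \linf{m}H(\mu_m)$, and \emph{then} dichotomise on $\lsup{m} I^{(3)}(\mu_{p_m},P^\Z)$. If it is $\infty$, Lemma~\ref{lem:acbound} forces $\lim_m H(\mu_{p_m}) = \infty$ (the limit exists by construction); if not, $I^{(3)}$ is eventually bounded along $(\mu_{p_m})$ and your main estimate applies. This is a small but genuine fix: your ``Main obstacle'' paragraph shows you saw the issue, but the written proof does not handle it. Apart from this ordering, your argument — the $\varepsilon\to 0$ limit after the $\liminf$, the use of $\Gamma^\nu(\nu)=\Gamma(\nu)$, and citing Corollary~\ref{Cor:PiNnu} for lower-semicontinuity of $H^\nu$ (where the paper uses Lemma~\ref{lem:hnusemicont}) — matches the paper's reasoning.

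One minor further caveat: the identity $\Gamma^\mu(\mu)=\Gamma(\mu)$ is not ``immediate upon comparing \eqref{eq:phiNdeftn} with \eqref{eq:phiNInfinitydefn}'', since those two expressions use $A^{\mu^N,k-j}$ and $A^{\nu,k-j}$ respectively and therefore differ at each finite $N$; the identity holds only in the $N\to\infty$ limit (compare the spectral formulas in Proposition~\ref{prop:E2Gamma2} and \eqref{eq:Gamma2nuspectral}, together with \eqref{eq:Gamma1nu}). The conclusion is correct, but the justification as phrased is not.
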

\begin{proof}
Fix $\mu$ and let $(\mu_{m})_{m \geq 0}$ converge weakly to $\mu$ as $m\rightarrow\infty$. We let $(\mu_{p_m})$ be a subset such that $\linf{m}H\left(\mu_{m}\right) = \limunder{m}H(\mu_{p_m})$. Suppose firstly that 
\begin{equation}
\lsup{m} I^{(3)}\left(\mu_{p_m},P^{\Z}\right) = \infty. \label{eq:assmpt1Hgoodfunction}
\end{equation}
From lemma \ref{lem:acbound} we have that, if $\mu_{p_m}\in\mathcal{E}_2$, then \newline$H(\mu_{p_m})\geq \left(1-\frac{1}{a}\right)I^{(3)}\left(\mu_{p_m}\right)-\frac{c}{a}$, where $a > 1$ and $c>0$ are constants. Otherwise, if $\mu_{p_m}\notin\mathcal{E}_2$ then (through lemma \ref{lemma:E2enough}) $H(\mu_{p_m})=\infty$. In either case, we find that $\lsup{m}H\left(\mu_{p_m}\right) = \limunder{m}H(\mu_{p_m}) = \infty$, so that in this instance $H$ is lower-semicontinuous at $\mu$.

In the second instance, we assume that (\ref{eq:assmpt1Hgoodfunction}) does not hold, so that there exists an $M$ such that for all $m\geq M$, $\lbrace I^{(3)}\left(\mu_{p_m},P^{\Z}\right)\rbrace$ is upperbounded (and by lemma \ref{lemma:E2enough}, $\mu_{p_m}\in\mathcal{E}_2$). We then find that
\begin{align*}
\linf{m}H\left(\mu_{p_m}\right) &= \linf{m}\left(I^{(3)}(\mu_{p_m},P^{\Z})-\Gamma(\mu_{p_m})\right) \\
&\geq \linf{m} H^{\mu}(\mu_{p_m}) + \linf{m} (\Gamma^\mu - \Gamma)(\mu_{p_m}).
\end{align*}
Recall that $\Gamma(\mu_{p_m}) = \Gamma^{\mu_{p_m}}(\mu_{p_m})$. It follows from proposition \ref{prop:Gamma2nu} and the boundedness of  $I^{(3)}\left(\mu_{p_m}\right)$ that the second term is zero. However, from lemma \ref{lem:hnusemicont}, $H^{\mu}$ is lower-semi-continuous, which allows us to conclude that $\linf{m} H^{\mu}(\mu_{p_m}) \geq H^{\mu}(\mu) = H(\mu)$ as required.
\end{proof}
Because $\lbrace\Pi^N\rbrace$ is exponentially tight and satisfies the weak LDP with rate function $H(\mu)$, the following corollary is immediate \cite[Lemma 2.1.5]{deuschel-stroock:89}. 
\begin{corollary}\label{coro:goodrate}
$H(\mu)$ is a good rate function, i.e. the sets $\lbrace \mu : H(\mu) \leq \delta\rbrace$ are compact for all $\delta\in\R^+$, \olivier{and it satisfies the first condition of theorem \ref{theo:LDP}.}
\end{corollary}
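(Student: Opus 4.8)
The plan is to invoke the standard principle that a sequence of probability measures which is exponentially tight and satisfies the weak LDP with a lower-semicontinuous rate function automatically satisfies the full LDP with a good rate function (see \cite[Lemma 2.1.5]{deuschel-stroock:89}); I would simply carry out the two remaining pieces explicitly. Everything needed is already in hand: Lemma \ref{lemma:lbopen} gives the lower bound \eqref{eq:PiNopen} on all open sets; Proposition \ref{prop:ubcompact} gives the upper bound \eqref{eq:PiNclosed} on all \emph{compact} sets; Proposition \ref{prop:exptight} gives exponential tightness, i.e. for every $0\le a<\infty$ a compact $K_a\subset\mM_{1,s}^+(\T^\Z)$ with $\lsup{N}N^{-1}\log\Pi^N(K_a^c)<-a$; and Lemma \ref{lemma:Hsemicontinuous} shows $H$ is lower-semicontinuous, so its sublevel sets are closed.

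First I would upgrade the compact-set upper bound to a closed-set upper bound. Let $F$ be closed and $a\ge 0$. Writing $\Pi^N(F)\le\Pi^N(F\cap K_a)+\Pi^N(K_a^c)$ and using the elementary identity $\lsup{N}N^{-1}\log(x_N+y_N)=\max\big(\lsup{N}N^{-1}\log x_N,\ \lsup{N}N^{-1}\log y_N\big)$ together with the fact that $F\cap K_a$ is compact (a closed subset of a compact set) and Proposition \ref{prop:ubcompact}, one gets
\[
\lsup{N}N^{-1}\log\Pi^N(F)\le\max\Big(-\inf_{F\cap K_a}H,\ -a\Big)\le\max\Big(-\inf_{F}H,\ -a\Big).
\]
Letting $a\to\infty$ yields \eqref{eq:PiNclosed}, which is the first condition of Theorem \ref{theo:LDP}.

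Next I would establish compactness of $\{\mu:H(\mu)\le\delta\}$ for $\delta\in\R^+$. This set is closed by Lemma \ref{lemma:Hsemicontinuous}, so it suffices to show it is contained in some compact set. Pick $a>\delta$ and the associated compact $K_a$ from exponential tightness. If some $\mu_0\notin K_a$ satisfied $H(\mu_0)\le\delta$, then since $K_a^c$ is open, Lemma \ref{lemma:lbopen} would give $\linf{N}N^{-1}\log\Pi^N(K_a^c)\ge-\inf_{K_a^c}H\ge-H(\mu_0)\ge-\delta>-a$, contradicting $\lsup{N}N^{-1}\log\Pi^N(K_a^c)<-a$. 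Hence $\{H\le\delta\}\subset K_a$, and being a closed subset of a compact set it is compact.

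The main point is that there is essentially no obstacle left here: all the substantive content (the weak LDP upper and lower bounds, exponential tightness, and lower-semicontinuity of $H$) has been proved in the preceding sections, and what remains is the purely formal ``exponential tightness plus weak LDP implies full LDP with good rate function'' argument. The only places calling for mild care are the subadditivity of $\lsup{N}N^{-1}\log(\cdot)$ under finite sums and the consistency of the open/closed-set conventions; both are routine.
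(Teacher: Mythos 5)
Your proposal is correct and follows the same route as the paper, which invokes precisely this standard ``exponential tightness plus weak LDP plus lower-semicontinuity implies strong LDP with good rate function'' argument (via \cite[Lemma~2.1.5]{deuschel-stroock:89}) rather than writing it out. Both halves of your explicit derivation — the decomposition $\Pi^N(F)\le\Pi^N(F\cap K_a)+\Pi^N(K_a^c)$ to pass from compact to closed sets, and the contradiction argument placing $\{H\le\delta\}$ inside $K_a$ for $a>\delta$ — are exactly the content of that cited lemma, so the only difference is that you spell it out.
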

\noindent
\olivier{This allows us to complete the proof of theorem \ref{theo:LDP}}:
\begin{proof}
\olivier{
By combining lemmas \ref{lemma:Hsemicontinuous} and \ref{lemma:lbopen}, proposition \ref{prop:exptight}, and corollary 
\ref{coro:goodrate}, we complete the proof of theorem \ref{theo:LDP}.
}
\end{proof}
\section{The unique minimum of the rate function}\label{sect:Hminimum}
We first prove that there exists a unique minimum $\mu_e$ of the rate function. We finish by providing explicit equations for $\mu_e$ which would facilitate its numerical simulation.
\begin{lemma}
For $\mu,\nu\in\mM_{1,s}^+(\T^\Z)$, $H^\nu(\mu) = 0$ if and only if $\mu = Q^{\nu}$.
\end{lemma}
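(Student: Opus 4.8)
The plan is to identify $H^\nu$ with the relative entropy rate of $\undt{\mu}$ with respect to the Gaussian process $\undt{Q}^\nu$, and then to reduce the vanishing of that rate to a Gaussian spectral computation via a maximum--entropy comparison. The ``if'' direction will be immediate once the identity is in place.

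\emph{Step 1: the key identity.} I would first show that, for every $\mu\in\mM_{1,s}^+(\T^\Z)$,
\begin{equation*}
H^\nu(\mu)=\lim_{N\to\infty}\tfrac1N\,I^{(2)}\big(\undt{\mu}^N,\undt{Q}^{\nu,N}\big).
\end{equation*}
By \eqref{defn:Qnu}, $\undt{Q}^{\nu,N}$ is a probability measure with $\tfrac{d\undt{Q}^{\nu,N}}{d\undt{P}^{\otimes N}}(v)=\exp\!\big(N\Gamma^\nu(\undt{\hat{\mu}}^N(v))\big)$, so $\int_{\T^N}\exp\!\big(N\Gamma^\nu(\undt{\hat{\mu}}^N(v))\big)\,\undt{P}^{\otimes N}(dv)=1$, and the chain rule for relative entropy gives, whenever $I^{(3)}(\mu,P^\Z)<\infty$,
\begin{equation*}
\tfrac1N I^{(2)}\big(\undt{\mu}^N,\undt{Q}^{\nu,N}\big)=\tfrac1N I^{(2)}\big(\undt{\mu}^N,\undt{P}^{\otimes N}\big)-\int_{\T^N}\Gamma^\nu\big(\undt{\hat{\mu}}^N(v)\big)\,\undt{\mu}^N(dv).
\end{equation*}
Since $\Gamma^\nu(\rho^N)=\Gamma_1^N(\nu)+\Gamma_2^\nu(\rho^N)$ with $\Gamma_2^\nu$ a quadratic functional of the coordinates (cf.\ \eqref{eq:Gamma2nuN}--\eqref{eq:phiNInfinitydefn}) and $\undt{\mu}$ is stationary, one checks that $\int_{\T^N}N\Gamma^\nu(\undt{\hat{\mu}}^N(v))\,\undt{\mu}^N(dv)=N\Gamma^\nu(\mu^N)+o(N)$, the error term absorbing the difference between the cyclic convolution implicit in $\undt{\hat{\mu}}^N$ and the genuine convolution with the absolutely summable sequence $(A^{\nu,l})_{l\in\Z}$ (the same kind of estimate as in Lemma~\ref{lemma:Vsquared}). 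Letting $N\to\infty$, and using $\tfrac1N I^{(2)}(\undt{\mu}^N,\undt{P}^{\otimes N})\to I^{(3)}(\undt{\mu},\undt{P}^\Z)=I^{(3)}(\mu,P^\Z)$ (see \eqref{eq:I3equality}) together with $\Gamma^\nu(\mu^N)\to\Gamma^\nu(\mu)$, gives the identity. If $I^{(3)}(\mu,P^\Z)=\infty$ the equivalence to prove is trivial: then $H^\nu(\mu)=\infty$, while $Q^\nu\in\mathcal{E}_2$ has finite entropy rate so $\mu\neq Q^\nu$.

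\emph{Steps 2 and 3: the two directions.} If $\mu=Q^\nu$ then $\undt{\mu}^N=\undt{Q}^{\nu,N}$ for all $N$, so every term in the identity is $0$ and $H^\nu(Q^\nu)=0$. Conversely, suppose $H^\nu(\mu)=0$; then $\mu\in\mathcal{E}_2$ (Lemma~\ref{lemma:E2enough}) and $\tfrac1N I^{(2)}(\undt{\mu}^N,\undt{Q}^{\nu,N})\to0$. Under $\undt{Q}^{\nu,N}$ the coordinate-$0$ block is distributed as $\mu_I^{\otimes N}$, independently of the coordinates $v^j_{1},\dots,v^j_{T}$, which form a \emph{non-degenerate} Gaussian vector with mean $c^\nu$ and covariance $\sigma^2{\rm Id}_{NT}+K^{\nu,N}$ (cf.\ \eqref{eq:muIassumption} and the discussion after \eqref{defn:Qnu}); hence the relative entropy splits as $I^{(2)}(\undt{\mu}^N,\undt{Q}^{\nu,N})=I^{(2)}(\undt{\mu}_0^N,\mu_I^{\otimes N})+\int I^{(2)}(\undt{\mu}^N_{u_0},\undt{Q}^{\nu,N}_{1,T})\,d\undt{\mu}_0^N(u_0)$, both terms nonnegative. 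Taking $\tfrac1N$ and $N\to\infty$, the first term forces $\undt{\mu}_0=\mu_I^\Z$; for the second, since $-\log$ of the Gaussian density $\undt{Q}^{\nu,N}_{1,T}$ is affine--quadratic, I would bound it below, via the maximum--entropy (Gaussian domination) inequality applied to the law of $(v^j_{1},\dots,v^j_{T})$ under $\undt\mu$, by the relative entropy rate of the stationary Gaussian $\undt{\mu}^G$ with mean $\bar{v}^\mu$ and spectral density $\tilde{v}^\mu$ with respect to $\undt{Q}^\nu_{1,T}$, with equality iff that law is itself Gaussian and independent of coordinate $0$. Vanishing of this Gaussian relative entropy rate reduces to the classical spectral formula: the integrand is $\sum(\lambda-1-\log\lambda)$ over the eigenvalues $\lambda$ of $(\sigma^2{\rm Id}_T+\tilde{K}^\nu(\omega))^{-1}\tilde{v}^\mu(\omega)$ plus a nonnegative mean contribution, which vanishes iff $\tilde{v}^\mu(\omega)=\sigma^2{\rm Id}_T+\tilde{K}^\nu(\omega)$ a.e.\ and $\bar{v}^\mu=c^\nu$. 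Combining, $\undt{\mu}_0=\mu_I^\Z$, $\undt{\mu}$ is Gaussian with the mean and covariance of $\undt{Q}^\nu$, the $v$-block is independent of coordinate $0$ as for $\undt Q^\nu$, hence $\undt{\mu}=\undt{Q}^\nu$, i.e.\ $\mu=Q^\nu$.

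\emph{Main obstacle.} The delicate step is Step~1, and within it the estimate $\int_{\T^N}N\Gamma^\nu(\undt{\hat{\mu}}^N(v))\,\undt{\mu}^N(dv)=N\Gamma^\nu(\mu^N)+o(N)$: one must control the discrepancy between the cyclic convolution implicit in the empirical field $\undt{\hat{\mu}}^N$ and the genuine convolution with $(A^{\nu,l})_{l\in\Z}$, using absolute summability of that sequence in the manner of Lemma~\ref{lemma:Vsquared} and Proposition~\ref{prop:Ktildeomega}, and using the stationarity of $\undt\mu$. A secondary technicality is, in Step~3, to verify that the per-site differential entropy rate is well defined and finite (here $I^{(3)}(\mu,P^\Z)<\infty$ is what is used) and that the maximum--entropy comparison and the Gaussian relative-entropy-rate formula apply to stationary non-product measures, with careful bookkeeping of the equality cases (in particular the independence of the coordinate-$0$ block, which is untouched by $\Gamma^\nu$ and where $\undt{Q}^\nu$ coincides with $\undt{P}^\Z$).
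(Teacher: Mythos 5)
Your approach is genuinely different from the paper's. The paper's proof is a one-liner given Theorem~\ref{lemma:PiNLDP1}: it represents $\undt{H}^\nu(\undt\mu)=I^{(3)}\bigl(\undt\mu\circ(\tau^\nu)^{-1},\undt{P}^\Z\bigr)$, and then invokes the classical fact that the process-level entropy relative to a \emph{product} reference $\undt{P}^\Z$ is a rate function whose unique zero is $\undt{P}^\Z$; the bijectivity of $\tau^\nu$ (Lemma~\ref{lemma:inversesigmatau}) then pulls the zero back to $\undt{Q}^\nu$. You instead work directly with the relative entropy rate $I^{(3)}(\undt\mu,\undt{Q}^\nu)$ of $\undt\mu$ with respect to the stationary Gaussian $\undt{Q}^\nu$. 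Your Step~1 identity $H^\nu(\mu)=\lim_N N^{-1}I^{(2)}(\undt{\mu}^N,\undt{Q}^{\nu,N})$ is correct and is in effect equivalent to what Theorem~\ref{lemma:PiNLDP1} furnishes (the two representations agree because $I^{(3)}$ is invariant under the shift-commuting bijection $\tau^\nu$); the chain-rule decomposition and the ``cyclic vs.\ Toeplitz'' estimate you flag are exactly the right ingredients, and they are doable by a computation of the type in Lemma~\ref{lemma:Vsquared}.

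However there is a genuine gap in your Step~3. For a \emph{non-product} reference measure, the vanishing of the process-level entropy rate $I^{(3)}(\cdot,\undt{Q}^\nu)$ does not automatically imply equality of measures, and the Gaussian-domination argument you invoke does not by itself close this. The finite-$N$ inequality $I^{(2)}(\undt{\mu}^N,\undt{Q}^{\nu,N})\ge I^{(2)}((\undt{\mu}^{G})^N,\undt{Q}^{\nu,N})$ has equality iff $\undt{\mu}^N=(\undt{\mu}^G)^N$, but after dividing by $N$ and passing to the limit you only obtain that $\undt{\mu}^G=\undt{Q}^\nu$ (from the Gaussian spectral formula) and that $N^{-1}\bigl(h((\undt{\mu}^G)^N)-h(\undt{\mu}^N)\bigr)\to 0$; the latter is precisely $I^{(3)}(\undt\mu,\undt{\mu}^G)\to 0$, and asserting this forces $\undt\mu=\undt{\mu}^G$ is the very statement you are trying to prove, now with $\undt{\mu}^G$ in place of $\undt{Q}^\nu$. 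The standard way to break this circularity is exactly the paper's: transform by the filter $\tau^\nu$ (with spectral filter $({\rm Id}_T+\sigma^{-2}\tilde K^\nu)^{1/2}$, made rigorous via the Fejér approximants $\tau^\nu_{(M)}$ and Lemma~\ref{lem:tempend}), reducing to the product reference $\undt{P}^\Z$ where the unique-zero property is classical. As written, your ``with equality iff that law is itself Gaussian'' step is unjustified at the rate level, so your proof is incomplete without importing the filtering argument that the paper already established in Appendix~\ref{section:proofLemmaLDP}.
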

\begin{proof}
Using the correspondences in section \ref{section:Qndefinition}, it suffices to
prove that $\undt{H}^\nu\left(\undt{\mu}\right) = 0$ if and only if $\undt{\mu} =
\undt{Q}^{\nu}$. We have from theorem \ref{lemma:PiNLDP1} that if $\mu\in\mathcal{E}_2$, then $H^{\nu}(\undt{\mu}) = I^{(3)}\left(\undt{\mu}\circ(\tau^{\nu})^{-1},\undt{P}^{\Z}\right)$. In turn a contraction principle \cite{ellis:85} dictates that 
\begin{equation*}
I^{(3)}\left( \undt{\mu}\circ (\tau^\nu)^{-1},\undt{P}^{\Z}\right) \geq 0,
\end{equation*}
with equality if and only if $\undt{\mu}\circ (\tau^\nu)^{-1} = \undt{P}^{\Z}$. However since $\undt{P}^{\olivier{\Z}} = \undt{Q}^{\nu}\circ (\tau^\nu)^{-1}$, it follows from lemma \ref{lemma:inversesigmatau} that $\undt{H}^{\nu}(\undt{\mu})$ is zero if and only if $\undt{\mu} = \undt{Q}^{\nu}$.
\end{proof}
\begin{proposition}\label{prop:unique}
There is a unique distribution $\mu_e \in \mathcal{M}_{1,s}^+(\T^\Z)$ which minimises $H$. This distribution satisfies $H(\mu_e)=0$. 
\end{proposition}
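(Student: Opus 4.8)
The plan is to reduce the whole statement to a fixed-point problem. The key preliminary observation is that $\Gamma^{\mu}(\mu)=\Gamma(\mu)$: indeed $\phi^{N}_{\infty}(\mu,\cdot)=\phi^{N}(\mu,\cdot)$ (compare \eqref{eq:phiNdeftn} and \eqref{eq:phiNInfinitydefn}), so $\Gamma_2^{\mu}(\mu)=\Gamma_2(\mu)$ by \eqref{eq:Gamma2muN}--\eqref{eq:Gamma2nuN}, while $\lim_N\Gamma_1^N(\mu)=\lim_N\Gamma_1(\mu^N)=\Gamma_1(\mu)$ by lemma~\ref{lemma:Gamma1} and \eqref{eq:Gamma1nu}. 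Hence $H(\mu)=H^{\mu}(\mu)$ for all $\mu$. Since every $H^{\nu}$ is a (good) rate function (lemma~\ref{lemma:PinuNLDP}, corollary~\ref{Cor:PiNnu}) it is nonnegative, so $H\ge 0$; and, by the lemma immediately preceding this proposition, $H(\mu)=0$ if and only if $\mu=Q^{\mu}$. Therefore the set of minimisers of $H$ is exactly the fixed-point set of the map $\nu\mapsto Q^{\nu}$, and proving the proposition amounts to showing that this set is a singleton.

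\emph{Existence of $\mu_e$ with $H(\mu_e)=0$.} Apply the LDP of theorem~\ref{theo:LDP} to $\mathcal{M}_{1,s}^+(\T^{\Z})$, which is at once closed and open; using $\Pi^N(\mathcal{M}_{1,s}^+(\T^{\Z}))=1$, the upper and lower bounds yield $0\le-\inf H$ and $0\ge-\inf H$, whence $\inf H=0$. Since $H$ is a good rate function (corollary~\ref{coro:goodrate}), the level sets $\{H\le 1/k\}$, $k\ge 1$, form a nested sequence of nonempty compacta, so $\bigcap_{k}\{H\le 1/k\}=\{H=0\}$ is nonempty; any $\mu_e$ in it satisfies $H(\mu_e)=0$ and $\mu_e=Q^{\mu_e}$.

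\emph{Uniqueness.} I would use the causal, triangular-in-time structure of $\nu\mapsto Q^{\nu}$. Let $\mu_1,\mu_2$ both satisfy $H=0$, hence $\mu_i=Q^{\mu_i}$, and show by induction on $t_0=0,1,\dots,T$ that the restrictions $\mu_{1,t_0}$ and $\mu_{2,t_0}$ to $\F_{t_0}$ coincide; at $t_0=T$ this gives $\mu_1=\mu_2$. The base case $t_0=0$ follows from \eqref{eq:muIassumption}, since $\Psi$ fixes the time-$0$ coordinate, so $\mu_{i,0}$ equals the time-$0$ marginal of $\undt{Q}^{\mu_i}$, which is $\mu_I^{\Z}$. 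For the inductive step, suppose $\mu_{1,t_0-1}=\mu_{2,t_0-1}$. By \eqref{eq:cmumean}, $c^{\mu}_s$ depends on $\mu$ only through its time-$(s-1)$ marginal, and by \eqref{eq:Mmuinfinite}, $M^{\mu,m}_{st}$ depends only on the joint marginal at times $s-1$ and $t-1$ (the spatial lag $m$ being irrelevant); consequently, via \eqref{eq:Kimuinfinite} (an absolutely convergent sum), $c^{\mu_1}_s=c^{\mu_2}_s$ and $K^{\mu_1,k}_{st}=K^{\mu_2,k}_{st}$ for all $s,t\le t_0$. Now $\undt{Q}^{\nu}_{1,T}$ is Gaussian with mean $c^{\nu}$ and covariance $\sigma^{2}{\rm Id}+K^{\nu}$, and its time-$0$ coordinates are independent of the rest and distributed as $\mu_I^{\Z}$; hence the restriction of $\undt{Q}^{\nu}$ to $\F_{t_0}$ depends on $\nu$ only through the sub-blocks of $(c^{\nu},K^{\nu})$ at times $\le t_0$, so $\undt{Q}^{\mu_1}$ and $\undt{Q}^{\mu_2}$ agree on $\F_{t_0}$. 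Finally $\mu_i$ is the image of $\undt{\mu_i}=\undt{Q}^{\mu_i}$ under $\Psi^{-1}$, and $\Psi^{-1}$ is causal ($u_s$ is an affine function of $v_0,\dots,v_s$), so the restriction of $\mu_i$ to $\F_{t_0}$ is determined by that of $\undt{Q}^{\mu_i}$ to $\F_{t_0}$; thus $\mu_{1,t_0}=\mu_{2,t_0}$, completing the induction. Running this recursion from $\mu_{e,0}=\mu_I^{\Z}$ also yields an explicit construction of $\mu_e$, which is the content of the remainder of the section.

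The main obstacle is the bookkeeping in the inductive step: one must check carefully that $\Psi$ and $\Psi^{-1}$ respect the filtrations $\F_t$, $\F_{1,t}$, that the passage from the finite-$N$ objects $K^{\mu^N,k}$ and $\undt{Q}^{\nu,N}$ to their $N\to\infty$ limits does not disturb the time-causality, and that no information from times $>t_0$ enters the time-$\le t_0$ sub-blocks of $(c^{\nu},K^{\nu})$ through the (spatially, but not temporally, non-local) coupling. Once this causal decomposition is in place, uniqueness follows with no smallness condition on the model parameters $\bar J,\Lambda,\gamma,k_f$, which is a distinctive feature of rate-neuron models with i.i.d.\ or only spatially correlated weights.
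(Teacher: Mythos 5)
Your reduction to the fixed-point problem $\mu=Q^{\mu}$ and your uniqueness argument are essentially the paper's: the paper packages the same time-causality as iteration of an operator $L(\mu)=Q^{\mu}$ satisfying $\mu_{t-1}=\nu_{t-1}\Rightarrow L(\mu)_t=L(\nu)_t$, whereas you run the induction on $t_0$ directly, but the content is identical. Where you genuinely diverge is existence: the paper exhibits the fixed point constructively as $\mu_e=L^{T}(\nu)$ for any $\nu$ with $\nu_0=\mu_I^{\Z}$ (this iterative construction is also what feeds proposition \ref{prop:effective} in the next part of the section), while you argue topologically, extracting $\mu_e$ from the nested nonempty compacta $\{H\leq 1/k\}$ via $\inf H=0$ and the goodness of $H$. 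Both are sound; the paper's route is self-contained within the fixed-point machinery and produces the explicit recursion, while yours leans on corollary \ref{coro:goodrate} and the full LDP bounds but avoids having to verify that $L^T(\nu)$ is a fixed point.

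One small inaccuracy: the identity $\phi^N_\infty(\mu,\cdot)=\phi^N(\mu,\cdot)$ is false at finite $N$, since \eqref{eq:phiNdeftn} uses the finite-$N$ blocks $A^{\mu^N,k-j}$ whereas \eqref{eq:phiNInfinitydefn} uses the limiting blocks $A^{\mu,k-j}$; they only coincide after passing to the limit $N\to\infty$ (via \eqref{eq:Amulimit} and proposition \ref{prop:E2Gamma2}). The conclusion $\Gamma^{\mu}(\mu)=\Gamma(\mu)$, and hence $H(\mu)=H^{\mu}(\mu)$, is correct (the paper uses exactly this identity in the proof of lemma \ref{lemma:Hsemicontinuous}), but the justification should be the equality of the limits, not of the finite-$N$ integrands.
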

\begin{proof}
By the previous lemma, it suffices to prove that there is a unique $\mu_e$ such that 
\begin{equation}
Q^{\mu_e} = \mu_e.\label{eq:Qmumu}
\end{equation}
 We define the mapping $L : 
\mathcal{M}_{1,s}^+(\T^\Z) \to \mathcal{M}_{1,s}^+(\T^\Z)$ by
\[
\mu \to L(\mu)=Q^\mu.
\]
It follows from \eqref{eq:muIassumption} that
\begin{equation}\label{eq:nu0}
Q^\mu_{\olivier{0}}=\mu_I^\Z,
\end{equation}
which is independent of $\mu$. 

It may be inferred from the definitions in Section \ref{subsub:infinite} that the marginal of $L(\mu)=Q^\mu$ over $\F_t$ only depends upon the
marginal of $\mu$ over $\F_{t-1}$. This follows from the fact that $\undt{Q}^\mu_{\olivier{s}}$ (which determines $Q^\mu_\olivier{s}$) is completely determined by the means $\lbrace c^{\mu}_t;t=0,\ldots,s-1\rbrace$ and covariances $\lbrace K^{\mu,j}_{uv}; j\in\Z,u,v\in [0,s-1]\rbrace$.  In turn, it may be observed from \eqref{eq:cmumean} and  \eqref{eq:Kimuinfinite} that these variables are determined by $\mu_\olivier{s-1}$. Thus for any $\mu,\nu\in\mathcal{M}_{1,s}^+(\T^\Z)$ and $t\in [1,T]$, if
\[
\mu_\olivier{t-1}=\nu_\olivier{t-1},
\]
then
\[
L(\mu)_\olivier{t}=L(\nu) _\olivier{t}.
\]
It follows from repeated application of the above identity that for any $\nu$ satisfying $\nu_\olivier{0} = \mu_I^\Z$,
\begin{equation}\label{eq:fixed}
L^T(\nu)_\olivier{T}=L(L^T(\nu))_\olivier{T}.
\end{equation}
Defining
\begin{equation}\label{eq:mufixeddefn}
\mu_e=L^T(\nu),
\end{equation}
it follows from \eqref{eq:fixed} that $\mu_e$ satisfies \eqref{eq:Qmumu}.

Conversely if $\mu=L(\mu)$ for some $\mu$, then we have that $\mu =
L^2(\nu)$ for any $\nu$ such that $\nu_\olivier{T-2} =
\mu_\olivier{T-2}$. Continuing this reasoning, we find that $\mu =
L^T(\nu)$ for any $\nu$ such that $\nu_\olivier{0} = \mu_\olivier{0}$. But
by \eqref{eq:nu0}, since $Q^\mu= \mu$, we have $\mu_\olivier{0}=
\mu_I^\Z$. But we have just seen that any $\mu$ satisfying $\mu =
L^T(\nu)$, where $\nu_\olivier{0} = \mu_{I}^\Z$, is uniquely defined by \eqref{eq:mufixeddefn}, which means that $\mu=\mu_e$.
\end{proof}

We may use the proof of proposition \ref{prop:unique} to characterize the unique measure ${\mu_e}$ such that ${\mu_e}=Q^{\mu_e}$ in terms of its image $\undt{\mu_e}$. This characterization allows one to directly numerically calculate $\mu_e$. We characterize $\undt{\mu_e}$ recursively (in time), by providing a method of determining $\undt{\mu_{e}}_t$ in terms of $\undt{\mu_e}_{t-1}$. However we must firstly outline explicitly the bijective correspondence between ${\mu_e}_t$ and $\undt{\mu_e}_t$, as follows. For $v\in\T$, we write $\Psi^{-1}(v) = (\Psi^{-1}(v)_0,\ldots,\Psi^{-1}(v)_T)$. We recall from \eqref{eq:Psidefn} that $\Psi^{-1}(v)_0 = \olivier{v_0}$. The coordinate $\Psi^{-1}(v)_{t}$ is the affine function of $v_s$, $s=0\cdots t$ obtained from equations \eqref{eq:Psidefn} and
\eqref{eq:Psi}
\begin{equation}\label{eq;psimoins1}
\Psi^{-1}(v)_{t}=\sum_{i=0}^{t} \gamma^i v_{t-i}+\bar{\theta} \frac{\gamma^{t}-1}{\gamma-1}.
\end{equation}

We have the following proposition.
\begin{proposition}\label{prop:effective}
The mean of the limit law $\mu_e$ can be computed iteratively by
\begin{equation}\label{eq:mean}
c^{\mu_e}_t=\bar{J} \int_{\R^{t}}\left(f\circ \Psi^{-1}(v)_{t-1}\right)\,\undt{{\mu^1_{e}}}{}_{t-1}(dv),
\end{equation}
where the measure $\undt{{\mu^1_{e}}}{}_{t-1}$ is given by
\[
\undt{{\mu_e}}{}_{\olivier{t-1}}^{1}(dv)=\mu_I(dv_0)\otimes \mathcal{N}_{t-1}(c^{\mu_e}_{(t-1)},K^{{\mu_e},0}_{(t-1,t-1)})dv_1 \cdots dv_{t-1}.
\]
$K^{{\mu_e},l}_{(t-1,s-1)}$ is the $\olivier{(t-1)} \times \olivier{(s-1)}$ submatrix of $K^{\mu_e,l}$ composed of the rows from times $\olivier{1}$ to $(t-1)$
and the columns from times $\olivier{1}$ to $(s-1)$, and
\[
c^{\mu_e}_{(t-1)} = \,^t(c^{\mu_e}_\olivier{1},\ldots,c^{\mu_e}_{t-1}).
\]
The covariance $K^{\mu_e}$ of the limit law $\mu_e$ can be computed from equation \eqref{eq:Kimuinfinite}  and
\begin{equation}\label{eq:Mmue0}
M^{{\mu_e},0}_{ts} = 
\int_{\R^{\max(t,s)}} \left(f \circ \Psi^{-1}(v)_{t-1}\right) \times \left(f \circ \Psi^{-1}(v)_{s-1}\right)\, \undt{{\mu_e}}{}_{\olivier{\max(t-1,s-1)}}^{1}(dv),
\end{equation}
and for $l \neq 0$
\begin{equation}\label{eq:Mmue}
M^{{\mu_e},l}_{ts} = 
\int_{\R^{t}\times \R^{s}}\left(f \circ \Psi^{-1}(v^0)_{t-1}\right) \times \left(f \circ \Psi^{-1}(v^l)_{s-1}\right)\, \undt{{\mu_e}}{}_{\olivier{t-1,s-1}}^{0,l}(dv^0dv^l) 
\end{equation}
for $s=1\cdots t$.
\olivier{
Moreover (with a slight abuse of notation):
\begin{multline*}
\undt{{\mu_e}}{}_{t-1,s-1}^{(0,l)}(dv^0dv^l)=\mu_I(dv^0_0)\otimes \mu_I(dv^l_0 )\otimes \\
\mathcal{N}_{(t-1)+(s-1)}((c^{\mu_e}_{(t-1)},c^{\mu_e}_{(s-1)}),K^{{\mu_e},(0,l)}_{(t-1,s-1)})dv^0_1 \cdots dv^0_{t-1}dv^l_1 \cdots dv^l_{s-1},
\end{multline*}}
where
\[
K^{{\mu_e},(0,l)}_{(t-1,s-1)}=
\left[
\begin{array}{cc}
K^{{\mu_e},0}_{(t-1,t-1)} & K^{{\mu_e},l}_{(t-1,s-1)}\\
\,^t K^{{\mu_e},l}_{(t-1,s-1)} & K^{{\mu_e},0}_{(s-1,s-1)}
\end{array}
\right]
.
\]
\end{proposition}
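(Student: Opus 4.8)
The plan is to unwind the fixed-point identity $\mu_e = Q^{\mu_e}$ supplied by Proposition~\ref{prop:unique}, using the explicit Gaussian description of $Q^\nu$ together with the change of variables $\Psi$; the statement is, in essence, a restatement of the defining formulas \eqref{eq:cmumean} and \eqref{eq:Mmuinfinite} once the structure of $\undt{\mu_e}$ is made explicit, plus the observation that it determines $\mu_e$ recursively in time.

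First I would pin down the structure of $\undt{\mu_e}$. Since $\mu_e = Q^{\mu_e}$ we have $\undt{\mu_e} = \undt{Q}^{\mu_e}$. By \eqref{defn:Qnu} the Radon--Nikodym derivative of $\undt{Q}^{\nu,N}$ with respect to $\undt{P}^{\otimes N}$ is $\exp(N\Gamma^\nu(\undt{\hat\mu}^N(v)))$; since $\Gamma_1^N(\nu)$ is a $v$-independent constant and, by \eqref{eq:phiNInfinitydefn}, $\phi^N_\infty(\nu,\cdot)$ is shift-invariant and depends on $v$ only through the coordinates $(v^j_s)_{|j|\le n,\,1\le s\le T}$, this density does not involve the initial layer $(v^j_0)_j$. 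Under $\undt{P}^{\otimes N}$ the variables $(v^j_0)$ are i.i.d.\ $\mu_I$ and independent of all later coordinates, so the same holds under $\undt{Q}^{\nu,N}$; combined with the Gaussian description of $\undt{Q}^\nu_{1,T}$ recorded just after \eqref{defn:Qnu} (mean $c^\nu$, covariance $\sigma^2{\rm Id}+K^\nu$, hence the block between coordinates $i$ and $i+k$ is $\sigma^2\delta_k{\rm Id}_T+K^{\nu,k}$) and with \eqref{eq:muIassumption}, this shows that $\undt{\mu_e}$ is the product of $\mu_I^\Z$ on the initial layer and an independent stationary Gaussian field on the later layers with mean $c^{\mu_e}$ and blocks $\sigma^2\delta_k{\rm Id}_T+K^{\mu_e,k}$. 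In particular the one- and two-coordinate finite-time marginals of $\undt{\mu_e}$ are the product-of-$\mu_I$-and-Gaussian measures $\undt{\mu_e}^{1}_{t-1}$ and $\undt{\mu_e}^{(0,l)}_{t-1,s-1}$ displayed in the statement (the relevant Gaussian covariances being the indicated submatrices of $\sigma^2{\rm Id}+K^{\mu_e}$).

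Next I would derive the integral identities by substituting this description into \eqref{eq:cmumean} and \eqref{eq:Mmuinfinite} and changing variables via $u^i = \Psi^{-1}(v^i)$. Indeed $c^{\mu_e}_t = \bar J\int_{\T^\Z} f(u^i_{t-1})\,d\mu_e(u) = \bar J\int_{\T^\Z} f(\Psi^{-1}(v^i)_{t-1})\,d\undt{\mu_e}(v)$, and by the explicit affine formula \eqref{eq;psimoins1} the integrand depends only on $(v^i_0,\dots,v^i_{t-1})$, so the integral reduces to one over $\R^t$ against $\undt{\mu_e}^{1}_{t-1}$, giving \eqref{eq:mean}. In the same way $M^{\mu_e,0}_{ts}$ depends on $\undt{\mu_e}$ only through $(v^0_0,\dots,v^0_{\max(t-1,s-1)})$, yielding \eqref{eq:Mmue0}, and for $l\ne 0$ it depends only on $(v^0_0,\dots,v^0_{t-1})$ and $(v^l_0,\dots,v^l_{s-1})$, yielding \eqref{eq:Mmue}; finally $K^{\mu_e,k}$ is recovered from the $M^{\mu_e,l}$ through \eqref{eq:Kimuinfinite}.

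It then remains to check that these identities determine $c^{\mu_e}$ and $K^{\mu_e}$ by induction on time, with no circularity; this is the fixed-point version of the observation used in the proof of Proposition~\ref{prop:unique} that the restriction of $Q^\mu$ to $\F_t$ depends only on the restriction of $\mu$ to $\F_{t-1}$. Concretely, the right-hand sides of \eqref{eq:mean}, \eqref{eq:Mmue0}, \eqref{eq:Mmue} involving only time indices $\le\rho$ are integrals against $\undt{\mu_e}^{1}_{\rho-1}$ and against $\undt{\mu_e}^{(0,l)}$ with indices $\le\rho-1$, and by the stated block forms those marginals are determined by $c^{\mu_e}_1,\dots,c^{\mu_e}_{\rho-1}$ and by the entries $K^{\mu_e,j}_{uv}$ with $u,v\le\rho-1$; hence every datum of $\mu_e$ ``at time $\rho$'' is an explicit function of the data ``at times $\le\rho-1$''. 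The induction is anchored at $\rho=1$, where $\undt{\mu_e}^{1}_0=\mu_I$ and $\undt{\mu_e}^{(0,l)}_{0,0}=\mu_I\otimes\mu_I$ for $l\ne 0$, so that $c^{\mu_e}_1 = \bar J\int_\R f(v_0)\,\mu_I(dv_0)$ and the $M^{\mu_e,l}_{11}$, hence the $K^{\mu_e,k}_{11}$, are fully explicit. The only step requiring genuine care is the factorisation in the second paragraph---that the initial layer of $\undt{\mu_e}$ is $\mu_I^\Z$ and is independent of the stationary Gaussian field on the later layers---which hinges entirely on the fact that the density $\exp(N\Gamma^\nu)$ is a function of the time-$1$-to-$T$ coordinates alone.
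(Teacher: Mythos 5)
Your proposal follows essentially the same route as the paper's proof: unwind the fixed point $\mu_e = Q^{\mu_e}$ through the Gaussian description of $\undt{Q}^\nu$, change variables via $\Psi^{-1}$ to pass from \eqref{eq:cmumean} and \eqref{eq:Mmuinfinite} to finite-dimensional integrals against $\undt{\mu_e}$, and read off the time-recursion exactly as in the proof of Proposition~\ref{prop:unique}. You are somewhat more explicit than the paper in justifying the factorisation of $\undt{\mu_e}$ into $\mu_I^\Z$ tensored with an independent stationary Gaussian field; your observation that the density $\exp(N\Gamma^\nu)$ in \eqref{defn:Qnu} depends only on the time-$1$-to-$T$ coordinates is precisely the point the paper leaves implicit when it writes ``$\undt{\mu_e}_{t}=\undt{\mu}_I^{\Z}\otimes\undt{\mu_e}_{1,t}$''.

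One discrepancy is worth flagging. Your derivation gives the Gaussian block covariances as $\sigma^2\delta_k{\rm Id}_T + K^{\mu_e,k}$, which agrees with the paper's own assertion immediately after \eqref{defn:Qnu} that $\undt{Q}^\nu_{1,T}$ has covariance $\sigma^2{\rm Id}+K^\nu$, and with a direct Gaussian computation (since $({\rm Id}-A^\nu)=\sigma^2(\sigma^2{\rm Id}+K^\nu)^{-1}$). However, both the proposition's displayed formulas and the paper's proof write the Gaussian covariances of $\undt{\mu_e}{}_{t-1}^{1}$ and $\undt{\mu_e}{}_{t-1,s-1}^{(0,l)}$ as submatrices of $K^{\mu_e,0}$ and $K^{\mu_e,(0,l)}$ with no diagonal $\sigma^2{\rm Id}$ contribution. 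This appears to be a typographical omission in the paper rather than a gap in your argument: the covariance you obtained is the one that follows from $\mu_e=Q^{\mu_e}$, so your version of the displayed measures is the correct one, and the recursion you describe goes through unchanged with the $\sigma^2{\rm Id}$ term included.
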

\begin{proof}
In the course of the previous proof we saw that $\mu_e{}_{0} = \undt{\mu_e}{}_0 = \mu_I^{\Z}$. It remains for us to explicitly outline how we determine $\undt{\mu_e}{}_{t}$ from $\undt{\mu_e}{}_{t-1}$ for each $t\geq 1$. We saw in the previous proof that $\undt{\mu_e}{}_{t} = \undt{\mu}_I^{\olivier{\Z}}\otimes\undt{\mu_e}_{1,t}$, where $\undt{\mu_e}_{1,t}$ is a Gaussian Process (for $t\geq 1$). Hence it suffices for us to provide expressions for $c^{\mu_e}_t$ and $\lbrace K^{\mu_e,j}_{ts}: s=\olivier{1},\ldots,t,j\in\Z\rbrace$, \olivier{since $K^{\mu_e,j}_{st}=K^{\mu_e,-j}_{ts}$}, in terms of $\undt{\mu_e}{}_{t-1}$. The other components of the mean and covariance of $\undt{\mu_e} {}_{t}$ are the same as their analogues in $\undt{\mu_e}{}_{t-1}$. The mean is given, through the change of variable $v=\Psi(u)$, using definition \ref{def:mubarbar}, by 
\begin{equation*}
c^{\mu_e}_t=\bar{J}\int_{\mathbb{R}} f(u_{t-1})\,\mu^1_e{}_{t-1}(du) = \bar{J} \int_{\R^{t}}\left(f\circ \Psi^{-1}(v)_{t-1}\right)\,\undt{{\mu^1_{e}}}{}_{t-1}(dv),
\end{equation*}
where $\mu_e^1$ is the marginal distribution over one neuron and
\olivier{
\[
\undt{{\mu^1_{e}}}{}_{t-1}(dv)=\mu_I(dv_0) \otimes \undt{{\mu^1_{e}}}{}_{1,t-1}(dv_1\cdots dv_{t-1}),
\]
where $\undt{{\mu^1_{e}}}{}_{1,t-1}$ is a $(t-1)$-dimensional Gaussian measure described below.
}

The formula for $K^{{\mu_e},j}$ can be obtained from equations \eqref{eq:Kimuinfinite} and \eqref{eq:Mmuinfinite}. Indeed, we have
\[
K^{{\mu_e},j}=\bar{\theta}^2 \olivier{\delta_j} \olivier{\mone}_{T} \,^t \olivier{\mone}_{T}+
\sum_{l=-\infty}^{\infty} \Lambda(j,l) M^{{\mu_e},l},
\]
and 
\[
M^{{\mu_e},l}_{ts} = \int_{\T^\Z} \olivier{f(u^0_{t-1})} \olivier{f(u^l_{s-1})} d{\mu_e}(u)\quad s =1\cdots t.
\]
\olivier{This can be rewritten, again through the change of variable $v=\Psi(u)$, using definition \ref{def:mubarbar}, as}
\begin{equation*}
M^{{\mu_e},l}_{ts} = \left\{
\begin{array}{lr}
\int_{\R^{\max(t,s)}} \left(f \circ \Psi^{-1}(v)_{t-1}\right) \times \left(f \circ \Psi^{-1}(v)_{s-1}\right)\, \undt{{\mu_e}}{}_{\olivier{\max(t-1,s-1)}}^{1}(dv) & l =0\\
\int_{\R^{t}\times \R^{s}}\left(f \circ \Psi^{-1}(v^0)_{t-1}\right) \times \left(f \circ \Psi^{-1}(v^l)_{s-1}\right)\, \undt{{\mu_e}}{}_{\olivier{t-1,s-1}}^{0,l}(dv^0dv^l) & l \neq 0
\end{array}
\right.
\end{equation*}
for $s=1\cdots t$.
\olivier{
For $l\neq 0$ (with a slight abuse of notation):
\begin{multline*}
\undt{{\mu_e}}{}_{t-1,s-1}^{(0,l)}(dv^0dv^l)=\mu_I(dv^0_0)\otimes \mu_I(dv^l_0 )\otimes \\
\mathcal{N}_{(t-1)+(s-1)}((c^{\mu_e}_{(t-1)},c^{\mu_e}_{(s-1)}),K^{{\mu_e},(0,l)}_{(t-1,s-1)})dv^0_1 \cdots dv^0_{t-1}dv^l_1 \cdots dv^l_{s-1},
\end{multline*}}
and
\[
c^{\mu_e}_{(t-1)} = \,^t(c^{\mu_e}_\olivier{1},\ldots,c^{\mu_e}_{t-1}),
\]
\[
K^{{\mu_e},(0,l)}_{(t-1,s-1)}=
\left[
\begin{array}{cc}
K^{{\mu_e},0}_{(t-1,t-1)} & K^{{\mu_e},l}_{(t-1,s-1)}\\
\,^t K^{{\mu_e},l}_{(t-1,s-1)} & K^{{\mu_e},0}_{(s-1,s-1)}
\end{array}
\right]
,
\]
and $K^{{\mu_e},l}_{(t-1,s-1)}$ is the $\olivier{(t-1)} \times \olivier{(s-1)}$ submatrix of $K^{\mu_e,l}$ composed of the rows from times $\olivier{1}$ to $(t-1)$
and the columns from times $\olivier{1}$ to $(s-1)$. Obviously, if either $t-1$ or $s-1$ is equal to 0, the corresponding matrixes are empty.

\olivier{
For $l=0$ we have
\[
\undt{{\mu_e}}{}_{\olivier{t-1}}^{1}(dv)=\mu_I(dv_0)\otimes \mathcal{N}_{t-1}(c^{\mu_e}_{(t-1)},K^{{\mu_e},0}_{(t-1,t-1)})dv_1 \cdots dv_{t-1}
\]
}
\end{proof}
\olivier{
Note that the integral in the righthand side of equation \eqref{eq:mean} can be reduced through a change of variable to an integral over at most $\R^2$. Similarly, the integrals in the righthand side  of equations \eqref{eq:Mmue0}  and \eqref{eq:Mmue} can be reduced to integrals computed over at most $\R^4$.}

\section{Conclusion}\label{sect:conclusion}
In this section we  sketch out some important consequences of our work and possible generalizations.
\subsection{Important consequences}
We note that the LDP of Moynot and Samuelides \cite{moynot:99,moynot-samuelides:02} may be obtained from ours by stipulating that $\Lambda(a,b)$ is nonzero if and only if $a=b=0$. Their LDP may then be obtained by applying a contraction principle to our LDP through taking the $1$-dimensional marginal of $\hat{\mu}^N$. More generally, for any $d\in\Z^+$ one may obtain a process-level LDP governing the interaction of each neuron with its $d$ neighbours by applying a contraction principle to the $d-$dimensional marginal of the empirical measure. 


We state some important consequences of our results, culminating in an analog of the Ergodic Theorem. We recall that $Q^N(J,\Theta)$ is the conditional law of $N$ neurons for given $J$ and $\Theta$.
\begin{theorem}\label{theo:weakconv}
$\Pi^N$ converges weakly to $\delta_{\mu_e}$, i.e., for all $\Phi \in \mathcal{C}_b(\mathcal{M}_{1,s}^+(\T^\Z))$,
\[
\lim_{N \to \infty} \int_{\T^N} \Phi(\hat{\mu}^N(u))\,Q^N(du)=\Phi(\mu_e).
\]
Similarly,
\[
\lim_{N \to \infty} \int_{\T^N} \Phi(\hat{\mu}^N(u))\,Q^N(J,\Theta)(d\olivier{u})=\Phi(\mu_e)
\]
\end{theorem}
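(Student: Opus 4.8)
The plan is to read off both statements from the large deviation principle of Theorem~\ref{theo:LDP} together with the fact, established in Proposition~\ref{prop:unique}, that the good rate function $H$ vanishes at the single point $\mu_e$. The basic estimate I would isolate first is: if $O$ is any open neighbourhood of $\mu_e$ in $\mM_{1,s}^+(\T^\Z)$ and $F=O^c$, then $\delta:=\inf_{\mu\in F}H(\mu)>0$. Indeed, if $\delta=\infty$ this is clear, and if $\delta<\infty$ then, because $H$ is a good rate function (Corollary~\ref{coro:goodrate}), $\delta$ is attained on the compact set $\{H\le\delta+1\}\cap F$ by lower semicontinuity of $H$; the minimiser lies in $F$, hence differs from $\mu_e$, so by uniqueness of the zero of $H$ it has positive value, i.e.\ $\delta>0$. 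The closed-set bound \eqref{eq:PiNclosed} then gives $\lsup{N}N^{-1}\log\Pi^N(F)\le-\delta$, whence $\Pi^N(F)\le e^{-N\delta/2}$ for all $N$ large, and in particular $\Pi^N(O)\to1$. Since, by Definition~\ref{def:PiNQN}, $\int_{\T^N}\Phi(\hat{\mu}^N(u))\,Q^N(du)=\int_{\mM_{1,s}^+(\T^\Z)}\Phi\,d\Pi^N$ for every $\Phi\in\mathcal{C}_b(\mM_{1,s}^+(\T^\Z))$, and since weak convergence to the point mass $\delta_{\mu_e}$ is characterised by $\Pi^N(O)\to1$ for every open $O\ni\mu_e$, the portmanteau theorem yields the first assertion.

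For the quenched assertion, which I would phrase as holding for almost every realisation of $(\J^N,\Theta)$, the argument is a Borel--Cantelli estimate. Fix a countable base $(O_k)_{k\ge1}$ of open neighbourhoods of $\mu_e$ (for instance the $D$-balls of radius $1/k$), and set $\delta_k=\inf_{O_k^c}H>0$ as above, so that $\Pi^N(O_k^c)\le e^{-N\delta_k/2}$ for $N$ large. Using $Q^N=\Exp^{\J,\Theta}[Q^N(\J,\Theta)]$ and Tonelli's theorem,
\[
\Exp^{\J,\Theta}\!\left[\sum_{N\ge1}Q^N(\J,\Theta)\big(\hat{\mu}^N\in O_k^c\big)\right]=\sum_{N\ge1}\Pi^N(O_k^c)<\infty ,
\]
so the inner sum is finite almost surely and hence $Q^N(\J,\Theta)(\hat{\mu}^N\in O_k^c)\to0$ on a full-measure event $\Omega_k$. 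On $\Omega^\ast:=\bigcap_k\Omega_k$ this holds for every $k$, which says exactly that, conditionally on $(\J^N,\Theta)\in\Omega^\ast$, the law of $\hat{\mu}^N$ under $Q^N(\J,\Theta)$ converges weakly to $\delta_{\mu_e}$; note that $\Omega^\ast$ does not depend on $\Phi$.

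It remains to integrate a fixed $\Phi\in\mathcal{C}_b(\mM_{1,s}^+(\T^\Z))$ against this convergence. On $\Omega^\ast$, given $\varepsilon>0$, continuity of $\Phi$ at $\mu_e$ and the fact that $(O_k)$ is a neighbourhood base let us pick $k$ with $|\Phi(\mu)-\Phi(\mu_e)|<\varepsilon$ for $\mu\in O_k$; then
\[
\left|\int_{\T^N}\Phi(\hat{\mu}^N(u))\,Q^N(\J,\Theta)(du)-\Phi(\mu_e)\right|\le\varepsilon+2\norm{\Phi}_\infty\,Q^N(\J,\Theta)\big(\hat{\mu}^N\in O_k^c\big),
\]
whose $\limsup$ in $N$ is at most $\varepsilon$; letting $\varepsilon\downarrow0$ gives the claim. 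I expect the only genuinely non-routine point to be the passage from the averaged exponential estimate to the quenched (almost-sure) convergence, and this is precisely what the summability of $\sum_N\Pi^N(O_k^c)$ delivers; everything else is a direct consequence of Theorem~\ref{theo:LDP} and Proposition~\ref{prop:unique}.
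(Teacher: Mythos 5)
Your proof is correct and follows essentially the same route the paper indicates: the annealed statement is read off from Theorem~\ref{theo:LDP} together with Proposition~\ref{prop:unique} via a standard portmanteau argument, and the quenched statement is obtained by Borel--Cantelli from the annealed exponential estimate. The only stylistic difference is that the paper factors the quenched case through the separate quenched LDP upper bound (Theorem~\ref{theo:asLDP}), whereas you inline a slightly more economical Borel--Cantelli argument that only needs the decay of $\Pi^N(O_k^c)$ for a countable base of neighbourhoods of $\mu_e$; this correctly yields a single full-measure event $\Omega^\ast$ independent of $\Phi$, which is the one point that genuinely needs care.
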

\begin{proof}
The proof of the first result follows directly from the existence of an LDP for the measure $\Pi_N$, see theorem \ref{theo:LDP}, and is a straightforward adaptation of the one in \cite[Theorem 2.5.1]{moynot:99}. The proof of the second result uses the same method, making use of theorem \ref{theo:asLDP} below.
\end{proof}
We can in fact obtain the following quenched convergence analogue of \eqref{eq:PiNclosed}. 
\begin{theorem}\label{theo:asLDP}
For each closed set $F$ of $\mmeas(\T^\Z)$ and for almost all $(J,\Theta)$
\[
\lsup{N} \frac{1}{N} \log \left[Q^N(J,\Theta) (\hat{\mu}^N \in F)\right]  \leq -\inf_{\mu \in F} H(\mu).
\]
\end{theorem}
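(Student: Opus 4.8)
The plan is to transfer the annealed upper bound of Theorem~\ref{theo:LDP} to the quenched setting by a first-moment (Markov) estimate combined with the Borel--Cantelli lemma, following the classical route used to pass from averaged to quenched large deviations in spin glass theory. Fix the closed set $F \subset \mmeas(\T^\Z)$ once and for all and write $h_F = \inf_{\mu \in F} H(\mu) \in [0,\infty]$. Realise the whole sequence $(\J^N,\Theta)_{N \geq 1}$ on a product probability space $(\Omega,\mathbb{P}) = \prod_{N \geq 1}(\Omega_N,\mathbb{P}_N)$, where each $(\Omega_N,\mathbb{P}_N)$ carries the law of $(\J^N,\Theta)$ prescribed in Section~\ref{sect:model} and the factors are independent. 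Since $Q^N(\cdot)$ is a probability kernel and $\{u \in \T^N : \hat{\mu}^N(u) \in F\} \in \F_T^N$, the quantity
\begin{equation*}
Z_N = Q^N(\J^N,\Theta)\left(\hat{\mu}^N \in F\right)
\end{equation*}
is a random variable on $\Omega$ depending only on the $N$-th coordinate, and by Definition~\ref{def:PiNQN} together with $Q^N = \Exp^{\J,\Theta}[Q^N(\J,\Theta)]$ we have $\Exp[Z_N] = \Pi^N(F)$.

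Assume first that $h_F < \infty$, and fix $\delta > 0$. Applying Markov's inequality to $Z_N \geq 0$,
\begin{equation*}
\mathbb{P}\left(Z_N \geq e^{-N(h_F - \delta)}\right) \leq e^{N(h_F - \delta)} \Exp[Z_N] = e^{N(h_F - \delta)} \Pi^N(F).
\end{equation*}
By Theorem~\ref{theo:LDP}, $\lsup{N} N^{-1}\log \Pi^N(F) \leq -h_F$, so there is $N_0$ such that $\Pi^N(F) \leq e^{-N(h_F - \delta/2)}$ for all $N \geq N_0$, whence $\mathbb{P}(Z_N \geq e^{-N(h_F-\delta)}) \leq e^{-N\delta/2}$ for $N \geq N_0$. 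This bound is summable in $N$, so the Borel--Cantelli lemma yields a $\mathbb{P}$-null set off which $Z_N < e^{-N(h_F-\delta)}$ for all $N$ large enough, i.e. $\lsup{N} N^{-1}\log Z_N \leq -(h_F - \delta)$. Taking $\delta = \delta_k$ along a sequence $\delta_k \downarrow 0$ and intersecting the countably many full-measure sets so obtained yields $\lsup{N} N^{-1}\log Z_N \leq -h_F$ for $\mathbb{P}$-almost every $(\J,\Theta)$, which is the asserted inequality.

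When $h_F = \infty$ one argues identically with an arbitrary level $L > 0$ in place of $h_F$: Theorem~\ref{theo:LDP} gives $\lsup{N} N^{-1}\log\Pi^N(F) = -\infty$, hence $\Pi^N(F) \leq e^{-NL}$ for $N$ large; with the event $\{Z_N \geq e^{-N(L-1)}\}$, Markov's inequality produces the summable bound $e^{-N}$, and Borel--Cantelli gives $\lsup{N} N^{-1}\log Z_N \leq -(L-1)$ almost surely. Intersecting over $L \in \N$ yields $\lsup{N} N^{-1}\log Z_N = -\infty = -h_F$ almost surely, as required.

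I expect the only genuinely delicate point to be the measure-theoretic bookkeeping: one must set up the common probability space carefully (the law of $(\J^N,\Theta)$ genuinely changes with $N$ because of the $1/N$ scaling of the covariance, so there is no single i.i.d.\ array from which everything is read off) and verify that $Z_N$ is a bona fide function of finitely many coordinates so that the Borel--Cantelli lemma applies. Everything else reduces to Markov's inequality and the already-established Theorem~\ref{theo:LDP}. Note finally that the exceptional $\mathbb{P}$-null set is allowed to depend on $F$, consistent with the phrasing ``for each closed set $F$'' in the statement.
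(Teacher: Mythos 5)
Your argument is correct and is essentially the same as the paper's, which proves this theorem by combining Tchebyshev's (Markov's) inequality with Borel--Cantelli applied to the summable bound that falls out of the annealed LDP upper bound of Theorem~\ref{theo:LDP}. One small remark: the first Borel--Cantelli lemma does not require independence across $N$, so the independent product coupling you impose on $(\J^N,\Theta)_{N\geq 1}$ is not actually needed — any common probability space with the correct $N$-marginals, in particular the nested ambient space used in the paper's final ergodic theorem, works just as well, and the conclusion is then ``almost surely'' with respect to that single measure.
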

\begin{proof}
The proof is a combination of Tchebyshev's inequality and the Borel-Cantelli lemma and is a straightforward adaptation of the one in \cite[Theorem 2.5.4, Corollary 2.5.6]{moynot:99}.
\end{proof}
We define $\check{Q}^N(J^N,\Theta) = \frac{1}{N}\sum_{j=-n}^n Q^N(J^N,\Theta)\circ S^{-\olivier{j}}$, where we recall the shift operator $S$ defined at the start of section \ref{subsection:laws-uncoupled-coupled}. 
\begin{corollary}\label{cor:weakconv}
Fix $M$ and let $N> M$. For almost every $J$ and $\Theta$, and all $h \in \mathcal{C}_b(\T^M)$,
\begin{align*}
\lim_{N \to \infty} \int_{\T^M}h(u)\,\check{Q}^{N,M}(J^{\olivier{N}},\Theta)(du)&=\int_{\T^M} h(u)\,d\mu_e^M(u).\\
\lim_{N \to \infty} \int_{\T^M}h(u)\,Q^{N,M}(du)&=\int_{\T^M} h(u)\,d\mu_e^M(u).
\end{align*}
That is, the $M^{th}$ marginals $\check{Q}^{N,M}(J,\Theta)$ and $Q^{N,M}$ converge weakly to $\mu_e^M$ as $N\to\infty$. 
\end{corollary}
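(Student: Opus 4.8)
The plan is to reduce the corollary to Theorem \ref{theo:weakconv} by testing the level-3 convergence $\Pi^N \to \delta_{\mu_e}$ (and its quenched analogue) against functionals that only see the $M$-dimensional marginal. Write $M = 2m+1$. For $h \in \mathcal{C}_b(\T^M)$, set $\tilde h : \T^\Z \to \R$, $\tilde h(y) = h(y^{-m},\ldots,y^m)$; since $h$ is continuous and bounded and the coordinate projection $\T^\Z \to \T^M$ is continuous, $\tilde h$ is continuous and bounded on $\T^\Z$, so
\[
\Phi_h(\nu) := \int_{\T^\Z}\tilde h\, d\nu = \int_{\T^M} h\, d\nu^M
\]
defines a function $\Phi_h \in \mathcal{C}_b(\mM_{1,s}^+(\T^\Z))$ (the metric \eqref{eq:metricdefn} metrises weak convergence), with $\Phi_h(\mu_e) = \int_{\T^M}h\, d\mu_e^M$.

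The main step is an elementary identity valid for $N > M$: from the definition \eqref{eq:hatmuN},
\[
\Phi_h(\hat{\mu}^N(u)) = \frac1N\sum_{i=-n}^n \tilde h(S^i u(N)) = \frac1N\sum_{i=-n}^n h\!\left(u^{(i-m)\bmod N},\ldots,u^{(i+m)\bmod N}\right),
\]
i.e. $\Phi_h \circ \hat{\mu}^N$ is the average of $h$ over the $N$ cyclic windows of length $M$ of $u$, each of which consists of $M$ distinct coordinates since $N>M$. Integrating against $Q^N$ and using Lemma \ref{lemma:PQdag} (which gives $Q^N \in \check{\M}_1^+(\T^N)$, so every such window has law $Q^{N,M}$) yields $\int_{\T^N}\Phi_h(\hat{\mu}^N(u))\,Q^N(du) = \int_{\T^M}h\, dQ^{N,M}$. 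Since $\check{Q}^N(J^N,\Theta)$ is the uniform average over the cyclic rotations of $Q^N(J^N,\Theta)$, the same window-averaging computation gives $\int_{\T^M}h\, d\check{Q}^{N,M}(J^N,\Theta) = \int_{\T^N}\Phi_h(\hat{\mu}^N(u))\,Q^N(J^N,\Theta)(du)$; this time no symmetry hypothesis on $Q^N(J^N,\Theta)$ is needed, the averaging over windows supplying it.

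Combining these identities with Theorem \ref{theo:weakconv} applied to $\Phi = \Phi_h$ gives the two limits at once: the first (non-random) statement yields $\int_{\T^M}h\, dQ^{N,M}\to\int_{\T^M}h\, d\mu_e^M$, and the second (quenched) statement yields, for almost every $(J,\Theta)$, $\int_{\T^M}h\, d\check{Q}^{N,M}(J^N,\Theta)\to\int_{\T^M}h\, d\mu_e^M$. The one point requiring care is the quantifier order in the quenched case: to obtain ``for almost every $(J,\Theta)$ and all $h$'' rather than ``for all $h$ and almost every $(J,\Theta)$'', I would fix a countable family $\{h_k\}\subset\mathcal{C}_b(\T^M)$ that is convergence-determining for weak convergence on the Polish space $\T^M$, apply the previous limit to each $h_k$, and intersect the resulting full-measure sets; on the intersection $\check{Q}^{N,M}(J^N,\Theta)\to\mu_e^M$ weakly, whence the stated convergence holds for every $h\in\mathcal{C}_b(\T^M)$. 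Apart from this exchange of quantifiers the argument is routine bookkeeping, so I do not expect a genuine obstacle.
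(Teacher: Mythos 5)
Your proof is correct and follows the same route as the paper: choose $\Phi(\mu)=\int_{\T^M} h\,d\mu^M$, observe that $\Phi\circ\hat{\mu}^N$ is a cyclic window average, use the shift-invariance from Lemma \ref{lemma:PQdag} (or the averaging in $\check{Q}^N$) to identify it with the $M$th marginal integral, and invoke Theorem \ref{theo:weakconv}. The only difference is that you spell out the window-averaging identity and flag the quantifier exchange via a countable convergence-determining family, a routine but real subtlety that the paper's terse proof leaves implicit.
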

\begin{proof}
It is sufficient to apply theorem \ref{theo:weakconv} in the case where $\Phi$ in $\mathcal{C}_b(\mathcal{M}_{1,s}^+(\T^\Z))$ is defined by
\[
\Phi(\mu)=\int_{\T^M} h\,d\mu^M
\]
and to use the fact that $Q^N,\check{Q}^N(J,\Theta) \in \check{\M}_{1}^+(\T^N)$ (lemma \ref{lemma:PQdag}).
\end{proof}
We have the following analogue of the Ergodic Theorem. We may represent the ambient probability space by $\mathfrak{W}$, where $\omega\in\mathfrak{W}$ is such that $\omega = (J_{ij},\theta_j,B_{st}^j,u^j_0)$, where $i,j\in\Z$ and $1\leq s,t \leq T$. We denote the probability measure governing $\omega$ by $\mathfrak{P}$. Let $u^{(N)}(\omega)\in \T^N$ be defined by \eqref{eq:U}. As an aside, we may then understand $Q^N(J,\Theta)$ to be the conditional law of $\mathfrak{P}$ on $u^{(N)}(\omega)$, for given $(J_{ij},\theta_i)_{i,j=-n}^n$. 
\begin{theorem}
Fix $M > 0$ and let $h\in C_b(\T^M)$. For $u^{(N)}(\omega)\in\T^{N}$ (where $N>M$) and $|j|\leq n$, let $\check{u}^{(N),j}(\omega) = (u^{(N),j}(\omega),u^{(N),j+1}(\omega),\ldots,u^{(N),j+M-1}(\omega))$, the indexing being taken modulo $N$. Then $\mathfrak{P}$ almost surely,
\begin{equation}\label{eq:Mthmarginalconverge}
\lim_{N\to\infty} \frac{1}{N} \sum _{j=-n}^n h\left(\check{u}^{(N),j}(\omega)\right) = \int_{\T^M}h(u)d\mu_e^M(u).  
\end{equation}
That is, $\hat{\mu}^N(u^{\olivier{(N)}}(\omega))$ converges $\mathfrak{P}$-almost-surely to $\mu_e$.
\end{theorem}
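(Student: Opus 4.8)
The plan is to prove the stronger assertion that $\hat\mu^N(u^{(N)}(\omega))\to\mu_e$ in $\mmeas(\T^\Z)$ for $\mathfrak P$-almost every $\omega$, and then to read off \eqref{eq:Mthmarginalconverge} from it. For this last step note that, directly from the definition \eqref{eq:hatmuN} of $\hat\mu^N$ and the periodisation $u\mapsto u(N)$,
\[
\frac1N\sum_{j=-n}^n h\bigl(\check u^{(N),j}(\omega)\bigr)=\int_{\T^M} h\,d\bigl(\hat\mu^N(u^{(N)}(\omega))\bigr)^M=\Phi\bigl(\hat\mu^N(u^{(N)}(\omega))\bigr),
\]
where $\Phi(\mu)=\int_{\T^M}h\,d\mu^M$. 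Since $h\in C_b(\T^M)$, the functional $\Phi$ is continuous and bounded on $\mmeas(\T^\Z)$ for the weak topology, so once the almost sure convergence $\hat\mu^N(u^{(N)}(\omega))\to\mu_e$ is established, continuity of $\Phi$ gives $\Phi(\hat\mu^N(u^{(N)}(\omega)))\to\Phi(\mu_e)=\int_{\T^M}h\,d\mu_e^M$, which is \eqref{eq:Mthmarginalconverge}.

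First I would record that the rate function $H$ has a "gap" around $\mu_e$. Fix an open neighbourhood $O$ of $\mu_e$ and let $F=O^c$, a closed set with $\mu_e\notin F$. Since $H$ is a good rate function (Corollary \ref{coro:goodrate}) and lower semicontinuous (Lemma \ref{lemma:Hsemicontinuous}), the infimum $\inf_{\mu\in F}H(\mu)$ is either $+\infty$ or attained on the compact set $F\cap\{H\le c\}$ for suitable $c$. Because $\mu_e$ is the \emph{unique} minimiser of $H$ and $H(\mu_e)=0$ (Proposition \ref{prop:unique}), any minimiser over $F$ is distinct from $\mu_e$, hence $c_F:=\inf_{\mu\in F}H(\mu)>0$.

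Next I would invoke the quenched upper bound. By Theorem \ref{theo:asLDP}, for $\mathfrak P$-almost every $(J,\Theta)$,
\[
\limsup_{N\to\infty}\frac1N\log Q^N(J,\Theta)\bigl(\hat\mu^N\in F\bigr)\le -c_F<0,
\]
so that $\sum_{N\ge1}Q^N(J,\Theta)(\hat\mu^N\in F)<\infty$ for almost every $(J,\Theta)$. Now work on the ambient space $(\mathfrak W,\mathfrak P)$ and let $\mathcal G$ be the $\sigma$-algebra generated by all the weights $J_{ij}$ and thresholds $\theta_i$. As indicated in the statement, $u^{(N)}(\omega)$ is a deterministic function of $(J^N,\Theta^N,(B^j_{st}),(u^j_0))$ with the fluctuations and initial conditions independent of $\mathcal G$, so the regular conditional law of $u^{(N)}(\omega)$ given $\mathcal G$ is $Q^N(J,\Theta)$; consequently
\[
\mathbb E^{\mathfrak P}\Bigl[\sum_{N\ge1}\mathbf 1_{\{\hat\mu^N(u^{(N)})\in F\}}\,\Big|\,\mathcal G\Bigr]=\sum_{N\ge1}Q^N(J,\Theta)(\hat\mu^N\in F)<\infty\quad\mathfrak P\text{-a.s.}
\]
A nonnegative random variable with almost surely finite conditional expectation is almost surely finite, which is exactly the conditional Borel--Cantelli conclusion: $\mathfrak P$-almost surely, $\hat\mu^N(u^{(N)}(\omega))\in F$ only for finitely many $N$, i.e. $\hat\mu^N(u^{(N)}(\omega))\in O$ for all $N$ large. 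Since $\mmeas(\T^\Z)$ is Polish, we may take $O$ ranging over a countable neighbourhood basis $\{B(\mu_e,1/k)\}_{k\ge1}$; discarding the resulting countable union of $\mathfrak P$-null sets yields $\hat\mu^N(u^{(N)}(\omega))\to\mu_e$ $\mathfrak P$-almost surely, and combined with the first paragraph this proves the theorem.

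I expect the only genuinely delicate point to be the measure-theoretic bookkeeping in the third paragraph: identifying $Q^N(J,\Theta)$ as the regular conditional distribution of the physical realisation $u^{(N)}(\omega)$ given $\mathcal G$, uniformly enough in $N$ that the conditional Borel--Cantelli step goes through. The positivity $c_F>0$, the passage to a countable basis, and the continuity of $\Phi$ are routine given the results already proved (in particular one cannot replace this route by Theorem \ref{theo:weakconv}, which only provides convergence of $\Phi(\hat\mu^N)$ in $\mathfrak P$-mean, not the almost-sure statement required here).
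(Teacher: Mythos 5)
Your proof is correct, but it takes a somewhat different route from the paper's. The common skeleton is: since $\mu_e$ is the unique zero of the good rate function $H$, any closed set $F$ avoiding $\mu_e$ has $\inf_F H>0$, so the LDP upper bound gives exponential decay of the probability that $\hat\mu^N$ lies in $F$, and Borel--Cantelli yields the a.s.\ statement. Where you diverge is in which LDP you invoke and where the Borel--Cantelli argument lives. The paper uses the \emph{annealed} bound (Theorem \ref{theo:LDP}) together with the immediate identity
$\mathfrak P\bigl(\hat\mu^N(u^{(N)})\in F\bigr)=Q^N\bigl(\hat\mu^N\in F\bigr)$
(because $Q^N$ \emph{is} the unconditional law of $u^{(N)}$ under $\mathfrak P$), and then applies ordinary Borel--Cantelli on $(\mathfrak W,\mathfrak P)$. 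You instead invoke the \emph{quenched} bound (Theorem \ref{theo:asLDP}), identify $Q^N(J,\Theta)$ as the regular conditional law of $u^{(N)}$ given $\mathcal G=\sigma(J,\Theta)$, and run a conditional Borel--Cantelli. Your version is sound (the passage ``a.s.\ finite conditional expectation implies a.s.\ finite'' is correct, and the null set from Theorem \ref{theo:asLDP} is controlled because you use only countably many $F$'s), but the extra conditioning is not needed; the annealed route is shorter and avoids the regular-conditional-distribution bookkeeping you yourself flag as the delicate point.

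One respect in which your ordering is arguably cleaner: you first establish $\hat\mu^N\to\mu_e$ $\mathfrak P$-a.s.\ via a countable neighbourhood basis, and then read off \eqref{eq:Mthmarginalconverge} for every $h\in C_b(\T^M)$ by continuity of $\Phi$. The paper proves \eqref{eq:Mthmarginalconverge} for a fixed $h$ first and then asserts a.s.\ weak convergence of $\hat\mu^N$ ``since the $M$th marginals converge''; that last step implicitly requires restricting to a countable convergence-determining class, which is left unsaid. Your reformulation closes that small gap.

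A minor remark: your closing aside about Theorem \ref{theo:weakconv} only giving convergence in mean is accurate, but the paper does not in fact use Theorem \ref{theo:weakconv} here — it goes straight to the annealed LDP — so the warning is aimed at a route nobody takes.
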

\begin{proof}
Our proof is an adaptation of \cite{moynot:99}. We may suppose without loss of generality that $\int_{\T^M}h(u) d\mu_e(u) = 0$. For $p>1$ let 
\begin{equation*}
F_p = \left\lbrace \mu\in\M_{1,s}^+(\T^{\Z})| \left|\int_{\T^M}h(u)d\mu_e^M(u)\right| \geq \frac{1}{p} \right\rbrace.
\end{equation*}
Since $\mu_e\notin F_p$, but it is the unique zero of $H$, it follows that $\inf_{F_p}H = m > 0$. Thus by theorem \ref{theo:LDP} there exists an $N_0$, such that for all $N>N_0$,
\begin{equation*}
Q^N\left(\hat{\mu}^N\in F_p\right) \leq \exp\left(-mN\right). 
\end{equation*}
However
\begin{equation*}
\mathfrak{P}\left(\omega |\hat{\mu}^N(u^{(N)}(\omega))\in F_p\right) = Q^N\left( u | \hat{\mu}^N(u)\in F_p\right).
\end{equation*}
Thus 
\begin{equation*}
\sum_{N=1}^{\infty}\mathfrak{P}\left(\omega |\hat{\mu}^N(u^{(N)}(\omega))\in F_p\right) < \infty.
\end{equation*}
We may thus conclude from the Borel-Cantelli Lemma that $\mathfrak{P}$ almost surely, for every $\omega\in\mathfrak{W}$, there exists $N_p$ such that for all $N\geq N_p$,
\[
\left|\frac{1}{N}\sum_{j=-n}^n h\left(\check{u}^{(N),j}(\omega)\right)\right| \leq \frac{1}{p}. 
\]
This yields \eqref{eq:Mthmarginalconverge} because $p$ is arbitrary. The convergence of $\hat{\mu}^N(u^{\olivier{(N)}}(\omega))$ is a direct consequence of \eqref{eq:Mthmarginalconverge}, since this means that each of the $M^{th}$ marginals converge.
\end{proof}

\subsection{Possible extensions}
Our results hold true if we assume that equation \eqref{eq:U} is replaced by the more general equation
\[
U^j_{t}=\sum_{k=1}^l \gamma_k U^j_{t-k}+\sum_{i=-n}^{n} J_{ji}^N
f(U^i_{t-1})+\theta_j+B^j_{t-1}, \quad j=-n,\ldots,n \quad t=l,\ldots,T,
\]
where $l$ is a positive integer strictly less than $T$ (in practice much smaller). This equation accounts for a more complicated "intrinsic" dynamics of the neurons, i.e. when they are uncoupled. The parameters $\gamma_k$, $k=1\cdots l$ must satisfy some conditions to ensure stability of the uncoupled dynamics. 

This result can be straightforwardly extended to the case when the noise is correlated but stationary Gaussian, that is ${\rm cov}(B^{j}_{s},B^{k}_{t})$ is some function of $s,t$ and $(k-j)$. It can also be easily extended to the case that the initial distribution is correlated but mixing, using the Large Deviation Principle in \cite{chiyonobu-kusuoka:88}.

The hypothesis that the synaptic weights are Gaussian is somewhat unrealistic from the biological viewpoint. In his PhD thesis \cite{moynot:99}, Moynot has obtained some preliminary results in the case of uncorrelated weights. We think that this is also a promising avenue.

Moynot again, in his thesis, has extended the uncorrelated weights case, to include two populations with different (Gaussian) statistics for each population. This is also an important practical problem in neuroscience. Extending Moynot's result to the correlated case is probably a low hanging fruit.

Last but not least, the solutions of the equations for the mean and covariance operator of the measure minimizing the rate function derived in section \ref{sect:Hminimum} and their numerical simulation are very much worth investigating and their predictions confronted to biological measurements.

\subsection{Discussion}

In recent years there has been a lot of effort to mathematically justify neural-field models, through some sort of asymptotic analysis of finite-size neural networks. Many, if not most, of these models assume / prove some sort of thermodynamic limit, whereby if one isolates a particular population of neurons in a localised area of space, they are found to fire increasingly asynchronously as the number in the population asymptotes to infinity.\footnote{We noted in the introduction that this is termed propagation of chaos by some.} Indeed this was the result of Moynot and Samuelides. However our results imply that there are system-wide correlations between the neurons, even in the asymptotic limit. The key reason why we do not have propagation of chaos is that the Radon-Nikodym derivative $\frac{dQ^N}{dP^N}$ of the average laws in proposition \ref{prop:RNderiv1} cannot be tensored into $N$ i.i.d. processes; whereas the simpler assumptions on the weight function $\Lambda$ in Moynot and Samuelides allow the Radon-Nikodym derivative to be tensored. A very important implication of our result is that the mean-field behaviour is insufficient to characterise the behaviour of a population. Our limit process $\mu_e$ is system-wide and ergodic. Our work challenges the assumption held by some that one cannot have a `concise' macroscopic description of a neural network without an assumption of asynchronicity at the local population level.

The utility of this paper extends well beyond the identification of the limit law $\mu_e$. The LDP provides a powerful means of assessing how quickly the empirical measure converges to its limit. In particular, it provides a means of assessing the probability of finite size effects. For example if it could be shown that the rate function $H$ is sharply convex everywhere, then one would be more confident that the system converges quickly to its limit law. The rate functions of many classical LDPs, such as the one in lemma \ref{lemma:PinuNLDP}, are indeed convex (in fact the rate function $H^\nu(\cdot)$, for fixed $\nu$, is affine). However it is not clear whether our rate function $H$ is convex. Indeed if it could be shown that the rate function $H$ is not sharply convex, and in particular that it has a local minimum at another point $\mu_m$, then perhaps if $N$ is not too great there could be a reasonable probability that the empirical measure lies close to $\mu_m$. The upshot of this discussion is that further exploration of the topology of the rate function $H$ could be a very fruitful avenue of research for assessing the probability of finite-size effects. It would be of interest to compare our LDP with other analyses of the rate of convergence of neural networks to their limits as the size asymptotes to infinity. This includes the system-size expansion of Bressloff \cite{bressloff:09}, the path-integral formulation of Buice and Cowan \cite{buice-cowan:07} and the systematic expansion of the moments by (amongst others) \cite{ginzburg-sompolinsky:94,elboustani-destexhe:09,buice-cowan-etal:10}. 

\vspace{0.5cm}

\noindent
{\bf Acknowledgements}\\
Many thanks to Bruno Cessac whose suggestion to look at process-level empirical measures and entropies has been very useful and whose insights into the physical interpretations of our results have been very stimulating.

This work was partially supported by the European Union Seventh Framework Programme (FP7/2007-
2013) under grant agreement no. 269921 (BrainScaleS), no. 318723 (Mathemacs), and by the ERC advanced grant NerVi no. 227747.
\appendix

\section{Proof of Lemma \ref{lemma:PinuNLDP}}\label{section:proofLemmaLDP}

We note $\undt{R}^N $ the image law of $\undt{P}^{\otimes N}$ under $\undt{\hat{\mu}}^N$. This satisfies a strong LDP with good rate function \olivier{see \eqref{eq:I3R})}
\begin{equation}\label{eq:I3R2}
I^{(3)}\left(\undt{\mu},\undt{P}^{\Z}\right) = I^{(3)}\left(\undt{\mu}_0,\undt{\mu}_I^{\Z}\right) + \int_{\R^\infty}I^{(3)}\left(\undt{\mu}_{u_0},\undt{P}^{\Z}_{1,T}\right)d\undt{\mu}_{\olivier{0}}(u_0).
\end{equation}
This may be obtained by applying the contraction principle to the result in Theorem \ref{theorem:Rnexptight}. We recall that $\undt{\mu}_{u_0}$ is considered to be a probability measure on $\mM_{1,s}^+(\T_T^{\Z})$ (the definition of the latter is at the start of section \ref{subsection:laws-uncoupled-coupled}), and we note from \eqref{eq:Punder} that $\undt{P}^{\Z}_{u_0} = \undt{P}^{\Z}_{1,T}$, i.e. it is independent of $u_0$.

We obtain the LDP governing $\undt{\Pi}^{\olivier{\nu},N}$ by applying a contraction principle to the LDP governing $\undt{R}^N$. A proof of an LDP for stationary Gaussian processes over $(\R^d)^{\Z}$ has already been obtained in \cite{baxter-jain:93}, however these authors do not provide an explicit expression for the rate function. We therefore adapt their proof to our problem, which has a non-Gaussian initial condition, and we derive the required expression for the rate function. We assume for simplicity throughout this appendix that $c^{\nu}=0$; the results may be easily generalised. For $\omega\in [-\pi,\pi[$, let
\begin{equation}
\sqrt{{\rm Id}_T + \sigma^{-2}\tilde{K}^{\nu}(\omega)} = \sum_{j=-\infty}^\infty F^j\exp\left(-i j\omega\right).
\end{equation}
The $T \times T$ matrixes $F^j$ are the coefficients of the absolutely converging Fourier series of the positive square root $\sqrt{{\rm Id}_T + \sigma^{-2} \tilde{K}^{\nu}(\omega)}$.
Define $\tau^\nu:\T^\Z\to\T^\Z$ and $\tau^\nu_{(M)}:\T^\Z\to\T^\Z$ as follows. We let $\left(\tau^\nu(u)\right)^k_0 = \left(\tau^\nu_{(M)}(u)\right)^k_0 =u^k_0$ and (for $1\leq s\leq T$) 
\begin{align}
\left(\tau^\nu(u)\right)^k_s &= \sum_{j=-\infty}^\infty\sum_{t=1}^T F^j_{ts} u^{k-j}_t ,\nonumber\\ 
\left(\tau^\nu_{(M)}(u)\right)^k_s &= \sum_{|j|\leq m}\sum_{t=1}^T F^j_{ts}\left(1-\frac{|j|}{M}\right)u^{k-j}_t,\label{eq:tauFejer}
\end{align}
where $M=2m+1$. We note that $\tau^\nu_{(M)}$ is a continuous map, but $\tau^\nu$ is not continuous (in general). We note that $(\undt{P}^{\Z}\circ(\tau^\nu)^{-1})_{1,T}$ has spectral density $\sigma^2\rm{Id}_T + \tilde{K}^\nu$, and the spectral density of $(\undt{P}^{\Z}\circ(\tau^\nu_{(M)})^{-1})_{1,T}$ is defined to be $h_{(M)}(\omega)$. We write $\epsilon^\nu(M) = \sup_{\omega\in [-\pi,\pi[}\norm{\sqrt{\sigma^2 \rm{Id}_T + \tilde{K}^\nu(\omega)}-\sqrt{h_{(M)}(\omega)}}^2$ (this is the Euclidean norm). By Fejer's Theorem, $\epsilon^{\nu}(M)\to 0$ as $M\to\infty$.

\begin{theorem}\label{lemma:PiNLDP1}$\undt{\Pi}^{\nu,N}$ satisfies a strong LDP with good rate function 
\begin{equation}\label{eq:Hnudefnappend}
\undt{H}^{\nu}(\undt{\mu}) = I^{(3)}\left(\undt{\mu}\circ(\tau^{\nu})^{-1}\right),
\end{equation}
for $\mu\in\mathcal{E}_2$. If $\mu\notin\mathcal{E}_2$, then $\undt{H}^{\nu}(\undt{\mu}) = \infty$.
\end{theorem}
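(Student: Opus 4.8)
The plan is to obtain the LDP for $\undt{\Pi}^{\nu,N}$ by a two-stage approximation argument, pushing forward the known LDP for $\undt{R}^N$ (from Theorem \ref{theorem:Rnexptight}, after applying the contraction principle to get \eqref{eq:I3R2}) through the map $\tau^\nu$. First I would observe that if $\xi = (\xi^k)_{k\in\Z}$ is a stationary process in $(\R^T)^\Z$ whose $T$-dimensional slices are i.i.d. $\mathcal{N}_T(0_T,\sigma^2{\rm Id}_T)$ — i.e. governed by $\undt{P}^{\Z}_{1,T}$ — then the process $\tau^\nu(\xi)$ is the stationary Gaussian process with spectral density $\sigma^2{\rm Id}_T + \tilde{K}^\nu(\omega)$ and mean $0$; this is exactly the structure of $\undt{Q}^{\nu}_{1,T}$ by the computation already recorded after Definition \ref{defn:Qnu}. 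Hence $\undt{Q}^\nu = \undt{P}^{\Z}\circ(\tau^\nu)^{-1}$, and the empirical measure $\undt{\hat\mu}^N$ of a $\undt{Q}^{\nu,N}$-sample is (approximately, modulo boundary terms that are negligible at the exponential scale $1/N$) the push-forward under $\tau^\nu$ of the empirical measure of a $\undt{P}^{\otimes N}$-sample. The formal content of "approximately" is that $\undt{\hat\mu}^N \circ (\tau^\nu_{(M)})^{-1}$ and $\undt{\hat\mu}^N\circ(\tau^\nu)^{-1}$ are exponentially equivalent, up to an error controlled by $\epsilon^\nu(M)$, uniformly over the sample.

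The key steps, in order, are: (i) Since $\tau^\nu_{(M)}$ is continuous and $\undt{R}^N = \undt{P}^{\otimes N}\circ(\undt{\hat\mu}^N)^{-1}$ satisfies a strong LDP with good rate function $I^{(3)}(\cdot,\undt{P}^\Z)$, the contraction principle gives a strong LDP for the image of $\undt{R}^N$ under the (continuous) push-forward map $\mu\mapsto \mu\circ(\tau^\nu_{(M)})^{-1}$, with good rate function $\mu\mapsto \inf\{I^{(3)}(\lambda,\undt{P}^\Z): \lambda\circ(\tau^\nu_{(M)})^{-1}=\mu\}$. (ii) Show that the finite-$M$ empirical-measure law and the $M=\infty$ empirical-measure law for $\undt{Q}^{\nu,N}$ are exponentially equivalent in $N$: this uses that $\tau^\nu_{(M)}(\xi)$ and $\tau^\nu(\xi)$ differ, in the metric $D$ on $\mM_{1,s}^+(\T^\Z)$ defined in \eqref{eq:metricdefn}, by a quantity whose exponential moments under $\undt{P}^{\otimes N}$ are bounded by something going to $0$ with $\epsilon^\nu(M)$; here one exploits the Gaussian tail estimates (Lemma \ref{lemma:gauss}) and the fact that the weights $\kappa_n$ in \eqref{eq:metricdefn} are summable, exactly as in Lemma \ref{lemma:expNc}. (iii) Apply the approximation lemma for LDPs (a sequence that is exponentially equivalent to a family of LDP-satisfying sequences, with the rate functions converging appropriately, satisfies the LDP with the limiting rate function; cf. Dembo--Zeitouni) to conclude that $\undt{\Pi}^{\nu,N}$ satisfies a weak LDP with rate function $\sup_{M}\inf_{\lambda\circ(\tau^\nu_{(M)})^{-1}=\mu} I^{(3)}(\lambda,\undt{P}^\Z)$. (iv) Identify this limiting rate function with $I^{(3)}(\undt{\mu}\circ(\tau^\nu)^{-1},\undt{P}^\Z)$: the $\geq$ direction is continuity/contraction, and for the $\leq$ direction one uses that $\tau^\nu$ is injective on the relevant set (one can recover $\xi$ from $\tau^\nu(\xi)$ by convolving with the absolutely summable Fourier coefficients of $(\mathrm{Id}_T+\sigma^{-2}\tilde K^\nu(\omega))^{-1/2}$, using that the eigenvalues of $\mathrm{Id}_T+\sigma^{-2}\tilde K^\nu(\omega)$ are bounded below by $1$), so the infimum is attained only at $\lambda = \undt{\mu}\circ(\tau^\nu)^{-1}$ when this is itself stationary and well-defined, i.e. when $\mu\in\mathcal{E}_2$. (v) Upgrade weak to strong: exponential tightness of $\undt{\Pi}^{\nu,N}$ follows from that of $\undt{R}^N$ together with the Gaussian moment bound in (ii) via Hölder's inequality, exactly paralleling Proposition \ref{prop:exptight}. (vi) Finally check the $\mu\notin\mathcal{E}_2$ case: by Lemma \ref{lemma:E2enough} applied to $\undt{Q}^\nu$ (whose covariance has eigenvalues bounded by $\sigma^2+\rho_K$ by Lemma \ref{eq:rhoK}), any $\undt{\mu}$ with finite entropy relative to $\undt{Q}^\nu$ lies in $\mathcal{E}_2$, and then the two expressions $I^{(3)}(\undt{\mu},\undt{Q}^\nu) = I^{(3)}(\undt{\mu}\circ(\tau^\nu)^{-1},\undt{P}^\Z)$ agree; outside $\mathcal{E}_2$ both are $+\infty$, consistent with \eqref{eq:Hnudefnappend} and with the convention $\Gamma^\nu(\mu)=\infty$.

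Once Theorem \ref{lemma:PiNLDP1} is in hand, Lemma \ref{lemma:PinuNLDP} itself follows by algebraically rewriting the rate function: one must verify that $I^{(3)}(\undt{\mu}\circ(\tau^\nu)^{-1},\undt{P}^\Z) = I^{(3)}(\undt{\mu},\undt{P}^\Z) - \Gamma^\nu(\mu)$ for $\mu\in\mathcal{E}_2$. This is the Gaussian change-of-measure identity: since $\undt{Q}^\nu = \undt{P}^\Z\circ(\tau^\nu)^{-1}$, one has $I^{(3)}(\undt{\mu}\circ(\tau^\nu)^{-1},\undt{P}^\Z) = I^{(3)}(\undt{\mu},\undt{Q}^\nu)$ (as relative entropy is invariant under a common bijective push-forward), and then $I^{(3)}(\undt{\mu},\undt{Q}^\nu) = I^{(3)}(\undt{\mu},\undt{P}^\Z) - \Exp^{\undt{\mu}}[\log \tfrac{d\undt{Q}^\nu}{d\undt{P}^\Z}]$, where the process-level Radon--Nikodym density is read off from \eqref{defn:Qnu}; computing its $\undt{\mu}$-expectation term by term reproduces precisely $\Gamma_1^N(\nu)\to\Gamma_1(\nu)$ (the determinant term, via Lemma \ref{lem:preserveeigenvalues}) plus $\Gamma_2^\nu(\mu)$ (the quadratic-form term, via \eqref{eq:Gamma2nuspectral} and the spectral representation \eqref{eq:tildemuV}), which is the definition of $\Gamma^\nu(\mu)$. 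The main obstacle I anticipate is step (ii): making the exponential-equivalence estimate uniform over the (infinitely many coordinates of the) empirical measure and over all measures, controlling the metric $D$ — which itself involves an infinite sum of Wasserstein distances weighted by $\kappa_n$ — by a single exponentially-small-in-$N$ quantity. This is where the absolute summability \eqref{eq:lambdasumassumption} of $\Lambda$ and the resulting summability of $\kappa_n$ are indispensable, and where the Fejér-kernel truncation in \eqref{eq:tauFejer} is doing its real work; the argument there will mirror the delicate Gaussian computation of Lemma \ref{lemma:expNc} rather closely.
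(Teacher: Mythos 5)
Your plan follows the paper's argument closely: approximate $\tau^\nu$ by the continuous Fej\'er-truncated maps $\tau^\nu_{(M)}$, push the LDP for $\undt{R}^N$ through each $\tau^\nu_{(M)}$ by contraction (this is \eqref{eq:HMLDP}), control the error by a uniform-in-$N$ Gaussian exponential-moment bound (Lemma \ref{lem:tempend}), invoke an exponential-approximation theorem, and identify the limiting rate function. The paper cites \cite[Theorem 4.9]{baxter-jain:93} where you cite Dembo--Zeitouni, but these are interchangeable for this purpose.

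One imprecision is worth fixing, because it affects how step (iv) must be carried out. The exponential-approximation theorem does \emph{not} deliver the rate function $\sup_{M}\inf\{I^{(3)}(\lambda,\undt{P}^{\Z}) : \lambda\circ(\tau^\nu_{(M)})^{-1}=\mu\}$ as you write in step (iii); it delivers the $\Gamma$-type limit
\[
\lim_{\delta\to 0}\,\linf{M}\,\inf_{\gamma\in B^{\delta}(\mu)}\undt{H}^{\nu}_{(M)}(\undt{\gamma}),
\]
with a shrinking ball around $\mu$, precisely because the $\undt{H}^{\nu}_{(M)}$ need not converge pointwise or monotonically to $\undt{H}^{\nu}$. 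Identifying this object with $I^{(3)}(\undt{\mu}\circ(\tau^{\nu})^{-1},\undt{P}^{\Z})$ is the content of \eqref{eq:tmp5}, and here injectivity of $\tau^\nu$ alone is not enough --- one also needs lower semicontinuity of $\undt{H}^{\nu}$ (the paper's Lemma \ref{lem:hnusemicont}), so that the infimum over the shrinking ball actually converges to the value at $\mu$. Relatedly, your closing remark that $I^{(3)}(\undt{\mu}\circ(\tau^\nu)^{-1},\undt{P}^{\Z})=I^{(3)}(\undt{\mu},\undt{Q}^{\nu})$ ``as relative entropy is invariant under a common bijective push-forward'' is immediate for $I^{(2)}$, but for the process-level $I^{(3)}$ it is nontrivial: $\tau^\nu$ does not map $\T^N$ to $\T^N$, so the $N$-marginals do not transform by a common bijection, and the limit defining $I^{(3)}$ must be re-examined. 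The paper establishes exactly this via the banded approximations $\Omega_q$ in Section \ref{sect:EntropyProperties} (Lemmas \ref{lemma:inversesigmatau}--\ref{lemma:lastinpaper}); treating the invariance as free would leave a real gap in the proof of Lemma \ref{lemma:PinuNLDP} and, indirectly, in the lower semicontinuity input required here.
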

Our proof makes use of results in \cite{donsker-varadhan:85} and \cite{baxter-jain:93}. Before we prove this theorem, we require some preliminary lemmas. We define, in analogy to \eqref{eq:Gamma1nu} and \eqref{eq:Gamma2nuspectral},

\begin{align*}
\Gamma_{1,(M)}(\nu) &=-\frac{1}{4\pi} \int_{-\pi}^\pi
\left(\log \det \left({\rm Id}_{T}+\frac{1}{\sigma^2} \tilde{K}_{(M)}^\nu(\omega)\right)\right)\,d\omega \\
\Gamma_{2,(M)}^\nu(\mu) &=\frac{1}{2\sigma^2}
\frac{1}{2\pi}\int_{-\pi}^{\pi} \tilde{A}^\nu_{(M)}(-\omega) :
  \tilde{v}^\mu(d\omega)
 \end{align*}
where $\tilde{A}^{\nu}_{(M)} = \tilde{K}^{\nu}_{(M)}\left(\sigma^2\rm{Id}_T + \tilde{K}^{\nu}_{(M)}\right)^{-1}$ and $\tilde{K}^{\nu}_{(M)}(\omega) = h_{(M)}(\omega) - \sigma^2 \rm{Id}_T$. Since we have assumed that $c^{\nu}=0$, it may be shown that $\Gamma_{1,(M)}(\nu)$ converges to $\Gamma_{1}(\nu)$ and, for all $\mu\in\mathcal{E}_2$, $\Gamma^{\nu}_{2,(M)}(\mu)$ converges to $\Gamma^{\nu}_{2}(\mu)$ as $M\to\infty$.

Let $(\zeta^j)$ be i.i.d random variables in $\T$ governed by $\undt{P}^{\Z}$. Let $(\undt{\Pi}^N_{(M)})$ be the image laws of the empirical measures generated by the stationary  sequence $\tau_{(M)}(\zeta)$. Since $\tau_{(M)}$ is continuous, an application of the contraction principle to \eqref{eq:I3R2} dictates that $(\undt{\Pi}^N_{(M)})$ satisfies a strong LDP with good rate function given by
\begin{equation}\label{eq:HMLDP}
\undt{H}_{(M)}^{\nu}\olivier{(\undt{\mu})} = \inf_{\undt{\nu}: \undt{\mu} = \undt{\nu}\circ\tau_{(M)}^{-1}} I^{(3)}(\undt{\nu},\undt{P}^{\Z}).
\end{equation}

\begin{lemma}\label{lem:hnusemicont}
If $\mu\in\mathcal{E}_2$, then $\undt{H}^{\nu}(\undt{\mu})$ (as defined in \eqref{eq:Hnudefnappend}) is equal to 
\[
I^{(3)}\left(\undt{\mu},\undt{P}^{\Z}\right) - \Gamma^{\nu}(\mu).
\]
$\undt{H}^{\nu}(\undt{\mu})$ is lower-semi-continuous (as a function of $\undt{\mu}$).
\end{lemma}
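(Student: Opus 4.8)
The plan is to prove Lemma \ref{lem:hnusemicont} in two parts, corresponding to its two assertions: first the identity $\undt{H}^{\nu}(\undt{\mu}) = I^{(3)}(\undt{\mu},\undt{P}^{\Z}) - \Gamma^{\nu}(\mu)$ for $\mu\in\mathcal{E}_2$, and then lower-semi-continuity of $\undt{H}^{\nu}$.

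\textbf{Step 1: the identity.} Fix $\mu\in\mathcal{E}_2$. The definition \eqref{eq:Hnudefnappend} gives $\undt{H}^{\nu}(\undt{\mu}) = I^{(3)}(\undt{\mu}\circ(\tau^{\nu})^{-1})$, which by the cocycle property of the relative entropy $I^{(3)}$ under composition with the \emph{deterministic} shift-commuting map $\tau^\nu$ (here using that $(\undt{P}^{\Z}\circ(\tau^\nu)^{-1})_{1,T}$ is the Gaussian with spectral density $\sigma^2\mathrm{Id}_T+\tilde K^\nu$, stated just above the theorem) should decompose as $I^{(3)}(\undt{\mu},\undt{P}^{\Z})$ plus a correction term coming from the change of the reference measure. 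Concretely I would first establish the finite-$N$ analogue: $I^{(2)}(\undt{\mu}^N\circ(\tau^{\nu,N})^{-1},\undt{P}^{\otimes N}) = I^{(2)}(\undt{\mu}^N,\undt{P}^{\otimes N}\circ(\tau^{\nu,N})^{-1}) = I^{(2)}(\undt{\mu}^N,\undt{Q}^{\nu,N})$ (recalling that $\undt{Q}^{\nu}_{1,T}$ is precisely the Gaussian with covariance $\sigma^2\mathrm{Id}+K^{\nu}$), then expand $I^{(2)}(\undt{\mu}^N,\undt{Q}^{\nu,N}) = I^{(2)}(\undt{\mu}^N,\undt{P}^{\otimes N}) - \int\log\frac{d\undt{Q}^{\nu,N}}{d\undt{P}^{\otimes N}}\,d\undt{\mu}^N$, observe that $\log\frac{d\undt{Q}^{\nu,N}}{d\undt{P}^{\otimes N}}(v)=N\Gamma^{\nu}((\undt{\hat\mu}^N(v))^N)$ by \eqref{defn:Qnu}, and use the stationarity of $\undt{\mu}$ together with the definition of $\Gamma^\nu$ in \eqref{eq:Gamma2nuN}--\eqref{eq:phiNInfinitydefn} to identify $\frac1N\int N\Gamma^\nu(\undt{\hat\mu}^N(v))\,d\undt{\mu}^N(v) = \Gamma^\nu(\mu^N)$. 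Dividing by $N$ and letting $N\to\infty$ (using \eqref{eq:Gamma1nu}, \eqref{eq:Gamma2nuspectral}, and $\mu\in\mathcal{E}_2$ to ensure all limits exist and the spectral measure $\tilde v^\mu$ is well-defined) yields $I^{(3)}(\undt{\mu}\circ(\tau^{\nu})^{-1},\undt{P}^{\Z}) = I^{(3)}(\undt{\mu},\undt{P}^{\Z}) - \Gamma^{\nu}(\mu)$, which is the claim. One must also check the case $\mu\notin\mathcal{E}_2$ is consistent: there $\Gamma^\nu(\mu)=\infty$ and $I^{(3)}(\undt\mu,\undt P^\Z)=\infty$ by lemma \ref{lemma:E2enough}, and $\undt H^\nu(\undt\mu)=\infty$ by the second clause of Theorem \ref{lemma:PiNLDP1}, so the identity $\undt H^\nu = I^{(3)}-\Gamma^\nu$ is to be read with the convention $\infty-\infty=\infty$ (equivalently, just quote the stated definition).

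\textbf{Step 2: lower-semi-continuity.} This is where I would exploit the Fejér-type approximation $\tau^\nu_{(M)}$ introduced in the excerpt. Since $\tau^\nu_{(M)}$ is continuous and shift-commuting, the contraction principle gives that $\undt{\Pi}^N_{(M)}$ satisfies an LDP with good --- hence lower-semi-continuous --- rate function $\undt{H}^{\nu}_{(M)}$ as in \eqref{eq:HMLDP}. The strategy is then to show $\undt{H}^{\nu}_{(M)}$ converges to $\undt{H}^{\nu}$ in a way that transfers lower-semi-continuity: concretely, I would show that $\undt{H}^{\nu}_{(M)}(\undt\mu) \to \undt{H}^{\nu}(\undt\mu)$ pointwise and moreover that the convergence is monotone-like or dominated, typically $\liminf_{M}\undt{H}^{\nu}_{(M)} \geq \undt{H}^{\nu}$ combined with a matching upper estimate, so that $\undt{H}^\nu$ is expressible as (a limit making it) an $\limsup$/$\sup$ or $\inf$ over lower-semi-continuous functions in the right direction. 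Using the alternative form from Step 1, $\undt H^\nu_{(M)}(\undt\mu) = I^{(3)}(\undt\mu,\undt P^\Z) - \Gamma^\nu_{(M)}(\mu)$ for $\mu\in\mathcal{E}_2$ (derived exactly as in Step 1 with $\tau^\nu$ replaced by $\tau^\nu_{(M)}$), and since it was noted that $\Gamma^\nu_{1,(M)}(\nu)\to\Gamma_1(\nu)$ and $\Gamma^\nu_{2,(M)}(\mu)\to\Gamma^\nu_2(\mu)$, one gets $\undt H^\nu_{(M)}(\undt\mu)\to\undt H^\nu(\undt\mu)$ pointwise. To upgrade to lower-semi-continuity I would control the error uniformly: bound $|\Gamma^\nu_{(M)}(\mu) - \Gamma^\nu(\mu)| \leq C\,\epsilon^\nu(M)(1 + \Exp^{\undt\mu_{1,T}}[\|v^0\|^2])$ by the same Frobenius-norm/spectral estimates used in lemma \ref{lemma:Vsquared}, and then $\Exp^{\undt\mu_{1,T}}[\|v^0\|^2]$ by (a constant times) $1 + I^{(3)}(\undt\mu,\undt P^\Z)$ using lemma \ref{lemma:E2enough}'s quantitative bound; hence on each sublevel set $\{I^{(3)}\leq c\}$ the convergence $\undt H^\nu_{(M)}\to\undt H^\nu$ is uniform, and a uniform limit of lower-semi-continuous functions on a set is lower-semi-continuous. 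Since $I^{(3)}$ is itself lower-semi-continuous, sublevel sets of $I^{(3)}$ are closed, and lower-semi-continuity on each closed sublevel set plus $\undt H^\nu=+\infty$ off $\bigcup_c\{I^{(3)}\leq c\}$ in the limit gives lower-semi-continuity of $\undt H^\nu$ globally.

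\textbf{Main obstacle.} I expect the delicate point to be the rigorous justification of Step 1's entropy cocycle identity when $\tau^\nu$ is \emph{not} continuous (only $\tau^\nu_{(M)}$ is), so that one cannot directly invoke the contraction principle at the level of $\tau^\nu$; the finite-$N$ argument via $I^{(2)}$ and the Radon–Nikodym derivative $\frac{d\undt Q^{\nu,N}}{d\undt P^{\otimes N}}$ circumvents this, but one must be careful that $\undt\mu^N\circ(\tau^{\nu,N})^{-1}$ is the correct finite-dimensional object (the map $\tau^{\nu,N}$ being the natural $N$-truncation, not a restriction of $\tau^\nu$) and that the limit $\frac1N I^{(2)}\to I^{(3)}$ commutes with subtracting the $\Gamma^\nu$ term --- this last interchange requires the integrability afforded precisely by $\mu\in\mathcal{E}_2$ and the uniform convergence $\tilde A^{\mu^N}\to\tilde A^\mu$ from lemma \ref{lem:muNconvergenceA}. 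The second genuine obstacle is establishing the uniform-on-sublevel-sets convergence $\undt H^\nu_{(M)}\to\undt H^\nu$ with the right dependence on $I^{(3)}$; this is essentially a repeat of the estimates in lemma \ref{lemma:Vsquared} and proposition \ref{prop:Gamma2nu} but with the perturbation parameter being $\epsilon^\nu(M)\to 0$ (Fejér) in place of the neighbourhood radius $\varepsilon$, so it should go through with only notational changes.
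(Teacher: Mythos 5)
Your Step 2 (lower-semi-continuity) is essentially the paper's argument: both rely on the Fejér approximations $\tau^\nu_{(M)}$, the lower-semi-continuity of $\undt{H}^\nu_{(M)}$ (coming from the contraction principle applied to a continuous map), and the uniform estimate $|\Gamma^\nu_{(M)}(\mu)-\Gamma^\nu(\mu)|\lesssim\epsilon^\nu(M)\,\Exp^{\undt{\mu}_{1,T}}[\|v^0\|^2]$, together with the bound $\Gamma^\nu(\mu)\leq a^{-1}(I^{(3)}(\undt\mu,\undt P^\Z)+c)$ to dispatch the case of exploding second moment. Your packaging (uniform convergence on closed sublevel sets of $I^{(3)}$) is a minor cosmetic variation of the paper's sequence-and-case-split argument, and it is correct.

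Your Step 1 (the identity) is a genuinely different route, and it is where the real difficulties lie. The paper does \emph{not} compute $I^{(2)}(\undt{\mu}^N,\undt{Q}^{\nu,N})$ via the Radon--Nikodym derivative; instead it invokes the separate change-of-measure lemma for $I^{(3)}$ (Lemma \ref{lemma:lastinpaper} in the ``Properties of the Entropy'' subsection), which is itself proved by an explicit block-circulant truncation $\Omega_q$, a change of variable at the level of $I^{(2)}$, and crucially a \emph{two-sided} inequality obtained by applying the same estimate to both $\Omega$ and its inverse $\Delta$; it then conditions on the initial datum $u_0$ to account for the non-Gaussian $\mu_I$ factor. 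Two concrete issues make your route harder than you acknowledge. First, the finite-$N$ ``cocycle'' identity $\int_{\T^N}\Gamma^\nu(\undt{\hat\mu}^N(v))\,d\undt{\mu}^N(v)=\Gamma^\nu(\mu^N)$ is \emph{not} an identity: $\hat\mu^N(v)$ averages over cyclic shifts of $v$, whereas the $N$-dimensional marginal of a stationary $\mu$ is not invariant under cyclic shifts (only $P^{\otimes N}$, $Q^N$ and $(\hat\mu^N)^N$ are, by Lemma \ref{lemma:PQdag}), and $\phi^N_\infty$ involves $A^{\nu,k-j}$ with $k-j$ \emph{not} reduced modulo $N$, so the boundary errors are nontrivial and must be estimated. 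Second, your invariance step $I^{(2)}(\undt\mu^N\circ(\tau^{\nu,N})^{-1},\undt P^{\otimes N})=I^{(2)}(\undt\mu^N,\undt P^{\otimes N}\circ(\tau^{\nu,N})^{-1})$ has the pushforward going the wrong way: the correct identity, applying the bijection $(\tau^{\nu,N})^{-1}$ to both arguments, is $I^{(2)}(\undt\mu^N\circ(\tau^{\nu,N})^{-1},\undt P^{\otimes N})=I^{(2)}(\undt\mu^N,\undt P^{\otimes N}\circ\tau^{\nu,N})$, and the latter reference measure (pushforward of $\undt{P}^{\otimes N}$ under $(\tau^{\nu,N})^{-1}$) is a Gaussian with spectral density $\sigma^2\mathcal{K}^{-1}$, not the covariance $\sigma^2\mathcal{K}$ of $\undt{Q}^{\nu,N}$. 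Even granting that this sign may trace back to an ambiguity in the paper's own $\tau^\nu$ convention, it must be resolved before your chain of equalities can be made to produce the advertised $I^{(2)}(\undt\mu^N,\undt Q^{\nu,N})$. Finally, as you yourself flag, $\undt\mu^N\circ(\tau^{\nu,N})^{-1}$ is not the $N$-th marginal of $\undt\mu\circ(\tau^\nu)^{-1}$, so the passage $N^{-1}I^{(2)}\to I^{(3)}$ along your chain is not the standard limit and requires a separate convergence argument --- the paper's machinery with $\Omega_q$, $\Delta$ and the two-sided inequality is precisely what supplies it.
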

\begin{proof}
We use results from the following section. From lemma \ref{lemma:lastinpaper}, we find that
\begin{equation*}
\undt{H}^{\nu}(\undt{\mu}) = I^{(3)}\left(\mu_0,\mu_{I}^{\olivier{\Z}}\right) + \int_{\R^\olivier{\Z}}I^{(3)}\left(\undt{\mu}_{u_0},\undt{P}^{\Z}_{1,T}\right) - \Gamma^{\Omega}(\mu_{u_0})\,\olivier{d\mu_{0}}(u_0),
\end{equation*}
where $\Gamma^{\Omega}(\mu_{u_0})$ is defined in \eqref{eq:GammaOmegaDefn}, and we substitute $\mathcal{K} = \rm{Id}_T + \sigma^{-2}\tilde{K}^{\nu}$. Now, after noting \eqref{eq:Gamma1nu} and \eqref{eq:Gamma2nuspectral}, and recalling our assumption that $c^{\nu} =0$, we find that\begin{align*}
\int_{\R^{\olivier{\Z}}}\Gamma^\Omega\left(\mu_{u_0}\right)\,\olivier{d\mu_{0}}(u_0) &= \Gamma^{\nu}_1 + \lim_{N\to\infty}\frac{1}{N}\int_{\T^N}\phi^N_{\infty}(\nu,v)\olivier{d\undt{\mu}_{v_0}}(v)\,\olivier{d\mu_0}(v_0), \\
&= \Gamma^\nu(\mu),
\end{align*}
from which the expression in the lemma follows. 

It remains for us to prove that if $\undt{\mu}^{(j)}\to\undt{\mu}$ then $\linf{j} \undt{H}^{\nu}(\undt{\mu}^{(j)}) \geq \undt{H}^{\nu}(\undt{\mu})$. We may assume without loss of generality that \[\linf{j} \undt{H}^{\nu}(\undt{\mu}^{(j)}) = \lim_{j\to\infty} \undt{H}^{\nu}(\undt{\mu}^{(j)}) \in \mathbb{R}\cup\infty.\]  

Suppose firstly that $\Exp^{\undt{\mu}^{(p_j)}}\left[ \norm{v^0}^2\right]\to\infty$ for some subsequence $(p_j)$. It follows from lemma \ref{lemma:E2enough} that $I^{(3)}(\undt{\mu}^{(p_j)},\undt{P}^{\Z})\to\infty$. It may be proved very similarly to lemma \ref{lem:acbound} that there exist constants $a>1$ and $c>0$ such that $\Gamma^{\nu}(\mu) \leq \frac{1}{a}\left( I^{(3)}(\undt{\mu},\undt{P}^{\Z}) + c\right)$. This means that $\undt{H}^{\nu}(\undt{\mu}^{(p_j)})\to\infty$ as well, and therefore $\lim_{j\to\infty} \undt{H}^{\nu}(\undt{\mu}^{(j)}) = \infty$ , satisfying the requirements of the lemma.

Otherwise we may assume that there exists a constant $l$ such that \newline$\Exp^{\undt{\mu}^{(j)}}\left[ \norm{u^0}^2\right]\leq l$ for all $j$. We therefore have that, for all $M$,
\begin{equation*}
\linf{j}\undt{H}^{\nu}(\undt{\mu}^{(j)}) = \linf{j}\left( \undt{H}^{\nu}_{(M)}(\undt{\mu}^{(j)}) + \Gamma^{\nu}_{(M)}\left(\undt{\mu}^{(j)}\right) - \Gamma^{\nu}\left(\undt{\mu}^{(j)}\right)\right).
\end{equation*}
Now 
\begin{align*}
\Gamma^{\nu}_{2,(M)}\left(\undt{\mu}^{(j)}\right) - \Gamma^{\nu}_2\left(\undt{\mu}^{(j)}\right) =& \frac{1}{4\pi \sigma^{2}}\int_{-\pi}^{\pi}\left(\tilde{A}^{\nu}_{(M)}(-\omega)-\tilde{A}^{\nu}(-\omega)\right):d\tilde{v}^{\mu^{(j)}}(\omega).\\
\leq & \frac{1}{4\pi\sigma^2}\int_{-\pi}^{\pi}\tilde{A}_{\rm{eig}}(-\omega)\sum_{s=1}^T d\tilde{v}^{\mu}_{ss}(\omega),
\end{align*}
where $\tilde{A}_{\rm{eig}}(\omega)$ is $\max\lbrace|\lambda|\rbrace$, where $\lambda$ is an eigenvalue of $\left(\tilde{A}^{\nu}_{(M)}(\omega)-\tilde{A}^{\nu}(\omega)\right)$. Since \newline$\sum_{s=1}^T\tilde{v}_{ss}^{\mu^{(j)}}\left( [-\pi,\pi[\right) = 2\pi \Exp^{\undt{\mu}^{(j)}}\left[ \norm{v^0}^2\right]$, we may write
\begin{equation*}
\left|\Gamma^{\nu}_{2,(M)}\left(\undt{\mu}^{(j)}\right) - \Gamma^{\nu}_2\left(\undt{\mu}^{(j)}\right)\right| \leq l \epsilon^*_{(M)},
\end{equation*}
where $\epsilon^*_{(M)} = \frac{T}{2\sigma^2} \sup_{\omega\in [-\pi,\pi[}\norm{\tilde{A}^{\nu}_{(M)}(\omega)-\tilde{A}^{\nu}(\omega)}\to 0$ as $M\to \infty$ because of the Fejer approximation in \eqref{eq:tauFejer}.
Thus
\begin{equation*}
\linf{j}\undt{H}^{\nu}(\mu^{(j)}) \geq \linf{j}\left( \undt{H}^{\nu}_{(M)}(\undt{\mu}^{(j)}) - l\epsilon^*_{(M)} -\left|\Gamma^{\nu}_{1,(M)}- \Gamma^{\nu}_1\right|\right).
\end{equation*}
However it follows from the Fejer approximation that $\left|\Gamma^{\nu}_{1,(M)}- \Gamma^{\nu}_1\right|\to 0$ as $M\to \infty$. In addition, $\linf{j}\undt{H}^{\nu}_{(M)}(\undt{\mu}^{(j)}) \geq \undt{H}^{\nu}_{(M)}(\mu)$ due to the lower semi-continuity of $\undt{H}^{\nu}_{(M)}$. On taking $M\to \infty$, we therefore find that
\begin{equation*}
\linf{j}\undt{H}^{\nu}(\undt{\mu}^{(j)}) \geq \undt{H}^{\nu}(\undt{\mu}^{(j)}).
\end{equation*}
\end{proof}
 \begin{lemma}\label{lem:tempend}
If $0 < \lambda < \frac{1}{2\varepsilon^\nu(M)}$ then for all odd $N=2n+1$
\[
\frac{1}{N}\log\Exp^{\undt{P}^{\Z}}\left[\exp\left(\frac{\lambda}{\sigma^2}\sum_{k=-n}^n\norm{\tau^\nu_{(M)}(u){}^k - \tau^\nu(u){}^k}^2\right)\right]\leq -\frac{1}{2}\log\left(1-2\lambda\varepsilon^\nu(M)\right).
\]
\end{lemma}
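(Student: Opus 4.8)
The plan is to reduce the statement to a standard Gaussian Laplace-transform computation for a block-Toeplitz covariance, followed by an elementary scalar optimization. Throughout one uses that in this appendix $c^{\nu}=0$, that the $(u^{j})_{j\in\Z}$ are i.i.d.\ under $\undt{P}^{\Z}$, and that both $\tau^{\nu}$ and $\tau^{\nu}_{(M)}$ are deterministic, leave the time-$0$ coordinate unchanged, and on the remaining coordinates act as convolutions in the neuron index $j$ with the absolutely summable matrix filters $(F^{j})$, respectively its Fej\'er modification. Hence, writing $W:=\tau^{\nu}_{(M)}(u)-\tau^{\nu}(u)$, the sequence $(W^{k})_{k\in\Z}$ has $W^{k}_{0}=0$ and, on times $1,\dots,T$, is the image of the white noise $\eta^{j}:=(u^{j}_{s})_{s=1,\dots,T}\sim\mathcal N_{T}(0_{T},\sigma^{2}{\rm Id}_{T})$ under the filter with transfer function $\hat C(\omega)=\bigl(S_{(M)}(\omega)-S(\omega)\bigr)^{\top}$, where $S(\omega)=\sum_{j}F^{j}e^{-ij\omega}=\sqrt{{\rm Id}_{T}+\sigma^{-2}\tilde K^{\nu}(\omega)}$ and $S_{(M)}$ is its Fej\'er mean; the initial conditions $(u^{j}_{0})$ do not enter $W$ and integrate out. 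Consequently $(W^{k})_{k\in\Z}$ is a centred stationary $\R^{T}$-valued Gaussian process with continuous matrix spectral density $g_{W}(\omega)=\sigma^{2}\hat C(\omega)\hat C(\omega)^{*}$, whose eigenvalues are $\sigma^{2}\mu_{t}(\omega)^{2}$, $t=1,\dots,T$, the $\mu_{t}(\omega)$ being the (real) eigenvalues of the Hermitian matrix $S_{(M)}(\omega)-S(\omega)$.

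Next I would identify the control of $g_{W}$ by $\varepsilon^{\nu}(M)$. Since, up to an immaterial transpose/conjugation, $\sqrt{\sigma^{2}{\rm Id}_{T}+\tilde K^{\nu}(\omega)}=\sigma S(\omega)$ and $\sqrt{h_{(M)}(\omega)}=\sigma S_{(M)}(\omega)$, one gets $\varepsilon^{\nu}(M)=\sup_{\omega}\sigma^{2}\|S_{(M)}(\omega)-S(\omega)\|_{F}^{2}=\sup_{\omega}{\rm tr}\,g_{W}(\omega)$. In particular ${\rm tr}\,g_{W}(\omega)\le\varepsilon^{\nu}(M)$ and $\lambda_{\max}(g_{W}(\omega))\le\varepsilon^{\nu}(M)$ for every $\omega$ (and $\varepsilon^{\nu}(M)\to0$ as $M\to\infty$ by Fej\'er's theorem).

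I would then run the Gaussian computation. The $NT\times NT$ covariance matrix $\Sigma_{W}$ of $(W^{-n},\dots,W^{n})$ is the block-Toeplitz matrix with symbol $g_{W}$: its $(j,k)$ block is $\frac1{2\pi}\int_{-\pi}^{\pi}e^{i(j-k)\omega}g_{W}(\omega)\,d\omega$. For any $x=(x^{-n},\dots,x^{n})\in\C^{NT}$ one has $x^{*}\Sigma_{W}x=\frac1{2\pi}\int_{-\pi}^{\pi}\hat x(\omega)^{*}g_{W}(\omega)\hat x(\omega)\,d\omega$ with $\hat x(\omega)=\sum_{k}x^{k}e^{-ik\omega}$ and $\frac1{2\pi}\int\|\hat x(\omega)\|^{2}d\omega=\|x\|^{2}$; hence all eigenvalues $\alpha_{1},\dots,\alpha_{NT}$ of $\Sigma_{W}$ lie in $[0,\varepsilon^{\nu}(M)]$, while $\sum_{i}\alpha_{i}={\rm tr}\,\Sigma_{W}=\frac{N}{2\pi}\int_{-\pi}^{\pi}{\rm tr}\,g_{W}(\omega)\,d\omega\le N\varepsilon^{\nu}(M)$. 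Because $\lambda<\frac1{2\varepsilon^{\nu}(M)}$, the eigenvalue condition of Lemma~\ref{lemma:gauss} is met (with $Z=W$, mean $0$, $a=0$, $b=-2\lambda/\sigma^{2}$), giving
\[
\Exp^{\undt{P}^{\Z}}\Bigl[\exp\bigl(\tfrac{\lambda}{\sigma^{2}}\|W\|^{2}\bigr)\Bigr]=\prod_{i=1}^{NT}\Bigl(1-\tfrac{2\lambda}{\sigma^{2}}\alpha_{i}\Bigr)^{-1/2}.
\]

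The remaining and only non-routine point is a scalar optimization: one must show $\sum_{i=1}^{NT}\log\bigl(1-\tfrac{2\lambda}{\sigma^{2}}\alpha_{i}\bigr)\ge N\log\bigl(1-2\lambda\varepsilon^{\nu}(M)\bigr)$. The map $x\mapsto\log(1-cx)$ is concave and decreasing on $[0,\varepsilon^{\nu}(M)]$, so subject only to $0\le\alpha_{i}\le\varepsilon^{\nu}(M)$ and $\sum_{i}\alpha_{i}\le N\varepsilon^{\nu}(M)$ the sum $\sum_{i}\log(1-c\alpha_{i})$ is minimised by increasing the total mass to $N\varepsilon^{\nu}(M)$ and then pushing it onto the endpoints, i.e.\ setting $N$ of the $\alpha_{i}$ equal to $\varepsilon^{\nu}(M)$ and the rest to $0$; the minimal value is $N\log(1-c\varepsilon^{\nu}(M))$. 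Taking $-\frac1{2N}\log$ of the displayed identity then yields the stated bound (the passage from $\frac{2\lambda}{\sigma^{2}}\varepsilon^{\nu}(M)$ to $2\lambda\varepsilon^{\nu}(M)$ is bookkeeping of the $\sigma^{2}$-normalisation built into the definition of $\varepsilon^{\nu}(M)$). The main obstacle is exactly this last step: one must use the trace bound ${\rm tr}\,\Sigma_{W}\le N\varepsilon^{\nu}(M)$ — not merely the crude eigenvalue bound $\alpha_{i}\le\varepsilon^{\nu}(M)$, which would only give an exponent $NT$ instead of $N$ — together with concavity; the other point requiring care is the correct identification of $g_{W}$ with $\varepsilon^{\nu}(M)$ through the square-root/Fej\'er filter.
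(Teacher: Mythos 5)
Your reconstruction follows the classical Donsker--Varadhan line that the paper's one-line proof (a pointer to \cite{donsker-varadhan:85}) invokes, and the structure is sound: the difference process is a centred stationary Gaussian filtered from the white-noise part of $\undt{P}^{\Z}$, its block-Toeplitz covariance $\Sigma_W$ has symbol $g_W(\omega)=\sigma^2\bigl(S_{(M)}(\omega)-S(\omega)\bigr)^2$ (your transpose/conjugation caveat is immaterial since $S$ and $S_{(M)}$ are Hermitian, with $S_{(M)}$ positive semi-definite because the Fej\'er kernel is nonnegative, so $\sqrt{h_{(M)}}=\sigma S_{(M)}$ is unambiguous), the Gaussian Laplace transform is a determinant, and the passage from the crude $NT$-exponent to the required $N$-exponent is achieved precisely as you say, via the trace bound $\mathrm{tr}\,\Sigma_W\le N\varepsilon^\nu(M)$ combined with concavity of $x\mapsto\log(1-cx)$. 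Your extreme-point argument on the polytope $\{0\le\alpha_i\le\varepsilon^\nu(M),\ \sum_i\alpha_i\le N\varepsilon^\nu(M)\}$ is correct, and your remark that this is the only non-routine step is exactly right.

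The point you dismiss as ``bookkeeping'' is not in fact resolved and should be flagged. With the paper's definitions one finds $\varepsilon^\nu(M)=\sup_\omega\sigma^2\|S_{(M)}(\omega)-S(\omega)\|^2=\sup_\omega\mathrm{tr}\,g_W(\omega)$, so the eigenvalues $\alpha_i$ of $\Sigma_W$ satisfy $\alpha_i\le\varepsilon^\nu(M)$; feeding this into lemma \ref{lemma:gauss} with $b=-2\lambda/\sigma^2$ produces the bound $-\tfrac12\log\bigl(1-\tfrac{2\lambda}{\sigma^2}\varepsilon^\nu(M)\bigr)$ together with the integrability requirement $\tfrac{2\lambda}{\sigma^2}\varepsilon^\nu(M)<1$, and neither agrees literally with the statement. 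For $\sigma^2\ge1$ the stated hypothesis is sufficient and your inequality is strictly stronger than the one claimed; for $\sigma^2<1$ the stated hypothesis does not even guarantee integrability. This looks like a normalization slip inherited from the cited source (which works with unit-variance noise) rather than a defect in your argument, but you should say so explicitly and state the $\sigma^2$-corrected conclusion rather than asserting that the $\sigma^2$ cancels.
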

The proof is almost identical to that in \cite{donsker-varadhan:85}. We are now ready to prove theorem \ref{lemma:PiNLDP1}.
\begin{proof}
It follows from the above lemma, the LDP for $\undt{\Pi}^{\nu,N}_{(M)}$ in \eqref{eq:HMLDP} and \cite[Theorem 4.9]{baxter-jain:93} that $\undt{\Pi}^{\nu,N}$ satisfies a strong LDP with good rate function
\begin{equation}\label{eq:LDPtmp3}
\lim_{\delta\to 0}\linf{M}\inf_{\gamma\in B^\delta(\mu)}\undt{H}^{\nu}_{(M)}(\undt{\gamma}),
\end{equation}
where $B^{\delta}(\mu) = \lbrace\gamma: \check{d}(\mu,\gamma) \leq \delta\rbrace$. Here $\check{d}$ is the Prohorov metric over $\M_{1,s}^+(\T^{\Z})$, induced by the metric $\grave{d}(x,y) = \sum_{j=-\infty}^{\infty} 2^{-|j|} \min\left(\norm{x^j-y^j},1\right)$ over $\T^{\Z}$. Note that the above expression with a closed ball is equivalent to the expression with the open ball in \cite{baxter-jain:93}. 

It follows, very similarly to  \cite{donsker-varadhan:85}, that if $F$ is a compact set, then
\begin{equation}\label{eq:tmp5}
\linf{M}\inf_{\gamma\in F} \undt{H}^{\nu}_{(M)}(\undt{\gamma}) = \inf_{\gamma\in F} \undt{H}^{\nu}(\undt{\mu}).
\end{equation} 
The theorem now follows since $\undt{H}^{\nu}$ is lower semicontinuous, by lemma \ref{lem:hnusemicont}.
\end{proof}

\subsection{Properties of the Entropy}\label{sect:EntropyProperties}

Let $\undt{\xi}$ be a zero-mean stationary measure on $\M_{\olivier{1,s}}^\olivier{+}((\R^T)^{\Z})$. Let $\mathcal{K}:[-\pi,\pi]\to \R^{\olivier{T \times T}}$ possess an absolutely convergent Fourier Series, i.e. $\mathcal{K}(-\pi) = \mathcal{K}(\pi)$, and be such that the eigenvalues of $\mathcal{K}(\omega)$ are strictly greater than zero for all $\omega$. We require that $\mathcal{K}$ is the density of a stationary sequence, which means that we must also assume that for all $\omega$
\[
\mathcal{K}(-\omega) = {}^t \mathcal{K}(\omega) = \mathcal{K}(\omega)^*.
\]
This means, in particular, that $\mathcal{K}(\omega)$ is Hermitian. We write
\begin{align}
\left(\Omega(u)\right)^k_s &= \sum_{j=-\infty}^\infty\sum_{t=1}^T \mathcal{R}^j_{ts} u^{k-j}_t, \text{ where } \\
\sum_{j=-\infty}^{\infty} \mathcal{R}^j\exp\left(- ij\omega\right) &= \mathcal{K}^{\frac{1}{2}},
\end{align}
Here $\mathcal{K}^{\frac{1}{2}}$ is understood to be the positive Hermitian square root of $\mathcal{K}$. In this section, we determine a general expression for
$I^{(3)}\left(\undt{\xi}\circ\Omega^{-1},\undt{P}_{1,T}\right)$. We are generalising the result for $T=1$ given in \cite{donsker-varadhan:85}. These results are necessary for the proofs in the previous section.

We similarly write that
\begin{align}
\left(\Delta(u)\right)^k_s &= \sum_{j=-\infty}^\infty\sum_{t=1}^T S^j_{ts} u^{k-j}_t , \text{ where } \\
\sum_{j=-\infty}^{\infty} S^j\exp\left(- ij\omega\right) &= \mathcal{K}^{-\frac{1}{2}}.
\end{align}
As previously, $\mathcal{K}^{-\frac{1}{2}}$ is understood to be the positive Hermitian square root. The Fourier Series of $\mathcal{K}^{-\frac{1}{2}}$ is absolutely convergent as a consequence of Wiener's Theorem. We note that $\mathcal{R}^{-j} = {}^t \mathcal{R}^j$ and $S^{-j} = {}^t S^j$. Similarly to definition \ref{def:E2}, we let $\mathcal{E}_{2,T}$ be the subset  of $\mathcal{M}_{1,s}^+((\mathbb{R}^T)^\Z)$ such that
\[
\mathcal{E}_{2,T}=\{ \underline{\mu} \in \mathcal{M}_{1,s}^+((\mathbb{R}^T)^{\Z})\, | \,
\Exp^{\underline{\mu}}[\|v^0\|^2] < \infty \}.
\]
\begin{lemma}\label{lemma:inversesigmatau}
For all $\olivier{\undt{\xi}}\in\mathcal{E}_{2,T}$, $\undt{\xi}\circ\Delta^{-1}$ and $\undt{\xi}\circ\Omega^{-1}$ are in $\mathcal{E}_{2,T}$ and
\[\undt{\xi}\circ\Delta^{-1}\circ\Omega^{-1} = \undt{\xi}\circ\Omega^{-1}\circ\Delta^{-1} = \undt{\xi}.\]
\end{lemma}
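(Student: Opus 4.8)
The plan is to recognise $\Omega$ and $\Delta$ as convolution operators on doubly-infinite $\R^T$-valued sequences, whose "symbols" are $\mathcal{K}^{1/2}$ and $\mathcal{K}^{-1/2}$ respectively, and to reduce the measure identities to the pointwise relations $\Omega\circ\Delta={\rm id}$ and $\Delta\circ\Omega={\rm id}$, holding $\undt{\xi}$-almost surely. Granting these, for every Borel $A$ we get $(\undt{\xi}\circ\Delta^{-1}\circ\Omega^{-1})(A)=\undt{\xi}\big((\Omega\circ\Delta)^{-1}(A)\big)=\undt{\xi}(A)$ (and symmetrically for the other order), which is the claim. So the work is to make the convolution bookkeeping rigorous under only a second-moment assumption.

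First I would check that $\Omega$ and $\Delta$ are well-defined Borel maps on a set of full $\undt{\xi}$-measure and that they preserve $\mathcal{E}_{2,T}$. For $\undt{\xi}\in\mathcal{E}_{2,T}$ one has $\Exp^{\undt{\xi}}[\|u^m\|]\le\Exp^{\undt{\xi}}[\|u^0\|^2]^{1/2}<\infty$ for every $m$, by stationarity and Cauchy--Schwarz; since $\sum_j\|\mathcal{R}^j\|<\infty$ and $\sum_j\|S^j\|<\infty$ (these Fourier series being absolutely convergent, the second by Wiener's theorem), the series defining $(\Omega(u))^k$ and $(\Delta(u))^k$ converge absolutely for $\undt{\xi}$-a.e.\ $u$ and in $L^2(\undt{\xi})$. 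Minkowski's inequality then gives $\Exp^{\undt{\xi}}[\|\Omega(u)^0\|^2]^{1/2}\le\big(\sum_j\|\mathcal{R}^j\|\big)\Exp^{\undt{\xi}}[\|u^0\|^2]^{1/2}<\infty$, and likewise for $\Delta$, so $\undt{\xi}\circ\Omega^{-1}$ and $\undt{\xi}\circ\Delta^{-1}$ lie in $\mathcal{E}_{2,T}$; stationarity is inherited because $\Omega$ and $\Delta$ commute with the shift $S$. In particular $\Omega$ is a.s.\ defined with respect to $\undt{\xi}\circ\Delta^{-1}$, and conversely, so the compositions make sense.

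Next I would establish $\Omega\circ\Delta={\rm id}$ a.s. Writing $(\Omega(\Delta(u)))^k_s=\sum_{j,\ell}\mathcal{R}^j_{\ell s}\sum_{i,t}S^i_{t\ell}u^{k-j-i}_t$, the double sum of absolute values of the summands has finite $\undt{\xi}$-expectation (bound $\|u^{k-j-i}\|$ in $L^1$ by stationarity and use $\big(\sum_j\|\mathcal{R}^j\|\big)\big(\sum_i\|S^i\|\big)<\infty$), so Fubini's theorem permits the rearrangement $(\Omega(\Delta(u)))^k_s=\sum_{m,t}\big(\sum_{i+j=m}(S^i\mathcal{R}^j)_{ts}\big)u^{k-m}_t$. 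The bracketed coefficient is the $m$-th coefficient of the Cauchy product of the Fourier series of $\mathcal{K}^{-1/2}$ and $\mathcal{K}^{1/2}$, hence of $\mathcal{K}^{-1/2}(\omega)\mathcal{K}^{1/2}(\omega)={\rm Id}_T$ (both are positive Hermitian square roots of commuting matrices, so $\mathcal{K}^{-1/2}=(\mathcal{K}^{1/2})^{-1}$); since this product has absolutely convergent Fourier series, its coefficients are uniquely $\delta_{m,0}\,{\rm Id}_T$, whence $(\Omega(\Delta(u)))^k_s=u^k_s$. The identity $\Delta\circ\Omega={\rm id}$ is obtained in exactly the same way, using $\mathcal{K}^{1/2}(\omega)\mathcal{K}^{-1/2}(\omega)={\rm Id}_T$ instead.

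The main obstacle is purely this integrability bookkeeping: justifying, under only the hypothesis $\Exp^{\undt{\xi}}[\|v^0\|^2]<\infty$, that the relevant doubly infinite convolution double series converge absolutely $\undt{\xi}$-almost surely (and are $L^1$), so that Fubini's theorem and the Cauchy-product rearrangement are legitimate. Once that is in place, everything reduces to the trivial matrix identity $\mathcal{K}^{1/2}\mathcal{K}^{-1/2}={\rm Id}_T$ and the uniqueness of Fourier coefficients.
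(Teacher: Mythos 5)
Your proposal is correct, but it takes a genuinely different route from the paper. The paper invokes the spectral representation theorem for $L^2$-stationary sequences (via orthogonal $\R^T$-valued stochastic measures $Z^{\undt{\xi}}$, citing Shiryaev), reads off that $Z^{\undt{\xi}\circ\Delta^{-1}}(d\omega)={}^t\mathcal{K}^{-\frac12}(\omega)Z^{\undt{\xi}}(d\omega)$ and $Z^{\undt{\xi}\circ\Omega^{-1}}(d\omega)={}^t\mathcal{K}^{\frac12}(\omega)Z^{\undt{\xi}}(d\omega)$, and concludes from the pointwise matrix identity $\mathcal{K}^{\frac12}\mathcal{K}^{-\frac12}={\rm Id}_T$. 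Your argument works pathwise: you show $\Omega$ and $\Delta$ are a.s.\ well-defined convolution maps sending $\mathcal{E}_{2,T}$ to itself (stationarity plus Cauchy--Schwarz and Minkowski, using the absolute summability of $\sum_j\|\mathcal{R}^j\|$ and $\sum_j\|S^j\|$ from Wiener's theorem), then justify the Fubini rearrangement of the double convolution in $L^1(\undt{\xi})$, identify the resulting coefficients as the Cauchy product of the Fourier series of $\mathcal{K}^{-\frac12}$ and $\mathcal{K}^{\frac12}$, hence $\delta_{m,0}{\rm Id}_T$, and conclude $\Omega\circ\Delta={\rm id}$ (and $\Delta\circ\Omega={\rm id}$) $\undt{\xi}$-a.s. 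What the paper's route buys is brevity and an immediate by-product: the explicit spectral density $\mathcal{K}^{\frac12}\mathcal{X}\mathcal{K}^{\frac12}$ of $\undt{\xi}\circ\Omega^{-1}$, which the paper records and uses downstream. What your route buys is self-containedness: it avoids the orthogonal-stochastic-measure machinery entirely, makes the a.s.\ convergence under only a second-moment hypothesis completely explicit, and reduces everything to Fubini plus uniqueness of Fourier coefficients. Both are sound; if you adopt your version, it would be worth also recording the spectral-density formula separately, since the paper relies on it later.
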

\begin{proof}
We make use of the following standard lemma from \cite{shiryaev:96}, to which the reader is referred for the definition of an orthogonal stochastic measure.
Let $(U^j)\in \mathbb{R}^T$ be a zero-mean stationary sequence governed by $\undt{\xi}\in\mathcal{E}_{2,T}$. Then there exists an orthogonal $\R^T$-valued stochastic measure $Z^{\undt{\xi}}=Z^{\undt{\xi}}(\Delta)$ ($\Delta \in \mathcal{B}([-\pi,\pi[)$, such that for every $j\in\Z$ ($\undt{\xi}$ a.s.) 
\begin{equation}\label{eq:stochmeasuredefn}
U^j = \frac{1}{2\pi}\int_{-\pi}^{\pi}\exp(i\omega j)Z^{\undt{\xi}}(d\omega).
\end{equation}
Conversely any orthogonal stochastic measure defines a zero-mean stationary sequence through \eqref{eq:stochmeasuredefn}.
It may be inferred from this representation that
\begin{align*}
Z^{\undt{\xi}\circ\Delta^{-1}}(d\omega) &= {}^t\mathcal{K}^{-\frac{1}{2}}(\omega)Z^{\undt{\xi}}(d\omega),\\
Z^{\undt{\xi}\circ\Omega^{-1}}(d\omega) &= {}^t\mathcal{K}^{\frac{1}{2}}(\omega)Z^{\undt{\xi}}(d\omega).
\end{align*}
The proof that this is well-defined makes use of the fact that $\mathcal{K}^{\frac{1}{2}}$ and $\mathcal{K}^{-\frac{1}{2}}$ are uniformly continuous, since their Fourier Series' each converge uniformly. This gives us the lemma. We note for future reference that, if $\undt{\xi}$ has spectral density $\mathcal{X}(\theta)$, then the spectral density of $\undt{\xi}\circ\Omega^{-1}$ is
\[
\mathcal{K}^{\frac{1}{2}}(\omega)\mathcal{X}(\omega)\mathcal{K}^{\frac{1}{2}}(\omega).
\]
\end{proof}
It remains for us to determine a specific expression for $I^{(3)}(\undt{\xi}\circ\Omega^{-1},\undt{P}^{\Z})$ when $\undt{\xi}\in\mathcal{E}_{2,T}$. 
We define
\begin{equation}\label{eq:GammaOmegaDefn}
\Gamma^{\Omega}(\undt{\xi}) = \frac{1}{2}\left(\Exp^{\undt{\xi}}\left[\sigma^{-2}\norm{v^0}^2\right]-\Exp^{\undt{\xi}\circ\Omega^{-1}}\left[\sigma^{-2}\norm{v^0}^2\right] \right) -\frac{1}{4\pi}\int_{-\pi}^{\pi}\log\det\left(\mathcal{K}(\omega)\right)d\omega.
\end{equation}
\begin{lemma}
For all $\undt{\xi}\in\mM_{1,s}^+(\T^{\Z})$,
\begin{equation*}
I^{(3)}\left(\undt{\xi}\circ \Omega^{-1},\undt{P}_{1,T}^{\Z}\right) \leq I^{(3)}\left(\undt{\xi},\undt{P}^{\Z}_{1,T}\right) - \Gamma^{\Omega}(\undt{\xi}).
\end{equation*}
\end{lemma}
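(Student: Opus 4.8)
Throughout write $m_{\mathrm{id}}=\Exp^{\undt{\xi}}[\|v^0\|^2]$. We may assume $\undt{\xi}\in\mathcal E_{2,T}$: otherwise, by the analogue for $\mathcal E_{2,T}$ of Lemma~\ref{lemma:E2enough}, $I^{(3)}(\undt{\xi},\undt{P}^{\Z}_{1,T})=\infty$, and (since $\undt{\xi}\circ\Omega^{-1}\notin\mathcal E_{2,T}$ as well, by Lemma~\ref{lemma:inversesigmatau}) the right-hand side is $+\infty$ under the natural convention, so there is nothing to prove. Under this assumption $m_{\mathrm{id}}<\infty$ and $\Gamma^{\Omega}(\undt{\xi})\in\R$. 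The plan is to approximate $\Omega$ by the finite-range Fej\'er truncations $\Omega_{(M)}$ ($M=2m+1$), built from the Fourier coefficients of $\mathcal K^{1/2}$ exactly as $\tau^{\nu}_{(M)}$ was built in \eqref{eq:tauFejer}. Each $\Omega_{(M)}$ is a continuous map on $\T^{\Z}$ with $(\Omega_{(M)})_*\undt{P}^{\Z}_{1,T}$ of spectral density $h_{(M)}$; moreover $\Omega_{(M)}(u)\to\Omega(u)$ in $\undt{\xi}$-probability (by the $L^{2}(\undt{\xi})$-convergence of the Fej\'er partial sums already used in Lemma~\ref{lemma:inversesigmatau}, which needs only $m_{\mathrm{id}}<\infty$), so $(\Omega_{(M)})_*\undt{\xi}\to\undt{\xi}\circ\Omega^{-1}$ weakly. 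Write $\Gamma^{\Omega_{(M)}}(\undt{\xi})$ for the quantity defined like $\Gamma^{\Omega}(\undt{\xi})$ with $\mathcal K$ replaced by $\mathcal K_{(M)}:=\sigma^{-2}h_{(M)}$; since $\mathcal K_{(M)}\to\mathcal K$ uniformly (Fej\'er) and $\Exp^{(\Omega_{(M)})_*\undt{\xi}}[\|v^0\|^2]\to\Exp^{\undt{\xi}\circ\Omega^{-1}}[\|v^0\|^2]$, one has $\Gamma^{\Omega_{(M)}}(\undt{\xi})\to\Gamma^{\Omega}(\undt{\xi})$ and $\Gamma_{1,(M)}\to\Gamma_{1}$. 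As $I^{(3)}(\cdot,\undt{P}^{\Z}_{1,T})$ is a good rate function, hence lower semicontinuous (Theorem~\ref{theorem:Rnexptight}), it suffices to prove, for each large $M$,
\[
I^{(3)}\big((\Omega_{(M)})_*\undt{\xi},\undt{P}^{\Z}_{1,T}\big)\ \le\ I^{(3)}(\undt{\xi},\undt{P}^{\Z}_{1,T})-\Gamma^{\Omega_{(M)}}(\undt{\xi})+2\Gamma_{1,(M)},
\]
since then $\liminf_{M\to\infty}$ of the right side is $I^{(3)}(\undt{\xi},\undt{P}^{\Z}_{1,T})-\Gamma^{\Omega}(\undt{\xi})+2\Gamma_{1}$, and $\Gamma_{1}=-\tfrac{1}{4\pi}\int_{-\pi}^{\pi}\log\det\mathcal K(\omega)\,d\omega\le 0$ (in the applications $\mathcal K(\omega)={\rm Id}_T+\sigma^{-2}\tilde K^{\nu}(\omega)\succeq{\rm Id}_T$, so $\det\mathcal K(\omega)\ge 1$).

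To prove the displayed bound, introduce the \emph{cyclic} surrogate $\Omega^{(N)}_{(M)}\colon(\R^{T})^{N}\to(\R^{T})^{N}$ of $\Omega_{(M)}$ (convolution with the truncated coefficients, indices mod $N$), $N=2n+1>2m$. This is a linear bijection for $M$ large (its DFT blocks converge to the Fej\'er mean $\widetilde R_{(M)}(\omega)$ of $\mathcal K^{1/2}$, which is uniformly positive-definite); by the block-circulant diagonalisation of Lemma~\ref{lem:preserveeigenvalues}, $\det\Omega^{(N)}_{(M)}=\prod_{|p|\le n}\det\widetilde R_{(M)}(2\pi p/N)>0$, and $|\det\widetilde R_{(M)}(\omega)|^{2}=\det\mathcal K_{(M)}(\omega)$. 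Because $\pi_{N}\circ\Omega_{(M)}$ depends only on the coordinates indexed in $[-n-m,n+m]$, putting $N'=N+2m$ one has $\pi_{N}\circ\Omega_{(M)}=P\circ\Omega^{(N')}_{(M)}\circ\pi_{N'}$, where $P$ is the projection onto the middle $N$ blocks (on which the cyclic convolution does not wrap). Hence $\pi_{N,*}(\Omega_{(M)})_*\undt{\xi}=P_*\big(\Omega^{(N')}_{(M)}\big)_*\pi_{N',*}\undt{\xi}$, and since $P_*\undt{P}^{\otimes N'}_{1,T}=\undt{P}^{\otimes N}_{1,T}$, monotonicity of relative entropy under measurable maps gives
\[
I^{(2)}\big(\pi_{N,*}(\Omega_{(M)})_*\undt{\xi},\,\undt{P}^{\otimes N}_{1,T}\big)\le I^{(2)}\big((\Omega^{(N')}_{(M)})_*\pi_{N',*}\undt{\xi},\,\undt{P}^{\otimes N'}_{1,T}\big).
\]
Dividing by $N$ and letting $N\to\infty$ (so $N'/N\to1$), by \eqref{eq:I3limit} the left side tends to $I^{(3)}\big((\Omega_{(M)})_*\undt{\xi},\undt{P}^{\Z}_{1,T}\big)$, so it remains to evaluate $\lim_{N}\tfrac1N I^{(2)}\big((\Omega^{(N)}_{(M)})_*\pi_{N,*}\undt{\xi},\undt{P}^{\otimes N}_{1,T}\big)$.

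For this I would use the elementary change-of-variables identity for relative entropy against the standard Gaussian $\undt{P}^{\otimes N}_{1,T}=\mathcal N(0,\sigma^{2}{\rm Id}_{NT})$: for a linear bijection $R$,
\[
I^{(2)}\big(R_*\alpha,\undt{P}^{\otimes N}_{1,T}\big)=I^{(2)}\big(\alpha,\undt{P}^{\otimes N}_{1,T}\big)-\log|\det R|+\tfrac{1}{2\sigma^{2}}\Exp^{\alpha}\big[\|Rv\|^{2}-\|v\|^{2}\big],
\]
valid because $R^{t}(RR^{t})^{-1}R={\rm Id}$. Taking $R=\Omega^{(N)}_{(M)}$ and $\alpha=\pi_{N,*}\undt{\xi}$: the first term over $N$ tends to $I^{(3)}(\undt{\xi},\undt{P}^{\Z}_{1,T})$ by \eqref{eq:I3limit}; $\tfrac1N\log\det\Omega^{(N)}_{(M)}=\tfrac1N\sum_{|p|\le n}\tfrac12\log\det\mathcal K_{(M)}(2\pi p/N)\to\tfrac{1}{4\pi}\int_{-\pi}^{\pi}\log\det\mathcal K_{(M)}(\omega)\,d\omega=-\Gamma_{1,(M)}$ by Riemann sums; and, using stationarity and the fact that $(\Omega^{(N)}_{(M)}v)^{k}=(\Omega_{(M)}v)^{k}$ for all but the $O(m)$ boundary indices $k$ (the remaining contributions being uniformly $L^{2}(\undt{\xi})$-bounded), $\tfrac1N\Exp^{\pi_{N,*}\undt{\xi}}[\|\Omega^{(N)}_{(M)}v\|^{2}-\|v\|^{2}]\to\Exp^{(\Omega_{(M)})_*\undt{\xi}}[\|v^{0}\|^{2}]-m_{\mathrm{id}}$. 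Collecting these and substituting the definition of $\Gamma^{\Omega_{(M)}}(\undt{\xi})$ (recall $\Gamma^{\Omega_{(M)}}(\undt{\xi})=\tfrac{1}{2\sigma^{2}}(m_{\mathrm{id}}-\Exp^{(\Omega_{(M)})_*\undt{\xi}}[\|v^{0}\|^{2}])+\Gamma_{1,(M)}$) gives precisely $I^{(3)}(\undt{\xi},\undt{P}^{\Z}_{1,T})-\Gamma^{\Omega_{(M)}}(\undt{\xi})+2\Gamma_{1,(M)}$; combined with the previous paragraph this yields the displayed bound, and hence the lemma.

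The main obstacle is the passage from the genuine marginal $\pi_{N,*}(\Omega_{(M)})_*\undt{\xi}$ to the circulant surrogate $(\Omega^{(N')}_{(M)})_*\pi_{N',*}\undt{\xi}$: one must set up the middle-block projection $P$ and verify the data-processing factorisation carefully, and then evaluate the circulant limit exactly, showing the $O(m)$ boundary blocks negligible and handling the block-circulant determinant through Lemma~\ref{lem:preserveeigenvalues}. The surrounding approximation steps — lower semicontinuity of $I^{(3)}$, the Fej\'er convergences $\Gamma^{\Omega_{(M)}}\to\Gamma^{\Omega}$ and $\Gamma_{1,(M)}\to\Gamma_{1}$, and the change-of-variables identity above — are routine. (The residual nonpositive term $2\Gamma_{1}$ appearing in the limit is exactly what one expects to vanish when the circulant surrogate is sharpened to the Toeplitz approximation; for the inequality claimed here the cruder circulant bound already suffices.)
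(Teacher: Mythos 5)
Your overall route matches the paper's: truncate to a finite-range convolution, pass to a cyclic surrogate, use data-processing under the middle-block projection to compare the genuine marginal with the circulant image, evaluate the circulant relative entropy by a Gaussian change of variables and Lemma~\ref{lem:preserveeigenvalues}, and remove the truncation by lower semicontinuity of $I^{(3)}$. The cosmetic difference (Fej\'er smoothing $\Omega_{(M)}$ versus the paper's hard cutoff $\Omega_q$) is harmless. Your change-of-variables identity
\[
I^{(2)}\big(R_*\alpha,\undt{P}^{\otimes N}_{1,T}\big)=I^{(2)}\big(\alpha,\undt{P}^{\otimes N}_{1,T}\big)-\log|\det R|+\tfrac{1}{2\sigma^{2}}\Exp^{\alpha}\big[\|Rv\|^{2}-\|v\|^{2}\big]
\]
is the correct one, and the limit you obtain from it, namely $I^{(3)}(\undt{\xi},\undt{P}^{\Z}_{1,T})+\Gamma_{1,(M)}+\tfrac{1}{2\sigma^{2}}\big(\Exp^{(\Omega_{(M)})_*\undt{\xi}}[\|v^0\|^2]-\Exp^{\undt{\xi}}[\|v^0\|^2]\big)$, is exact. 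In particular there is no Toeplitz-versus-circulant slack to pay: one can verify against the closed form for the Gaussian process-level entropy, $I^{(3)}(f_1,f_2)=\tfrac{1}{4\pi}\int_{-\pi}^{\pi}\big[\mathrm{tr}(f_2^{-1}f_1)-T-\log\det(f_2^{-1}f_1)\big]\,d\omega$, that plugging $\undt{\xi}=\undt{P}^{\Z}_{1,T}$ into your formula reproduces $I^{(3)}(\undt{P}^{\Z}_{1,T}\circ\Omega^{-1},\undt{P}^{\Z}_{1,T})$ with equality.

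The step that does not go through is the disposal of the residual $2\Gamma_{1,(M)}$. That term is not circulant slack; it appears solely because you rewrote the correct limit against the definition \eqref{eq:GammaOmegaDefn} as printed, which has a sign error: the last term should read $+\tfrac{1}{4\pi}\int_{-\pi}^{\pi}\log\det\mathcal{K}(\omega)\,d\omega$ (equivalently $-\Gamma_1$, not $+\Gamma_1$). The same Gaussian sanity check shows that with \eqref{eq:GammaOmegaDefn} as printed the lemma would require $\int_{-\pi}^{\pi}\log\det\mathcal{K}\,d\omega\ge 0$, which the stated hypotheses on $\mathcal{K}$ (strictly positive eigenvalues, absolutely summable coefficients) do not give. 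Once the sign is corrected, your computation yields $I^{(3)}(\undt{\xi},\undt{P}^{\Z}_{1,T})-\Gamma^{\Omega}(\undt{\xi})$ on the nose, with no residual and no need to invoke $\Gamma_1\le 0$. Invoking $\Gamma_1\le 0$ cannot be the intended repair: it is not a hypothesis of the lemma, and it is reversed exactly where the lemma is re-applied with $\Delta$ (multiplication by $\mathcal{K}^{-1/2}$) in the proof of Lemma~\ref{lemma:lastinpaper}, because replacing $\mathcal{K}$ by $\mathcal{K}^{-1}$ flips the sign of $\log\det$. With your version of the argument, one half of that two-sided comparison would point the wrong way, and the equality in Lemma~\ref{lemma:lastinpaper} would be lost.
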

\begin{proof}
We assume for now that there exists a $q$ such that $\mathcal{R}^{j}_{st} = 0$ for all $s,t$ and $j \geq q$, denoting the corresponding map by $\Omega_q$. Let $\mathcal{N}_q^L$ be the $TL\times TL$ block-circulant matrix, with the $j^{th}$ block given by $\mathcal{R}^j$. Let $\grave{\undt{\xi}}_{q,N} = \undt{\xi}^{N}_{u_0}\circ(\mathcal{N}_q^N)^{-1}$. It follows from this assumption that the $L=2l+1$ dimensional marginals of $\grave{\undt{\xi}}_{q,N}$ and $(\undt{\xi}\circ\Omega_q)^{-1})_{u_0}$ are the same, as long as $l\leq n-q$ (where $N=2n+1$). Thus
\begin{align}
I^{(2)}\left((\undt{\xi}\circ(\Omega_q)^{-1})^L_{u_0},\undt{P}^{\otimes L}_{1,T}\right) =& I^{(2)}\left((\grave{\undt{\xi}}_{q,N})^{L},\undt{P}^{\otimes L}_{1,T}\right) \nonumber\\
\leq & I^{(2)}\left(\grave{\undt{\xi}}_{q,N},\undt{P}^{\otimes N}_{1,T}\right).\label{eq:I2inequality}
\end{align}
This last inequality follows from a property of the K\"ullback-Leibler Divergence $I^{(2)}$, namely that it is nondecreasing as we take a `finer' $\sigma$-algebra. If $\undt{\xi}^N$ does not have a density for some $N$ then $I^{(3)}\left(\undt{\xi},\undt{P}^{\Z}_{1,T}\right)$ is infinite and the lemma is trivial. If otherwise, we may readily evaluate $I^{(2)}\left(\grave{\undt{\xi}}_{q,N},\undt{P}^{\otimes N}_{1,T}\right)$ using a change of variable to find that
\begin{multline*}
 I^{(2)}\left(\grave{\undt{\xi}}_{q,N},\undt{P}^{\otimes N}_{1,T}\right) = I^{(2)}\left(\undt{\xi}^N,\undt{P}^{\otimes N}_{1,T}\right) + \frac{1}{2\sigma^2}\Exp^{\undt{\xi}^N}\left[\norm{\mathcal{N}_q^N u}^2-\norm{u}^{2}\right]\\+\frac{1}{2}\log\det\left(\mathcal{N}_q^N\right).
\end{multline*}
We divide \eqref{eq:I2inequality} by $L$, substitute the above result, and finally take $L\to\infty$ (while fixing $N=L+2q$) to find that
\begin{align*}
&I^{(3)}\left((\undt{\xi}\circ(\Omega_q)^{-1}),\undt{P}_{1,T}^{\Z}\right) \leq I^{(3)}\left(\undt{\xi},\undt{P}^{\Z}_{1,T}\right)\nonumber\\ &+\frac{1}{2\sigma^2}\left( \Exp^{\undt{\xi}\circ(\Omega)^{-1}}\left[\norm{u^0}^2\right] - \Exp^{\undt{\xi}}\left[\norm{u^0}^2\right]\right) +\frac{1}{4\pi}\int_{-\pi}^{\pi}\log\det\left(\mathcal{K}(\omega)\right)d\omega\nonumber \\
&= I^{(3)}\left(\undt{\xi},\undt{P}^{\Z}_{1,T}\right) - \Gamma^{\Omega}_{q}(\undt{\xi}).
\end{align*}
Here $\Gamma^{\Omega}_{q}(\undt{\xi})$ is equal to $\Gamma^{\Omega}(\undt{\xi})$ as defined above, subject to the above assumption that $\mathcal{R}^j = 0$ for $j>q$. On taking $q\to\infty$, it may be readily seen that $\Gamma^{\Omega}_{q}\to \Gamma^\Omega$ pointwise. Furthermore the lower semicontinuity of $I^{(3)}$ dictates that
\begin{equation*}
I^{(3)}\left((\undt{\xi}\circ\Omega^{-1}),\undt{P}_{1,T}^{\Z}\right) \leq \linf{q}
I^{(3)}\left((\undt{\xi}\circ\Omega_q^{-1}),\undt{P}_{1,T}^{\Z}\right),
\end{equation*}
which gives us the lemma.
\end{proof}
\begin{lemma}\label{lemma:lastinpaper}
If $\undt{\xi}\in\mathcal{E}_{2,T}$, then
$I^{(3)}\left(\undt{\xi}\circ\Omega^{-1},\undt{P}^{\Z}_{1,T}\right) = I^{(3)}\left(\undt{\xi},\undt{P}^{\Z}_{1,T}\right) - \Gamma^{\Omega}\left(\undt{\xi}\right)$.
\end{lemma}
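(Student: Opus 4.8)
The plan is to establish the reverse of the inequality proved in the preceding lemma, namely
$I^{(3)}\!\left(\undt{\xi}\circ\Omega^{-1},\undt{P}^{\Z}_{1,T}\right) \geq I^{(3)}\!\left(\undt{\xi},\undt{P}^{\Z}_{1,T}\right) - \Gamma^{\Omega}(\undt{\xi})$, and then combine the two bounds. The key observation is that the preceding lemma was proved using only the facts that $\mathcal{K}$ has an absolutely convergent Fourier series, is Hermitian-valued, and has uniformly positive eigenvalues; all of these properties are inherited by $\mathcal{K}^{-1}$ (absolute convergence of the Fourier series of $\mathcal{K}^{-1}$ follows from Wiener's theorem exactly as it does for $\mathcal{K}^{-\frac12}$). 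Hence the preceding lemma applies verbatim with $\mathcal{K}$ replaced by $\mathcal{K}^{-1}$, i.e. with the map $\Omega$ replaced by the map $\Delta$ (which is precisely the $\Omega$-map associated with the kernel $\mathcal{K}^{-1}$, since $(\mathcal{K}^{-1})^{\frac12} = \mathcal{K}^{-\frac12}$), and with $\Gamma^{\Omega}$ replaced by the functional $\Gamma^{\Delta}$ obtained from \eqref{eq:GammaOmegaDefn} by substituting $\mathcal{K}^{-1}$ for $\mathcal{K}$ and $\Delta$ for $\Omega$.

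First I would apply this version of the preceding lemma to the measure $\undt{\eta} := \undt{\xi}\circ\Omega^{-1}$. By lemma \ref{lemma:inversesigmatau}, $\undt{\eta}\in\mathcal{E}_{2,T}$, and it is stationary and zero-mean because $\Omega$ is linear and commutes with the shift, so the hypotheses are met and we obtain
\[
I^{(3)}\!\left(\undt{\eta}\circ\Delta^{-1},\undt{P}^{\Z}_{1,T}\right) \leq I^{(3)}\!\left(\undt{\eta},\undt{P}^{\Z}_{1,T}\right) - \Gamma^{\Delta}(\undt{\eta}).
\]
Again by lemma \ref{lemma:inversesigmatau}, $\undt{\eta}\circ\Delta^{-1} = \undt{\xi}\circ\Omega^{-1}\circ\Delta^{-1} = \undt{\xi}$, so the left-hand side equals $I^{(3)}\!\left(\undt{\xi},\undt{P}^{\Z}_{1,T}\right)$. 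Next I would compute $\Gamma^{\Delta}(\undt{\eta})$ directly from its definition: using $\undt{\eta}\circ\Delta^{-1} = \undt{\xi}$ and $\det\bigl(\mathcal{K}^{-1}(\omega)\bigr) = 1/\det\bigl(\mathcal{K}(\omega)\bigr)$, one reads off at once that $\Gamma^{\Delta}(\undt{\xi}\circ\Omega^{-1}) = -\Gamma^{\Omega}(\undt{\xi})$. Here $\Gamma^{\Omega}(\undt{\xi})$ is finite, since $\Exp^{\undt{\xi}}[\norm{v^0}^2]<\infty$ and, by lemma \ref{lemma:inversesigmatau}, $\Exp^{\undt{\xi}\circ\Omega^{-1}}[\norm{v^0}^2]<\infty$ too, while the spectral integral is finite because $\mathcal{K}$ is continuous with positive eigenvalues on the compact interval $[-\pi,\pi]$. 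Substituting, the displayed inequality becomes $I^{(3)}\!\left(\undt{\xi},\undt{P}^{\Z}_{1,T}\right) \leq I^{(3)}\!\left(\undt{\xi}\circ\Omega^{-1},\undt{P}^{\Z}_{1,T}\right) + \Gamma^{\Omega}(\undt{\xi})$, which is the desired reverse inequality; combined with the preceding lemma this yields the claimed identity. The same argument also covers the degenerate case $I^{(3)}\!\left(\undt{\xi},\undt{P}^{\Z}_{1,T}\right)=\infty$: the displayed bound then forces $I^{(3)}\!\left(\undt{\xi}\circ\Omega^{-1},\undt{P}^{\Z}_{1,T}\right)=\infty$ as well, since $\Gamma^{\Omega}(\undt{\xi})$ is finite, so both sides of the identity are infinite.

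The only point requiring genuine care is the justification that the preceding lemma transfers to the kernel $\mathcal{K}^{-1}$ — checking Hermiticity, strict positivity of eigenvalues and absolute convergence of the Fourier series for $\mathcal{K}^{-1}$, and $\undt{\eta}\in\mathcal{E}_{2,T}$ — together with the sign bookkeeping in $\Gamma^{\Delta}$. Everything else is a direct substitution, and I expect no analytic obstacle beyond what is already contained in lemma \ref{lemma:inversesigmatau} and in the preceding lemma.
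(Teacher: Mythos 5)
Your proposal is correct and follows essentially the same route as the paper: the paper likewise derives the $\Delta$ version of the preceding lemma (stating the bound, which upon inspection is exactly the preceding lemma with $\mathcal{K}$ replaced by $\mathcal{K}^{-1}$, so that the last term becomes $-\frac{1}{4\pi}\int\log\det\mathcal{K}$), substitutes $\undt{\gamma}=\undt{\xi}\circ\Omega^{-1}$, invokes lemma \ref{lemma:inversesigmatau} to identify $\undt{\gamma}\circ\Delta^{-1}=\undt{\xi}$, and combines the two inequalities. The only cosmetic differences are that you name the transformed functional $\Gamma^{\Delta}$ and spell out the sign bookkeeping $\Gamma^{\Delta}(\undt{\xi}\circ\Omega^{-1})=-\Gamma^{\Omega}(\undt{\xi})$, whereas the paper just writes out the terms; your explicit verification that $\mathcal{K}^{-1}$ inherits the Wiener/Hermitian/positivity hypotheses, and your handling of the case $I^{(3)}(\undt{\xi},\undt{P}^{\Z}_{1,T})=\infty$, make the argument a bit more self-contained than the paper's terse "similarly to the previous lemma."
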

\begin{proof}
We find, similarly to the previous lemma, that if $\gamma\in\mM_{1,s}(\T^{\Z})$ then
\begin{multline}\label{eq:I3temp4}
I^{(3)}\left((\undt{\gamma}\circ\Delta^{-1}),\undt{P}_{1,T}^{\Z}\right) \leq I^{(3)}\left(\undt{\gamma},\undt{P}^{\Z}_{1,T}\right) +  \\ \frac{1}{2\sigma^2}\left[ \Exp^{(\undt{\gamma}\circ\Delta^{-1})}\left[ \norm{u^0}^2\right] - \Exp^{\undt{\gamma}}\left[\norm{u^0}^2\right]\right] - \frac{1}{4\pi}\int_{-\pi}^{\pi}\log\det\mathcal{K}(\omega)d\omega.
\end{multline}
We substitute $\undt{\gamma} = \undt{\xi}\circ\Omega^{-1}$ into the above and, after noting lemma \ref{lemma:inversesigmatau}, we find that
\begin{align}
I^{(3)}\left(\undt{\xi},\undt{P}_{1,T}^{\Z}\right) &\leq I^{(3)}\left((\undt{\xi}\circ \Omega^{-1}),\undt{P}_{1,T}^{\Z}\right) +\nonumber\\ &\frac{1}{2\sigma^2}\left( \Exp^{\undt{\xi}}\left[ \norm{u^0}^2\right] - \Exp^{(\undt{\xi}\circ\Omega^{-1})}\left[\norm{u^0}^2\right]\right) - \frac{1}{4\pi}\int_{-\pi}^{\pi}\log\det\left(\mathcal{K}(\omega)\right)d\omega\nonumber\\
&=I^{(3)}\left((\undt{\xi}\circ\Omega^{-1}),\undt{P}^{\Z}_{1,T}\right) + \Gamma^{\Omega}(\undt{\xi}).\label{eq:I3temp5}
\end{align}
The result now follows from the previous lemma and \eqref{eq:I3temp5}.
\end{proof}

\end{document}